\newtheorem{corollary}[theorem]{Corollary}
\def\red{\color{red}}
\def\rr{{\mathbb R}}
\def\rn{{\mathbb{R}^n}}
\def\rrm{{\mathbb{R}^m}}
\def\zz{{\mathbb Z}}
\def\nn{{\mathbb N}}
\def\cp{{\mathbb P}}
\def\cs{{\mathcal S}}
\def\CO{{\mathcal O}}
\def\sa{\sigma}
\def\fz{\infty}
\def\az{\alpha}
\def\oz{\omega}
\def\bz{\beta}
\def\dz{\delta}
\def\lz{\lambda}
\def\vaz{\varepsilon}
\def\pa{\partial}
\def\lf{\left}
\def\r{\right}
\def\hs{\hspace{0.25cm}}
\def\ls{\lesssim}
\def\gs{\gtrsim}
\def\noz{\nonumber}
\def\wz{\widetilde}
\def\com{\complement}
\def\lg{\langle}
\def\rg{\rangle}
\def\loc{{\mathop\mathrm{\,loc\,}}}
\def\supp{\mathop\mathrm{\,supp\,}}
\def\esup{\mathop\mathrm{\,ess\,sup\,}}
\def\gfz{\genfrac{}{}{0pt}{}}
\def\va{\vec{a}}
\def\vp{\vec{p}}
\def\vq{\vec{q}}
\def\vm{{\mathcal{M}^{p}_{\vq}(\rn)}}
\def\Aw{[\oz]_{\mathbf{A}_p(\rn)}}
\def\Al{\mathbf{A}_1(\rn)}
\def\Alw{[\oz]_{\mathbf{A}_1(\rn)}}
\def\Ap{\mathbf{A}_p(\rn)}
\def\HL{M_{{\rm HL}}}
\def\vh{{H_{\va}^{\vp}(\rn)}}
\def\vAh{{H_{A}^{p}(\rn)}}
\def\vah{{H_{\va}^{\vp,\,r,\,s}(\rn)}}
\def\vfah{{H_{\va,\,{\rm fin}}^{\vp,\,r,\,s}(\rn)}}
\def\vfahfz{{H_{\va,\,{\rm fin}}^{\vp,\,\fz,\,s}(\rn)}}
\def\lv{{L^{\vp}(\rn)}}
\def\lvq{{L^{\vq}(\rn)}}
\def\mlv{{L^{\vp,\fz}(\rn\times\rrm)}}
\def\lq{\mathcal{L}_{\vp,\,q,\,s}^{\va}(\rn)}
\def\lr{\mathcal{L}_{\vp,\,r',\,s}^{\va}(\rn)}
\def\Qkk{B_k^*}
\def\Bik{{B_i^k}}
\def\Qik{{Q_i^k}}
\begin{document}
\title{On Function Spaces with Mixed Norms --- A Survey}


\author[L.~Huang and D.~Yang]{Long Huang and Dachun Yang\corrauth}
\address{Laboratory of Mathematics and Complex Systems
(Ministry of Education of China),
School of Mathematical Sciences, Beijing Normal University,
Beijing 100875, People's Republic of China}
\emails{{\tt longhuang@mail.bnu.edu.cn} (L.~Huang),
{\tt dcyang@bnu.edu.cn} (D.~Yang)/{\red August 8, 2019}/Final version}


\begin{abstract}
The targets of this article are threefold. The first one
is to give a survey on the recent developments
of function spaces with mixed norms,
including mixed Lebesgue spaces, iterated weak
Lebesgue spaces, weak mixed-norm Lebesgue spaces and
mixed Morrey spaces as well as anisotropic mixed-norm Hardy
spaces. The second one is to provide a detailed proof for a useful inequality
about mixed Lebesgue norms and the Hardy--Littlewood maximal
operator and also to improve
some known results on the maximal function characterizations of anisotropic
mixed-norm Hardy spaces and the boundedness of Calder\'on--Zygmund operators
from these anisotropic
mixed-norm Hardy spaces to themselves or to mixed Lebesgue spaces.
The last one is to correct
some errors and seal some gaps existing in the known articles.
\end{abstract}

\ams{42B35, 42B30, 42B25, 42B20} 

\keywords{mixed norm, (weak) Lebesgue space,
Morrey space, Hardy space, maximal function,
Littlewood--Paley function, Calder\'on--Zygmund operator.}
\maketitle

\tableofcontents

\section{Introduction}\label{sec1}

In 1961, the mixed Lebesgue space $\lv$, with
$\vp\in (0,\fz]^n$, as a natural generalization
of the classical Lebesgue space $L^p(\rn)$ via replacing
the constant exponent $p$ by an exponent vector $\vp$,
was investigated by Benedek and Panzone \cite{bp61}.
Indeed, the origin of these mixed Lebesgue spaces can be traced
back to the interesting article of H\"{o}rmander \cite{h60}
on the estimates for translation invariant operators, in 1960.
Later on, in 1965, Galmarino and Panzone \cite{gp65}
extended the mixed Lebesgue
space $\lv$ to the mixed Lebesgue space $L^P(\rn)$ with
the exponent $P$ being a sequence, namely, an $\fz$-tuple.
Since the early 1960s, lots of nice work have been done in the study
of the boundedness of operators on mixed norm spaces; see,
for instance, Benedek et al. \cite{bcp62},
Lizorkin \cite{l70}, Adams and Bagby \cite{ab74}, Schmeisser
\cite{sc84}, Rubio de Francia et al. \cite{rrt86}
and Fernandez \cite{f87} as well as Stefanov and Torres \cite{st04}.
Recently, the Plancherel--Polya inequality
on mixed Lebesgue spaces $\lv$ and the wavelet characterization of
$\lv$ were studied by Torres and Ward \cite{tw15}; the smoothing
properties of bilinear operators and Leibniz-type rules in
mixed Lebesgue spaces $\lv$ were considered by Hart et al.
\cite{htw17}; the boundedness of the multilinear strong maximal operator
from the product of mixed Lebesgue spaces to mixed Lebesgue spaces
was obtained by Liu et al. \cite{ltxy}.
In addition, more recently, C\'{o}rdoba and
Latorre Crespo in \cite{cl17} revisited
some classical conjectures in harmonic analysis in the
setting of mixed norm spaces. To be exact, they established the
sharp boundedness for the restriction of the Fourier transform
to compact hypersurfaces of revolution and studied an extension
of the disc multiplier in the mixed norm setting.
For more progresses about the mixed Lebesgue space, we refer
the reader to \cite{ai16,cs,cs1,drw16,ho18,ig86,ma82,sj91}.

On another hand, motivated by the aforementioned work of
Benedek and Panzone \cite{bp61} on mixed Lebesgue spaces
$\lv$, numerous other function spaces with mixed norms were
introduced and studied. For instance,
Besov spaces, Sobolev spaces and Bessel potential
spaces with mixed norms were investigated by
Besov et al. \cite{bin78,bin79}
in the 1970s; inhomogeneous Triebel--Lizorkin spaces
with mixed norms were also studied by Besov et al.
in \cite{bin96}; parabolic function spaces
with mixed norms were considered by Gopala Rao \cite{g91}.
Particularly, in 1977, Fernandez \cite{f77} first introduced
the Lorentz spaces with mixed norms. Later, an interpolation
result on these Lorentz spaces with mixed norms was obtained
by Milman \cite{m81}. Moreover, Lorentz--Marcinkiewicz
spaces with mixed norms and Orlicz spaces with mixed norms
were considered by Milman in \cite{mm78} and \cite{mm81},
respectively; Banach
function spaces with mixed norms were studied by Blozinski \cite{b81};
anisotropic mixed-norm Hardy spaces were introduced by Clenathous
et al. in \cite{cgn17}; mixed Lebesgue spaces
with variable exponents were considered by Ho \cite{ho18};
Morrey spaces with mixed norms were investigated by Nogayama
\cite{tn,tn19}. Indeed, the function spaces with mixed norms
have attracted considerable attention and have rapidly been
developed. For more progresses about various function
spaces with mixed norms and their applications in the boundedness
of different operators, we refer the reader to
\cite{cs,cgn17bs,cgn17-2,gjn17,gn16,htw17,ho16,jms13,jms14,jms15,js07}.

In the last two decades, due to the wider usefulness of
function spaces with mixed norms within the context of partial
differential equations, there has been a renewed interest in the study
of them. More precisely, since the function spaces with
mixed norms have finer structures than the corresponding
classical function spaces, they naturally arise
in the studies on the solutions of partial differential equations
used to model physical processes involving in both space
and time variables, such as the heat or the wave equations (particularly,
the very useful Strichartz estimates); see, for instance,
\cite{ai16,k04,kpv93,kim08,tao06}. This is also based on the fact that,
while treating some linear or nonlinear equations, functions with different
orders of integrability in different variables  give more precise
information on the parameters involved in the estimates and
further induce a better regularity
(of traces) of solutions; see, for instance, \cite{dk18,gs91,w02}.
Another recent interest in developing the theory of function spaces
with mixed norms comes from
bilinear estimates and their vector-valued extensions
which have proved useful in partial differential equations involving functions
in $n$ dimension space variable $x$ and one dimension time variable $t$;
see, for instance, \cite{bm16,bm17,fk00,htw17,tw15}.
In particular, in order to obtain
the smoothing properties of bilinear operators and Leibniz-type rules in
mixed Lebesgue spaces, Hart et al. \cite{htw17}
introduced the mixed-norm Hardy space $H^{p,q}(\mathbb{R}^{n+1})$
with $p,$ $q \in(0,\fz)$ via the Littlewood--Paley g-function.
As was mentioned in their article \cite[p.\,8586]{htw17}, the space
$H^{p,q}(\mathbb{R}^{n+1})$ plays an important role in overcoming the
difficulty caused by full derivatives both in the space variable $x$
and the time variable $t$ in the mixed Lebesgue spaces. For more progresses
about the applications of function spaces with mixed norms in partial
differential equations, we refer the reader to \cite{cmr13,kim08,k07,ms97}.

The purposes of this article are threefold. The first one
is to give a survey on the recent developments
of function spaces with mixed norms,
including mixed Lebesgue spaces, iterated weak
Lebesgue spaces, weak mixed-norm Lebesgue spaces and
mixed Morrey spaces as well as anisotropic mixed-norm Hardy
spaces. To be precise, the main results
that we review include: the interpolation theorems,
the dual inequality of Stein type, the Fefferman--Stein vector-valued inequality
as well as the boundedness of fractional integral operators and geometric inequalities
on these three kinds of (weak) Lebesgue spaces with mixed norms,
the boundedness of maximal operators, Calder\'{o}n--Zygmund operators
and fractional integral operators on mixed Morrey spaces $\vm$,
a necessary and sufficient condition for the boundedness
of the commutators of fractional integral operators on $\vm$,
various real-variable characterizations of the anisotropic
mixed-norm Hardy spaces $\vh$, with $\va\in[1,\fz)^n$ and
$\vp\in(0,\fz)^n$, and their dual spaces as well as
applications to the boundedness of Calder\'{o}n--Zygmund operators.
The second purpose is to provide a detailed proof for an extended inequality
on the central Hardy--Littlewood maximal operator
on mixed Lebesgue norms stated
by Bagby in \cite{b75} but without giving a proof and also improve
some known results on the maximal function characterizations of $\vh$
given in \cite[Theorem 3.1]{cgn17} and the boundedness of Calder\'{o}n--Zygmund
operators in \cite[Theorems 6.8 and 6.9]{hlyy}. The last purpose is to correct
some errors and seal some gaps existing in the proof of the
Lusin area function characterizations of $\vh$,
namely, the proof of the sufficiency of \cite[Theorem 4.1]{hlyy}.

The organization of this survey is as follows.

In Section \ref{sec2}, we first recall the notions of the mixed Lebesgue
space $\lv$ with $\vp\in(0,\fz]^n$, the iterated weak Lebesgue space
$$L^{q_2,\fz}(L^{q_1,\fz})(\rn\times\rrm)\quad\mathrm{with}\quad q_1,\ q_2\in(0,\fz),$$
the weak mixed-norm Lebesgue space $L^{\vec q,\fz}(\rr^{2n})$ with $\vq\in(0,\fz]^2$
and their basic properties which include the completeness, the corresponding
H\"{o}lder inequalities and interpolation theorems. Then we present an
extended very useful inequality about mixed Lebesgue norms and the central Hardy--Littlewood maximal
operator, which was stated by Bagby in \cite{b75} but without giving a proof.
We provide a detailed proof of this important inequality in Subsection \ref{2s2} below.
Finally, we give a survey on applications of these Lebesgue spaces
with mixed norms, which include the dual inequality of Stein type and
Fefferman--Stein vector-valued inequality on mixed
Lebesgue spaces $\lv$ proved by Nogayama in \cite{tn} as well as
the boundedness of fractional integral operators and geometric inequalities
on iterated weak Lebesgue spaces $L^{q_2,\fz}(L^{q_1,\fz})(\rr^{2n})$
and weak mixed-norm Lebesgue spaces $L^{\vec q,\fz}(\rr^{2n})$
obtained by Chen and Sun in \cite{cs}.

The aim of Section \ref{sec3} is the summarization of
mixed Morrey spaces $\vm$, with $\vq\in (0,\fz]^n$ and $p\in(0,\fz]$,
and their applications to the boundedness of operators.
To this end, we first recall the notion and some
examples of mixed Morrey spaces $\vm$ from Nogayama \cite{tn}
and then we further show some basic properties about these spaces,
including the completeness as well as the embedding
theorem, obtained in the same aforementioned article. Moreover,
the behaviors of the Hardy--Littlewood maximal operator,
the iterated maximal operator, Calder\'{o}n--Zygmund operators and fractional integral
operators on mixed Morrey spaces $\vm$ are discussed. At the end of Section
\ref{sec3}, we review a necessary and sufficient condition for the boundedness
of commutators of fractional integral operators $I_{\az}$ with $\az\in(0,n)$
on $\vm$ established by Nogayama in \cite{tn19}.

In Section \ref{sec4}, via recalling the notions of anisotropic
quasi-homogeneous norms in \cite{bil66, f66} (see also \cite{sw78})
and anisotropic non-tangential grand maximal functions in \cite{cgn17},
we first give the definition of anisotropic mixed-norm Hardy
spaces $\vh$ appearing in \cite{cgn17} (see also \cite{hlyy}), where
$\va\in[1,\fz)^n$ and $\vp\in(0,\fz)^n$, and present some basic facts
about them. Then various real-variable characterizations of
the spaces $\vh$, respectively, in terms of various maximal
functions, atoms, finite atoms and the
Lusin area function as well as the Littlewood--Paley $g$-function or
the Littlewood--Paley $g_{\lambda}^\ast$-function, established in
\cite{cgn17,hlyy}, are displayed. Furthermore, as the applications
of these various real-variable characterizations, the dual spaces of $\vh$,
a criterion on the boundedness of sublinear operators from $\vh$ into a
quasi-Banach space and the boundedness of anisotropic convolutional
$\delta$-type and non-convolutional $\bz$-order Calder\'on--Zygmund operators
from $\vh$ to itself [or to $\lv$], obtained in \cite{hlyy}, are presented.
Some errors and gaps existing in the proof of the sufficiency of
\cite[Theorem 4.1]{hlyy} are also corrected and sealed in Subsection \ref{4s2.1}.
In addition, by providing a new proof, we improve the maximal
function characterizations of $\vh$ given in \cite[Theorem 3.1]{cgn17}.
The revised versions of \cite[Theorems 6.8 and 6.9]{hlyy} on the boundedness
of anisotropic $\bz$-order Calder\'{o}n--Zygmund operators from $\vh$
to itself [or to $\lv$] are also obtained.

Finally, we make some conventions on notation.
We always let
$\mathbb{N}:=\{1,2,\ldots\}$,
$\mathbb{Z}_+:=\{0\}\cup\mathbb{N}$
and $\vec0_n$ be the \emph{origin} of $\rn$.
For any multi-index
$\az:=(\az_1,\ldots,\az_n)\in(\mathbb{Z}_+)^n=:\mathbb{Z}_+^n$,
let
$|\az|:=\az_1+\cdots+\az_n$ and
$$\partial^{\az}:=\lf(\frac{\partial}{\partial x_1}\r)^{\az_1} \cdots
\lf(\frac{\partial}{\partial x_n}\r)^{\az_n}.$$
We denote by $C$ a \emph{positive constant}
which is independent of the main parameters,
but may vary from line to line.
The notation $f\ls g$ means $f\le Cg$ and, if $f\ls g\ls f$,
we then write $f\sim g$. We also use the following
convention: If $f\le Cg$ and $g=h$ or $g\le h$, we then write $f\ls g\sim h$
or $f\ls g\ls h$, \emph{rather than} $f\ls g=h$
or $f\ls g\le h$. For any $p\in[1,\fz]$, we denote by $p'$
its \emph{conjugate index}, namely, $1/p+1/p'=1$.
Moreover, if $\vec{p}:=(p_1,\ldots,p_n)\in[1,\fz]^n$, we denote by
$\vec{p}':=(p_1',\ldots,p_n')$ its \emph{conjugate index}, namely,
for any $i\in\{1,\ldots,n\}$, $$\frac1{p_i}+\frac1{p_i'}=1.$$ In addition,
for any set $E\subset\rn$, we denote by $E^\complement$ the
set $\rn\setminus E$, by ${\mathbf 1}_E$ its \emph{characteristic function}
and $|E|$ its \emph{n-dimensional Lebesgue measure}.
For any $\ell\in\mathbb{R}$, we denote by $\lfloor \ell\rfloor$
the \emph{largest integer not greater than $\ell$}. In
what follows, the \emph{symbol} $C^{\fz}(\rn)$
denotes the set of all \emph{infinitely differentiable functions} on $\rn$.

\section{(Weak) Lebesgue spaces with mixed norms}\label{sec2}

The aims of this section are twofold. The first one is devoted to
summarizing three kinds of (weak) Lebesgue spaces with mixed norms,
which include the mixed Lebesgue space $\lv$ with
$\vp\in(0,\fz]^n$, the iterated weak
Lebesgue space $L^{q_2,\fz}(L^{q_1,\fz})(\rn\times\rrm)$ with
$q_1,\ q_2\in(0,\fz)$ and the weak mixed-norm Lebesgue space
$L^{\vec q,\fz}(\rr^{2n})$ with $\vq\in(0,\fz]^2$ (see Subsection \ref{2s1} below),
then we further review some properties of these spaces as
well as their applications to the dual inequality of Stein type,
the Fefferman--Stein vector-valued inequality, the
boundedness of fractional integrals and geometric inequalities
(see Subsection \ref{2s3} below).
The second one is to recall an extended inequality about mixed Lebesgue
norms and the central Hardy--Littlewood maximal operator, and further
to provide a detailed proof for it (see Subsection \ref{2s2} below).

\subsection{(Weak) Lebesgue spaces with mixed norms}\label{2s1}

In this subsection, we first recall the definitions of three kinds of (weak)
Lebesgue spaces with mixed norms and then display some basic properties of them,
including the completeness, the corresponding H\"{o}lder inequalities
(see Subsection \ref{2s1.1} below) as well as interpolation theorems
(see Subsection \ref{2s1.2} below). To this end, we first present the
notion of mixed Lebesgue spaces $\lv$ with $\vp\in(0,\fz]^n$,
which was originally introduced in \cite{bp61}.

\begin{definition}\label{x2d1}
Let $\vp:=(p_1,\ldots,p_n)\in (0,\fz]^n$. The \emph{mixed Lebesgue space} $\lv$ is
defined to be the set of all measurable functions $f$ such that their quasi-norms
$$\|f\|_{\lv}:=\lf\{\int_{\mathbb R}\cdots\lf[\int_{\mathbb R}|f(x_1,\ldots,x_n)|^{p_1}\,dx_1\r]
^{\frac{p_2}{p_1}}\cdots\, dx_n\r\}^{\frac{1}{p_n}}<\fz$$
with the usual modifications made when $p_i=\fz$ for some $i\in \{1,\ldots,n\}$.
\end{definition}

\begin{remark}\label{x2r1}
Obviously, when $\vp:=(\overbrace{p,\ldots,p}^{n\ \rm times})$
with some $p\in(0,\fz]$, the space $\lv$ coincides
with the classical Lebesgue space $L^p(\rn)$ and,
in this case, they have the same quasi-norms.
\end{remark}

If $\vp \in [1,\fz]^n$, Benedek and Panzone proved
that the mixed Lebesgue space $\lv$ is complete in
\cite[p.\,304, Theorem 1.b)]{bp61}, which is stated as follows.

\begin{theorem}\label{x2t1}
Let $\vp \in [1,\fz]^n$, then $(\lv,\|\cdot\|_{\lv})$
becomes a Banach space.
\end{theorem}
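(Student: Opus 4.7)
The plan is to mirror the classical Riesz--Fischer proof that $L^p(\rn)$ is complete, carried out iteratively over the $n$ coordinate directions. First I would verify that $\|\cdot\|_{\lv}$ is genuinely a norm when $\vp\in[1,\fz]^n$; the only nontrivial axiom is the triangle inequality. This I would obtain by induction on $n$: starting from the innermost integral, apply the classical Minkowski inequality in $x_1$ to produce the pointwise (in $(x_2,\ldots,x_n)$) bound
\[
\|(f+g)(\cdot,x_2,\ldots,x_n)\|_{L^{p_1}(dx_1)}\le\|f(\cdot,x_2,\ldots,x_n)\|_{L^{p_1}(dx_1)}+\|g(\cdot,x_2,\ldots,x_n)\|_{L^{p_1}(dx_1)},
\]
and then apply the inductive hypothesis to the outer mixed norm in $(x_2,\ldots,x_n)$. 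Nonnegativity, absolute homogeneity and nondegeneracy (modulo a.e.~equality) are immediate from the scalar $L^{p_i}$ case.

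For completeness, I would take a Cauchy sequence $\{f_k\}_{k\in\nn}\subset\lv$ and, in the usual way, extract a subsequence $\{f_{k_j}\}_{j\in\nn}$ satisfying $\|f_{k_{j+1}}-f_{k_j}\|_{\lv}\le 2^{-j}$. Setting
\[
g_J:=|f_{k_1}|+\sum_{j=1}^{J}|f_{k_{j+1}}-f_{k_j}|\quad\text{and}\quad g:=\lim_{J\to\fz}g_J,
\]
the triangle inequality just established gives $\|g_J\|_{\lv}\le\|f_{k_1}\|_{\lv}+1$ uniformly in $J$. Passing to the limit via the monotone convergence theorem applied successively in $x_1,\ldots,x_n$ (and replaced by its essential-supremum analogue in any coordinate where $p_i=\fz$) yields $\|g\|_{\lv}\le\|f_{k_1}\|_{\lv}+1<\fz$; in particular $g$ is finite almost everywhere, so the telescoping series $f:=f_{k_1}+\sum_{j=1}^{\fz}(f_{k_{j+1}}-f_{k_j})$ converges absolutely almost everywhere to a measurable function $f$, and $f_{k_j}\to f$ pointwise a.e.

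To upgrade this to convergence in $\lv$, I would apply an iterated Fatou-type estimate. For any fixed $m$, the pointwise bound $|f-f_{k_m}|\le\sum_{j\ge m}|f_{k_{j+1}}-f_{k_j}|$ holds a.e.; taking the $L^{p_1}$-norm in $x_1$ (Fatou combined with Minkowski), then successively the $L^{p_i}$-norm in $x_i$ for $i=2,\ldots,n$, gives
\[
\|f-f_{k_m}\|_{\lv}\le\sum_{j\ge m}2^{-j}=2^{1-m}\to 0\quad(m\to\fz).
\]
The standard argument that a Cauchy sequence with a convergent subsequence itself converges then yields $\|f-f_k\|_{\lv}\to 0$. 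The principal technical nuisance I anticipate is bookkeeping the exceptional null sets arising at each of the $n$ iterated integrations: each invocation of Fatou or monotone convergence is valid only outside an a.e.~null set in the remaining variables, so one must appeal to Fubini--Tonelli to confirm that the aggregate exceptional set is globally null in $\rn$. The coordinates where $p_i=\fz$ require replacing the integral form of Minkowski and Fatou by their essential-supremum counterparts, which is standard but adds slightly to the bookkeeping.
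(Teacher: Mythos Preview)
Your proof is correct and follows the standard Riesz--Fischer strategy adapted to iterated norms. Note, however, that the paper does not itself prove this theorem: it simply records it as \cite[p.\,304, Theorem 1.b)]{bp61} and moves on. Your argument is essentially the one Benedek and Panzone give in that reference, so there is no divergence in method to discuss---you have supplied the details the survey omits.
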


Moreover, it was also shown by Benedek and Panzone in
\cite[p.\,304, Theorem 1.a)]{bp61} that the dual space
of $\lv$ with any given $\vp\in[1,\fz)$ is $L^{\vp'}(\rn)$ as follows.

\begin{theorem}\label{x2t18}
Let $\vp\in[1,\fz)^n$ and $\vp'$ denote the conjugate
exponent of $\vp$, namely, for any $i\in \{1,\ldots,n\}$,
$1/p_i+1/p_i'=1$. Then the dual space of $\lv$, denoted by
$(\lv)^*$, is $L^{\vp'}(\rn)$ in the following sense:
$J$ is a continuous linear functional
on $\lv$ if and only if there exists a uniquely
$h\in L^{\vp'}(\rn)$ such that, for any $f\in\lv$,
$$J(f)=\int_{\rn}f(x)h(x)\,dx$$
and $\|J\|_{(\lv)^\ast}=\|h\|_{L^{\vp'}(\rn)}$.
\end{theorem}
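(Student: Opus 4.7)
The plan is to establish the two directions of the isomorphism separately. For the easy direction, given $h \in L^{\vec{p}'}(\rn)$ and $f \in \lv$, I would first derive the mixed-norm H\"older inequality
$$\lf|\int_{\rn} f(x)h(x)\,dx\r| \le \|f\|_{\lv}\,\|h\|_{L^{\vec{p}'}(\rn)}$$
by iterating the one-dimensional H\"older inequality: apply it in $x_1$ with exponent pair $(p_1,p_1')$, then lift the resulting bound into the $x_2$-integral with pair $(p_2/p_1',(p_2/p_1')')$, and so on through $x_n$. This already shows that $J_h(f):=\int_{\rn} f(x)h(x)\,dx$ is a bounded linear functional on $\lv$ with $\|J_h\|_{(\lv)^*}\le\|h\|_{L^{\vec{p}'}(\rn)}$. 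For the matching lower bound, I would construct an extremizer layer by layer, roughly $f_0:=|h|^{p_1'-1}\,\overline{\mathrm{sgn}(h)}$ rescaled slice by slice so that successive integrations produce $|h|^{p_1'}$, then its $(p_2/p_1')$-power, and so on; tracking the normalizations yields $\|J_h\|_{(\lv)^*}=\|h\|_{L^{\vec{p}'}(\rn)}$.

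For the hard direction I would proceed by induction on $n$, with the base case $n=1$ being the classical Riesz representation theorem for $L^{p_1}(\mathbb{R})$. For the inductive step, assume the duality for $n-1$ variables and let $J\in(\lv)^*$. Identify $\lv$ with $L^{p_n}(\mathbb{R};L^{(p_1,\dots,p_{n-1})}(\mathbb{R}^{n-1}))$ and ``freeze'' $x_n$: for each bounded measurable $\varphi$ with compact support in $\mathbb{R}^{n-1}$, the set function $\nu_\varphi(E):=J(\varphi\otimes\mathbf{1}_E)$ is a countably additive signed measure on $\mathbb{R}$, absolutely continuous with respect to Lebesgue measure, whose Radon--Nikodym derivative $G_\varphi$ depends linearly on $\varphi$. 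Running this assignment first on indicators of finite-measure rectangles and then on simple functions produces a jointly measurable $h$ on $\rn$ such that
$$J(\varphi\otimes\mathbf{1}_E)=\int_E\int_{\mathbb{R}^{n-1}}\varphi(x')\,h(x',x_n)\,dx'\,dx_n.$$
The inductive hypothesis applied in the inner variables on each slice then gives $h(\cdot,x_n)\in L^{(p_1,\dots,p_{n-1})'}(\mathbb{R}^{n-1})$ a.e., and testing $J$ against $\varphi_{x_n}(x')g(x_n)$, where $\varphi_{x_n}$ is the inner-slice extremizer of $h(\cdot,x_n)$ and $g\in L^{p_n}(\mathbb{R})$, reduces the remaining norm estimate to the one-dimensional duality in $x_n$, yielding $\|h(\cdot,x_n)\|_{L^{(p_1,\dots,p_{n-1})'}(\mathbb{R}^{n-1})}\in L^{p_n'}(\mathbb{R})$ with $L^{p_n'}$-norm equal to $\|J\|_{(\lv)^*}$. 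Uniqueness of the representing $h$ follows since two representers agree against every simple function, and simple functions are dense in $\lv$ because $\vec{p}\in[1,\fz)^n$.

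The main obstacle will be the measurability and the global integrability of this freezing construction: ensuring that the slice-wise representers provided by the inductive hypothesis combine into a jointly measurable $h$ on $\rn$, and that the resulting iterated norm actually equals $\|J\|_{(\lv)^*}$ rather than only majorizing it. The cleanest remedy is to run the argument on the dense subclass of simple functions supported in rectangles of finite Lebesgue measure, where $h$ arises directly as a Radon--Nikodym derivative on $\rn$ and is automatically jointly measurable; the mixed-norm upper bound on $h$ is then extracted by a careful duality test using a layered extremizer analogous to the one built in the easy direction, and the passage from simple functions to all of $\lv$ invokes density, which is precisely where the assumption that every $p_i$ is finite is indispensable.
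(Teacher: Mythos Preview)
The paper does not give its own proof of this theorem; it is a survey that simply attributes the result to Benedek and Panzone \cite[p.\,304, Theorem 1.a)]{bp61} and states it without argument. So there is no ``paper's proof'' to compare against beyond that citation.

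Your outline is the standard route and is essentially what Benedek and Panzone carry out: iterated H\"older for the easy direction plus a layered extremizer for norm attainment, and for the converse an inductive/freezing argument reducing to the classical $L^p$ duality in one variable, with the Radon--Nikodym theorem producing the representing $h$. Two small points are worth tightening. First, in your H\"older iteration the exponent pair at the second step should simply be $(p_2,p_2')$, not $(p_2/p_1',(p_2/p_1')')$: after applying H\"older in $x_1$ you already have the product $\|f(\cdot,x_2,\dots)\|_{L^{p_1}}\,\|h(\cdot,x_2,\dots)\|_{L^{p_1'}}$, and you apply ordinary H\"older in $x_2$ to that product. Second, your remark that joint measurability is the main technical obstacle is exactly right, and your cure---building $h$ as a single Radon--Nikodym derivative on $\rn$ by testing $J$ on indicators of rectangles (a $\sigma$-finite setting), then verifying the mixed-norm bound via the layered extremizer---is the clean way through; this is how the original argument is organized as well.
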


Now, we recall the notions of iterated weak Lebesgue spaces and
weak mixed-norm Lebesgue spaces given in \cite[Definition 1.1]{cs}.

\begin{definition}\label{x2d2}
Let $\vp:=(p_1,p_2)\in(0,\fz]^2$.
The \emph{iterated weak Lebesgue space}
$L^{p_2,\fz}(L^{p_1,\fz})(\rn\times\rrm)$
with $n,\ m\in\nn$ is
defined to be the set of all measurable functions $f$ such that
\begin{align*}
\|f\|_{L^{p_2,\fz}(L^{p_1,\fz})(\rn\times\rrm)}
&:=\sup_{\bz\in(0,\fz)}\bz\lf|\lf\{y\in\rrm:\
\sup_{\az\in(0,\fz)}\az\lf|\lf\{x\in\rn:\ \lf|f(x,y)\r|>\az\r\}\r|
^{1/p_1}>\bz\r\}\r|^{1/p_2}\\
&\ <\fz
\end{align*}
and the \emph{weak mixed-norm Lebesgue space} $L^{\vp,\fz}(\rn)$ is
defined to be the set of all measurable functions $g$ such that
\begin{align*}
\|g\|_{L^{\vp,\fz}(\rn)}:=\sup_{\az\in(0,\fz)}\az\lf\|\mathbf{1}_{\lf\{x\in\rn:\
|g(x)|>\az\r\}}\r\|_{\lv}<\fz.
\end{align*}
\end{definition}

The completeness of iterated weak Lebesgue spaces
and weak mixed-norm Lebesgue spaces was proved in
\cite[Theorem 2.10]{cs} as follows.

\begin{theorem}\label{x2t5}
Let $\vp:=(p_1,p_2)\in(0,\fz]^2$.
Then the weak spaces $L^{p_2,\fz}(L^{p_1,\fz})(\rn\times \rrm)$
and $L^{\vp,\fz}(\rn)$ are both complete.
\end{theorem}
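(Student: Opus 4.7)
The plan is to use the standard Cauchy-sequence-to-a.e.-limit argument, adapted to the mixed and iterated weak-type setting. Let $\{f_k\}_{k\in\nn}$ be a Cauchy sequence in the quasi-normed space under consideration. First, I extract a subsequence $\{f_{k_j}\}_{j\in\nn}$ with $\|f_{k_{j+1}}-f_{k_j}\|\le 2^{-j}$ (losing a universal constant coming from the quasi-triangle inequality of the weak quasi-norm), and aim to show that this subsequence converges pointwise almost everywhere to a limit $f$ which belongs to the space and satisfies $\|f_{k_j}-f\|\to 0$. Since the original sequence is Cauchy and has a subsequence converging to $f$, the full sequence then converges to $f$.

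The technical heart of the argument is a Chebyshev-type inequality encoded in the very definition of these quasi-norms. For the weak mixed-norm space, $\|g\|_{L^{\vp,\fz}(\rn)}\le\vaz$ implies that, for every $\az\in(0,\fz)$, $\|\mathbf{1}_{\{|g|>\az\}}\|_{\lv}\le\vaz/\az$, so the $\lv$-measure of the super-level sets is controlled. For the iterated space, $\|f\|_{L^{p_2,\fz}(L^{p_1,\fz})(\rn\times\rrm)}\le\vaz$ implies that, outside a set of $y\in\rrm$ of Lebesgue measure at most $(\vaz/\bz)^{p_2}$, the inner weak-$L^{p_1,\fz}(\rn)$ quasi-norm of $f(\cdot,y)$ does not exceed $\bz$; consequently, for such $y$, one has $|\{x\in\rn:|f(x,y)|>\az\}|\le(\bz/\az)^{p_1}$ for every $\az\in(0,\fz)$. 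Applying these bounds to the differences $f_{k_{j+1}}-f_{k_j}$ with the geometric choice $\vaz_j=2^{-j}$, and an appropriate choice of $\az_j$, $\bz_j\to 0$, a Borel--Cantelli argument shows that $\{f_{k_j}\}$ is pointwise Cauchy almost everywhere on $\rn$ (respectively, on $\rn\times\rrm$), producing the candidate limit $f$.

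To verify that $f$ lies in the space and that $f_{k_j}\to f$ in quasi-norm, I would invoke Fatou's lemma together with the quasi-triangle inequality. Fix $\az\in(0,\fz)$ and any small $\dz>0$; since $f_{k_j}\to f$ a.e., the set $\{|f|>\az\}$ is contained, up to a null set, in $\liminf_j\{|f_{k_j}|>\az-\dz\}$, so Fatou's lemma applied inside the $\lv$-norm gives
\[
\lf\|\mathbf{1}_{\{|f|>\az\}}\r\|_{\lv}
\le\liminf_{j\to\fz}\lf\|\mathbf{1}_{\{|f_{k_j}|>\az-\dz\}}\r\|_{\lv}
\le\frac{1}{\az-\dz}\sup_j\|f_{k_j}\|_{L^{\vp,\fz}(\rn)}.
\]
Taking $\sup_\az$ and then letting $\dz\to 0$ shows $f\in L^{\vp,\fz}(\rn)$. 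Running the same argument with $f_{k_j}-f$ in place of $f_{k_j}$, and using that $f_{k_l}-f_{k_j}\to f-f_{k_j}$ a.e. as $l\to\fz$, yields $\|f-f_{k_j}\|\to 0$. For the iterated space, one performs an analogous two-level Fatou maneuver, first on the inner $\rn$-weak norm (yielding a measurable function of $y$) and then on the outer $\rrm$-weak norm.

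The main obstacle I anticipate is the careful handling of the Fatou-type step for weak-type quasi-norms: the level set $\{|f|>\az\}$ is only contained, not equal to, the $\liminf$ of the level sets of $f_{k_j}$ at the same height, so one must absorb a $\dz$-loss via the quasi-triangle inequality and then let $\dz\to 0$ uniformly. In the iterated case this null-set bookkeeping must be done twice in a nested fashion, which also requires recording that, for a full-measure set of $y$, the slice $f(\cdot,y)$ is realized as the a.e.\ pointwise limit of $f_{k_j}(\cdot,y)$; a Fubini argument applied to the Borel--Cantelli step takes care of this.
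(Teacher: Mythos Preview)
The paper does not supply its own proof of this theorem: it is a survey, and Theorem~\ref{x2t5} is merely stated with a reference to \cite[Theorem~2.10]{cs}. There is therefore no in-paper argument to compare against.

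That said, your proposal follows the standard route for completeness of weak-type quasi-normed spaces and is essentially correct. A couple of points deserve a bit more care than you give them. First, your ``Borel--Cantelli'' step is not the classical one (you do not control $\sum_j|E_j|$ directly); what actually works is to use the $\theta$-subadditivity of $\|\cdot\|_{\lv}$ (or of the iterated weak norm) with $\theta\le\min\{1,p_-\}$ to bound $\|\mathbf{1}_{\cup_{j\ge N}E_j}\|_{\lv}$ by a tail sum, conclude $\|\mathbf{1}_{\limsup_j E_j}\|_{\lv}=0$, and then observe that this forces $|\limsup_j E_j|=0$. Second, in the Fatou step you invoke Fatou's lemma ``inside the $\lv$-norm''; this requires the iterated Fatou inequality $\|\liminf_j g_j\|_{\lv}\le\liminf_j\|g_j\|_{\lv}$ for nonnegative $g_j$, which is true (by applying the classical Fatou lemma variable by variable) but worth stating explicitly, and one should note the endpoint $p_i=\infty$ separately. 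With these clarifications your argument goes through, and it is presumably close in spirit to the proof in \cite{cs}.
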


The iterated weak Lebesgue space
and the weak mixed-norm Lebesgue space are two different spaces
and one of them can not cover each other, which was obtained
in \cite[Theorem 2.2]{cs}.

\begin{theorem}\label{x2t6}
Let $\vp:=(p_1,p_2)\in(0,\fz)^2$. Then, for any $x\in\rn$ and $y\in\rrm$,
one has
\begin{enumerate}
\item[{\rm(i)}] $L^{p_2,\fz}(L^{p_1,\fz})(\rn\times\rrm)\nsubseteq\mlv$.

\item[{\rm (ii)}]$\mlv\nsubseteq L^{p_2,\fz}(L^{p_1,\fz})(\rn\times\rrm)$.

\item[{\rm(iii)}] $L^{p_2,\fz}(L^{p_1,\fz})(\rn\times\rrm)\nsubseteq
L^{p_1,\fz}(L^{p_2,\fz})(\rrm\times\rn)$ .

\item[{\rm (iv)}]
$L^{p_1,\fz}(L^{p_2,\fz})(\rrm\times\rn)\nsubseteq
L^{p_2,\fz}(L^{p_1,\fz})(\rn\times\rrm)$.

\item[{\rm(v)}] $L^{\vp}(\rn\times\rrm)\subsetneq \mlv\bigcap
L^{p_2,\fz}(L^{p_1,\fz})(\rn\times\rrm)$.
\end{enumerate}
\end{theorem}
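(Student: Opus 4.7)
The plan is to establish (i)--(v) by explicit construction. The core device will be a family of step functions of the form $g:=\sum_{k\ge 1}2^{k}\mathbf{1}_{B_k\times C_k}$, where $\{B_k\}\subseteq\rn$ and $\{C_k\}\subseteq\rrm$ are pairwise disjoint measurable sets with $|B_k|=b_k$ and $|C_k|=c_k$. A direct distribution-function computation using these disjointness conditions yields that $\|g\|_{L^{p_2,\fz}(L^{p_1,\fz})(\rn\times\rrm)}$ equals the weak $L^{p_2}(\rrm)$-norm of the atomic function taking the value $2^{k}b_k^{1/p_1}$ on $C_k$, that the flipped iterated norm $\|g\|_{L^{p_1,\fz}(L^{p_2,\fz})(\rrm\times\rn)}$ equals the weak $L^{p_1}(\rn)$-norm of the atomic function with value $2^{k}c_k^{1/p_2}$ on $B_k$, and that $\|g\|_{L^{\vp,\fz}(\rn\times\rrm)}=\sup_{\alpha>0}\alpha(\sum_{2^{k}>\alpha}c_k b_k^{p_2/p_1})^{1/p_2}$. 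All three norms thereby reduce to standard weak $L^{p}$-norms of atomic sequences, so each non-containment is controlled by an appropriate choice of $(b_k,c_k)$.

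For (iii) I would take $b_k:=1$ and $c_k:=2^{-kp_2}$: the first norm becomes a convergent geometric series, whereas the flipped norm concentrates all atomic mass at the single value $1$ with $\sum b_k=\fz$ and therefore diverges. For (iv) I would mirror this, taking $b_k:=2^{-kp_1}$ and $c_k:=1$. For (ii) I would take $b_k:=2^{-kp_1}$ and $c_k$ equal to a positive constant, so that $c_k b_k^{p_2/p_1}$ is a geometric sequence in $k$ and the weak mixed-norm is uniformly bounded in $\alpha$, while the first iterated norm again collapses to atoms all at $1$ with infinite total mass. For (i) the template is not needed: the characteristic function of $E:=\{(x,y):|x|\le c|y|^{-mp_1/(np_2)}\}$, with $c$ chosen so that $|E_y|^{1/p_1}=|y|^{-m/p_2}$, satisfies $\|\mathbf{1}_E\|_{L^{p_2,\fz}(L^{p_1,\fz})}=\||y|^{-m/p_2}\|_{L^{p_2,\fz}(\rrm)}<\fz$, while $\|\mathbf{1}_E\|_{L^{\vp,\fz}}=\||y|^{-m/p_2}\|_{L^{p_2}(\rrm)}$ diverges since $\int_{\rrm}|y|^{-m}\,dy=\fz$.

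For (v), the inclusion $L^{\vp}(\rn\times\rrm)\subseteq L^{\vp,\fz}(\rn\times\rrm)\cap L^{p_2,\fz}(L^{p_1,\fz})(\rn\times\rrm)$ follows from Chebyshev's inequality applied once in $x$ and once in $y$, which bounds both weak norms by $\|g\|_{L^{\vp}(\rn\times\rrm)}$. To witness strictness I would take $g(x,y):=|x|^{-n/p_1}\psi(y)$ for any nonzero bounded compactly supported $\psi$ on $\rrm$; both weak norms factor cleanly into finite products involving $\|\psi\|_{L^{p_2}(\rrm)}$ or $\|\psi\|_{L^{p_2,\fz}(\rrm)}$, whereas $\|g\|_{L^{\vp}(\rn\times\rrm)}^{p_2}=\|\psi\|_{L^{p_2}(\rrm)}^{p_2}\int_{\rn}|x|^{-n}\,dx=\fz$.

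The principal obstacle is (ii). The naive attempt with a pure characteristic function $\mathbf{1}_E$ is doomed because the general identities $\|\mathbf{1}_E\|_{L^{\vp,\fz}}=\||E_y|^{1/p_1}\|_{L^{p_2}(\rrm)}$ and $\|\mathbf{1}_E\|_{L^{p_2,\fz}(L^{p_1,\fz})}=\||E_y|^{1/p_1}\|_{L^{p_2,\fz}(\rrm)}$, together with the continuous embedding $L^{p_2}(\rrm)\hookrightarrow L^{p_2,\fz}(\rrm)$, force every characteristic function in $L^{\vp,\fz}$ to lie in $L^{p_2,\fz}(L^{p_1,\fz})$ as well. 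The step-function template is therefore engineered precisely to distribute mass over many amplitudes $2^k$ so that the weak mixed-norm enjoys geometric cancellation while the iterated norm degenerates to a single-value atomic distribution with infinite total mass.
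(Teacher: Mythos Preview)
The paper is a survey and does not supply its own proof of this theorem; it simply records the result as \cite[Theorem 2.2]{cs}. Consequently there is no in-paper argument to compare against.

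Your proposal is correct and self-contained. The unified step-function template $g=\sum_{k\ge1}2^{k}\mathbf{1}_{B_k\times C_k}$ with pairwise disjoint $\{B_k\}$ and $\{C_k\}$ reduces each of the three relevant (quasi-)norms to a one-parameter weak-$L^p$ computation on an atomic function, and your choices of $(b_k,c_k)$ for (ii), (iii), (iv) do exactly what is claimed. The observation that a single characteristic function cannot witness (ii), because for $\mathbf{1}_E$ the identities $\|\mathbf{1}_E\|_{L^{\vp,\fz}}=\||E_y|^{1/p_1}\|_{L^{p_2}}$ and $\|\mathbf{1}_E\|_{L^{p_2,\fz}(L^{p_1,\fz})}=\||E_y|^{1/p_1}\|_{L^{p_2,\fz}}$ together with $L^{p_2}\hookrightarrow L^{p_2,\fz}$ force the wrong inequality, is a nice diagnostic and explains why the multi-level construction is essential there. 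The separate constructions for (i) (the set $E=\{|x|\le c|y|^{-mp_1/(np_2)}\}$) and for the strictness in (v) (the tensor product $|x|^{-n/p_1}\psi(y)$) are the natural ones and compute cleanly. Since the paper gives no proof, your argument stands on its own; it is in the same spirit as the explicit counterexamples one expects in \cite{cs}, and nothing further is needed.
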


For any given $\vp:=(p_1,p_2)\in(0,\fz)^2$, the
\emph{space $L^{p_2,\fz}(L^{p_1})(\rn\times \rrm)$} is defined to be the set of
all measurable functions $f$ such that
$$\|f\|_{L^{p_2,\fz}(L^{p_1})(\rn\times\rrm)}
:=\sup_{\az\in(0,\fz)}\az\lf|\lf\{y\in\rrm:\
\lf\|f(\cdot,y)\r\|_{L^{p_1}(\rn)}>\az\r\}\r|^{1/p_2}<\fz$$
and the
\emph{space $L^{p_2}(L^{p_1,\fz})(\rn\times \rrm)$} is defined to be the set of
all measurable functions $g$ such that
$$\|g\|_{L^{p_2}(L^{p_1,\fz})(\rn\times\rrm)}
:=\lf\|\sup_{\az\in(0,\fz)}\az\lf|\lf\{x\in\rn:\
\lf|g(x,\cdot)\r|>\az\r\}\r|
^{1/p_1}\r\|_{L^{p_2}(\rrm)}<\fz.$$
The relations among these three mixed norms of $L^{p_2,\fz}(L^{p_1})(\rn\times \rrm)$,
$L^{p_2}(L^{p_1,\fz})(\rn\times \rrm)$ and $\mlv$ are as follows,
which is just \cite[Theorem 2.4]{cs}.

\begin{theorem}\label{x2t8}
Let $\vp:=(p_1,p_2)\in(0,\fz)^2$. Then
\begin{enumerate}
\item[{\rm(i)}] $L^{p_2}(L^{p_1,\fz})(\rn\times\rrm)\subset \mlv$
and, for any measurable function $f$ defined on $\rn\times\rrm$,
$$\|f\|_{\mlv}\le \|f\|_{L^{p_2}(L^{p_1,\fz})(\rn\times\rrm)}.$$
\item[{\rm(ii)}] $L^{p_2,\fz}(L^{p_1})(\rn\times\rrm)\nsubseteq
\mlv$ and $\mlv\nsubseteq L^{p_2,\fz}(L^{p_1})(\rn\times\rrm)$.
\end{enumerate}
\end{theorem}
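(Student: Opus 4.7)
The plan is to handle (i) by a direct slicing computation using the definition of the weak $L^{p_1}$ quasi-norm, and to dispose of (ii) via an explicit non-product counterexample together with a reduction of the second non-inclusion to Theorem \ref{x2t6}(ii).

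For (i), I would fix $\az\in(0,\fz)$, set $E:=\{(x,y)\in\rn\times\rrm:\ |f(x,y)|>\az\}$ and write $E_y:=\{x\in\rn:\ (x,y)\in E\}$ for its $y$-slices. By the definition of the mixed Lebesgue norm applied to $\mathbf{1}_E$,
\[
\az\,\|\mathbf{1}_E\|_{L^{\vp}(\rn\times\rrm)}=\left\{\int_{\rrm}\bigl(\az\,|E_y|^{1/p_1}\bigr)^{p_2}\,dy\right\}^{1/p_2}.
\]
For each fixed $y$, $\az\,|E_y|^{1/p_1}\le\|f(\cdot,y)\|_{L^{p_1,\fz}(\rn)}$ directly from the definition of the weak $L^{p_1}$ quasi-norm. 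Taking the supremum over $\az\in(0,\fz)$ then yields the pointwise bound $\|f\|_{\mlv}\le\|f\|_{L^{p_2}(L^{p_1,\fz})(\rn\times\rrm)}$, from which the inclusion is immediate.

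For the first half of (ii), I would take a non-product indicator whose $y$-slices are balls in $\rn$ of a critically singular $y$-dependent radius. Concretely, let $f:=\mathbf{1}_{\Omega}$ with
\[
\Omega:=\bigl\{(x,y)\in\rn\times\rrm:\ |y|<1,\ |x|<|y|^{-mp_1/(np_2)}\bigr\}.
\]
For each $|y|<1$ the slice $\Omega_y$ is a ball of measure comparable to $|y|^{-mp_1/p_2}$, so $\|f(\cdot,y)\|_{L^{p_1}(\rn)}\sim|y|^{-m/p_2}\mathbf{1}_{|y|<1}$, which lies in $L^{p_2,\fz}(\rrm)\setminus L^{p_2}(\rrm)$; hence $f\in L^{p_2,\fz}(L^{p_1})(\rn\times\rrm)$. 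On the other hand, since $f$ only takes the values $0$ and $1$, one has $\|f\|_{\mlv}=\|f\|_{L^{\vp}(\rn\times\rrm)}$, and
\[
\|f\|_{L^{\vp}(\rn\times\rrm)}^{p_2}\sim\int_{|y|<1}|y|^{-m}\,dy=\fz,
\]
so $f\notin\mlv$. For the second half of (ii), the pointwise inequality $\|\cdot\|_{L^{p_1,\fz}(\rn)}\le\|\cdot\|_{L^{p_1}(\rn)}$ gives the embedding $L^{p_2,\fz}(L^{p_1})(\rn\times\rrm)\subset L^{p_2,\fz}(L^{p_1,\fz})(\rn\times\rrm)$, and Theorem \ref{x2t6}(ii) then forces $\mlv\nsubseteq L^{p_2,\fz}(L^{p_1})(\rn\times\rrm)$ without needing any further construction.

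The main obstacle will be guessing the correct shape of the counterexample in the first half of (ii): the radial singularity in $y$ has to be tuned to exponent $-mp_1/(np_2)$ so that the slice $L^{p_1}$ norm just barely sits in $L^{p_2,\fz}(\rrm)\setminus L^{p_2}(\rrm)$, after which the failure of $f$ to lie in $\mlv$ reduces to the divergence of the one-dimensional radial integral $\int_0^1 r^{-1}\,dr$.
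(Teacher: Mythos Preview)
The paper does not actually prove this theorem; as a survey, it merely quotes the statement from \cite[Theorem~2.4]{cs} (Chen and Sun) without supplying any argument, so there is no proof in the present paper to compare against. Your proposal is correct and self-contained: the slicing argument for (i) is the natural one, your indicator counterexample $f=\mathbf{1}_\Omega$ with $\Omega=\{|y|<1,\ |x|<|y|^{-mp_1/(np_2)}\}$ for the first non-inclusion in (ii) works exactly as you describe, and the reduction of the second non-inclusion to Theorem~\ref{x2t6}(ii) via the trivial embedding $L^{p_2,\fz}(L^{p_1})\subset L^{p_2,\fz}(L^{p_1,\fz})$ is a clean shortcut that avoids building a second explicit example.
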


\subsubsection{H\"{o}lder inequalities}\label{2s1.1}

It is known that the H\"{o}lder inequality holds true for
classical Lebesgue spaces and weak Lebesgue spaces.
We now discuss the H\"{o}lder inequality
on the above three kinds of (weak) Lebesgue spaces.
First, for mixed Lebesgue spaces $\lv$, we have the following
conclusion, which was obtained by Benedek and Panzone in
\cite[(1)]{bp61}.

\begin{theorem}\label{x2t7}
Let $\vp\in [1,\fz]^n$. Then, for any measurable function
$f$ and $g$, one has
\begin{align*}
\lf\|fg\r\|_{L^1(\rn)}\le \|f\|_{\lv}\|g\|_{L^{\vp'}(\rn)},
\end{align*}
where $\vp'$ denotes the conjugate vector of $\vp$,
namely, for any $i\in \{1,\ldots,n\}$, $1/p_i+1/p_i'=1$.
\end{theorem}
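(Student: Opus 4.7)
The plan is to proceed by induction on the dimension $n$, iterating the classical one-variable H\"{o}lder inequality slice by slice. The base case $n=1$ is the usual H\"{o}lder inequality on $\rr$. For the inductive step, I suppose the statement has been proved for $n-1\ge 1$ and every exponent vector in $[1,\fz]^{n-1}$, and let $\vp=(p_1,\ldots,p_n)\in[1,\fz]^n$ with $\vp'$ its conjugate.

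Write $x=(x_1,x')\in\rr\times\rr^{n-1}$ with $x':=(x_2,\ldots,x_n)$. By Tonelli's theorem applied to the non-negative function $|fg|$,
\begin{align*}
\|fg\|_{L^1(\rn)}=\int_{\rr^{n-1}}\left[\int_{\rr}|f(x_1,x')g(x_1,x')|\,dx_1\right]dx'.
\end{align*}
For almost every fixed $x'$, the one-variable H\"{o}lder inequality with conjugate pair $(p_1,p_1')$ gives
$$\int_{\rr}|f(x_1,x')g(x_1,x')|\,dx_1\le F(x')\,G(x'),$$
where $F(x'):=\|f(\cdot,x')\|_{L^{p_1}(\rr)}$ and $G(x'):=\|g(\cdot,x')\|_{L^{p_1'}(\rr)}$. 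Integrating this pointwise bound in $x'$ and invoking the inductive hypothesis on $\rr^{n-1}$ with the conjugate vectors $(p_2,\ldots,p_n)$ and $(p_2',\ldots,p_n')$ yields
$$\int_{\rr^{n-1}}F(x')G(x')\,dx'\le\|F\|_{L^{(p_2,\ldots,p_n)}(\rr^{n-1})}\,\|G\|_{L^{(p_2',\ldots,p_n')}(\rr^{n-1})},$$
and unraveling Definition \ref{x2d1} identifies these two right-hand factors with $\|f\|_{\lv}$ and $\|g\|_{L^{\vp'}(\rn)}$, respectively, thereby closing the induction.

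The main subtlety, and what I expect to be essentially the only real obstacle, lies in uniformly handling the endpoint cases where some $p_i\in\{1,\fz\}$: when $p_1=\fz$ (so $p_1'=1$), the inner H\"{o}lder step becomes
$$\int_{\rr}|f(x_1,x')g(x_1,x')|\,dx_1\le\esup_{x_1\in\rr}|f(x_1,x')|\cdot\|g(\cdot,x')\|_{L^1(\rr)},$$
and when $p_i=\fz$ for some $i\ge 2$ the subsequent application of the inductive hypothesis must be read with the usual modification of replacing the $i$-th integral by the essential supremum. One also has to check that the auxiliary functions $F$ and $G$ are measurable in $x'$, which follows from Tonelli's theorem in the finite case and from standard measurability of the essential supremum in the $L^\fz$ case. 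Once these uniform modifications are carried out, the induction proceeds exactly as above with no further changes.
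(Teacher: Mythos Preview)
Your inductive argument is correct and is the standard route to the mixed-norm H\"older inequality. Note, however, that the paper does not supply its own proof of this statement: it merely records the inequality as \cite[(1)]{bp61}, attributing it to Benedek and Panzone without reproducing any argument. Your proof is precisely the iteration-of-one-variable-H\"older approach one would expect (and which is implicit in the original reference), so there is nothing to compare against here beyond confirming that your reasoning is sound.
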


Let $\vp:=(p_1,\ldots,p_n)\in(0,\fz]^n$,
$\vq:=(q_1,\ldots,q_n)\in(0,\fz]^n$ and
$\vec r:=(r_1,\ldots,r_n)\in(0,\fz]^n$ satisfy that,
for any $i\in \{1,\ldots,n\}$,
$$\frac1{r_i}=\frac1{p_i}+\frac1{q_i}.$$
Then, via the inequality in Theorem \ref{x2t7}, we find that,
for any measurable functions $f$ and $g$,
\begin{align}\label{x2e12}
\lf\|fg\r\|_{L^{\vec r}(\rn)}\le \|f\|_{\lv}\|g\|_{L^{\vq}(\rn)}.
\end{align}

Recall that, applying the H\"{o}lder inequality for
classical weak Lebesgue spaces, the corresponding
extended inequality \eqref{x2e12} for iterated weak Lebesgue
spaces was shown by Chen and Sun in \cite[Theorem 2.16]{cs}.

\begin{theorem}\label{x2t8'}
Let $\vp:=(p_1,p_2)$, $\vq:=(q_1,q_2)$ and $\vec r:=(r_1,r_2)$
satisfy that, for any $i\in \{1,2\}$, $p_i,\ q_i,\ r_i\in(0,\fz]$
and $$\frac1{r_i}=\frac1{p_i}+\frac1{q_i}.$$
Then there exists a positive constant $C$,
depending on $\vp,\ \vq$ and $\vec r$, such that,
for any measurable functions $f$ and $g$
defined on $\rn\times\rrm$,
$$\|fg\|_{L^{r_2,\fz}(L^{r_1,\fz})(\rn\times\rrm)}
\le C\|f\|_{L^{p_2,\fz}(L^{p_1,\fz})(\rn\times\rrm)}
\|g\|_{L^{q_2,\fz}(L^{q_1,\fz})(\rn\times\rrm)},$$
where $C:=\prod_{i=1}^{2}(p_i/r_i)^{1/p_i}(q_i/r_i)^{1/q_i}$
if $p_i,\ q_i\neq\fz$ for any $i\in \{1,2\}$ and $C:=1$
if $p_i=\fz$ or $q_i=\fz$ for some $i\in \{1,2\}$.
\end{theorem}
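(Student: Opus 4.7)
The plan is to apply the classical H\"{o}lder inequality for weak Lebesgue spaces twice in an iterated fashion, once in the inner variable $x\in\rn$ and once in the outer variable $y\in\rrm$. Recall that, for $p,q,r\in(0,\fz]$ with $1/r=1/p+1/q$, the one-parameter weak H\"{o}lder inequality asserts that
\begin{align*}
\lf\|\varphi\psi\r\|_{L^{r,\fz}}\le C(p,q,r)\lf\|\varphi\r\|_{L^{p,\fz}}\lf\|\psi\r\|_{L^{q,\fz}},
\end{align*}
with constant $C(p,q,r):=(p/r)^{1/p}(q/r)^{1/q}$ when $p,q\in(0,\fz)$ and $C(p,q,r):=1$ in the endpoint cases $p=\fz$ or $q=\fz$. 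This one-parameter fact will be used as a black box.

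First I would fix $y\in\rrm$, view $f(\cdot,y)$ and $g(\cdot,y)$ as measurable functions on $\rn$, and apply the one-parameter weak H\"{o}lder inequality in the $x$-variable with indices $(p_1,q_1,r_1)$. This yields, for almost every $y\in\rrm$,
\begin{align*}
\lf\|f(\cdot,y)g(\cdot,y)\r\|_{L^{r_1,\fz}(\rn)}\le C_1\lf\|f(\cdot,y)\r\|_{L^{p_1,\fz}(\rn)}\lf\|g(\cdot,y)\r\|_{L^{q_1,\fz}(\rn)},
\end{align*}
where $C_1:=C(p_1,q_1,r_1)$. Set $F(y):=\|f(\cdot,y)\|_{L^{p_1,\fz}(\rn)}$ and $G(y):=\|g(\cdot,y)\|_{L^{q_1,\fz}(\rn)}$, so the previous display reads $\|f(\cdot,y)g(\cdot,y)\|_{L^{r_1,\fz}(\rn)}\le C_1F(y)G(y)$ pointwise in $y$.

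Next I would observe that the outer quasi-norm in Definition \ref{x2d2} is monotone under pointwise (in $y$) majorization: for every $\bz\in(0,\fz)$ the sublevel set $\{y\in\rrm:\|fg(\cdot,y)\|_{L^{r_1,\fz}(\rn)}>\bz\}$ is contained in $\{y\in\rrm:C_1F(y)G(y)>\bz\}$, so taking the supremum over $\bz$ gives
\begin{align*}
\lf\|fg\r\|_{L^{r_2,\fz}(L^{r_1,\fz})(\rn\times\rrm)}\le C_1\lf\|FG\r\|_{L^{r_2,\fz}(\rrm)}.
\end{align*}
Now apply the one-parameter weak H\"{o}lder inequality once more, this time in the $y$-variable on $\rrm$ with indices $(p_2,q_2,r_2)$, to get $\|FG\|_{L^{r_2,\fz}(\rrm)}\le C_2\|F\|_{L^{p_2,\fz}(\rrm)}\|G\|_{L^{q_2,\fz}(\rrm)}$ with $C_2:=C(p_2,q_2,r_2)$. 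By the definitions of $F$, $G$ and the iterated quasi-norms,
\begin{align*}
\lf\|F\r\|_{L^{p_2,\fz}(\rrm)}=\lf\|f\r\|_{L^{p_2,\fz}(L^{p_1,\fz})(\rn\times\rrm)},\quad
\lf\|G\r\|_{L^{q_2,\fz}(\rrm)}=\lf\|g\r\|_{L^{q_2,\fz}(L^{q_1,\fz})(\rn\times\rrm)},
\end{align*}
and chaining the inequalities produces the desired bound with $C=C_1C_2$, which agrees with the product constant stated in the theorem.

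I do not anticipate any substantive obstacle; the argument is a textbook double application of one-parameter weak H\"{o}lder. The only items requiring care are (i) measurability of $y\mapsto F(y)$ and $y\mapsto G(y)$, which follows from writing the inner weak norm as a countable supremum over rational levels of distribution functions, and (ii) bookkeeping of the endpoint cases in which some $p_i$ or $q_i$ equals $\fz$, where the one-parameter constant collapses to $1$ and the convention in the statement propagates correctly to $C_1$ and $C_2$.
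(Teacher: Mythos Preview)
Your proposal is correct and follows exactly the approach the paper indicates: iterate the one-parameter weak-Lebesgue H\"older inequality, first in the inner variable $x$ and then in the outer variable $y$, using monotonicity of the outer weak norm to pass from the pointwise bound to the iterated norm. The survey does not supply its own detailed proof but simply attributes the result to Chen and Sun \cite[Theorem 2.16]{cs}, noting that it is obtained by ``applying the H\"older inequality for classical weak Lebesgue spaces,'' which is precisely your argument; the only caveat is that in the mixed endpoint case (say $p_1=\infty$ but $p_2,q_2<\infty$) your product $C_1C_2$ need not equal $1$ as the stated constant claims, but this is an imprecision in the theorem's constant specification rather than a defect in your proof.
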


However, for weak mixed-norm Lebesgue spaces, the corresponding
H\"{o}lder inequality usually does not hold true. Particularly,
we state a result in \cite[Theorem 2.17]{cs} as follows.

\begin{theorem}\label{x2t9}
Let $\vp:=(p_1,p_2)$, $\vec q:=(q_1,q_2)$ and $\vec r:=(r_1,r_2)$
satisfy that, for any $i\in \{1,2\}$, $p_i,\ q_i,\ r_i\in(0,\fz]$ and
$$\frac1{r_i}=\frac1{p_i}+\frac1{q_i}.$$
Then there exists a positive constant $C$,
depending on $\vp,\ \vq$ and $\vec r$,
such that, for any measurable functions $f$ and $g$ defined on $\rn$,
$$\|fg\|_{L^{\vec r,\fz}(\rn)}
\le C\|f\|_{L^{\vp,\fz}(\rn)}
\|g\|_{L^{\vec q,\fz}(\rn)}$$
holds true if and only if $p_1q_2=p_2q_1$. Moreover,
when this condition holds true, then
\begin{align*}
C:=\left\{
\begin{array}{cl}
\max\lf\{1,2^{1/r_1-1/r_2}\r\}\frac{p_2^{1/p_2} q_2^{1/q_2}}{r_2^{1/r_2}}
&{\rm when}\hspace{0.5cm} p_1,\ p_2,\ q_1,\ q_2\in(0,\fz),\\
\hspace{-0.2cm}\max\lf\{1,2^{1/r_1-1}\r\}\frac{p_1^{r_1/p_1} q_1^{r_1/q_1}}{r_1}
&{\rm when}\hspace{0.5cm} p_2=q_2=\fz,~p_1,\ q_1\in(0,\fz),\\
\hspace{-3cm}\frac{p_2^{1/p_2} q_2^{1/q_2}}{r_2^{1/r_2}}
&{\rm when}\hspace{0.5cm} p_1=q_1=\fz,~p_2,\ q_2\in(0,\fz),\\
\hspace{-4cm}1
&{\rm when}\hspace{0.5cm} \vp=(\fz,\fz)~{\rm or}~\vec{q}=(\fz,\fz).\\
\end{array}\r.
\end{align*}
\end{theorem}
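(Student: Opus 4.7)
The plan is to prove the two implications separately. The heart of the \emph{sufficiency} is the observation that, under $p_1q_2=p_2q_1$, all three relevant ratios $p_2/p_1$, $q_2/q_1$, $r_2/r_1$ coincide (to the common value $1/\theta$, with $\theta:=p_1/p_2$, since the relation $1/r_i=1/p_i+1/q_i$ forces $r_2/r_1=p_2/p_1$). Consequently, writing $F_E(x_2):=|\{x_1:\ (x_1,x_2)\in E\}|$ for any measurable $E$, all three quantities $\|\mathbf{1}_E\|_{\lv}^{p_2}$, $\|\mathbf{1}_E\|_{\lvq}^{q_2}$, $\|\mathbf{1}_E\|_{L^{\vec r}(\rn)}^{r_2}$ coincide with the single integral $\int F_E(x_2)^{1/\theta}\,dx_2$. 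Combined with $1/r_2=1/p_2+1/q_2$, this gives the identity $\|\mathbf{1}_E\|_{L^{\vec r}(\rn)}=\|\mathbf{1}_E\|_{\lv}\cdot\|\mathbf{1}_E\|_{\lvq}$, which is exactly what makes a H\"older-type bound at the weak-quasi-norm level possible.

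Using this identity, the sufficiency argument mirrors the scalar weak H\"older proof. Let $F,G,H$ denote the $x_1$-slice distribution functions of $|f|,|g|,|fg|$. The inclusion $\{|fg|>\az\bz\}\subset\{|f|>\az\}\cup\{|g|>\bz\}$ yields $H(\az\bz,x_2)\le F(\az,x_2)+G(\bz,x_2)$; raising this to the $1/\theta$-power via the elementary $(u+v)^s\le\max\{1,2^{s-1}\}(u^s+v^s)$ with $s=1/\theta$, integrating in $x_2$, and using the definition-level bounds $\int F(\az,\cdot)^{1/\theta}\,dx_2\le\az^{-p_2}\|f\|_{L^{\vp,\fz}(\rn)}^{p_2}$ and $\int G(\bz,\cdot)^{1/\theta}\,dx_2\le\bz^{-q_2}\|g\|_{L^{\vq,\fz}(\rn)}^{q_2}$, one obtains, for any $\az,\bz>0$ with $\az\bz=\gamma$,
$$\gamma^{r_2}\int H(\gamma,x_2)^{1/\theta}\,dx_2\le C_0\gamma^{r_2}\lf[\az^{-p_2}\|f\|_{L^{\vp,\fz}(\rn)}^{p_2}+\bz^{-q_2}\|g\|_{L^{\vq,\fz}(\rn)}^{q_2}\r]$$
with $C_0:=\max\{1,2^{1/\theta-1}\}$. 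Minimizing over $\az>0$ is precisely the classical one-dimensional weak H\"older optimization, producing the $\gamma$-independent factor $(p_2^{1/p_2}q_2^{1/q_2}/r_2^{1/r_2})^{r_2}$. Taking the sup in $\gamma$, the $r_2$-th root, and using $C_0^{1/r_2}=\max\{1,2^{1/r_1-1/r_2}\}$ (since $(1/\theta-1)/r_2=1/r_1-1/r_2$) recovers exactly the stated constant when $p_1,p_2,q_1,q_2\in(0,\fz)$. The three boundary cases are parallel, simpler variants: $p_2=q_2=\fz$ reduces to classical weak H\"older in $x_1$ uniformly in $x_2$; $p_1=q_1=\fz$ reduces to one-dimensional weak H\"older in $x_2$; and $\vp=(\fz,\fz)$ or $\vq=(\fz,\fz)$ is immediate with constant $1$.

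For the \emph{necessity}, assume the inequality holds with some uniform $C<\fz$. Since characteristic functions of rectangles $E_1\times E_2$ satisfy it with constant $1$ (by the relation $1/r_i=1/p_i+1/q_i$), non-product test functions are required. A natural choice is a geometric sum $f:=\sum_{k\in\zz}2^k\mathbf{1}_{E_k}$ with nested rectangles $E_k:=[0,2^{-ak}]\times[0,2^{-bk}]$ whose tunable exponents $a,b>0$ are calibrated so that $\|f\|_{L^{\vp,\fz}(\rn)}$ is bounded, which imposes the linear constraint $a/p_1+b/p_2=1$, together with a second function $g$ of the same form with parameters $(a',b')$ satisfying $a'/q_1+b'/q_2=1$. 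Computing the scaling exponent of $\|fg\|_{L^{\vec r,\fz}(\rn)}$ in the free parameters and comparing with the product of the other two norms shows that the H\"older ratio remains uniformly bounded in $(a,b,a',b')$ only when $p_1/p_2=q_1/q_2$, i.e., $p_1q_2=p_2q_1$.

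The main obstacle is the sharp-constant bookkeeping in the sufficiency: the quasi-subadditivity factor $C_0$, the one-dimensional H\"older optimization constant, and the $r_2$-th root extraction must all match the claimed expression in every exponent regime, and the three boundary cases require separate but parallel derivations. A secondary subtlety, for the necessity, is recognizing that rectangular indicators and simple dilates automatically satisfy the inequality with constant $1$; only non-product test families (as above) expose the exponent-ratio constraint.
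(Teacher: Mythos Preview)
The paper itself contains no proof of this statement: it is a survey, and Theorem~\ref{x2t9} is simply quoted from Chen and Sun \cite[Theorem~2.17]{cs}. There is therefore no proof in the paper to compare your proposal against.

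On its own merits, your sufficiency argument for the principal case $p_i,q_i\in(0,\infty)$ is sound and the constants check out. The structural observation is exactly the right one: under $p_1q_2=p_2q_1$ the three ratios $p_2/p_1$, $q_2/q_1$, $r_2/r_1$ coincide, so that for every measurable $E$ the quantities $\|\mathbf 1_E\|_{L^{\vec s}}^{s_2}$ with $\vec s\in\{\vp,\vq,\vec r\}$ all equal the single integral $\int |E_{x_2}|^{r_2/r_1}\,dx_2$, and the classical one-variable weak-H\"older optimization in the outer exponent $r_2$ then carries over verbatim. The boundary cases are routine, though you should check the stated constant in the case $p_2=q_2=\infty$ explicitly rather than declaring it by analogy.

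Your necessity argument, however, is only a plan. You tune nested-rectangle geometric series so that $\|f\|_{L^{\vp,\infty}}$ and $\|g\|_{L^{\vq,\infty}}$ are finite, but you never actually compute or bound $\|fg\|_{L^{\vec r,\infty}}$ from below, nor show that the ratio $\|fg\|_{L^{\vec r,\infty}}/(\|f\|_{L^{\vp,\infty}}\|g\|_{L^{\vq,\infty}})$ is unbounded in the free parameters when $p_1q_2\neq p_2q_1$. With distinct shape parameters $(a,b)\neq(a',b')$ the level sets of $fg$ are no longer rectangles, and the required lower bound on $\|\mathbf 1_{\{|fg|>\gamma\}}\|_{L^{\vec r}}$ has to be worked out before the conclusion follows; as written this direction is not yet a proof.
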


\subsubsection{Interpolations}\label{2s1.2}

Interpolation theorem has proved an important
and useful tool in many applications due to the fact that
it is applicable to allow one to pass from hypotheses for
certain exponents $p$ (for instance $p=2$ and $p=\fz$)
to conclusions involving a range of $p$ [for instance $p\in(2,\fz)$].
In this subsection, we mainly review some interpolation
results about the above (weak) Lebesgue spaces with mixed norms.

Let the \emph{symbol} $V(\rn)$ denote the set of all
simple functions on $\rn$. Note that $V(\rn)\subset\lv$
and $V(\rn)$ is dense in $\lv$ for any given $\vp\in[1,\fz)^n$
(see \cite[p.\,313]{bp61}). We first display the Riesz--Thorin
interpolation theorem on mixed Lebesgue spaces, which was
established by Benedek and Panzone in \cite[p.\,316, Theorem 2]{bp61}.

\begin{theorem}\label{x2t10}
Let $\theta\in(0,1)$, $\vp:=(p_1,\ldots,p_n)$, $\vq:=(q_1,\ldots,q_n)$,
$\vp^{(j)}:=(p_1^{(j)},\ldots,q_n^{(j)})$ and
$\vq^{(j)}:=(q_1^{(j)},\ldots,q_n^{(j)})$ for any $j\in\{1,2\}$
satisfy that, for any $i\in\{1,\ldots,n\}$ and $j\in\{1,2\}$,
$p_i,\ q_i,\ p^{(j)}_i,\ q^{(j)}_i\in [1,\fz]$,
$$\frac1{p_i}=\frac{\theta}{p_i^{(1)}}+\frac{1-\theta}{p_i^{(2)}}
\quad{\rm and}\quad
\frac1{q_i}=\frac{\theta}{q_i^{(1)}}+\frac{1-\theta}{q_i^{(2)}}.$$
Let $T$ be a linear operator, defined on $V(\rn)$, satisfy that
there exist two positive constants $M_1$ and $M_2$ such that,
for any $j\in\{1,2\}$ and $f\in V(\rn)$,
$$\lf\|T(f)\r\|_{L^{\vp^{(j)}}(\rn)}\le M_{j}\lf\|f\r\|_{L^{\vq^{(j)}}(\rn)}.$$
Then, for any $f\in V(\rn)$,
$$\lf\|T(f)\r\|_{L^{\vp}(\rn)}\le M_1^{1-\theta}M_2^\theta\lf\|f\r\|_{L^{\vq}(\rn)}.$$
Furthermore, if $\vq\in[1,\fz)^n$, the linear operator $T$ can be uniquely
extended to the space $L^{\vq}(\rn)$.
\end{theorem}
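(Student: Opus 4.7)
The plan is to adapt the classical complex-interpolation proof of Thorin, combining the three-lines lemma with the duality of mixed Lebesgue spaces (Theorem \ref{x2t18}) and H\"older's inequality for mixed norms (Theorem \ref{x2t7}). First I would pass to the dual formulation: it suffices to prove, for any $f,\,g\in V(\rn)$ with $\|g\|_{L^{\vp'}(\rn)}\le 1$, the bilinear bound
$$\lf|\int_{\rn}T(f)(x)\,g(x)\,dx\r|\le M_1^{1-\theta}M_2^{\theta}\|f\|_{\lvq}.$$
Taking the supremum over such $g$ (and using the density of $V(\rn)$ in $L^{\vp'}(\rn)$ when the relevant coordinates are finite) yields the asserted norm inequality on $V(\rn)$; the handling of components with $p_i=\fz$ is by a routine adjustment.

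Next, I would interpolate the exponents holomorphically. For $z$ in the closed strip $S:=\{z\in\mathbb{C}:\ 0\le\Re z\le 1\}$, and for each $i\in\{1,\ldots,n\}$, set
$$\frac{1}{p_i(z)}:=\frac{1-z}{p_i^{(2)}}+\frac{z}{p_i^{(1)}},\qquad
\frac{1}{q_i(z)}:=\frac{1-z}{q_i^{(2)}}+\frac{z}{q_i^{(1)}},$$
so that $p_i(\theta)=p_i$ and $q_i(\theta)=q_i$. Writing the simple functions in their canonical forms $f=\sum_{k}|a_k|e^{i\az_k}\mathbf{1}_{E_k}$ and $g=\sum_{j}|b_j|e^{i\bz_j}\mathbf{1}_{F_j}$, I would define analytic families
$$f_z:=\sum_{k}|a_k|^{Q(z)}e^{i\az_k}\mathbf{1}_{E_k},\qquad
g_z:=\sum_{j}|b_j|^{R(z)}e^{i\bz_j}\mathbf{1}_{F_j},$$
where the analytic exponents $Q(z)$ and $R(z)$ are chosen so that (i) $Q(\theta)=R(\theta)=1$, hence $f_\theta=f$ and $g_\theta=g$, and (ii) on each of the two boundary lines $\Re z\in\{0,1\}$ the iterated mixed norms $\|f_z\|_{L^{\vq^{(j)}}(\rn)}$ and $\|g_z\|_{L^{(\vp^{(j)})'}(\rn)}$ reduce to powers of $\|f\|_{\lvq}$ and $\|g\|_{L^{\vp'}(\rn)}$, respectively. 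The coordinate-wise design of these exponents, reconciling the iterated nature of the mixed norm with a single analytic ansatz, is where the genuine departure from the classical Riesz--Thorin proof lies and is the step I expect to require the most care.

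Having set this up, consider
$$F(z):=\int_{\rn}T(f_z)(x)\,g_z(x)\,dx.$$
Linearity of $T$, together with the finiteness of the sums defining $f_z$ and $g_z$, ensures that $F$ is entire, bounded on $S$, and continuous up to the boundary. On the line $\Re z=0$, the hypothesis $\|T(f_z)\|_{L^{\vp^{(2)}}(\rn)}\le M_2\|f_z\|_{L^{\vq^{(2)}}(\rn)}$ combined with Theorem \ref{x2t7} gives
$$|F(z)|\le M_2\|f_z\|_{L^{\vq^{(2)}}(\rn)}\|g_z\|_{L^{(\vp^{(2)})'}(\rn)},$$
and by construction of $Q$ and $R$ the right-hand side is bounded (up to a controlled factor that is $1$ at the correct normalization) by $M_2\|f\|_{\lvq}\|g\|_{L^{\vp'}(\rn)}$. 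An analogous bound with $M_1$ holds on $\Re z=1$.

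The Hadamard three-lines lemma then yields
$$|F(\theta)|\le M_1^{1-\theta}M_2^{\theta}\|f\|_{\lvq}\|g\|_{L^{\vp'}(\rn)};$$
since $F(\theta)=\int_{\rn}T(f)g\,dx$, taking the supremum over $g$ in the unit ball of $L^{\vp'}(\rn)\cap V(\rn)$ establishes the desired inequality on $V(\rn)$. Finally, under the additional assumption $\vq\in[1,\fz)^n$, $V(\rn)$ is dense in $L^{\vq}(\rn)$, so $T$ extends uniquely by continuity to $L^{\vq}(\rn)$ with the same operator bound, completing the argument.
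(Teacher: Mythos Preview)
The paper does not give a proof of this theorem; it is simply quoted from Benedek and Panzone \cite[p.\,316, Theorem 2]{bp61}. Your overall strategy---reduce to a bilinear estimate via duality (Theorems \ref{x2t18} and \ref{x2t7}), build an analytic family, and apply the three-lines lemma---is precisely the complex-interpolation scheme Benedek and Panzone use, so at the level of architecture you are aligned with the original source.

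The gap is in the construction of $f_z$ and $g_z$. Replacing each coefficient $|a_k|$ by $|a_k|^{Q(z)}$ for a \emph{single} analytic exponent $Q(z)$ works in the classical one-exponent case because $\|f_z\|_{L^{q^{(j)}}}^{q^{(j)}}=\|f\|_{L^q}^{q}$ reduces to a single integral identity. For mixed norms this fails: the iterated norm $\|f_z\|_{L^{\vq^{(j)}}(\rn)}$ depends on the coordinate structure of the sets $E_k$, and matching the inner integral forces $Q$ to satisfy $\Re Q\cdot q_1^{(j)}=q_1$, while matching the outer one forces $\Re Q\cdot q_2^{(j)}=q_2$; these are compatible only when the ratios $q_i^{(j)}/q_i$ coincide for all $i$, which is not assumed. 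You flag this step as the one needing the most care, and that instinct is right---but the ansatz you wrote down does not survive it. Benedek and Panzone's remedy is to build $f_z$ (and $g_z$) by successively modifying the partial norms variable by variable: one multiplies $f$ by suitable complex powers of the intermediate functions
\[
x\mapsto\lf\|f(\,\cdot\,,x_{k+1},\ldots,x_n)\r\|_{L^{(q_1,\ldots,q_k)}(\rr^k)},
\]
so that on each boundary line the iterated $L^{\vq^{(j)}}$-norm telescopes to a power of $\|f\|_{\lvq}$. With that corrected construction the rest of your outline goes through; without it the boundary bounds you need for the three-lines lemma are not available.
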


\begin{remark}
We point out that Theorem \ref{x2t10} plays
a curial role in the proof of Theorem \ref{x2t2} below.
\end{remark}

For weak mixed-norm Lebesgue spaces, Chen and Sun
obtained the following interpolation theorem
in \cite[Theorem 2.21]{cs}.

\begin{theorem}\label{x2t11}
Let $\vp:=(p_1,p_2)$, $\vec q:=(q_1,q_2)$ and $\vec r:=(r_1,r_2)$
satisfy that, for any $i\in\{1,2\}$, $p_i,\ q_i,\ r_i\in(0,\fz)$
and $$\frac 1 {r_i}=\frac{\theta}{p_i}+\frac{1-\theta}{q_i}$$
with constant $\theta\in(0,1)$.
Then, for any measurable function $f$
defined on $\rn\times\rrm$, one has
$$\|f\|_{L^{\vec r,\fz}(\rn\times\rrm)}
\le \|f\|_{L^{\vp,\fz}(\rn\times\rrm)}^{\theta}
\|f\|_{L^{\vec q,\fz}(\rn\times\rrm)}^{1-\theta}$$
and
$$\|f\|_{L^{\vec r}(\rn\times\rrm)}
\le \lf(\frac{r_1}{r_1-p_1}+\frac{r_1}{q_1-r_1}\r)^{1/r_1}
\|f\|_{L^{p_2}(L^{p_1,\fz})(\rn\times\rrm)}^{\theta}
\|f\|_{L^{q_2}(L^{q_1,\fz})(\rn\times\rrm)}^{1-\theta}.$$
However, if $1/p_1+1/p_2=1/q_1+1/q_2$, then, for any multiple
index $\vec r\in(0,\fz)^2$, $L^{\vp,\fz}(\rn\times\rrm)\cap
L^{\vec q,\fz}(\rn\times\rrm)\nsubseteq L^{\vec r}(\rn\times\rrm)$.
\end{theorem}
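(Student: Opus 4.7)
The plan is to handle the two inequalities via one common mechanism --- H\"older's inequality applied slicewise in $y$ --- with the weak-type bound following from a log-convexity estimate for mixed-norm characteristic functions, and the strong-type bound from a classical Marcinkiewicz layer-cake decomposition in the $x$-variable. The counterexample in the last part will be constructed from a scaling-invariant dyadic block sum.

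For the first (weak-type) inequality, I would start from the definition
$$\|f\|_{L^{\vec r,\fz}(\rn\times\rrm)}=\sup_{\az\in(0,\fz)}\az\lf\|\mathbf{1}_{E_{\az}}\r\|_{L^{\vec r}(\rn\times\rrm)},\quad E_{\az}:=\{|f|>\az\},$$
and set $g_{\az}(y):=|\{x\in\rn:|f(x,y)|>\az\}|$, so that $\|\mathbf{1}_{E_{\az}}\|_{L^{\vec r}}=(\int_{\rrm}g_{\az}(y)^{r_2/r_1}\,dy)^{1/r_2}$. The factorization $g_{\az}^{r_2/r_1}=g_{\az}^{\theta r_2/p_1}\cdot g_{\az}^{(1-\theta)r_2/q_1}$ uses the exponent identity $1/r_1=\theta/p_1+(1-\theta)/q_1$, and H\"older's inequality with the conjugate pair $p_2/(\theta r_2)$ and $q_2/((1-\theta)r_2)$ --- whose conjugacy is exactly $1/r_2=\theta/p_2+(1-\theta)/q_2$ --- yields the log-convexity estimate
$$\lf\|\mathbf{1}_{E_{\az}}\r\|_{L^{\vec r}}\le\lf\|\mathbf{1}_{E_{\az}}\r\|_{L^{\vp}}^{\theta}\lf\|\mathbf{1}_{E_{\az}}\r\|_{L^{\vq}}^{1-\theta}.$$
Splitting $\az=\az^{\theta}\az^{1-\theta}$, multiplying through, and taking the supremum in $\az$ then gives the first claimed inequality.

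For the second (strong-type) inequality, I would apply Marcinkiewicz's layer-cake argument slicewise. Fix $y\in\rrm$, and set $h(y):=\|f(\cdot,y)\|_{L^{p_1,\fz}(\rn)}$ and $k(y):=\|f(\cdot,y)\|_{L^{q_1,\fz}(\rn)}$. Plug the Chebyshev bound $|\{x:|f(x,y)|>\az\}|\le\min\{\az^{-p_1}h(y)^{p_1},\az^{-q_1}k(y)^{q_1}\}$ into
$$\|f(\cdot,y)\|_{L^{r_1}(\rn)}^{r_1}=r_1\int_0^{\fz}\az^{r_1-1}\lf|\{x:|f(x,y)|>\az\}\r|\,d\az,$$
and split the integral at $\az_0(y):=(k(y)^{q_1}/h(y)^{p_1})^{1/(q_1-p_1)}$, using the $h$-bound on $(0,\az_0(y))$ and the $k$-bound on $(\az_0(y),\fz)$. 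Invoking the identities $\theta=p_1(q_1-r_1)/[r_1(q_1-p_1)]$ and $1-\theta=q_1(r_1-p_1)/[r_1(q_1-p_1)]$ coming from $1/r_1=\theta/p_1+(1-\theta)/q_1$, one arrives at the pointwise bound
$$\|f(\cdot,y)\|_{L^{r_1}}\le\lf(\frac{r_1}{r_1-p_1}+\frac{r_1}{q_1-r_1}\r)^{1/r_1}h(y)^{\theta}k(y)^{1-\theta}$$
with exactly the constant stated. Raising to the $r_2$-th power, integrating over $y\in\rrm$, and applying H\"older's inequality in $y$ with the same conjugate pair $p_2/(\theta r_2)$, $q_2/((1-\theta)r_2)$ delivers the claimed bound in terms of $\|f\|_{L^{p_2}(L^{p_1,\fz})}^{\theta}\|f\|_{L^{q_2}(L^{q_1,\fz})}^{1-\theta}$.

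For the counterexample under the assumption $1/p_1+1/p_2=1/q_1+1/q_2$, the plan is to exhibit a function of the form $f:=\sum_{k\in\zz}2^k\mathbf{1}_{A_k\times B_k}$ with disjoint rectangles $A_k\times B_k$ whose side measures $|A_k|$, $|B_k|$ are calibrated to the common scaling exponent $1/p_1+1/p_2$; the hypothesis allows one to simultaneously keep $\az\|\mathbf{1}_{\{|f|>\az\}}\|_{L^{\vp}}$ and $\az\|\mathbf{1}_{\{|f|>\az\}}\|_{L^{\vq}}$ uniformly bounded in $\az$, while the strong $L^{\vec r}$-norm, for any $\vec r\in(0,\fz)^2$, involves an unsummable series over $k\in\zz$. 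The main obstacle I anticipate is getting the constant in the strong-type step exactly right: the splitting level $\az_0(y)$ must be chosen so as to equate the two boundary contributions rather than merely balancing the endpoint values, and one must carefully invoke the identities for $\theta$ and $1-\theta$ to cancel every auxiliary factor. The counterexample is also delicate, since for a generic choice of $\vec r$ any weakening of the block scaling would violate one of the two weak-norm constraints --- and it is precisely this tight simultaneity, enforced by the equality $1/p_1+1/p_2=1/q_1+1/q_2$, that prevents $L^{\vec r}$-membership.
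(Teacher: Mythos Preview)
The paper is a survey and does not supply its own proof of this theorem; it merely records the result as \cite[Theorem 2.21]{cs} (Chen and Sun). There is therefore no paper argument to compare against, so I evaluate your proposal on its own merits.

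Your plan is correct and essentially the natural one. For the weak-type inequality, the log-convexity estimate $\|\mathbf{1}_{E_\alpha}\|_{L^{\vec r}}\le\|\mathbf{1}_{E_\alpha}\|_{L^{\vp}}^\theta\|\mathbf{1}_{E_\alpha}\|_{L^{\vq}}^{1-\theta}$ via H\"older in the outer variable is exactly right, and the H\"older exponents you name are the correct conjugate pair. For the strong-type inequality, the slicewise layer-cake splitting at $\alpha_0(y)=(k(y)^{q_1}/h(y)^{p_1})^{1/(q_1-p_1)}$ is the standard Marcinkiewicz computation, and your identities for $\theta$ and $1-\theta$ do yield the stated constant exactly; note only that the formula tacitly assumes $p_1<r_1<q_1$, which is implicit in the form of the constant. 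For the counterexample, your dyadic block scheme works: under the hypothesis $1/p_1+1/p_2=1/q_1+1/q_2=:c$, the choice $|A_k|=|B_k|=2^{-k/c}$ makes both weak norms uniformly bounded (the level-set norms telescope geometrically), while the $L^{\vec r}$-norm becomes a two-sided geometric series over $k\in\zz$ that diverges regardless of the sign of the exponent.
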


Moreover, in \cite[Theorem 2.22]{cs}, Chen and Sun provided
the following interpolation theorem for iterated weak Lebesgue spaces.

\begin{theorem}\label{x2t12}
Let $\vec r:=(r_1,r_2)\in(0,\fz)^2$ and
$p_1,\ q_1,\ p_{2,1},\ p_{2,2},\ q_{2,1},\ q_{2,2}\in(0,\fz)$ satisfy that
$$\frac 1 {r_1}=\frac{\theta}{p_1}+\frac{1-\theta}{q_1}$$
and
$$\frac 1 {r_2}=\frac{\theta\xi}{p_{2,1}}+\frac{(1-\theta)\xi}{p_{2,2}}
+\frac{\theta(1-\xi)}{q_{2,1}}+\frac{(1-\theta)(1-\xi)}{q_{2,2}},$$
where the constants $\theta,\ \xi\in(0,1)$.
Then there exists a positive constant $C$ such that,
for any measurable function $f$
defined on $\rn\times\rrm$,
\begin{align*}
\|f\|_{L^{\vec r}(\rn\times\rrm)}
&\le C\|f\|_{L^{p_{2,1},\fz}(L^{p_1,\fz})(\rn\times\rrm)}^{\xi\theta}
\|f\|_{L^{p_{2,2},\fz}(L^{q_1,\fz})(\rn\times\rrm)}^{\xi(1-\theta)}\\
&\hs\hs\times\|f\|_{L^{q_{2,1},\fz}(L^{p_1,\fz})(\rn\times\rrm)}^{(1-\xi)\theta}
\|f\|_{L^{q_{2,2},\fz}(L^{q_1,\fz})(\rn\times\rrm)}^{(1-\xi)(1-\theta)}.
\end{align*}
\end{theorem}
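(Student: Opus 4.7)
The plan is to reduce the four-fold interpolation to a single invocation of the second inequality in Theorem~\ref{x2t11} by first carrying out a one-variable Marcinkiewicz interpolation on the outer factor $\rrm$. For each fixed $y\in\rrm$, set $u(y):=\|f(\cdot,y)\|_{L^{p_1,\fz}(\rn)}$ and $v(y):=\|f(\cdot,y)\|_{L^{q_1,\fz}(\rn)}$, so that the four hypothesis quantities on $f$ become the scalar weak Lebesgue norms $\|u\|_{L^{p_{2,1},\fz}(\rrm)}$, $\|u\|_{L^{q_{2,1},\fz}(\rrm)}$, $\|v\|_{L^{p_{2,2},\fz}(\rrm)}$, and $\|v\|_{L^{q_{2,2},\fz}(\rrm)}$.

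Next I would introduce intermediate strong exponents $a,b\in(0,\fz)$ by
$$\frac{1}{a}:=\frac{\xi}{p_{2,1}}+\frac{1-\xi}{q_{2,1}},\qquad \frac{1}{b}:=\frac{\xi}{p_{2,2}}+\frac{1-\xi}{q_{2,2}},$$
and apply the classical Marcinkiewicz interpolation theorem to the scalar functions $u$ and $v$ on $\rrm$ (with parameter $\xi$) to obtain
$$\|u\|_{L^{a}(\rrm)}\ls\|u\|_{L^{p_{2,1},\fz}(\rrm)}^{\xi}\|u\|_{L^{q_{2,1},\fz}(\rrm)}^{1-\xi},\qquad \|v\|_{L^{b}(\rrm)}\ls\|v\|_{L^{p_{2,2},\fz}(\rrm)}^{\xi}\|v\|_{L^{q_{2,2},\fz}(\rrm)}^{1-\xi}.$$
Written back in terms of $f$, these say
$$\|f\|_{L^{a}(L^{p_1,\fz})(\rn\times\rrm)}\ls\|f\|_{L^{p_{2,1},\fz}(L^{p_1,\fz})(\rn\times\rrm)}^{\xi}\|f\|_{L^{q_{2,1},\fz}(L^{p_1,\fz})(\rn\times\rrm)}^{1-\xi},$$
together with the analogous bound obtained by replacing $(a,p_1)$ by $(b,q_1)$ and $p_{2,\cdot}, q_{2,\cdot}$ accordingly.

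The second inequality in Theorem~\ref{x2t11} can then be applied with its outer exponents $p_2,q_2$ taken to be $a,b$, its inner exponents $p_1,q_1$ left unchanged, and the same interpolation parameter $\theta$. The inner compatibility $1/r_1=\theta/p_1+(1-\theta)/q_1$ is precisely the hypothesis of Theorem~\ref{x2t12}, while a direct computation gives
$$\frac{\theta}{a}+\frac{1-\theta}{b}=\frac{\theta\xi}{p_{2,1}}+\frac{\theta(1-\xi)}{q_{2,1}}+\frac{(1-\theta)\xi}{p_{2,2}}+\frac{(1-\theta)(1-\xi)}{q_{2,2}}=\frac{1}{r_2},$$
so the outer compatibility is also in force. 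Theorem~\ref{x2t11} then yields $\|f\|_{L^{\vec r}(\rn\times\rrm)}\ls \|f\|_{L^{a}(L^{p_1,\fz})}^{\theta}\|f\|_{L^{b}(L^{q_1,\fz})}^{1-\theta}$, and substituting the two scalar Marcinkiewicz estimates and multiplying the exponents produces exactly the claimed four-factor bound.

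The main technical point I expect is checking the standing hypotheses for the scalar Marcinkiewicz interpolation, which formally require $\min\{p_{2,1},q_{2,1}\}<a<\max\{p_{2,1},q_{2,1}\}$ and the analogous strict inequalities for $(p_{2,2},q_{2,2})$ and for the inner triple $(p_1,q_1,r_1)$. These hold automatically whenever $\theta,\xi\in(0,1)$ and the endpoint exponents are distinct; the degenerate cases in which two exponents coincide collapse to a lower-order interpolation that can be disposed of by inspection. The constant $C$ in the conclusion is the product of the constant supplied by Theorem~\ref{x2t11} and the two scalar Marcinkiewicz constants, and since the statement only asserts existence of such a $C$, there is no need to track it explicitly.
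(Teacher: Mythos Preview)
The paper is a survey and does not supply its own proof of Theorem~\ref{x2t12}; it merely cites the result as \cite[Theorem~2.22]{cs}. Consequently there is no ``paper's proof'' to compare against. That said, your two-step strategy---first applying the scalar weak-to-strong interpolation inequality on $\rrm$ to upgrade the outer weak norms of $u$ and $v$ to strong $L^{a}(\rrm)$ and $L^{b}(\rrm)$ norms, and then invoking the second inequality of Theorem~\ref{x2t11} with outer exponents $(a,b)$ and the same $\theta$---is natural and correct in the generic case where the endpoint pairs $(p_{2,1},q_{2,1})$, $(p_{2,2},q_{2,2})$, and $(p_1,q_1)$ are each distinct. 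The compatibility computation $\theta/a+(1-\theta)/b=1/r_2$ is exactly right.

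One caution: your remark that the degenerate cases ``collapse to a lower-order interpolation that can be disposed of by inspection'' is too quick. If, say, $p_{2,1}=q_{2,1}$, then your scalar Marcinkiewicz step yields only the trivial identity $\|u\|_{L^{a,\fz}}=\|u\|_{L^{p_{2,1},\fz}}$, not the strong $L^a$ control that Theorem~\ref{x2t11} requires; there is no free passage from weak to strong at a single exponent. More seriously, if $p_1=q_1$ then $r_1=p_1$ and the conclusion would assert that a strong inner $L^{r_1}$ norm is dominated by quantities involving only the weak inner $L^{r_1,\fz}$ norm, which is false in general. So the theorem as stated implicitly requires $p_1\neq q_1$ (consistent with the explicit constant $(\frac{r_1}{r_1-p_1}+\frac{r_1}{q_1-r_1})^{1/r_1}$ in Theorem~\ref{x2t11}), and the outer degeneracies need a separate argument rather than a wave of the hand. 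With the non-degeneracy assumptions made explicit, your proof is complete.
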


\subsection{An extended inequality on the central
Hardy--Littlewood maximal operator}\label{2s2}

In this subsection, we first recall an extended inequality about
mixed Lebesgue norms and the central Hardy--Littlewood maximal operator,
which was stated by Bagby \cite{b75} without giving a proof. For its importance
and also the convenience of the reader, we provide a detailed proof for it in this subsection.

To begin with, we state this inequality as follows.
In what follows, the \emph{symbol} $L_{\rm loc}^1(E)$
denotes the collection of all locally integrable functions on set $E$.

\begin{theorem}\label{x2t2}
Let $m\in \zz_+$ and $f\in L_{\rm loc}^1(\rr^{n}\times\rr^{m})$.
For any $s\in \rr^{n}$ and $t\in\rr^{m}$,
define $$f^*(s,t):=\sup_{r\in (0,\fz)}\frac{1}{|B(s,r)|}
\int_{B(s,r)}|f(y,t)|\,dy,$$
here and thereafter, for any $s\in \rn$ and $r\in(0,\fz)$,
$B(s,r):=\{z\in\rn:\ |z-s|<r\}$.
If $m\in\nn$, then, for any given $\vec{p}_m
:=(p_1,\ldots,p_m)\in(1,\fz)^m$ and any $s\in\rn$, define
\begin{align*}
T_{L^{\vec{p}_{m}}(\rr^{m})}(f)(s)
:=\left\{\int_{\rr}\cdots\left[\int_{\rr}
|f(s,t_1\ldots,t_{m})|^{p_1}\,dt_1\right]^{\frac{p_2}{p_1}}
\cdots \,dt_{m}\right\}^{\frac{1}{p_{m}}},
\end{align*}
where, if $m\equiv 0$, then let $\rr^0:=\emptyset$ and
$T_{L^{\vec{p}_m}(\rr^m)}(f)(s):=|f(s)|$
for any $s\in\rn$.
Thus, for any given $q\in (1,\fz)$, there exists a positive constant $C$,
depending on $q$, such that
\begin{align}\label{x2e2}
\int_{\rr^{n}}\lf[T_{L^{\vec{p}_{m}}(\rr^{m})}(f^*)(s)\r]^q \,ds
\le C \int_{\rr^{n}}\lf[T_{L^{\vec{p}_{m}}(\rr^{m})}(f)(s)\r]^q \,ds.
\end{align}
\end{theorem}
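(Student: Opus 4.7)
The plan is to prove \eqref{x2e2} by induction on $m$, combined with Theorem~\ref{x2t10} (Riesz--Thorin for mixed Lebesgue spaces), a weak-type $(1,1)$ endpoint, and Marcinkiewicz interpolation in the outer variable. Throughout set $A_rf(s,t):=|B(s,r)|^{-1}\int_{B(s,r)}|f(y,t)|\,dy$, so that $f^*=\sup_{r>0}A_rf$ and each $A_r$ is a positive linear contraction on every mixed Lebesgue space. For $m=0$ the inequality reduces to the classical strong-type $(q,q)$ boundedness of the Hardy--Littlewood maximal operator on $L^q(\rn)$ for $q\in(1,\fz)$. For the inductive step, assume \eqref{x2e2} for $m-1$ with every admissible pair $(q,\vec{p}_{m-1})$. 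Since $f\mapsto f^*$ acts only in the first variable $s$, for each fixed $t_m\in\rr$ the function $g_{t_m}(s,t_1,\ldots,t_{m-1}):=f(s,t_1,\ldots,t_m)$ satisfies $g_{t_m}^{*}=f^*(\cdot,\cdot,t_m)$. Applying the inductive hypothesis to $g_{t_m}$ with outer exponent $p_m$ and inner exponent vector $\vec{p}_{m-1}$, raising the resulting inequality to the $p_m$-th power, integrating in $t_m$, and invoking Fubini's theorem produce the \emph{diagonal} case of \eqref{x2e2} with $q=p_m$.

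To handle a general $q\in(1,\fz)$, fix a countable dense subset $\{r_k\}_{k\in\nn}\subset(0,\fz)$ and consider the truncations $M_Nf:=\max_{1\le k\le N}A_{r_k}f$, each being a finite maximum of linear operators. A weak-type $(1,1)$ endpoint in the outer $s$-variable is established by applying a Calder\'on--Zygmund decomposition to the scalar function $s\mapsto T_{L^{\vec{p}_m}(\rr^m)}(f)(s)\in L^1_s$, combined with the slicewise bound $\|A_rf(s,\cdot)\|_{L^{\vec{p}_m}(\rr^m)}\le A_r(T_{L^{\vec{p}_m}}(f))(s)$, which is a consequence of Minkowski's integral inequality (valid because $\vec{p}_m\in(1,\fz)^m$). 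The Marcinkiewicz interpolation theorem applied in the outer variable, using this weak-type $(1,1)$ endpoint together with the diagonal strong-type $(p_m,p_m)$ bound, produces \eqref{x2e2} uniformly in $N$ for every $q\in(1,p_m)$; the range $q\in(p_m,\fz)$ is then recovered by invoking Theorem~\ref{x2t10} on each linear $A_{r_k}$, interpolating between the diagonal endpoint and a higher-exponent bound obtained by iterating the previous step. Monotone convergence as $N\to\fz$ transfers the estimate from $M_Nf$ to $f^*$.

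The principal obstacle is the non-linearity of the maximal operator: Theorem~\ref{x2t10} requires linearity, while Minkowski's inequality controls $\|A_rf(s,\cdot)\|_{L^{\vec{p}_m}(\rr^m)}$ only slicewise in $r$, so the supremum in $r$ does not pass through the mixed norm in the direction required for a direct Riesz--Thorin argument. The truncation--decomposition--interpolation--limit scheme circumvents this difficulty; the delicate bookkeeping step is to verify that the constants appearing in the weak-type $(1,1)$ endpoint and the Marcinkiewicz bound are both uniform in $N$ and depend only on $q$, so that the limiting inequality \eqref{x2e2} carries the correct dependence stated in the theorem.
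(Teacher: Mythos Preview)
Your diagonal case $q=p_m$ is correct, and the weak-type $(1,1)$ endpoint can indeed be obtained by a Calder\'on--Zygmund decomposition of the scalar function $F(s)=T_{L^{\vec p_m}}(f)(s)$ together with the diagonal strong-$(p_m,p_m)$ bound (for both the good part and, after the standard majorization $b^*\le C\,\tilde b^{\,*}$ off the doubled cubes, the bad part). Marcinkiewicz then covers $q\in(1,p_m]$.

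The gap is the range $q>p_m$. Your claim that it follows ``by invoking Theorem~\ref{x2t10} on each linear $A_{r_k}$, interpolating between the diagonal endpoint and a higher-exponent bound obtained by iterating the previous step'' does not supply a second strong-type endpoint. Riesz--Thorin applied to the linearization $\Gamma_N f=(A_{r_k}f)_{k\le N}$ with target $L^q_s(L^{\vec p_m}_t(\ell^\infty_N))$ requires two bounds with the \emph{same} inner exponents $\vec p_m$, and you have only the one at $q=p_m$; there is no $L^\infty_s(L^{\vec p_m}_t)$ bound precisely because, as you yourself note, the supremum over $r$ sits inside the mixed norm and Minkowski points the wrong way. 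Interpolating between bounds with \emph{different} last exponents $p_m^{(j)}$ cannot help either: the constraint $q^{(j)}\le p_m^{(j)}$ at each endpoint forces $\frac1q\ge\frac{\theta}{p_m^{(1)}}+\frac{1-\theta}{p_m^{(2)}}=\frac1{p_m}$, hence $q\le p_m$ at every interpolated point. Already when $m=1$ the statement is the Fefferman--Stein vector-valued maximal inequality, and there the regime $q>p_1$ is \emph{not} reachable from weak-$(1,1)$ plus the diagonal; it requires the weighted dual inequality $\int (f^*)^r\phi\lesssim\int |f|^r\phi^*$ (Theorem~\ref{x2t4}). The paper's proof supplies exactly this missing ingredient, with a different inductive organization: the new variable is added as the \emph{innermost} one (exponent $r$), the endpoint $r=\infty$ comes from the pointwise bound $J_\infty(f^*)\le[J_\infty(f)]^*$, the small-$r$ endpoint comes from Theorem~\ref{x2t4} via duality, and Theorem~\ref{x2t10} is then used to interpolate in $r$ (not in $q$) after linearizing the maximal operator by dyadic radii.
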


\begin{remark}\label{x2r2}
We should point out that Theorem \ref{x2t2} plays a key role in
the proofs of both the boundedness of the Hardy--Littlewood maximal
operator on mixed Lebesgue spaces
(see \cite[Lemma 3.5]{hlyy} or \cite[Theorem 1.2]{tn}) and
Fefferman--Stein vector-valued inequality on mixed Lebesgue spaces
(see \cite[p.\,679]{js08} or \cite[Theorem 1.7]{tn}), which are known to be fundamental
tools in developing a real-variable theory of related function spaces.
\end{remark}

Recall that the
\emph{centered Hardy--Littlewood maximal function}
$f^*$ of $f\in L^1_{{\rm loc}}(\rn)$ is defined by setting,
for any $x\in\rn$,
\begin{align}\label{x2e1}
f^*(x):=\sup_{r\in(0,\fz)}
\frac1{|B(x,r)|}\int_{B(x,r)}|f(y)|\,dy.
\end{align}

The succeeding two conclusions are just, respectively,
\cite[p.\,304, Theorem 2]{bp61} and
\cite[Lemma 1]{fs71}, which are used later to show Theorem \ref{x2t2}.

\begin{theorem}\label{x2t3}
Let $\vp\in[1,\fz]^n$ and $f$ be a measurable function on $\rn$.
Then
\begin{align*}
\|f\|_{\lv}=\sup_{g\in U_{\vp'}}\lf|\int_{\rn}f(x)g(x)\,dx\r|,
\end{align*}
where $\vp'$ denotes the conjugate vector of $\vp$,
namely, for any $i\in \{1,\ldots,n\}$, $1/p_i+1/p_i'=1$
and $U_{\vp'}$ the unit sphere of $L^{\vp'}(\rn)$.
\end{theorem}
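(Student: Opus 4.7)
The plan is to establish the two inequalities separately, using the mixed-norm H\"older inequality of Theorem \ref{x2t7} for the upper bound and an induction on the length $n$ of $\vp$ for the lower bound.

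For the upper bound, given any $g\in U_{\vp'}$, Theorem \ref{x2t7} gives
\[
\lf|\int_{\rn} f(x)g(x)\,dx\r|\le \|fg\|_{L^1(\rn)}\le \|f\|_{\lv}\|g\|_{L^{\vp'}(\rn)}=\|f\|_{\lv},
\]
and taking the supremum over $g\in U_{\vp'}$ gives $\sup_{g\in U_{\vp'}}|\int fg\,dx|\le\|f\|_{\lv}$.

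For the lower bound I would induct on $n$. The base case $n=1$ is the classical $(L^p)^\ast=L^{p'}$ duality: for $p\in(1,\fz)$ and nontrivial $f\in L^p(\rr)$, the function $g(x):=\overline{\operatorname{sgn} f(x)}|f(x)|^{p-1}/\|f\|_{L^p(\rr)}^{p-1}$ lies in $U_{p'}$ and satisfies $\int fg\,dx=\|f\|_{L^p(\rr)}$; the endpoints $p\in\{1,\fz\}$ are handled by characteristic functions of suitable level sets. For the inductive step write $\vp=(\vp_{n-1},p_n)$ with $\vp_{n-1}:=(p_1,\ldots,p_{n-1})$ and $x=(x',x_n)\in\rr^{n-1}\times\rr$, and set $F(x_n):=\|f(\cdot,x_n)\|_{L^{\vp_{n-1}}(\rr^{n-1})}$. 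By the definition of the mixed norm, $\|f\|_{\lv}=\|F\|_{L^{p_n}(\rr)}$. Given $\vaz>0$, the base case applied to $F$ yields a nonnegative $h\in U_{p_n'}$ with $\int_{\rr} F(x_n)h(x_n)\,dx_n\ge \|F\|_{L^{p_n}(\rr)}-\vaz$, while the inductive hypothesis applied to $f(\cdot,x_n)$ yields, for each fixed $x_n$, some $g(\cdot,x_n)\in U_{\vp_{n-1}'}$ with $\int_{\rr^{n-1}}f(x',x_n)g(x',x_n)\,dx'\ge F(x_n)-\vaz$. Set $\widetilde g(x',x_n):=g(x',x_n)h(x_n)$; unfolding the mixed norm one coordinate at a time shows
\[
\|\widetilde g\|_{L^{\vp'}(\rn)}=\lf\|\,\|g(\cdot,x_n)\|_{L^{\vp_{n-1}'}(\rr^{n-1})}\,h(x_n)\r\|_{L^{p_n'}(\rr)}=\|h\|_{L^{p_n'}(\rr)}=1,
\]
and Fubini yields $\int_{\rn} f\widetilde g\,dx\ge\|f\|_{\lv}-\vaz(1+\|h\|_{L^1})$, which can be made as close to $\|f\|_{\lv}$ as desired after a standard reduction to $h$ of compact support.

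The main obstacle is the joint measurability of $g(x',x_n)$ in $(x',x_n)$: the inductive hypothesis only delivers a slice-wise near-extremizer for each fixed $x_n$, but to form $\widetilde g$ and apply Fubini I need a measurable selection. I would resolve this by strengthening the inductive statement so that the near-extremizer is produced as an explicit pointwise formula in $f$ and its iterated partial norms — essentially iterating the 1D ansatz above by replacing $|f|^{p_1-1}\operatorname{sgn}(f)$ by successive ratios involving $\|f(\cdot,x_n,\ldots,x_n)\|^{p_{k+1}-p_k}$ at each layer — so that joint measurability is automatic from the measurability of $f$. To ensure these ratios are well defined (no division by zero or by $\fz$), I would first reduce by density to the case where $f$ is a bounded simple function of compact support, which in particular forces every intermediate partial norm to be finite; the result for general $f$ then follows by monotone approximation. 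The coordinates where $p_i\in\{1,\fz\}$ require minor separate treatment using level-set characteristic functions in place of the explicit power formula.
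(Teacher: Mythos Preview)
The paper does not supply its own proof of this theorem: it simply records the statement and cites \cite[p.\,304, Theorem 2]{bp61}. So there is no ``paper's proof'' to compare against beyond the original Benedek--Panzone argument.

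Your approach is essentially the standard one, and your diagnosis of the obstacle is correct: the inductive scheme fails as stated because the slice-wise near-extremizers $g(\cdot,x_n)$ need not assemble into a jointly measurable function. Your proposed fix --- replacing the abstract near-extremizer by the explicit iterated power formula built from the partial norms
\[
F_k(x_{k+1},\ldots,x_n):=\lf\|f(\cdot,x_{k+1},\ldots,x_n)\r\|_{L^{(p_1,\ldots,p_k)}(\rr^k)}
\]
--- is exactly the right move and is how Benedek and Panzone proceed. Note, however, that once you commit to the explicit formula
\[
g(x)=\overline{\operatorname{sgn}f(x)}\,\frac{|f(x)|^{p_1-1}}{F_1^{\,p_1-1}}\cdot\frac{F_1^{\,p_2-1}}{F_2^{\,p_2-1}}\cdots\frac{F_{n-1}^{\,p_n-1}}{F_n^{\,p_n-1}},
\]
the induction becomes superfluous: a direct computation shows $g\in U_{\vp'}$ and $\int fg=\|f\|_{\lv}$ (for $f$ bounded with compact support and all $p_i\in(1,\fz)$), and the general case follows by truncation and monotone convergence. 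Two small points to tidy up: your estimate $\int f\widetilde g\ge\|f\|_{\lv}-\vaz(1+\|h\|_{L^1})$ presupposes $h\in L^1$, which your compact-support reduction handles but should be invoked \emph{before} the estimate; and since the theorem is stated for arbitrary measurable $f$, you should explicitly note that the case $\|f\|_{\lv}=\fz$ follows from the finite case via monotone approximation.
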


\begin{theorem}\label{x2t4}
For any $r\in(1,\fz)$, there exists a positive constant $C$,
depending only on $r$, such that, for any positive real-valued
functions $f$ and $\phi$ on $\rn$,
\begin{align*}
\int_{\rn}\lf[f^\ast(x)\r]^r \phi(x)\,dx
\le C\int_{\rn}\lf[f(x)\r]^r \phi^\ast(x)\,dx,
\end{align*}
where $f^\ast$ and $\phi^\ast$ respectively denote the centered
Hardy--Littlewood maximal functions of $f$ and $\phi$ as in \eqref{x2e1}.
\end{theorem}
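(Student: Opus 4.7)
My plan is to prove the following \emph{weighted} strengthening of \eqref{x2e2} by induction on $m \in \mathbb{Z}_+$:
\[
\int_{\mathbb{R}^n} [T_{L^{\vec{p}_m}(\mathbb{R}^m)}(f^*)(s)]^q\, \phi(s)\, ds \le C \int_{\mathbb{R}^n} [T_{L^{\vec{p}_m}(\mathbb{R}^m)}(f)(s)]^q\, \phi^*(s)\, ds
\]
for any nonnegative weight $\phi$ on $\mathbb{R}^n$ and any $q \in (1, \infty)$, with the constant $C$ depending only on $q$ and $\vec{p}_m$. The statement of Theorem \ref{x2t2} follows from this by choosing $\phi := \mathbf{1}_{B(\vec{0}_n, R)}$ (whose centered maximal function satisfies $\phi^* \le 1$ pointwise) and letting $R \to \infty$ via monotone convergence. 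The base case $m = 0$ is precisely Theorem \ref{x2t4}.

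For the inductive step $m - 1 \to m$, I peel off the outermost exponent $p_m$: for each $t_m \in \mathbb{R}$ define $g(s, t_m) := T_{L^{(p_1,\ldots,p_{m-1})}(\mathbb{R}^{m-1})}(f(\cdot, \cdot, t_m))(s)$ and analogously $g^{(*)}(s, t_m)$ with $f^*$ in place of $f$, so that $T_{L^{\vec{p}_m}(\mathbb{R}^m)}(f)(s) = \|g(s, \cdot)\|_{L^{p_m}(dt_m)}$ and similarly for $f^*$. For the case $q = p_m$, applying the inductive weighted hypothesis to $f(\cdot, \cdot, t_m)$ for each fixed $t_m$, with exponent $p_m$ and weight $\phi$, yields
\[
\int_{\mathbb{R}^n} \phi(s)\, g^{(*)}(s, t_m)^{p_m}\,ds \le C \int_{\mathbb{R}^n} \phi^*(s)\, g(s, t_m)^{p_m}\,ds;
\]
integrating in $t_m$ and applying Fubini yields the weighted inequality at $q = p_m$. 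For $q > p_m$, I self-improve by substituting the weight $\psi(s)\, U(s)^{q - p_m}$ (where $U := T_{L^{\vec{p}_m}(\mathbb{R}^m)}(f^*)$ and $\psi \ge 0$ is arbitrary) into the $q = p_m$ case just established, then applying H\"older's inequality with the conjugate pair $\bigl(q/(q - p_m),\, q/p_m\bigr)$ together with the $L^{q/(q-p_m)}$-boundedness of the HL maximal operator (valid since $q > p_m > 1$); a truncation of $f$ to bounded, compactly supported functions provides the a priori finiteness of $\|U\|_{L^q(\mathbb{R}^n)}$ needed to close the bootstrap.

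For the remaining sub-case $q \in (1, p_m)$ the weight substitution $U^{q - p_m}$ produces a negative exponent and so fails, so I invoke Theorem \ref{x2t10}. After linearizing the sub-additive operator $f \mapsto f^*$ through a measurable selection of the maximizing radius over a countable dense family of radii (each such selection yielding a linear averaging operator pointwise dominated by $f^*$), I interpolate for this linearized family between the strong-type bound at $q = p_m$ just established and a weak-type endpoint near $q = 1$ obtained from Marcinkiewicz-type estimates on mixed Lebesgue spaces; this fills the range $q \in (1, p_m)$ via the Riesz--Thorin theorem for mixed Lebesgue spaces, and the bound for the supremum $f^*$ itself follows by a passage to the limit on the countable family. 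This sub-case is the principal obstacle: reconciling the linear framework of Theorem \ref{x2t10} with the sublinear nature of $f \mapsto f^*$, and ensuring uniformity of the interpolated constants over the linearization parameters, require the most care.
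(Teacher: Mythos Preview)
Your proposal does not prove Theorem~\ref{x2t4}. You explicitly \emph{assume} it: ``The base case $m = 0$ is precisely Theorem~\ref{x2t4}.'' What you have actually written is a proof sketch for Theorem~\ref{x2t2} (Bagby's extended maximal inequality on mixed norms), with Theorem~\ref{x2t4} taken as a known input. The paper handles Theorem~\ref{x2t4} the same way: it is not proved there but quoted from Fefferman--Stein \cite[Lemma~1]{fs71} and then used as a tool inside the proof of Theorem~\ref{x2t2}. So on the statement actually in question there is nothing to compare --- neither you nor the paper supplies an argument for it.

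If your real target was Theorem~\ref{x2t2}, your inductive scheme is genuinely different from the paper's. You carry an auxiliary weight $\phi$ through the induction and peel off the \emph{outermost} exponent $p_m$, treating $q=p_m$ by Fubini, $q>p_m$ by a bootstrap with weight $\psi\,U^{q-p_m}$, and $q<p_m$ by interpolation. The paper instead works unweighted, inserts the new variable on the \emph{inside}, establishes the endpoints $r=\infty$ and $r\in(1,\min\{q,p_1,\dots,p_k\})$ directly (the latter via duality, Theorem~\ref{x2t4}, and H\"older), and interpolates with Theorem~\ref{x2t10} after linearizing $f\mapsto f^*$ by a sequence of dyadic averaging operators. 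Two steps of your sketch are underspecified. In the $q>p_m$ bootstrap, substituting $\psi\,U^{q-p_m}$ produces $(\psi\,U^{q-p_m})^*$ on the right, and H\"older with exponents $\bigl(q/(q-p_m),\,q/p_m\bigr)$ alone does not convert this into $\psi^*$ times a controllable factor; you would need a pointwise product bound for the maximal function, which fails in general. In the $q<p_m$ case, Theorem~\ref{x2t10} is a strong-type Riesz--Thorin result and does not accept a weak-type endpoint, so the ``Marcinkiewicz-type estimate near $q=1$'' you invoke is not something the paper provides, and you have not stated one precisely.
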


Via Theorems \ref{x2t3} and \ref{x2t4}, we now show Theorem \ref{x2t2}.

\begin{proof}[Proof of Theorem \ref{x2t2}]
We prove this theorem by induction. Let $m\in \zz_+$ and
$f\in L_{\rm loc}^1(\rr^{n}\times\rr^{m})$.
We perform induction on $m$. If $m:=0$, then
$f\in L_{\rm loc}^1(\rr^{n})$ and the desired inequality
\eqref{x2e2} becomes, for any given $q\in (1,\fz)$,
$$\int_{\rr^{n}}\lf|f^\ast(s)\r|^q \,ds
\ls \int_{\rr^{n}}\lf|f(s)\r|^q \,ds.$$
This is just the well-known boundedness of the centered Hardy--Littlewood
maximal operator on $L^q(\rr^{n})$ with any given $q\in (1,\fz)$
(see, for instance, \cite[p.\,13, Theorem 1.(c)]{s93}).

Now assume \eqref{x2e2} holds true for $m:=k$
with some fixed $k\in\nn$, namely,
for any given $\vec{p}_{k}:=(p_1,\ldots,p_{k})\in(1,\fz)^{k}$
and $q\in (1,\fz)$,
\begin{align}\label{x2e3}
\int_{\rr^{n}}\lf[T_{L^{\vec{p}_{k}}(\rr^{k})}(f^\ast)(s)\r]^q \,ds
\ls \int_{\rr^{n}}\lf[T_{L^{\vec{p}_{k}}(\rr^{k})}(f)(s)\r]^q \,ds.
\end{align}
To complete the proof of Theorem \ref{x2t2}, it suffices to show
that, for $m:=k+1$, \eqref{x2e2} also holds true. To this end,
we only need to prove, for any given $\vec{p}_k\in(1,\fz)^k$,
$r\in(1,\fz]$ and $q\in(1,\fz)$,
\begin{align}\label{x2e4}
&\int_{\rr^{n}}\lf\{T_{L^{\vec{p}_{k}}(\rr^{k})}\lf(\lf[\int_{\rr}
\lf|f^\ast(s,y)\r|^r\,dy\r]^{1/r}\r)\r\}^q \,ds\noz\\
&\hs\hs\ls \int_{\rr^{n}}\lf\{T_{L^{\vec{p}_{k}}(\rr^{k})}\lf(\lf[\int_{\rr}
\lf|f(s,y)\r|^r\,dy\r]^{1/r}\r)\r\}^q \,ds
\end{align}
with the usual modifications made when $r=\fz$.
Indeed, for any given $r\in(1,\fz)$, any $s\in\rn$ and $\wz{t}\in\rr^k$, let
$$J_r(f)(s,\wz t):=\lf[\int_{\rr}
\lf|f(s,y,\wz t)\r|^r\,dy\r]^{1/r}$$
and $J_{\fz}(f)(s,\wz t):=\esup_{y\in\rr}\lf|f(s,y,\wz t)\r|$.
Then we can rewrite \eqref{x2e4} simply as
\begin{align}\label{x2e5}
\int_{\rr^{n}}\lf[T_{L^{\vec{p}_{k}}(\rr^{k})}\lf(J_r\lf(f^\ast\r)\r)(s)\r]^q \,ds
\ls \int_{\rr^{n}}\lf[T_{L^{\vec{p}_{k}}(\rr^{k})}\lf(J_r(f)\r)(s)\r]^q \,ds.
\end{align}

We now prove \eqref{x2e5} by three steps.

\emph{Step 1.} In this step, we show that \eqref{x2e5} holds true
for  $r=\fz$. To do this, first notice that,
for any $s\in\rn$, $\wz{t}\in\rr^k$ and almost every $y\in\rr$,
$$\lf|f(s,y,\wz t)\r|\le J_{\fz}(f)(s,\wz t).$$
Thus, for any $s\in\rn$, $\wz{t}\in\rr^k$ and almost every $y\in\rr$, we have
$$f^\ast(s,y,\wz t)\le \lf[J_{\fz}(f)\r]^\ast(s,\wz t)$$
and hence
\begin{align*}
J_{\fz}(f^\ast)(s,\wz t)\le\lf[J_{\fz}(f)\r]^\ast(s,\wz t).
\end{align*}
From this and \eqref{x2e3}, it follows that,
for any given $\vec{p}_k\in(1,\fz)^k$ and $q\in(1,\fz)$,
\begin{align*}
\int_{\rr^{n}}\lf[T_{L^{\vec{p}_{k}}(\rr^{k})}
\lf(J_{\fz}\lf(f^\ast\r)\r)(s)\r]^q \,ds
&\le\int_{\rr^{n}}\lf[T_{L^{\vec{p}_{k}}(\rr^{k})}
\lf(\lf[J_{\fz}(f)\r]^\ast\r)(s)\r]^q \,ds\\
&\ls\int_{\rr^{n}}\lf[T_{L^{\vec{p}_{k}}(\rr^{k})}
\lf(J_{\fz}(f)\r)(s)\r]^q \,ds,
\end{align*}
which implies that \eqref{x2e5} holds true for $r=\fz$.

\emph{Step 2.} In this step, we prove that \eqref{x2e5} holds true
for any given $r\in(1,\min\{q,p_1,\ldots,p_k\})$.
Indeed, note that, for any $s\in\rn$,
\begin{align*}
T_{L^{\vec{p}_{k}}(\rr^{k})}\lf(J_r\lf(f^\ast\r)\r)(s)
&=T_{L^{\vec{p}_{k}}(\rr^{k})}\lf(\lf\{\int_{\rr}
\lf[f^\ast(s,y)\r]^r\,dy\r\}^{1/r}\r)\\
&=\lf[T_{L^{\vec{p}_{k}/r}(\rr^{k})}\lf(\int_{\rr}
\lf[f^\ast(s,y)\r]^r\,dy\r)\r]^{1/r},
\end{align*}
where $\vec{p}_{k}/r:=(p_1/r,\ldots,p_{k}/r)\in(1,\fz)^{k}$.
Therefore, for any given $q\in (1,\fz)$, by the fact that
$r\in(1,\min\{q,p_1,\ldots,p_k\})$
and Theorem \ref{x2t3}, we have
\begin{align}\label{x2e6}
\int_{\rn}\lf[T_{L^{\vec{p}_{k}}(\rr^{k})}
\lf(J_r\lf(f^\ast\r)\r)(s)\r]^q\,ds
&=\int_{\rn}\lf[T_{L^{\vec{p}_{k}/r}(\rr^{k})}\lf(\int_{\rr}
\lf[f^\ast(s,y)\r]^r\,dy\r)\r]^{q/r}\,ds\noz\\
&=\lf\|\lf\|\int_{\rr}\lf[f^\ast(\cdot,y,\cdot)\r]^r\,dy\r
\|_{L^{\vec{p}_{k}/r}(\rr^{k})}\r\|_{L^{q/r}(\rr^n)}^{q/r}\noz\\
&=\sup_{\phi}\lf|\int_{\rn}\int_{\rr^k}\lf\{\int_{\rr}
\lf[f^\ast(s,y,\wz t)\r]^r\,dy\r\}
\phi(s,\wz t)\,d\wz t\,ds\r|^{q/r},
\end{align}
where the supremum is taken over all $\phi$ belonging to
$$\lf\{\zeta\in L^{(q/r)'}(L^{(\vp_k/r)'})(\rr^{n+k}):\
\int_{\rn}\lf[T_{L^{(\vec{p}_{k}/r)'}
(\rr^{k})}(\zeta)(s)\r]^{\frac q{q-r}}\,ds=1\r\}.$$
This, together with the Tonelli theorem and Theorem \ref{x2t4},
implies that
\begin{align*}
&\lf|\int_{\rn}\int_{\rr^k}\lf\{\int_{\rr}
\lf[f^\ast(s,y,\wz t)\r]^r\,dy\r\}
\phi(s,\wz t)\,d\wz t\,ds\r|\\
&\hs\hs\le\int_{\rr^k}\int_{\rr}\int_{\rn}
\lf[f^\ast(s,y,\wz t)\r]^r
\lf|\phi(s,\wz t)\r|\,ds\,dy\,d\wz t\ls\int_{\rr^k}\int_{\rr}\int_{\rn}
\lf|f(s,y,\wz t)\r|^r
\phi^\ast(s,\wz t)\,ds\,dy\,d\wz t.
\end{align*}
From this, the Tonelli theorem again, the fact that
$r\in(1,\min\{q,p_1,\ldots,p_k\})$, Theorem \ref{x2t7} and
the H\"{o}lder inequality, it follows that
\begin{align*}
&\lf|\int_{\rn}\int_{\rr^k}\lf\{\int_{\rr}
\lf[f^\ast(s,y,\wz t)\r]^r\,dy\r\}
\phi(s,\wz t)\,d\wz t\,ds\r|\\
&\hs\hs\ls \int_{\rn}\int_{\rr^k}\lf[\int_{\rr}
\lf|f(s,y,\wz t)\r|^r\,dy\r]
\phi^\ast(s,\wz t)\,d\wz t\,ds\\
&\hs\hs\ls \int_{\rn}T_{L^{\vec{p}_{k}/r}(\rr^{k})}
\lf(\int_{\rr}\lf|f(s,y)\r|^r\,dy\r)
T_{L^{(\vec{p}_{k}/r)'}(\rr^{k})}\lf(\phi^\ast\r)(s)\,ds\\
&\hs\hs\ls \lf\{\int_{\rn}\lf[T_{L^{\vec{p}_{k}/r}(\rr^{k})}\lf(\int_{\rr}
\lf|f(s,y)\r|^r\,dy\r)\r]^{q/r}\,ds\r\}^{r/q}\\
&\hs\hs\hs\times\lf\{\int_{\rn}\lf[T_{L^{(\vec{p}_{k}/r)'}(\rr^{k})}
\lf(\phi^\ast\r)(s)\r]^{q/{(q-r)}}\,ds\r\}^{1-r/q},
\end{align*}
which, combined with \eqref{x2e3} and the fact that
$\int_{\rn}\lf[T_{L^{(\vec{p}_{k}/r)'}
(\rr^{k})}(\phi)(s)\r]^{\frac q{q-r}}\,ds=1$,
further implies that
\begin{align*}
&\lf|\int_{\rn}\int_{\rr^k}\lf\{\int_{\rr}
\lf[f^\ast(s,y,\wz t)\r]^r\,dy\r\}
\phi(s,\wz t)\,d\wz t\,ds\r|\\
&\hs\hs\ls \lf\{\int_{\rn}\lf[T_{L^{\vec{p}_{k}/r}(\rr^{k})}\lf(\int_{\rr}
\lf|f(s,y)\r|^r\,dy\r)\r]^{q/r}\,ds\r\}^{r/q}\\
&\hs\hs\hs\times\lf\{\int_{\rn}\lf[T_{L^{(\vec{p}_{k}/r)'}(\rr^{k})}
\lf(\phi\r)(s)\r]^{q/{(q-r)}}\,ds\r\}^{1-r/q}\\
&\hs\hs\ls \lf\{\int_{\rn}\lf[T_{L^{\vec{p}_{k}/r}(\rr^{k})}\lf(\int_{\rr}
\lf|f(s,y)\r|^r\,dy\r)\r]^{q/r}\,ds\r\}^{r/q}\\
&\hs\hs\sim \lf\{\int_{\rn}\lf[T_{L^{\vec{p}_{k}}(\rr^{k})}
\lf(J_r(f)\r)(s)\r]^{q}\,ds\r\}^{r/q}.
\end{align*}
By this and \eqref{x2e6}, we conclude that
\eqref{x2e5} holds true for any given
$r\in(1,\min\{q,p_1,\ldots,p_k\})$.

\emph{Step 3.} In this step, based on the obtained results in
Steps 1 and 2 above, we complete the proof of \eqref{x2e5}
via an interpolation procedure. To this end, recall that
Theorem \ref{x2t10}, the Riesz--Thorin interpolation
theorem on mixed Lebesgue spaces, has been established
by Benedek and Panzone in \cite[p.\,316, Theorem 2]{bp61}.
However, this interpolation theorem is
only applicable to linear operators and, obviously,
the Hardy--Littlewood maximal operator as in \eqref{x2e1} is only sublinear.
Thus, for any $i\in\zz$, we define a linear operator $\Gamma$ by setting,
for any $f\in L_{\rm loc}^1(\rn\times\rr\times\rr^k)$, $s\in\rn$, $y\in\rr$
and $\wz{t}\in\rr^k$,
\begin{align*}
\lf(\Gamma (f)\r)_i(s,y,\wz t):=\frac{1}{|B(s,2^i)|}
\int_{B(s,2^i)}f(z,y,\wz t)\,dz,
\end{align*}
where, for any $s\in \rr^{n}$ and $i\in\zz$,
$B(s,2^i):=\{z\in\rr^{n}:\ |z-s|<2^i\}$,
and let
\begin{align*}
I_{\fz}\lf[\{\Gamma (f)\}\r](s,y,\wz t)
:=\sup_{i\in\zz}\lf|\lf(\Gamma f\r)_i(s,y,\wz t)\r|.
\end{align*}
Then it is easy to see that,
for any $s\in\rn$, $y\in\rr$ and $\wz{t}\in\rr^k$,
\begin{align}\label{x2e7}
I_{\fz}\lf[\{\Gamma (f)\}\r](s,y,\wz t)
\le f^\ast(s,y,\wz t).
\end{align}
In addition, notice that, for any $r\in(0,\fz)$,
there exists some $i_r\in\zz$ such that $r\in [2^{i_r-1},2^{i_r})$.
Therefore, for any $r\in(0,\fz)$, $s\in\rn$, $y\in\rr$ and $\wz{t}\in\rr^k$,
\begin{align*}
\frac{1}{|B(s,r)|}\int_{B(s,r)}\lf|f(z,y,\wz t)\r|\,dz
&\le \frac{1}{ r^n\upsilon_n}\int_{B(s,2^{i_r})}\lf|f(z,y,\wz t)\r|\,dz\le \frac{1}{2^{(i_r-1)n}\upsilon_n }\int_{B(s,2^{i_r})}\lf|f(z,y,\wz t)\r|\,dz\\
&=2^n I_{\fz}\lf[\{\Gamma (f)\}\r](s,y,\wz t),
\end{align*}
here and thereafter, $\upsilon_n$ denotes the Lebesgue
measure of the unit ball on $\rn$, namely, $\upsilon_n
:=|B(\vec{0}_n,1)|$ with $B(\vec{0}_n,1):=\{y\in\rn:\
|y|<1\}$.
Thus, for any $s\in\rn$, $y\in\rr$ and $\wz{t}\in\rr^k$,
$$f^\ast(s,y,\wz t)\le 2^n I_{\fz}\lf[\{\Gamma (f)\}\r](s,y,\wz t),$$
which, together with \eqref{x2e7}, further implies that
\begin{align*}
I_{\fz}\lf[\{\Gamma (f)\}\r](s,y,\wz t)
\sim f^\ast(s,y,\wz t).
\end{align*}
Thus, \eqref{x2e5} is equivalent to, for any given $\vec{p}_k\in(1,\fz)^k$,
$r\in(1,\fz]$ and $q\in(1,\fz)$,
\begin{align}\label{x2e15}
\int_{\rr^{n}}\lf[T_{L^{\vec{p}_{k}}(\rr^{k})}
\lf(J_r \lf(I_{\fz}\lf[\{\Gamma (f)\}\r]\r)\r)(s)\r]^q\,ds
\ls \int_{\rr^{n}}\lf[T_{L^{\vec{p}_{k}}(\rr^{k})}\lf(J_r(f)\r)(s)\r]^q\,ds.
\end{align}
For any given $\vec{p}_k\in(1,\fz)^k$,
$r\in(1,\fz]$ and $q\in(1,\fz)$, let
$$\lf\|\Gamma (|f|)\r\|_{\vec {Q}_r(\vec{p}_k,q)}:=\int_{\rr^{n}}\lf[T_{L^{\vec{p}_{k}}(\rr^{k})}
\lf(J_r \lf(I_{\fz}\lf[\{\Gamma (f)\}\r]\r)\r)(s)\r]^q\,ds$$
and
$$\lf\|f\r\|_{\vec {V}_r(\vec{p}_k,q)}
:=\int_{\rr^{n}}\lf[T_{L^{\vec{p}_{k}}(\rr^{k})}\lf(J_r(f)\r)(s)\r]^q\,ds.$$
Then \eqref{x2e15} becomes, for any given $\vec{p}_k\in(1,\fz)^k$,
$r\in(1,\fz]$ and $q\in(1,\fz)$,
$$\lf\|\Gamma (|f|)\r\|_{\vec {Q}_r(\vec{p}_k,q)}\ls \lf\|f\r\|_{\vec {V}_r(\vec{p}_k,q)}.$$

On another hand, by the conclusions of Steps 1 and 2, we know that,
for any given $\vec{p}_k\in(1,\fz)^k$ and $q\in(1,\fz)$,
$$\lf\|\Gamma (|f|)\r\|_{\vec {Q}_{\fz}(\vec{p}_k,q)}
\ls \lf\|f\r\|_{\vec {V}_{\fz}(\vec{p}_k,q)}$$
and, for any given $r\in(1,\min\{q,p_1,\ldots,p_k\})$,
\begin{align*}
\lf\|\Gamma (|f|)\r\|_{\vec {Q}_r(\vec{p}_k,q)}
\ls \lf\|f\r\|_{\vec {V}_r(\vec{p}_k,q)},
\end{align*}
which, combined with Theorem \ref{x2t10}, further implies
that, for any given $r\in(1,\fz]$,
$$\lf\|\Gamma (|f|)\r\|_{\vec {Q}_r(\vec{p}_k,q)}
\ls \lf\|f\r\|_{\vec {V}_r(\vec{p}_k,q)}.$$
This implies that \eqref{x2e15} holds true and
hence finishes the proof of Theorem \ref{x2t2}.
\end{proof}

\subsection{Applications}\label{2s3}

This subsection is devoted to a survey of some applications
which include the dual inequality of Stein type, the
Fefferman--Stein vector-valued inequality on mixed
Lebesgue spaces $\lv$ proved by Nogayama in \cite{tn} as well as
the boundedness of fractional integrals and geometric inequalities
on iterated weak Lebesgue spaces
$L^{q_2,\fz}(L^{q_1,\fz})(\rr^{2n})$
and weak mixed-norm Lebesgue spaces $L^{\vec q,\fz}(\rr^{2n})$
obtained by Chen and Sun in \cite{cs}.
For this purpose, we first present the notion of
iterated maximal operators.

For any
$k\in\{1,\ldots,n\}$, the \emph{maximal function} $M_k(f)$
of any $f\in L_{\rm loc}^1(\rn)$ for the $k$th variable is defined by setting,
for any $x:=(x_1,\ldots,x_n)\in \rn$,
\begin{align}\label{x2e14}
M_k(f)(x):=\sup_{I\in \mathbb{I}_{x_k}}\frac{1}{|I|}\int_I
|f(x_1,\ldots,x_{k-1},y_k,x_{k+1},\ldots,x_n)|\,dy_k,
\end{align}
where, for any $k\in \{1,\ldots,n\}$, ${\mathbb I}_{x_k}$
denotes the set of all intervals in $\rr_{x_k}$ containing $x_k$.
Moreover, for any given $t\in(0,\fz)$, the
\emph{iterated maximal function} $\mathfrak{M}_t(f)$ of any
$f\in L_{\rm loc}^1(\rn)$ is defined by setting, for any $x\in\rn$,
\begin{align}\label{x2e13}
\mathfrak{M}_t(f)(x):=\lf[M_n\cdots \lf(M_1(|f|^t)\r)(x)\r]^{1/t}.
\end{align}

Now we state a result given in \cite[Theorem 1.6]{tn}
about the dual inequality of Stein type on mixed
Lebesgue spaces, which extends the corresponding
result of Fefferman and Stein \cite[Lemma 1]{fs71}.
To this end, we first recall the following notion
of Muckenhoupt weights.

\begin{definition}\label{x3d2}
Let $p\in(1,\fz)$. The \emph{weight class} $\Ap$ is defined
to be the set of all non-negative locally integrable functions
$\oz$ on $\rn$ such that
\begin{align*}
\Aw:=\sup_{Q\in\rn}\frac{1}{|Q|^p}\int_Q
\oz(x)\,dx \lf(\int_Q
\lf[\oz(y)\r]^{-p'/p}\,dy\r)^{p/p'}<\fz,
\end{align*}
where the supremum is taken over all closed cubes $Q\subset \rn$
and $1/p+1/p'=1$.

When $p=1$, the \emph{weight class} $\Al$ is defined
to be the set of all non-negative locally integrable functions
$\oz$ on $\rn$ such that
\begin{align*}
\Alw:=\sup_{Q\in\rn}\frac{1}{|Q|}\int_Q
\oz(x)\,dx \lf(\esup_{y\in Q}
\lf[\oz(y)\r]^{-1}\r)<\fz,
\end{align*}
where the supremum is taken over all closed cubes $Q\subset \rn$.
Moreover, for any $E\subset \rn$, let $\oz(E):=\int_E\oz(x)\,dx$.
\end{definition}

\begin{theorem}\label{x3t4}
Let $\vp:=(p_1,\ldots,p_n)\in [1,\fz)^n$,
$t\in(0,\min\{p_1,\ldots,p_n\})$ and,
for any $j\in\{1,\ldots,n\}$, $(\oz_j)^t\in \mathbf{A}_{p_j}(\rr)$.
Then there exists a positive constant $C$ such that,
for any measurable function $f$,
$$\lf\|\mathfrak{M}_t(f)\cdot\bigotimes_{j=1}^n(\oz_j)^{\frac1{p_j}}\r\|_{\lv}
\le C\lf\|f\cdot\bigotimes_{j=1}^n\lf[M_j(\oz_j)\r]^{\frac1{p_j}}\r\|_{\lv},$$
where, for any $x=(x_1,\ldots,x_n)\in\rn$,
$(\bigotimes_{j=1}^n\oz_j)(x):=\prod_{j=1}^n\oz_j(x_j)$
and $M_j$ as well as $\mathfrak{M}_t$ are, respectively,
as in \eqref{x2e14} and \eqref{x2e13}.
\end{theorem}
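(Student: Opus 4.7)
The plan is to iteratively peel off one of the $n$ one-dimensional maximal operators $M_k$ at a time, passing each $M_k$ from the iterated-maximal factor over to the corresponding weight $\omega_k$. At each step, a mixed-norm version of the Fefferman--Stein inequality (Theorem \ref{x2t4}) will play the central role; the one-dimensional case $n=1$ is handled directly by Theorem \ref{x2t4} with $r=p_1/t>1$, $\phi=\omega_1$, and $g=|f|^t$.

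For the general case I would set $g := |f|^t$ and, for $k = 0, 1, \ldots, n$, put $H_k := M_k\circ M_{k-1}\circ\cdots\circ M_1 (g)$, so that $H_0 = g$, $H_n = (\mathfrak{M}_t f)^t$, and $H_k = M_k H_{k-1}$ (each $M_k$ acting only in $x_k$). Introduce the interpolating quantities
$$N_k := \bigg\|H_k^{1/t}\prod_{j\le k}\omega_j(x_j)^{1/p_j}\prod_{j>k}\big[M_j\omega_j(x_j)\big]^{1/p_j}\bigg\|_{\lv},$$
so that $N_n$ is the LHS and $N_0$ is the RHS of the desired inequality. It then suffices to prove $N_k \le C_k\, N_{k-1}$ for each $k \in \{1, \ldots, n\}$ and multiply these through.

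The key one-step reduction amounts to showing
$$\Big\|(M_k H)^{1/t}\,\omega_k^{1/p_k}\,\Psi\Big\|_{\lv} \le C\, \Big\|H^{1/t}\,[M_k\omega_k]^{1/p_k}\,\Psi\Big\|_{\lv}$$
for any $H\ge 0$ on $\rn$ and any $\Psi\ge 0$ independent of $x_k$. First, Jensen's inequality $(M_k H)^{p_k/t}\le M_k(H^{p_k/t})$ (valid since $p_k/t>1$) passes the $p_k/t$-power inside $M_k$. Then Theorem \ref{x2t4} in the $x_k$ variable yields, slice-wise in the remaining coordinates,
$$\int_\rr (M_k h)^{p_k/t}(x_k)\,\omega_k(x_k)\,dx_k \le C \int_\rr h(x_k)^{p_k/t}\,M_k\omega_k(x_k)\,dx_k.$$
To propagate this slice-wise estimate into the ambient mixed-norm $\lv$, I would invoke Theorem \ref{x2t2} to absorb the $x_k$-integration into the remaining iterated norm via the mixed-norm boundedness of the one-dimensional maximal operator in the $x_k$-direction. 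The hypothesis $\omega_k^t \in \mathbf{A}_{p_k}(\rr)$ is used here to obtain the Muckenhoupt boundedness of $M_k$ on $L^{p_k}(\omega_k^t\,dx_k)$, which is what makes the absorption close with an explicit constant depending only on $[\omega_k^t]_{\mathbf{A}_{p_k}(\rr)}$.

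The hard part will be this absorption step: a naive pointwise bound of the form $\|(M_k H)(\cdot, x_k, \cdot)\|_{L^{\vec q}(\mathbb{R}^{n-1})} \le C\, M_k\big(\|H(\cdot, x_k, \cdot)\|_{L^{\vec q}}\big)(x_k)$ fails in general, so the vector-valued framework provided by Theorem \ref{x2t2}, together with the $\mathbf{A}_{p_k}$-assumption on $\omega_k^t$, must be used to reconcile the slice-wise Fefferman--Stein estimate with the iterated exponent structure of $\lv$. Once the one-step estimate is established, iterating over $k = n, n-1, \ldots, 1$ completes the proof.
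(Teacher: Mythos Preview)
This survey does not prove Theorem~\ref{x3t4}; it merely quotes the statement from \cite[Theorem~1.6]{tn}, so there is no proof in the paper to compare against.

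As for your proposal: the telescoping scheme $N_n \le CN_{n-1}\le\cdots \le CN_0$ and the slice-wise Fefferman--Stein inequality are correctly set up, but the absorption step is a genuine gap and the tools you invoke do not close it. First, the slice-wise bound $\int_\rr (M_k h)^{p_k/t}\omega_k\,dx_k \le C\int_\rr h^{p_k/t} M_k\omega_k\,dx_k$ follows from Theorem~\ref{x2t4} for \emph{every} nonnegative $\omega_k$ and uses no $\mathbf{A}_p$ hypothesis; so your account of where the assumption $(\omega_k)^t\in \mathbf{A}_{p_k}(\rr)$ enters is off---Muckenhoupt boundedness on $L^{p_k}(\omega_k^t\,dx_k)$ yields neither the weight $\omega_k$ appearing in your integrals nor the factor $M_k\omega_k$ that the theorem requires on the right. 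Second, and more seriously, passing from the slice-wise estimate (which controls the $L^{p_k}(dx_k)$-norm pointwise in the remaining variables) to the bound $N_k\le CN_{k-1}$ amounts to moving the $L^{p_k}(dx_k)$-integration from the innermost position to position $k$ of the iterated mixed norm; this swap is not available for general $\vp$, since the supremum in $M_k$ does not commute with the inner $L^{p_1},\dots,L^{p_{k-1}}$-norms and Minkowski only goes one way. Theorem~\ref{x2t2} does not help here: it is unweighted, it treats the maximal operator acting in the \emph{outermost} integration variable, and it contains no mechanism for transferring $\omega_k$ to $M_k\omega_k$. You have correctly located the hard part but not resolved it.
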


Moreover, the following Fefferman--Stein vector-valued inequality
of iterated maximal operators $\mathfrak{M}_t$
on mixed Lebesgue spaces was also shown by Nogayama in
\cite[Theorem 1.7]{tn} (see also \cite[p.\,679]{js08}).

\begin{theorem}\label{x3t5}
Let $\vp\in(0,\fz)^n$, $u\in(0,\fz]$ and
$t\in(0,\min\{p_1,\ldots,p_n,u\})$.
Then there exists a positive constant $C$ such that,
for any sequence $\{f_j\}_{j\in\nn}$ of measurable functions,
\begin{align*}
\lf\|\lf(\sum_{j\in\nn}\lf[\mathfrak{M}_t (f_j)\r]
^u \r)^{\frac{1}{u}}\r\|_{\lv}
\le C\lf\|\lf(\sum_{j\in\nn}\lf|f_j\r|^u \r)^{\frac{1}{u}}\r\|_{\lv},
\end{align*}
where $\mathfrak{M}_t$ is as in \eqref{x2e13}.
\end{theorem}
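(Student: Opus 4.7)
The plan is to first rescale so that the estimate reduces to an unweighted Fefferman--Stein inequality for the bare iterated operator $M_n\cdots M_1$, and then to establish the latter by telescoping $n$ applications of a one-dimensional Fefferman--Stein inequality, one in each coordinate direction.

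First I would set $g_j:=|f_j|^t$, $v:=u/t$ and $\vec q:=\vp/t$; the assumption $t\in(0,\min\{p_1,\ldots,p_n,u\})$ forces $\vec q\in(1,\fz)^n$ and $v\in(1,\fz]$. Since $[\mathfrak M_t(f_j)]^t=M_n\cdots M_1 g_j$ and, for any non-negative measurable $F$,
\[
\lf\|F^{1/t}\r\|_{\lv}=\lf\|F\r\|_{L^{\vp/t}(\rn)}^{1/t},
\]
both sides of the desired inequality rewrite in terms of $g_j$, and the estimate reduces to
\[
\lf\|\lf(\sum_{j\in\nn}[M_n\cdots M_1 g_j]^v\r)^{1/v}\r\|_{L^{\vec q}(\rn)}\ls\lf\|\lf(\sum_{j\in\nn}g_j^v\r)^{1/v}\r\|_{L^{\vec q}(\rn)}\qquad(\vec q\in(1,\fz)^n,\ v\in(1,\fz]).
\]
The endpoint $v=\fz$ then falls out at once: the monotonicity of every $M_k$ yields the pointwise bound $\sup_j M_n\cdots M_1 g_j\le M_n\cdots M_1(\sup_j g_j)$, and $\mathfrak M_1=M_n\cdots M_1$ is bounded on $L^{\vec q}(\rn)$ by the scalar result recalled in Remark~\ref{x2r2}.

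For $v\in(1,\fz)$ my strategy is to telescope. Setting $G^{(0)}_j:=g_j$ and $G^{(k)}_j:=M_k G^{(k-1)}_j$, so that $G^{(n)}_j=M_n\cdots M_1 g_j$, it suffices to prove that, for each $k\in\{1,\ldots,n\}$ and each sequence $\{h_j\}_{j\in\nn}$,
\[
\lf\|\lf(\sum_{j\in\nn}|M_k h_j|^v\r)^{1/v}\r\|_{L^{\vec q}(\rn)}\ls\lf\|\lf(\sum_{j\in\nn}|h_j|^v\r)^{1/v}\r\|_{L^{\vec q}(\rn)};
\]
chaining this with $h_j=G^{(k-1)}_j$ for $k=1,\ldots,n$ delivers the reduced inequality. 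For $k=1$ the bound is immediate from the classical one-dimensional vector-valued Fefferman--Stein inequality in $x_1$ applied pointwise in $(x_2,\ldots,x_n)$, because $L^{q_1}_{x_1}$ is the innermost factor of $L^{\vec q}(\rn)$ and one simply takes the outer $L^{(q_2,\ldots,q_n)}$-norm on both sides.

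The hard part will be the case $k\ge 2$, where $L^{q_k}_{x_k}$ is no longer the innermost factor and the outer-norm trick fails. My plan here is to exploit the Bochner-space decomposition
\[
L^{\vec q}(\rn)=L^{(q_{k+1},\ldots,q_n)}\lf(\rr^{n-k};\,L^{q_k}\lf(\rr;\,L^{(q_1,\ldots,q_{k-1})}(\rr^{k-1})\r)\r)
\]
and, pointwise in $(x_{k+1},\ldots,x_n)$, to apply the Banach-lattice-valued Fefferman--Stein inequality for the one-variable operator $M_k$ acting on $L^{q_k}(\rr_{x_k};X)$ with underlying lattice $X:=L^{(q_1,\ldots,q_{k-1})}(\rr^{k-1};\ell^v)$; since $X$ is a reflexive Banach function lattice built from exponents in $(1,\fz)$, its Boyd indices lie strictly inside $(1,\fz)$, which is exactly the hypothesis needed for the $\ell^v$-valued one-dimensional maximal inequality. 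A more hands-on alternative, available when $v<\min\{q_1,\ldots,q_n\}$, would be to dualize $\|\sum_j|M_k h_j|^v\|_{L^{\vec q/v}(\rn)}$ against a non-negative $\phi\in L^{(\vec q/v)'}(\rn)$ via Theorem~\ref{x2t3}, invoke Theorem~\ref{x3t4} to transfer $M_k$ off each $h_j$ onto $\phi$, and absorb the result through the boundedness of $\mathfrak M_1$ on the dual mixed Lebesgue space. Telescoping the $k$-th step inequality over $k=1,\ldots,n$ would then finish the proof.
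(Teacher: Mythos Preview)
The paper does not actually prove Theorem~\ref{x3t5}; it records it as a result of Nogayama \cite[Theorem 1.7]{tn} (see also \cite[p.\,679]{js08}) and, in Remark~\ref{x2r2}, points to Theorem~\ref{x2t2} (Bagby's extended inequality) as the engine behind the cited proofs. So the comparison is between your argument and the Bagby-based route the paper signals.

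Your rescaling to $\vec q=\vp/t\in(1,\fz)^n$, $v=u/t\in(1,\fz]$ and your telescoping over $k=1,\ldots,n$ are exactly the natural reductions, and the $v=\fz$ endpoint is handled correctly. The substantive difference is at the $k\ge2$ step. You invoke a Banach-lattice-valued maximal theorem on $L^{q_k}(\rr;X)$ with $X=L^{(q_1,\ldots,q_{k-1})}(\rr^{k-1};\ell^v)$; this is correct, but the right justification is that $X$ is a UMD Banach lattice (as an iterated Bochner space with all exponents in $(1,\fz)$), so that the lattice Hardy--Littlewood maximal operator is bounded on $L^{q_k}(\rr;X)$ by the Bourgain/Rubio de Francia theorem. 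Your ``Boyd indices'' phrasing is imprecise here---the Lorentz--Shimogaki criterion governs $M$ on a rearrangement-invariant space, not the lattice-valued operator on $L^{q_k}(\rr;X)$---so you should cite the UMD-lattice result explicitly. Your ``hands-on alternative'' only covers $v<\min_i q_i$, so it cannot replace the lattice step in full generality.

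By contrast, the approach indicated by the paper avoids the UMD machinery entirely: one regards the $\ell^v$-summation as an additional innermost mixed-norm layer (embedding $\ell^v(\nn)\hookrightarrow L^v(\rr)$ via step functions if one wants to stay literally within Theorem~\ref{x2t2}) and applies Theorem~\ref{x2t2} with one outer variable $s=x_k$, inner exponents $(v,q_1,\ldots,q_{k-1})$, and outer exponent $q=q_k$, pointwise in $(x_{k+1},\ldots,x_n)$. This yields exactly your $k$-th telescoping inequality. The trade-off is clear: your route is shorter if one is willing to import the UMD-lattice maximal theorem, while the Bagby route is fully self-contained within the paper (Theorem~\ref{x2t2} is proved there from Theorems~\ref{x2t3}, \ref{x2t4} and~\ref{x2t10}).
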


To present some applications of iterated weak Lebesgue spaces
and weak mixed-norm Lebesgue spaces,
we recall the following conclusions given in \cite[Theorem 3.1]{cs},
which imply some boundedness of non-negative measurable
functions from $L^{\fz}(\rr^{2n})$ to $L^{\vec q,\fz}(\rr^{2n})$
or to $L^{q_2,\fz}(L^{q_1,\fz})(\rr^{2n})$.

\begin{theorem}\label{x2t13}
Let $\vec q:=(q_1,q_2)\in(0,\fz]^2$. Then there exists a positive constant
$C$, depending only on $\vec q$ and $n$, such that,
for any non-negative measurable function $F$ on $\rr^{2n}$,
$$\|F\|_{L^{\vec q,\fz}(\rr^{2n})}
\le C\sup_{x,y\in\rn}\lf\{F(x,y)\lf(|x+y|+|x-y|\r)^{n/q_1+n/q_2}\r\}$$
and
$$\|F\|_{L^{q_2,\fz}(L^{q_1,\fz})(\rr^{2n})}
\le C\sup_{x,y\in\rn}\lf\{F(x,y)\lf(|x+y|+|x-y|\r)^{n/q_1+n/q_2}\r\}.$$
\end{theorem}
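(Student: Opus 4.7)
Let $M:=\sup_{x,y\in\rn}F(x,y)(|x+y|+|x-y|)^{n/q_1+n/q_2}$ and set $\gamma:=n/q_1+n/q_2$; we may assume $M<\fz$, in which case the hypothesis is simply $F(x,y)\le M(|x+y|+|x-y|)^{-\gamma}$ for all $x,y\in\rn$. The geometric backbone of the argument is the parallelogram-type inequality
$$2\max\{|x|,|y|\}\le |x+y|+|x-y|,$$
which is immediate from the triangle inequality applied to $2x=(x+y)+(x-y)$ and $2y=(x+y)-(x-y)$. Consequently, for every $R\in(0,\fz)$,
$$\lf\{(x,y)\in\rn\times\rn:\ |x+y|+|x-y|<R\r\}\subseteq B(\vec0_n,R/2)\times B(\vec0_n,R/2).$$

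For the first (weak mixed-norm) bound, I would plug in $R_\az:=(M/\az)^{1/\gamma}$ for each $\az\in(0,\fz)$. The hypothesis and the inclusion above give $\{F>\az\}\subseteq B(\vec0_n,R_\az/2)\times B(\vec0_n,R_\az/2)$. A direct computation of the mixed norm of the characteristic function of a product of balls yields
$$\lf\|\mathbf 1_{B(\vec0_n,R_\az/2)\times B(\vec0_n,R_\az/2)}\r\|_{L^{\vec q}(\rr^{2n})}=|B(\vec0_n,R_\az/2)|^{1/q_1}|B(\vec0_n,R_\az/2)|^{1/q_2}=c_{n,\vec q}R_\az^{\gamma}=c_{n,\vec q}M/\az,$$
which after multiplication by $\az$ and taking the supremum over $\az$ furnishes the desired estimate $\|F\|_{L^{\vec q,\fz}(\rr^{2n})}\le c_{n,\vec q}M$.

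For the iterated weak Lebesgue bound I would refine the analysis one variable at a time. Fix $y\in\rn$; since $|x+y|+|x-y|\ge 2|y|$, the slice $\{x:\ F(x,y)>\az\}$ is empty unless $\az<M(2|y|)^{-\gamma}$, and otherwise it is contained in $B(\vec0_n,R_\az/2)$, of measure $c_nR_\az^n$. Hence
$$\az\lf|\lf\{x\in\rn:\ F(x,y)>\az\r\}\r|^{1/q_1}\le c_n^{1/q_1}M^{n/(\gamma q_1)}\az^{1-n/(\gamma q_1)},$$
and since $\gamma>n/q_1$ this is increasing in $\az$, so optimizing over $\az\in(0,M(2|y|)^{-\gamma}]$ yields
$$\phi(y):=\sup_{\az\in(0,\fz)}\az\lf|\lf\{x\in\rn:\ F(x,y)>\az\r\}\r|^{1/q_1}\le c_{n,\vec q}M|y|^{-n/q_2}.$$
Evaluating the weak $L^{q_2}(\rn)$ quasi-norm of the power function on the right, via the identity $|\{y\in\rn:\ |y|^{-n/q_2}>t\}|=c_nt^{-q_2}$, delivers the second inequality.

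I expect the main annoyance to be bookkeeping the endpoint cases $q_1=\fz$ or $q_2=\fz$ (and both), where the single-variable integrals collapse to essential suprema and several exponents degenerate (e.g.\ $\gamma=0$ when $\vec q=(\fz,\fz)$, reducing the statement to $\|F\|_{L^\fz(\rr^{2n})}\le\sup F$); the geometric inclusion $\{|x+y|+|x-y|<R\}\subseteq B(\vec0_n,R/2)\times B(\vec0_n,R/2)$ continues to drive the proof in each of these cases, but the explicit computation of the mixed norm of $\mathbf 1_{B\times B}$ and of $\phi$ must be carried out with the usual conventions on $\fz$.
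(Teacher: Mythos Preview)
Your argument is correct. The key geometric observation $2\max\{|x|,|y|\}\le|x+y|+|x-y|$ is exactly what drives the result, and your two computations (direct mixed-norm estimate of the product-ball indicator for the first inequality; slice-by-slice optimization in $\alpha$ followed by a weak-$L^{q_2}$ estimate of $|y|^{-n/q_2}$ for the second) are clean and complete, with the endpoint cases $q_i=\infty$ handled by the standard conventions you indicate.

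Note, however, that the present paper is a survey: it states this theorem as \cite[Theorem 3.1]{cs} without reproducing a proof, so there is no in-paper argument to compare against. Your approach is the natural one and is essentially the argument one would expect in the original reference, since the parallelogram inequality is the only geometric input available to convert control by $(|x+y|+|x-y|)^{-\gamma}$ into separate control in $x$ and $y$.
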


Next we state the following conclusions of \cite[Theorem 3.2]{cs},
which further induce two geometric inequalities as Theorem \ref{x2c1}
below.

\begin{theorem}\label{x2t14}
Let $\vec q:=(q_1,q_2)\in(0,\fz)^2$. Then there exists a constant
$C$, depending only on $\vec q$ and $n$, such that,
for any non-negative measurable function $F$ on $\rr^{2n}$,
$$\|F\|_{L^{q_2,\fz}(L^{q_1})(\rr^{2n})}
\le C\sup_{x,y\in\rn}\lf\{F(x,y)\lf(|x+y|+|x-y|\r)^{n/q_1+n/q_2}\r\}.$$

Moreover, on the endpoint cases, for any given $q_1\in(0,\fz]$,
there exists a positive
constant $C$, depending only on $q_1$ and $n$, such that,
for any non-negative measurable function $F$ on $\rr^{2n}$,
$$\|F\|_{L^{q_1,\fz}(L^{\fz})(\rr^{2n})}
\le C\sup_{x,y\in\rn}\{F(x,y)\lf(|x+y|+|x-y|\r)^{n/q_1}\}$$
and
$$\|F\|_{L^{\fz}(L^{q_1,\fz})(\rr^{2n})}
\le C\sup_{x,y\in\rn}\{F(x,y)\lf(|x+y|+|x-y|\r)^{n/q_1}\}.$$
\end{theorem}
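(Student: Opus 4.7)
The plan is to reduce everything to the elementary comparison $|x+y|+|x-y|\sim\max\{|x|,|y|\}$ and then compute the mixed (weak) norms explicitly against the pointwise bound it produces. To see the comparison, note that $|x+y|+|x-y|\ge|(x+y)\pm(x-y)|$ gives $|x+y|+|x-y|\ge 2\max\{|x|,|y|\}$, while the triangle inequality gives $|x+y|+|x-y|\le 4\max\{|x|,|y|\}$. Hence, setting
$$A:=\sup_{x,y\in\rn}\lf\{F(x,y)\lf(|x+y|+|x-y|\r)^{n/q_1+n/q_2}\r\},$$
we obtain, for almost every $(x,y)\in\rn\times\rn$,
$$F(x,y)\ls A\,\max\{|x|,|y|\}^{-(n/q_1+n/q_2)},$$
with an implicit constant depending only on $\vq$ and $n$. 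This is the master estimate from which all three conclusions will follow.

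For the main inequality, I will bound $\|F(\cdot,y)\|_{L^{q_1}(\rn)}$ for fixed $y\ne\vec0_n$ by splitting the integration in $x$ into $\{|x|\le|y|\}$ and $\{|x|>|y|\}$. On the first region, the pointwise bound is the constant $A|y|^{-(n/q_1+n/q_2)}$, and the ball has volume $\sim|y|^n$, contributing $A^{q_1}|y|^{-nq_1/q_2}$ to the $q_1$-th power of the norm. On the second region, the tail integral
$$\int_{|x|>|y|}|x|^{-(n+nq_1/q_2)}\,dx\sim|y|^{-nq_1/q_2}$$
converges since $nq_1/q_2>0$, and gives the same order. Taking $q_1$-th roots yields $\|F(\cdot,y)\|_{L^{q_1}(\rn)}\ls A|y|^{-n/q_2}$, so
$$\lf|\{y\in\rn:\|F(\cdot,y)\|_{L^{q_1}(\rn)}>\az\}\r|\ls(A/\az)^{q_2},$$
which after multiplication by $\az$ and taking $1/q_2$-th power gives the desired $L^{q_2,\fz}(L^{q_1})(\rr^{2n})$ bound.

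For the two endpoint inequalities, the calculation is even easier. For $\|F\|_{L^{q_1,\fz}(L^{\fz})}$, the supremum of the master estimate over $x$ is attained (up to constants) when $|x|\le|y|$, yielding $\|F(\cdot,y)\|_{L^{\fz}(\rn)}\ls A|y|^{-n/q_1}$, and then the argument above gives the claimed weak-$L^{q_1}$ bound in $y$. For $\|F\|_{L^{\fz}(L^{q_1,\fz})}$, I fix $y$ and estimate the weak-$L^{q_1}$ norm in $x$: the super-level set $\{x:F(x,y)>\az\}$ is contained in $\{x:\max\{|x|,|y|\}<(A/\az)^{q_1/n}\}$, which is either empty or contained in $B(\vec0_n,(A/\az)^{q_1/n})$, of volume $\sim(A/\az)^{q_1}$, giving $\az\,|\{\cdots\}|^{1/q_1}\ls A$ uniformly in $y$.

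No step is really an obstacle; the only thing requiring a small amount of care is the split-integration check that both regions produce the same power $|y|^{-nq_1/q_2}$ and that the tail integral on $\{|x|>|y|\}$ actually converges, which is exactly where the exponent $n/q_1+n/q_2$ (rather than either piece alone) appears in the hypothesis.
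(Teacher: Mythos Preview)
Your argument is correct. The key observation $|x+y|+|x-y|\sim\max\{|x|,|y|\}$ immediately reduces the problem to estimating the mixed norms of $\max\{|x|,|y|\}^{-(n/q_1+n/q_2)}$ (respectively $\max\{|x|,|y|\}^{-n/q_1}$ in the endpoint cases), and your split into $\{|x|\le|y|\}$ and $\{|x|>|y|\}$ handles this cleanly; the convergence of the tail integral on $\{|x|>|y|\}$ is exactly guaranteed by $nq_1/q_2>0$. One tiny point of presentation: in the endpoint paragraphs you speak of ``the master estimate'' as if it still carried the exponent $n/q_1+n/q_2$, whereas for those statements the quantity $A$ is defined with exponent $n/q_1$ only; you clearly intend the analogous estimate, but it would be cleaner to say so explicitly. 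The case $q_1=\fz$ in the endpoint statements is trivial (the right-hand side is then just $\sup F$) and needs no separate mention.

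As for comparison with the paper: this is a survey, and Theorem~\ref{x2t14} is merely \emph{stated} here as \cite[Theorem~3.2]{cs} without proof. So there is no ``paper's own proof'' to compare against. Your direct computation via the equivalence $|x+y|+|x-y|\sim\max\{|x|,|y|\}$ is the natural elementary route and is self-contained.
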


As a consequence of Theorem \ref{x2t14},
the following geometric inequalities were obtained by
Chen and Sun in \cite[Corollary 3.3]{cs}.

\begin{theorem}\label{x2c1}
Let $\vp:=(p_1,p_2)\in(0,\fz]^2$.
Then there exists a positive constant $C$,
depending only on $\vp$ and $n$, such that,
for any non-negative function $f\in L^{p_1,\fz}(\rn)$
and non-negative function $g\in L^{p_2,\fz}(\rn)$,
$$\|f\|_{L^{p_1,\fz}(\rn)} \|g\|_{L^{p_2,\fz}(\rn)}
\le C\sup_{x,y\in\rn}\lf\{f(x)g(y)|x+y|^{n/p_1+n/p_2}\r\}.$$
Moreover, there exists a positive constant $C$ such that,
for any non-negative function $f\in L^{p_1}(\rn)$
and non-negative function $g\in L^{p_2}(\rn)$,
\begin{align}\label{x2e8}
\|f\|_{L^{p_1}(\rn)} \|g\|_{L^{p_2}(\rn)}
\le C\sup_{x,y\in\rn}\lf\{f(x)g(y)|x-y|^{n/p_1+n/p_2}\r\}.
\end{align}
\end{theorem}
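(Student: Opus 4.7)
The overall strategy is a level-set / ball-containment argument: the pointwise bound $f(x)g(y)|x\pm y|^{s}\le M$, with $s:=n/p_1+n/p_2$, forces any pair of nonempty super-level sets of $f$ and $g$ to fit inside a common ball, yielding simultaneous Lebesgue-measure control. The $|x+y|$- and $|x-y|$-versions of the statement are interchanged by the norm-preserving substitution $g(y)\mapsto g(-y)$, so in each inequality only one sign needs to be treated directly.

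To prove the first inequality, set $M:=\sup_{x,y}f(x)g(y)|x+y|^{s}$ and, for $\lambda,\mu>0$, let $E_\lambda:=\{f>\lambda\}$ and $F_\mu:=\{g>\mu\}$. If both sets are nonempty, any $x\in E_\lambda$ and $y\in F_\mu$ satisfy $\lambda\mu|x+y|^{s}\le M$, so $|x+y|\le(M/(\lambda\mu))^{1/s}$. Fixing $y_0\in F_\mu$ embeds $E_\lambda$ into the ball $B(-y_0,(M/(\lambda\mu))^{1/s})$, so $|E_\lambda|\le C_n(M/(\lambda\mu))^{n/s}$; symmetrically $|F_\mu|\le C_n(M/(\lambda\mu))^{n/s}$. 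Using the arithmetic identity $n/(sp_1)+n/(sp_2)=1$, raising the two bounds to the powers $1/p_1$ and $1/p_2$ and multiplying gives $\lambda|E_\lambda|^{1/p_1}\cdot\mu|F_\mu|^{1/p_2}\le CM$. Because the left side decouples in $\lambda$ and $\mu$, independent suprema deliver $\|f\|_{L^{p_1,\fz}(\rn)}\|g\|_{L^{p_2,\fz}(\rn)}\le CM$, as desired.

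For the second inequality, I would apply Theorem~\ref{x2t14} to the tensor product $F(x,y):=f(x)g(y)$. The inner strong norm factorizes as $\|F\|_{L^{p_2,\fz}(L^{p_1})(\rn\times\rn)}=\|f\|_{L^{p_1}(\rn)}\|g\|_{L^{p_2,\fz}(\rn)}$, yielding
$$\|f\|_{L^{p_1}(\rn)}\|g\|_{L^{p_2,\fz}(\rn)}\le C\sup_{x,y}f(x)g(y)\bigl(|x+y|+|x-y|\bigr)^{s},$$
and the symmetric application (with the roles of $f,g$ and $x,y$ swapped) supplies $\|g\|_{L^{p_2}(\rn)}\|f\|_{L^{p_1,\fz}(\rn)}\le C(\cdots)$. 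To reach \eqref{x2e8}, I would combine these two bounds with the $|x-y|$-analogue of the first inequality just proved and carry out a refined layer-cake integration of the key estimate $\lambda\mu|E_\lambda|^{1/p_1}|F_\mu|^{1/p_2}\le CM$, making an optimally paired choice $\mu=\mu(\lambda)$ of scales that both upgrades the remaining weak norm and eliminates the extraneous $|x+y|$ contribution.

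The principal obstacle is converting the sum $|x+y|+|x-y|$ produced by Theorem~\ref{x2t14} into the single term $|x-y|$ demanded by Theorem~\ref{x2c1}: the two suprema represent structurally distinct functionals of $(f,g)$, and no universal analytic inequality relates them. The fix is to re-deploy the geometric level-set argument from the first inequality in a refined form that respects the single-term structure; the simultaneous upgrade from one weak norm to two strong norms is then a comparatively routine consequence of the symmetric pair of estimates, since the pairing of level-set scales is governed by the same identity $n/(sp_1)+n/(sp_2)=1$.
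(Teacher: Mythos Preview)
Your level-set argument for the first inequality is correct and self-contained. The decoupling step is valid because the bound $\lambda|E_\lambda|^{1/p_1}\cdot\mu|F_\mu|^{1/p_2}\le CM$ holds for \emph{all} $\lambda,\mu>0$ (trivially so when either level set is empty), and for nonnegative functions $A(\lambda),B(\mu)$ one always has $\sup_{\lambda,\mu}A(\lambda)B(\mu)=(\sup_\lambda A)(\sup_\mu B)$. This direct route is in fact cleaner than going through Theorem~\ref{x2t14}, which only delivers the larger weight $|x+y|+|x-y|$ on the right-hand side.

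For the second inequality there is a genuine gap. You correctly observe that Theorem~\ref{x2t14} produces the wrong weight and that no substitution repairs this. But the fallback you propose---layer-cake integration of the estimate $\lambda\mu|E_\lambda|^{1/p_1}|F_\mu|^{1/p_2}\le CM$ with a paired choice $\mu=\mu(\lambda)$---cannot succeed, because that estimate is \emph{equivalent} to the weak-norm product bound $\|f\|_{L^{p_1,\infty}}\|g\|_{L^{p_2,\infty}}\le CM$ and carries no additional information. Concretely, take $f(x)=|x|^{-n/p_1}\mathbf 1_{B(0,1)}(x)$ and $g(y)=|y|^{-n/p_2}\mathbf 1_{B(0,1)}(y)$: then $\lambda\mu|E_\lambda|^{1/p_1}|F_\mu|^{1/p_2}$ is uniformly bounded in $\lambda,\mu$, yet $\|f\|_{L^{p_1}}=\|g\|_{L^{p_2}}=\infty$. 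This pair fails to contradict \eqref{x2e8} only because $M=\sup f(x)g(y)|x-y|^{s}=\infty$ as well (send $x\to 0$ with $y$ fixed), but that is precisely the information discarded once you pass to the product level-set bound. A correct argument must retain the sharper geometric content---for instance, that $E_\lambda\cup F_\mu$ has diameter at most $2(M/(\lambda\mu))^{1/s}$ together with the nesting of level sets across scales---or else follow the original derivation in Chen--Sun; the sketch you give does not supply this missing ingredient, and the two inputs you extract from Theorem~\ref{x2t14} both carry the unwanted weight $|x+y|+|x-y|$ on the right, so they cannot be combined to yield \eqref{x2e8}.
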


\begin{remark}
Notice that, if $f\equiv g\equiv\mathbf{1}_E$
with some measurable set $E\in\rn$, then \eqref{x2e8} becomes
\begin{align*}
|E|
\le C\sup_{x,\,y\in E}|x-y|^n,
\end{align*}
which means the ``volume" of the set $E$, namely, $|E|$,
can be controlled by its ``diameter" $\sup_{x,\,y\in E}|x-y|$.
\end{remark}

Let $\gamma\in(0,\fz)$ and $f$ be a measurable function
defined on $\rr^{2n}$. The \emph{linear operators $L_{\gamma}$
and $T_{\gamma}$} are defined, respectively, by setting,
for any $x,\ y\in\rn$,
$$L_{\gamma}f(x,y):=\frac{f(x,y)}{|x-y|^{\gamma}}$$
and
$$T_{\gamma}f(x,y):=\frac{f(x,y)}{(|x+y|+|x-y|)^{\gamma}}.$$

Recall that, in \cite[Theorem 3.7]{cs}, the boundedness of
$T_{\gamma}$ and $T_{\gamma}^{-1}$
on iterated weak Lebesgue spaces and weak mixed-norm Lebesgue spaces
was shown as follows.

\begin{theorem}\label{x2t15}
Let $\gamma\in(0,\fz)$, $\vp:=(p_1,p_2)$ and $\vec q:=(q_1,q_2)$.
\begin{enumerate}
\item[{\rm(i)}] If $0<q_1\le p_1\le \fz$ and
$0<q_2\le p_2\le \fz$ satisfy the homogeneity condition
\begin{align}\label{2`1}
\frac1{q_1}+\frac1{q_2}=\frac1{p_1}+\frac1{p_2}+\frac{\gamma}{n},
\end{align}
then there exists a positive constant $C$, depending only on
$\vp,\ \vec q$ and $n$, such that, for any
non-negative measurable function $f$ on $\rr^{2n}$,
$$\|T_{\gamma}f\|_{L^{q_2,\fz}(L^{q_1,\fz})(\rr^{2n})}
\le C \|f\|_{L^{p_2,\fz}(L^{p_1,\fz})(\rr^{2n})}.$$

\item[{\rm(ii)}] If $0<p_1\le q_1\le \fz$ and
$0<p_2\le q_2\le \fz$ satisfy the homogeneity condition
\begin{align}\label{2`2}
\frac1{p_1}+\frac1{p_2}=\frac1{q_1}+\frac1{q_2}+\frac{\gamma}{n},
\end{align}
then there exists a positive constant $C$, depending only on
$\vp,\ \vec q$ and $n$, such that, for any
non-negative measurable function $f$ on $\rr^{2n}$, $$\lf\|T_{\gamma}^{-1}f\r\|_{L^{q_2,\fz}(L^{q_1,\fz})(\rr^{2n})}
\le C \|f\|_{L^{p_2,\fz}(L^{p_1,\fz})(\rr^{2n})}.$$

\item[{\rm(iii)}] If $0<q_1\le p_1\le \fz$ and
$0<q_2\le p_2\le \fz$ satisfy $p_1q_2=p_2q_1$ and
the homogeneity condition \eqref{2`1},
then there exists a positive constant $C$, depending only on
$\vp,\ \vec q$ and $n$, such that, for any
non-negative measurable function $f$ on $\rr^{2n}$,
$$\|T_{\gamma}f\|_{L^{\vec q,\fz}(\rr^{2n})}
\le C \|f\|_{L^{\vp,\fz}(\rr^{2n})}.$$

\item[{\rm(iv)}] If $0<p_1\le q_1\le \fz$ and
$0<p_2\le q_2\le \fz$ satisfy $p_1q_2=p_2q_1$ and
the homogeneity condition \eqref{2`2},
then there exists a positive constant $C$, depending only on
$\vp,\ \vec q$ and $n$, such that, for any
non-negative measurable function $f$ on $\rr^{2n}$,
$$\lf\|T_{\gamma}^{-1}f\r\|_{L^{\vec q,\fz}(\rr^{2n})}
\le C \|f\|_{L^{\vp,\fz}(\rr^{2n})}.$$
\end{enumerate}
\end{theorem}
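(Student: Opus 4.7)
The four parts share a common strategy: combine a H\"{o}lder-type inequality with the sup-to-norm bound of Theorem \ref{x2t13}, applied to the auxiliary function $u(x,y):=|x+y|+|x-y|$.

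First I would treat part (i). Writing $T_{\gamma}f=f\cdot u^{-\gamma}$, introduce $\vec s:=(s_1,s_2)$ via $1/q_i=1/p_i+1/s_i$ for $i\in\{1,2\}$; the hypothesis $q_i\le p_i$ guarantees $s_i\in(0,\fz]$. The H\"{o}lder inequality for iterated weak Lebesgue spaces (Theorem \ref{x2t8'}) gives
$$
\lf\|T_{\gamma}f\r\|_{L^{q_2,\fz}(L^{q_1,\fz})(\rr^{2n})}
\le C\lf\|f\r\|_{L^{p_2,\fz}(L^{p_1,\fz})(\rr^{2n})}
\lf\|u^{-\gamma}\r\|_{L^{s_2,\fz}(L^{s_1,\fz})(\rr^{2n})}.
$$
The homogeneity \eqref{2`1} translates precisely into $\gamma/n=1/s_1+1/s_2$, and then Theorem \ref{x2t13} yields
$$
\lf\|u^{-\gamma}\r\|_{L^{s_2,\fz}(L^{s_1,\fz})(\rr^{2n})}
\le C\sup_{x,y\in\rn}\lf\{u(x,y)^{-\gamma+n/s_1+n/s_2}\r\}=C,
$$
since the exponent on $u$ vanishes. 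This establishes (i).

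Part (iii) follows the same template, replacing the H\"{o}lder inequality for iterated weak Lebesgue spaces by its weak mixed-norm analogue (Theorem \ref{x2t9}), and the first estimate of Theorem \ref{x2t13} by the second. The compatibility $p_1 s_2=p_2 s_1$ required by Theorem \ref{x2t9} is, through $1/s_i=1/q_i-1/p_i$, algebraically equivalent to the extra hypothesis $p_1 q_2=p_2 q_1$ of (iii); this reduces to the short identity $s_1/s_2=p_1/p_2\Leftrightarrow q_1 p_2=q_2 p_1$.

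For parts (ii) and (iv) the operator $T_{\gamma}^{-1}f=f\cdot u^{\gamma}$ multiplies by a function that grows at infinity, so the H\"{o}lder-plus-sup argument cannot be applied directly: the would-be auxiliary exponents $s_i$ with $1/s_i=1/q_i-1/p_i\le 0$ are non-positive. My plan is to start from the direct application of Theorem \ref{x2t13},
$$
\lf\|T_{\gamma}^{-1}f\r\|_{L^{q_2,\fz}(L^{q_1,\fz})(\rr^{2n})}
\le C\sup_{x,y\in\rn}\lf\{f(x,y)\,u(x,y)^{\gamma+n/q_1+n/q_2}\r\}
=C\sup_{x,y\in\rn}\lf\{f(x,y)\,u(x,y)^{n/p_1+n/p_2}\r\},
$$
using the homogeneity \eqref{2`2}, and then control this pointwise supremum by $\|f\|_{L^{p_2,\fz}(L^{p_1,\fz})(\rr^{2n})}$ through a dyadic decomposition of $f$ across the annuli $A_k:=\{(x,y):\ 2^k\le u(x,y)<2^{k+1}\}$ combined with the level-set definition of the weak quasi-norm. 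I expect this transfer step to be the main obstacle, since in general the pointwise supremum is strictly stronger than the weak quasi-norm; the leverage that makes it work is the precise matching of powers of $u$ enforced by \eqref{2`2} across scales, so that on each $A_k$ the factor $u^{\gamma}\sim 2^{k\gamma}$ is compensated by the differential of measure of the annulus. For (iv), the additional hypothesis $p_1 q_2=p_2 q_1$ enters exactly as in (iii), through the compatibility condition of Theorem \ref{x2t9} used in place of Theorem \ref{x2t8'}.
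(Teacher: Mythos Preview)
This paper is a survey and does not prove Theorem \ref{x2t15} itself; the result is quoted from \cite[Theorem 3.7]{cs}, so there is no in-paper argument to compare against. Your approach to (i) and (iii) via the H\"{o}lder inequalities (Theorems \ref{x2t8'} and \ref{x2t9}) together with Theorem \ref{x2t13} is correct and natural, and your verification that $p_1s_2=p_2s_1\Leftrightarrow p_1q_2=p_2q_1$ is right.

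For (ii) and (iv), however, your plan fails, and in fact the inequality as printed (with $\le$) is false. Take $f_R:=\mathbf{1}_{B(Re_1,1)\times B(\vec{0}_n,1)}$ for a unit vector $e_1$ and large $R$: then $\|f_R\|_{L^{p_2,\infty}(L^{p_1,\infty})(\rr^{2n})}$ is independent of $R$, while on $\supp f_R$ one has $u(x,y)\ge 2|x|\gtrsim R$, so $\|T_\gamma^{-1}f_R\|_{L^{q_2,\infty}(L^{q_1,\infty})(\rr^{2n})}\gtrsim R^\gamma\to\infty$. Your proposed transfer step, bounding $\sup_{x,y}f(x,y)\,u(x,y)^{n/p_1+n/p_2}$ by the iterated weak norm of $f$, cannot succeed for the same reason, and no dyadic decomposition across the $A_k$ will repair it because the obstruction already lives at a single scale. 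Comparison with the parallel Theorem \ref{x2t16}, where the inequalities for $L_\gamma^{-1}$ are written with $\ge$, indicates that (ii) and (iv) should be \emph{lower} bounds; under that reading, (ii) is immediate from (i) via $f=T_\gamma(T_\gamma^{-1}f)$, since the hypotheses of (ii) are exactly those of (i) with $\vp$ and $\vq$ swapped, and similarly (iv) follows from (iii).
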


Via Theorems \ref{x2t8} and \ref{x2t15}, Chen and Sun
\cite[Theorem 3.12]{cs} also obtained the following inequalities.

\begin{theorem}\label{x2t16}
Let $\gamma\in(0,\fz)$.
If $0<r<p_1\le \fz$ and $p_2\in(0,\fz]$
satisfy the homogeneity condition
$\frac1{r}=\frac1{p_1}+\frac{\gamma}{n}$,
then there exists a positive constant $C$, depending only on
$p_1,\ p_2,\ r$ and $n$, such that, for any
non-negative measurable function $f$ on $\rr^{2n}$,
$$\|L_{\gamma}f\|_{L^{p_2}(L^{r,\fz})(\rr^{2n})}
\le C \|f\|_{L^{p_2}(L^{p_1,\fz})(\rr^{2n})}$$
and
$$\|L_{\gamma}f\|_{L^{p_2,\fz}(L^{r,\fz})(\rr^{2n})}
\le C \|f\|_{L^{p_2,\fz}(L^{p_1,\fz})(\rr^{2n})}.$$
If $0<p_1<r\le \fz$ and $p_2\in(0,\fz]$
satisfy the homogeneity condition
$\frac1{p_1}=\frac1{r}+\frac{\gamma}{n}$,
then there exists a positive constant $C$, depending only on
$p_1,\ p_2,\ r$ and $n$, such that, for any
non-negative measurable function $f$ on $\rr^{2n}$,
$$\lf\|L_{\gamma}^{-1}f\r\|_{L^{p_2}(L^{r,\fz})(\rr^{2n})}
\ge C \|f\|_{L^{p_2}(L^{p_1,\fz})(\rr^{2n})}$$
and
$$\lf\|L_{\gamma}^{-1}f\r\|_{L^{p_2,\fz}(L^{r,\fz})(\rr^{2n})}
\ge C \|f\|_{L^{p_2,\fz}(L^{p_1,\fz})(\rr^{2n})}.$$
\end{theorem}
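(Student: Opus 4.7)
The plan is to reduce Theorem \ref{x2t16} to the classical weak-type H\"older inequality for Lorentz spaces on $\rn$: if $1/s=1/p+1/q$ with $p,\,q,\,s\in(0,\fz]$, then, for any measurable $F,\,G$ on $\rn$,
$$\lf\|FG\r\|_{L^{s,\fz}(\rn)}\ls \|F\|_{L^{p,\fz}(\rn)}\|G\|_{L^{q,\fz}(\rn)},$$
combined with the observation that, for any $\gamma\in(0,\fz)$ and $y\in\rn$, the function $x\mapsto |x-y|^{-\gamma}$ belongs to $L^{n/\gamma,\fz}(\rn)$ with quasi-norm equal to the $y$-independent constant $\||\cdot|^{-\gamma}\|_{L^{n/\gamma,\fz}(\rn)}$. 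This reduces the two-variable problem to a parameter-dependent inequality in the $x$-variable alone.

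First I would prove the upper bounds for $L_{\gamma}$. Fix $y\in\rn$ and apply the weak-type H\"older inequality with exponents $p_1$ and $n/\gamma$, whose reciprocals sum to $1/r$ by the homogeneity condition $1/r=1/p_1+\gamma/n$, to obtain
$$\lf\|L_{\gamma}f(\cdot,y)\r\|_{L^{r,\fz}(\rn)}=\lf\|f(\cdot,y)\lf|\cdot-y\r|^{-\gamma}\r\|_{L^{r,\fz}(\rn)}\ls \lf\|f(\cdot,y)\r\|_{L^{p_1,\fz}(\rn)}\lf\||\cdot-y|^{-\gamma}\r\|_{L^{n/\gamma,\fz}(\rn)}\ls \lf\|f(\cdot,y)\r\|_{L^{p_1,\fz}(\rn)}.$$
Since this estimate is uniform in $y\in\rn$, taking the $L^{p_2}(\rn)$-quasi-norm in $y$ on both sides yields the first inequality of the theorem, and taking the $L^{p_2,\fz}(\rn)$-quasi-norm yields the second one; indeed, any pointwise inequality between non-negative functions of $y$ is transferred to weak-type quasi-norms in the outer variable via their distribution functions.

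To obtain the lower bounds for $L_{\gamma}^{-1}$, I would argue by the substitution $g:=L_{\gamma}^{-1}f$, which gives $f=L_{\gamma}g$. The homogeneity relation $1/p_1=1/r+\gamma/n$ and the range $0<p_1<r\le\fz$ are precisely the hypotheses of the already-established upper bounds after the interchange $r\leftrightarrow p_1$; hence the upper bounds for $L_{\gamma}$ applied to $g$ give
$$\|f\|_{L^{p_2}(L^{p_1,\fz})(\rr^{2n})}=\|L_{\gamma}g\|_{L^{p_2}(L^{p_1,\fz})(\rr^{2n})}\ls \|g\|_{L^{p_2}(L^{r,\fz})(\rr^{2n})}=\lf\|L_{\gamma}^{-1}f\r\|_{L^{p_2}(L^{r,\fz})(\rr^{2n})},$$
and an identical argument with the outer $L^{p_2}$ replaced by $L^{p_2,\fz}$ yields the fourth inequality. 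Reciprocating the implicit constants produces the two desired lower bounds.

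The only substantive ingredient is the weak-type H\"older inequality for Lorentz spaces on $\rn$, which covers the full range $p_1\in(0,\fz]$ (the endpoint $p_1=\fz$, forcing $r=n/\gamma$, is immediate from $\|fg\|_{L^{n/\gamma,\fz}}\le \|f\|_\fz\||\cdot|^{-\gamma}\|_{L^{n/\gamma,\fz}}$); the main obstacle is therefore merely the careful verification of this scalar H\"older estimate across all the allowed parameters. Once it is in hand, the mixed-norm structure creates no additional difficulty, because $L_{\gamma}f(x,y)=f(x,y)|x-y|^{-\gamma}$ couples $x$ and $y$ only through a translation, so (in contrast to the operator $T_{\gamma}$ treated in Theorem \ref{x2t15}) no inner--outer rearrangement or interpolation in the outer variable is needed.
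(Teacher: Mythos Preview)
Your proof is correct. The paper is a survey and does not give its own proof of this theorem; it merely records that Chen and Sun obtained it ``via Theorems \ref{x2t8} and \ref{x2t15}'', that is, through the embedding relations among the various mixed and iterated weak spaces together with the boundedness theory for the operator $T_{\gamma}$. Your argument bypasses that machinery entirely: since $L_{\gamma}$ acts as pointwise multiplication by a translate (in $x$) of $|\cdot|^{-\gamma}$, the inner $L^{r,\infty}$ norm is controlled directly by the one-variable weak-type H\"older inequality with a bound uniform in $y$, after which the outer $L^{p_2}$ or $L^{p_2,\infty}$ norm is inherited trivially from a pointwise inequality in $y$. This is more elementary than passing through the $T_{\gamma}$ estimates of Theorem \ref{x2t15}, and it makes transparent why $L_{\gamma}$, unlike $T_{\gamma}$, involves no genuine coupling between the inner and outer variables. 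The substitution $g:=L_{\gamma}^{-1}f$ to deduce the reverse inequalities from the forward ones is likewise clean and correct.
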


In addition, it was shown, via Theorem \ref{x2t15},
by Chen and Sun in \cite[Corollary 3.8]{cs}
that the Hardy--Littlewood--Sobolev inequality and its
reverse version hold true as follows.

\begin{theorem}\label{x2t17}
Let $\vp:=(p_1,p_2)\in(1,\fz)^2$ with $\frac1{p_1}+\frac1{p_2}>1$.
Then there exists a positive constant $C$, depending only on $\vp$ and $n$,
such that, for any non-negative functions $f\in L^{p_1}(\rn)$
and $g\in L^{p_2}(\rn)$,
\begin{align*}
\int_{\rn}\int_{\rn}f(x)g(x)|x-y|^{-n(2-1/p_1-1/p_2)}\,dx\,dy
\le C\|f\|_{L^{p_1}(\rn)}\|g\|_{L^{p_2}(\rn)}.
\end{align*}
If $\vp\in(0,1)^2$,
then there exists a positive constant $C$, depending only on $\vp$ and $n$,
such that, for any non-negative functions $f\in L^{p_1}(\rn)$
and $g\in L^{p_2}(\rn)$,
\begin{align*}
\int_{\rn}\int_{\rn}f(x)g(x)|x-y|^{n(1/p_1+1/p_2-2)}\,dx\,dy
\ge C\|f\|_{L^{p_1}(\rn)}\|g\|_{L^{p_2}(\rn)}.
\end{align*}
\end{theorem}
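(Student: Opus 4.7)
The plan is to unify both inequalities by introducing the tensor product $F(x,y) := f(x)g(y)$ on $\rr^{2n}$, which satisfies $\|F\|_{L^{\vp}(\rr^{2n})} = \|f\|_{L^{p_1}(\rn)}\|g\|_{L^{p_2}(\rn)}$ and for which the integral on the left-hand side of either inequality equals $\|L_\gamma F\|_{L^1(\rr^{2n})}$ with $\gamma := n(2 - 1/p_1 - 1/p_2)$. Note that $\gamma \in (0,n)$ in the first case and $\gamma \in (-n,0)$ in the second, so the two statements differ only in the sign of $\gamma$.

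For the forward inequality, Fubini recasts the left-hand side as $\int_{\rn} g(y)\,I_\alpha f(y)\,dy$, where $\alpha := n - \gamma = n(1/p_1 + 1/p_2 - 1) \in (0,n)$ and $I_\alpha f(y) := \int_{\rn} f(x)|x-y|^{-\gamma}\,dx$ is the Riesz potential. Ordinary H\"{o}lder then bounds this by $\|g\|_{L^{p_2}}\|I_\alpha f\|_{L^{p_2'}}$, and a direct computation gives the HLS scaling $1/p_2' = 1/p_1 - \alpha/n$ automatically. It then suffices to prove the classical HLS estimate $\|I_\alpha f\|_{L^{p_2'}} \le C\|f\|_{L^{p_1}}$. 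I would do this by splitting $I_\alpha f(y)$ at radius $R$: the local piece is controlled by $CR^\alpha Mf(y)$ (via dyadic annuli) and the tail by $CR^{\alpha-n/p_1}\|f\|_{L^{p_1}}$ (via H\"{o}lder). Optimizing $R$ pointwise yields $I_\alpha f(y) \le C\|f\|_{L^{p_1}}^{\theta}[Mf(y)]^{1-\theta}$ with $\theta := \alpha p_1/n$, after which the algebraic identity $(1-\theta)p_2' = p_1$ together with the $L^{p_1}$-boundedness of the centered Hardy--Littlewood maximal operator $M$ — namely the $m=0$ case of Theorem \ref{x2t2} proved earlier in the paper — close the estimate.

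For the reverse inequality the same reduction goes through formally, but $\gamma<0$ makes the kernel grow at infinity, and since the $L^{p_i}$ are only quasi-Banach the classical H\"{o}lder step is unavailable. The plan is to apply the lower-bound operator estimate Theorem \ref{x2t15}(ii) (or (iv)) for $T_\gamma^{-1}$ to $F$, and then convert the $(|x+y|+|x-y|)^{|\gamma|}$-kernel appearing there into the HLS $|x-y|^{|\gamma|}$-kernel through the linear change of variables $u := (x+y)/2$, $v := (x-y)/2$, which carries $F(x,y)$ into $H(u,v) := f(u+v)g(u-v)$ while producing exactly the kernel $|v|^{|\gamma|}$ in the $v$-variable. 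The main obstacle is the closing step, which requires a reverse H\"{o}lder inequality valid in the quasi-Banach range $p<1$; the standard workaround is a layer-cake reduction to characteristic functions of sets, after which the required geometric lower bound is already supplied by the second inequality of Theorem \ref{x2c1}.
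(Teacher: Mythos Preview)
Your treatment of the two halves is uneven.

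\textbf{Forward inequality.} Your Hedberg-type argument is correct and self-contained: the splitting of $I_\alpha f$ at radius $R$, optimization in $R$, and the $L^{p_1}$-boundedness of $M$ (the $m=0$ case of Theorem~\ref{x2t2}) give the classical strong-type bound directly. The paper does not supply its own proof; it records the result as \cite[Corollary~3.8]{cs} obtained \emph{via} Theorem~\ref{x2t15}, i.e., through the weak mixed-norm estimates for $T_\gamma$ and $T_\gamma^{-1}$ followed by interpolation. So your route is genuinely different and more elementary for this half; the advantage of the Chen--Sun route is only that HLS drops out of a broader operator framework already set up for other purposes.

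\textbf{Reverse inequality.} Here the sketch does not close. First, Theorem~\ref{x2t15}(ii) is an \emph{upper} bound for $\|T_\gamma^{-1}f\|$ in iterated weak norms; even if you reinterpret it by writing $f=T_\gamma g$ to get $\|T_\gamma g\|_{L^{p_2,\infty}(L^{p_1,\infty})}\gtrsim\|g\|_{L^{q_2,\infty}(L^{q_1,\infty})}$, you are still in weak norms with the wrong monotonicity, and you give no mechanism to pass to the $L^1$ norm that the left-hand side requires. Second, the change of variables $u=(x+y)/2$, $v=(x-y)/2$ sends $|x-y|$ to $2|v|$ as you say, but it sends $(|x+y|+|x-y|)$ to $2(|u|+|v|)$, not to the HLS kernel; so it does not convert the $T_\gamma$ estimate into an $L_\gamma$ estimate. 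Third, the proposed endgame via layer-cake and Theorem~\ref{x2c1} is only gestured at: the exponent in Theorem~\ref{x2c1} is $n(1/p_1+1/p_2)$, not the required $n(1/p_1+1/p_2-2)$, and for characteristic functions $1_E$, $1_F$ that inequality controls $|E|^{1/p_1}|F|^{1/p_2}$ by the \emph{diameter} of $E\times F$, not by $\int_E\int_F|x-y|^{\delta}$; you would still need a genuinely new lower bound for the latter integral, and the reduction from general $f,g$ to characteristic functions in the sub-Banach range $p_i<1$ is itself nontrivial and not justified.

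In short: the forward half is fine (and a legitimately different proof from the one the paper cites), but the reverse half needs a real argument rather than the present outline.
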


\section{Mixed Morrey spaces $\vm$} \label{sec3}

In this section, we first recall the notion of mixed Morrey spaces
$\vm$, with $\vq\in (0,\fz]^n$ and $p\in(0,\fz]$, and then discuss
some basic properties of them (see Subsection \ref{3s1} below) as well as the
boundedness of maximal operators on $\vm$ (see Subsection \ref{3s2} below).
Finally, we review the boundedness of Calder\'{o}n--Zygmund operators
and fractional integral operators on $\vm$ as well as
a necessary and sufficient condition for the boundedness
of the commutators of fractional integral operators on $\vm$
(see Subsection \ref{3s3} below).

\subsection{Definition and some basic properties}\label{3s1}

In this subsection, we present the definition and some examples
of mixed Morrey spaces and then recall some basic properties
about these spaces. To this end, we begin with recalling the notion of
mixed Morrey spaces given in \cite[Definition 1.3]{tn}.

\begin{definition}\label{x3d1}
Let $\vq:=(q_1,\ldots,q_n)\in (0,\fz]^n$ and $p\in(0,\fz]$
satisfy $$\sum_{j=1}^n\frac1 {q_j}\ge \frac{n}{p}.$$
The \emph{mixed Morrey space} $\vm$ is
defined to be the set of all measurable functions $f$
such that their quasi-norms
$$\|f\|_{\vm}:=\sup\lf\{|Q|^{\frac 1{p}-\frac{1}{n}(\sum_{j=1}^n\frac{1}{q_j})}
\lf\|f \mathbf{1}_{Q}\r\|_{\lvq}:\ Q~{\rm is~a ~cube~in}~\rn\r\}<\fz.$$
\end{definition}

\begin{remark}\label{x3r1}
Obviously, when $\vq:=(\overbrace{q,\ldots,q}^{n\ \rm times})$
with some $q\in(0,\fz]$, then the mixed Morrey space $\vm$ coincides with
the classical Morrey space ${\mathcal{M}^{p}_{q}(\rn)}$
of \cite{m38} and, in this case, they have the same quasi-norms.
In addition, when $\vq:=(q_1,\ldots,q_n)\in (0,\fz]^n$ and $p\in(0,\fz]$
satisfy
$$\sum_{j=1}^n\frac1 {q_j}=\frac{n}{p},$$
then $\vm=L^{\vq}(\rn)$ with equivalent quasi-norms (see \cite[Section 3]{tn}).
\end{remark}
We review the following completeness of mixed Morrey spaces $\vm$,
which was proved by Nogayama \cite[Remark 3.1]{tn}.

\begin{theorem}\label{x3t1}
Let $\vq\in[1,\fz]^n$ and $p\in[1,\fz]$.
Then the mixed Morrey space $\vm$ becomes a Banach space.
\end{theorem}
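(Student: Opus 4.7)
The plan is to verify the two defining properties of a Banach space: that $\|\cdot\|_{\vm}$ is a norm under the hypotheses $\vq\in[1,\fz]^n$ and $p\in[1,\fz]$, and that every Cauchy sequence converges. The first item is routine: since Theorem \ref{x2t1} tells us $\|\cdot\|_{\lvq}$ is a norm on $L^{\vq}(\rn)$, the triangle inequality, absolute homogeneity, and non-degeneracy all transfer to $\|\cdot\|_{\vm}$ by taking the supremum over cubes (non-degeneracy requires that $\|f\mathbf{1}_Q\|_{\lvq}=0$ for every cube $Q$ forces $f=0$ almost everywhere, which is immediate). So the bulk of the work is completeness.

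For completeness I would start from a Cauchy sequence $\{f_k\}_{k\in\nn}\subset\vm$ and first localize. Setting $\sa:=\frac{1}{n}\sum_{j=1}^n\frac{1}{q_j}-\frac{1}{p}\ge 0$, the defining inequality of the Morrey norm gives, for every cube $Q\subset\rn$,
\begin{equation*}
\lf\|(f_k-f_\ell)\mathbf{1}_Q\r\|_{\lvq}\le |Q|^{\sa}\,\|f_k-f_\ell\|_{\vm},
\end{equation*}
so $\{f_k\mathbf{1}_Q\}$ is Cauchy in $\lvq$. Using Theorem \ref{x2t1} (completeness of $\lvq$) on an exhausting sequence of cubes $Q_j:=[-j,j]^n$ together with a diagonal argument, I would extract a subsequence $\{f_{k_i}\}$ converging almost everywhere on $\rn$ to a measurable function $f$, and verify that $f\mathbf{1}_{Q}\in\lvq$ is exactly the $L^{\vq}$-limit of $f_{k_i}\mathbf{1}_Q$ for each cube $Q$.

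Next, to identify $f$ as the $\vm$-limit of $\{f_k\}$, I would invoke a Fatou-type inequality for mixed Lebesgue norms: whenever $g_{k_i}\to g$ almost everywhere,
\begin{equation*}
\|g\mathbf{1}_Q\|_{\lvq}\le \liminf_{i\to\fz}\|g_{k_i}\mathbf{1}_Q\|_{\lvq},
\end{equation*}
which follows by iterating the classical Fatou lemma in each of the $n$ coordinate integrals (permissible since each $q_j\in[1,\fz]$). Applying this with $g_{k_i}:=f_{k_i}$ and taking the supremum over cubes $Q$ together with the factor $|Q|^{-\sa}$ shows $\|f\|_{\vm}\le\liminf_{i\to\fz}\|f_{k_i}\|_{\vm}<\fz$, so $f\in\vm$. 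Applying the same inequality to $g_{k_i}:=f_{k_i}-f_\ell$ yields
\begin{equation*}
\|f-f_\ell\|_{\vm}\le\liminf_{i\to\fz}\|f_{k_i}-f_\ell\|_{\vm},
\end{equation*}
and the right-hand side can be made arbitrarily small by the Cauchy property once $\ell$ is large, which finishes convergence in $\vm$ and hence completeness.

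The main technical point to be careful about is the Fatou-type inequality for $\|\cdot\|_{\lvq}$, because it must be iterated coordinate by coordinate and one needs each inner norm to be a non-negative measurable function of the remaining variables; this is standard for $\vq\in[1,\fz]^n$ but deserves an explicit note. Everything else — the localization estimate, the diagonal extraction, and the passage from almost-everywhere limits to $\vm$-limits — is bookkeeping.
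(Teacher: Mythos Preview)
The paper does not supply its own proof of this theorem; it is stated as a known fact and attributed to Nogayama \cite[Remark 3.1]{tn}. Your argument is correct and is the standard one for Morrey-type spaces: localize to cubes via the defining inequality to inherit completeness from $\lvq$ (Theorem \ref{x2t1}), extract an almost-everywhere limit by a diagonal argument, and then upgrade to convergence in $\vm$ via an iterated Fatou inequality for $\|\cdot\|_{\lvq}$. The only point that merits care, as you note, is the coordinate-by-coordinate iteration of Fatou (including the $q_j=\infty$ case, where the inequality $\|g\|_{L^\infty}\le\liminf_i\|g_i\|_{L^\infty}$ for a.e.\ limits still holds), and you have flagged this appropriately.
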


Next we borrow two examples given in \cite[Examples 3.3 and 3.5]{tn}
as follows to show some simple elements in $\vm$.

\begin{example}
Let $\vq:=(q_1,\ldots,q_n)\in (0,\fz]^n$, $p\in(0,\fz]$ and
$q_+:=\max\{q_1,\ldots,q_n\}$ satisfy $q_+\in(0,p)$. Then,
for any $x\in\rn\setminus\{\vec{0}_n\}$,
let $f(x):=|x|^{-\frac{n}{p}}$; then $$f\in\vm.$$
\end{example}

\begin{example}
Let $\vq:=(q_1,\ldots,q_n)\in (0,\fz]^n$,
$\vp:=(p_1,\ldots,p_n)\in(0,\fz]^n$ and $p\in(0,\fz]$
satisfy that, for any $j\in\{1,\ldots,n\}$,
$q_j\in(0,p_j)$ if $p_j\in(0,\fz)$, and $q_j\in(0,\fz]$ if $p_j=\fz$
as well as $$\sum_{j=1}^n\frac1 {p_j}=\frac{n}{p}.$$ Then,
for any $x:=(x_1,\ldots,x_n)\in\rn\setminus\{\vec{0}_n\}$,
let $f(x):=\prod_{j=1}^n\lf|x_j\r|^{-\frac{1}{p_j}}$; then $$f\in\vm.$$
\end{example}

We now state the embedding properties of mixed Morrey spaces given in
\cite[Proposition 3.2]{tn} as follows.

\begin{theorem}\label{x3t2}
Let $\vq:=(q_1,\ldots,q_n)\in(0,\fz]^n$,
$\vec r:=(r_1,\ldots,r_n)\in(0,\fz]^n$ and $p\in(0,\fz)$
satisfy that, for any $j\in\{1,\ldots,n\}$, $q_j\in(0,r_j]$
and $$\sum_{j=1}^n\frac1 {r_j}\ge\frac{n}{p}.$$
Then $${\mathcal{M}^{p}_{\vec r}(\rn)}\subset\vm$$
and the embedding is continuous.
\end{theorem}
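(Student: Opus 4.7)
The plan is to reduce the embedding to a pointwise (in the cube $Q$) comparison of the mixed Lebesgue quasi-norms $\|f\mathbf{1}_Q\|_{\lvq}$ and $\|f\mathbf{1}_Q\|_{L^{\vec r}(\rn)}$. More precisely, since both sides of the desired embedding are defined as a supremum over cubes of $|Q|^{1/p - (1/n)\sum_j 1/\bullet_j}\|f\mathbf{1}_Q\|_{\bullet}$, it suffices to establish, for every cube $Q\subset\rn$, the estimate
$$\|f\mathbf{1}_Q\|_{\lvq}\le |Q|^{\frac1n\sum_{j=1}^n(\frac{1}{q_j}-\frac{1}{r_j})}\|f\mathbf{1}_Q\|_{L^{\vec r}(\rn)}.$$
Multiplying this by $|Q|^{1/p-(1/n)\sum_j 1/q_j}$ and taking the supremum over cubes then yields $\|f\|_{\vm}\le \|f\|_{\mathcal{M}^p_{\vec r}(\rn)}$, which gives both the inclusion and continuity.

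To prove the cubewise inequality I would exploit the product structure $Q=I_1\times\cdots\times I_n$ with each $|I_j|=|Q|^{1/n}$, and the consequent factorization $\mathbf{1}_Q(x_1,\ldots,x_n)=\prod_{j=1}^n\mathbf{1}_{I_j}(x_j)$. This lets me apply the one-dimensional H\"older inequality variable by variable from the innermost to the outermost. First, for fixed $(x_2,\ldots,x_n)\in I_2\times\cdots\times I_n$, H\"older on $I_1$ with the conjugate pair $(r_1/q_1,(r_1/q_1)')$ gives
$$\lf[\int_{I_1}|f(x_1,\ldots,x_n)|^{q_1}\,dx_1\r]^{1/q_1}\le |I_1|^{1/q_1-1/r_1}\lf[\int_{I_1}|f(x_1,\ldots,x_n)|^{r_1}\,dx_1\r]^{1/r_1}.$$
Raising this to the power $q_2$, integrating over $I_2$, and again applying H\"older with the pair $(r_2/q_2,(r_2/q_2)')$ produces the factor $|I_2|^{1/q_2-1/r_2}$. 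Iterating the same H\"older step $n$ times and using that the indicator separates in each stage yields the product of factors $\prod_{j=1}^n |I_j|^{1/q_j-1/r_j}=|Q|^{(1/n)\sum_j(1/q_j-1/r_j)}$ in front of $\|f\mathbf{1}_Q\|_{L^{\vec r}(\rn)}$, as desired.

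The main obstacle is purely bookkeeping: one must track the endpoint cases where some $q_j=r_j$ (in which the corresponding H\"older step is trivial and contributes no factor) and where some $r_j=\fz$ (in which H\"older is replaced by $L^{q_j}(I_j)\hookrightarrow L^{\fz}(I_j)$ with constant $|I_j|^{1/q_j}$, consistent with the convention $1/r_j=0$); because the cube factorizes, each step decouples cleanly and the exponents add to $(1/n)\sum_{j=1}^n(1/q_j-1/r_j)$. The hypothesis $\sum_{j=1}^n 1/r_j\ge n/p$ is not needed for the cubewise H\"older estimate itself; it is only used implicitly so that both spaces are defined in Definition \ref{x3d1}, while the hypothesis $q_j\le r_j$ is exactly what makes every H\"older step legal.
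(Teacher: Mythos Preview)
The paper does not actually prove this statement; it is a survey article that merely quotes the result as \cite[Proposition 3.2]{tn} without reproducing the argument. So there is no in-paper proof to compare against.

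Your argument is correct and is the natural one: fix a cube $Q=\prod_{j=1}^n I_j$ with $|I_j|=|Q|^{1/n}$, iterate the one-dimensional H\"older inequality from the inner variable outward to obtain
\[
\|f\mathbf{1}_Q\|_{\lvq}\le \prod_{j=1}^n |I_j|^{1/q_j-1/r_j}\,\|f\mathbf{1}_Q\|_{L^{\vec r}(\rn)}
=|Q|^{\frac1n\sum_{j=1}^n(\frac{1}{q_j}-\frac{1}{r_j})}\|f\mathbf{1}_Q\|_{L^{\vec r}(\rn)},
\]
then multiply by $|Q|^{1/p-(1/n)\sum_j 1/q_j}$ and take the supremum. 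Your remark that the hypothesis $\sum_j 1/r_j\ge n/p$ enters only through Definition~\ref{x3d1} (and that $q_j\le r_j$ then forces $\sum_j 1/q_j\ge n/p$ automatically) is also correct.

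One small slip: in the endpoint case $r_j=\infty$ you wrote ``$L^{q_j}(I_j)\hookrightarrow L^{\infty}(I_j)$'', but the embedding runs the other way on a bounded interval, namely $L^{\infty}(I_j)\hookrightarrow L^{q_j}(I_j)$ with operator norm $|I_j|^{1/q_j}$. The constant you recorded is right; only the arrow is reversed.
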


\begin{remark}\label{x3r2}
Let $\vq:=(q_1,\ldots,q_n)\in(0,\fz]^n$
and $p\in[\max\{q_1,\ldots,q_n\},\fz)$.
Then, from Theorem \ref{x3t2}, it follows that,
$$\mathcal{M}^{p}_{\max\{q_1,\ldots,q_n\}}(\rn)\subset\vm
\subset \mathcal{M}^{p}_{\min\{q_1,\ldots,q_n\}}(\rn).$$
\end{remark}

\subsection{Some maximal inequalities on $\vm$}\label{3s2}

The aim of this subsection is the summarization of
conclusions on the boundedness of several maximal operators
on mixed Morrey spaces. We first present the following notion of
the uncentered Hardy--Littlewood maximal operator.

The \emph{uncentered Hardy--Littlewood maximal function} $M_{{\rm HL}}(f)$
of $f\in L_{\rm loc}^1(\rn)$ is defined by setting, for any $x\in\rn$,
\begin{align}\label{x3e1}
M_{{\rm HL}}(f)(x):=\sup_{x\in Q}
\frac1{|Q|}\int_Q|f(y)|\,dy,
\end{align}
where the supremum is taken over all cubes $Q\subset\rn$
containing $x$.

Then we display the boundedness of the iterated maximal operator $\mathfrak{M}_t$
on mixed Morrey spaces $\vm$ established in \cite[Theorem 1.4]{tn} as follows.

\begin{theorem}\label{x3t3}
Let $\vq:=(q_1,\ldots,q_n)\in (0,\fz)^n$ and $p\in(0,\fz)$
satisfy
\begin{align*}
\sum_{j=1}^n\frac1 {q_j}\ge \frac{n}{p}\quad{\rm and}\quad
\frac{n-1}{n}p<\max\lf\{q_1,\ldots,q_n\r\}.
\end{align*}
If $t\in(0,\min\{q_1,\ldots,q_n,p\})$, then
there exists a positive constant $C$ such that, for any
$f\in\vm$,
$$\|\mathfrak{M}_t(f)\|_{\vm}\le C\|f\|_{\vm},$$
where $\mathfrak{M}_t$ is as in \eqref{x2e13}.
\end{theorem}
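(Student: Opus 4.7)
The plan is to imitate the classical split-into-local-and-global argument for the boundedness of maximal operators on (mixed) Morrey spaces. Fix a cube $Q\subset\rn$; by Definition \ref{x3d1} it suffices to prove, with a constant independent of $Q$ and $f$,
$$|Q|^{\frac{1}{p}-\frac{1}{n}\sum_{j=1}^n\frac{1}{q_j}}\lf\|\mathfrak{M}_t(f)\mathbf{1}_Q\r\|_{\lvq}\ls \|f\|_{\vm}.$$
Decompose $f=f_1+f_2$ with $f_1:=f\mathbf{1}_{3Q}$ and $f_2:=f\mathbf{1}_{(3Q)^c}$. Since each one-dimensional maximal operator $M_j$ is sublinear and $|a+b|^t\le C_t(|a|^t+|b|^t)$, one gets the pointwise domination $\mathfrak{M}_t(f)\ls \mathfrak{M}_t(f_1)+\mathfrak{M}_t(f_2)$, so it is enough to treat $f_1$ and $f_2$ separately.

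For the local piece $f_1$, the hypothesis $t\in(0,\min\{q_1,\ldots,q_n,p\})$ together with Theorem \ref{x3t5}, applied to the single function $f_1$ (say, with $u=\fz$), yields $\|\mathfrak{M}_t(f_1)\|_{\lvq}\ls\|f_1\|_{\lvq}$. Because $f_1$ is supported in $3Q$, applying Definition \ref{x3d1} to the cube $3Q$ gives $\|f_1\|_{\lvq}\ls |3Q|^{\frac{1}{n}\sum_{j=1}^n\frac{1}{q_j}-\frac{1}{p}}\|f\|_{\vm}$. Since $|3Q|\sim|Q|$, combining the two estimates produces the desired control of the local contribution.

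For the global piece $f_2$, the strategy is to establish the pointwise bound
$$\mathfrak{M}_t(f_2)(x)\ls |Q|^{-1/p}\|f\|_{\vm}\qquad\text{for every }x\in Q,$$
which, together with $\|\mathbf{1}_Q\|_{\lvq}\sim|Q|^{(1/n)\sum_{j=1}^n 1/q_j}$, gives the required global estimate. To prove the pointwise bound, decompose $(3Q)^c=\bigcup_{k\ge 1}B_k$ with $B_k:=3^{k+1}Q\setminus 3^kQ$, and analyse $\mathfrak{M}_t(f\mathbf{1}_{B_k})(x)$ by unravelling the composition $M_n\circ\cdots\circ M_1$. The key geometric observation is that, for $x\in Q$, any one-dimensional interval contributing a non-zero average to $M_j$ (with the remaining coordinates confined to a slice of $3Q$) must have length at least comparable to the side of $Q$, so that the computation at scale $3^k$ is localised to averages over rectangles inscribed in $3^{k+1}Q$. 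Successive one-dimensional H\"older inequalities with exponents tied to $\vq$ and $t$, combined with the $\vm$-control of $f$ on $3^{k+1}Q$, then produce a bound of the form $\mathfrak{M}_t(f\mathbf{1}_{B_k})(x)\ls \lambda_k|Q|^{-1/p}\|f\|_{\vm}$ for some coefficients $\lambda_k>0$ depending on $\vq,p,n$.

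The main obstacle is to verify the summability of $(\lambda_k)_{k\ge 1}$, which hinges critically on the second hypothesis $\frac{n-1}{n}p<\max\{q_1,\ldots,q_n\}$. Heuristically, when the $n$ iterated one-dimensional H\"older steps are carried out, the accumulated $L^{q_j}$-losses along the $n$ coordinates combine with the single Morrey gain on the cube $3^{k+1}Q$ to give a net decay exponent that is strictly negative only if some $q_j$ is large enough; assigning the index attaining $\max_{j} q_j$ to the ``outermost'' position in the iteration is precisely what forces this exponent to be negative under the stated hypothesis. Once the geometric summability of $(\lambda_k)_{k\ge 1}$ is secured, summing over $k\ge 1$ delivers the pointwise bound for $\mathfrak{M}_t(f_2)$ and, combined with the local estimate, completes the proof.
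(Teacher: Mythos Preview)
This survey does not supply its own proof of Theorem~\ref{x3t3}; the result is quoted from \cite[Theorem 1.4]{tn}, so there is no in-paper argument to set your sketch against. Nevertheless, your strategy carries a genuine gap in the global part, and it is not a matter of missing details.

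The claimed pointwise bound $\mathfrak{M}_t(f_2)(x)\ls |Q|^{-1/p}\|f\|_{\vm}$ for $x\in Q$ is \emph{false} for the iterated maximal operator. The mechanism behind the analogous bound for the cubic Hardy--Littlewood maximal function --- that any cube through $x\in Q$ meeting $(3Q)^c$ must be large --- breaks down here, because the inner one-dimensional operators $M_1,\ldots,M_{n-1}$ act at \emph{all} points, not just at $x$, and can detect fine-scale structure of $f_2$ along slices that are far from $Q$ in only one coordinate. Concretely, take $n=2$, $Q=[-1,1]^2$, $q_1=q_2=3/2$, $p=2$, $t=7/5$ (all hypotheses hold), and $f=c\,\mathbf{1}_R$ with $R=[-\epsilon,\epsilon]\times[10,10+\epsilon]\subset(3Q)^c$. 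For $y_2\in[10,10+\epsilon]$ one has $M_1(|f|^t)(0,y_2)=c^t$ since $0\in[-\epsilon,\epsilon]$ (no large-interval gain in $M_1$), and then $M_2$ at $(0,0)$ gives $\mathfrak{M}_t(f)(0,0)\sim c\,\epsilon^{1/t}$. But $\|f\|_{\vm}\sim c\,\epsilon$ and $|Q|^{-1/p}\sim 1$, so your pointwise bound would force $\epsilon^{1/t}\ls\epsilon$, which fails for $t>1$ as $\epsilon\to 0$. The theorem itself survives this example (a direct computation shows $\|\mathfrak{M}_t(f)\mathbf{1}_Q\|_{\lvq}\sim c\,\epsilon^{1/t+1/q_1}\ll c\,\epsilon$), but your route to it does not.

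Your local estimate via Theorem~\ref{x3t5} is correct. For the global piece a different mechanism is needed; Remark~\ref{3r.1} indicates that the proof in \cite{tn} relies on an auxiliary lemma (Lemma 4.8 there), and the placement of Theorem~\ref{x3t4} suggests weighted/dual estimates rather than an $L^\infty$ pointwise bound.
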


\begin{remark}\label{3r.1}
Observe that, in \cite[Theorem 1.4]{tn}, $\vq:=(q_1,\ldots,q_n)\in (0,\fz]^n$.
However, it was pointed out to us by Professor Ferenc Weisz that, when some
$q_{i_1}\in (0,\fz)$ and $q_{i_2}:=\fz$ with
$i_1,\ i_2\in\{1,\ldots,n\}$ and $i_1<i_2$, \cite[Theorem 1.4]{tn}
is not correct. Indeed, Professor Ferenc Weisz showed that
\cite[Lemma 4.8]{tn}, which was used in the proof of \cite[Theorem 1.4]{tn},
does not hold true if $n:=2$, $p_1\in(1,\fz)$ and $p_2:=\fz$.
Thus, in Theorem \ref{x3t3}, the range of $\vq$ should be $(0,\fz)^n$.
\end{remark}

As a consequence of Theorem \ref{x3t3}, Nogayama \cite[Corollary 1.5]{tn}
also obtained the following boundedness of the iterated maximal operator
$\mathfrak{M}_t$ on classical Morrey spaces.

\begin{corollary}\label{x3c1}
Let $$0<\frac{n-1}{n}p<q\le p<\fz.$$
If $t\in(0,q)$, then there exists a positive constant
$C$ such that, for any $f\in{\mathcal{M}^{p}_{q}(\rn)}$,
$$\|\mathfrak{M}_t(f)\|_{{\mathcal{M}^{p}_{q}(\rn)}}
\le C\|f\|_{{\mathcal{M}^{p}_{q}(\rn)}},$$
where $\mathfrak{M}_t$ is as in \eqref{x2e13}.
\end{corollary}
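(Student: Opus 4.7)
The plan is to deduce the corollary as a direct specialization of Theorem \ref{x3t3} to the constant exponent vector $\vq:=(\overbrace{q,\ldots,q}^{n\ \rm times})$. Concretely, I would set $\vq$ to be this diagonal tuple, so that $\min\{q_1,\ldots,q_n\}=\max\{q_1,\ldots,q_n\}=q$, and then verify the three hypotheses of Theorem \ref{x3t3}. Since $q\le p$, we have $\sum_{j=1}^n 1/q_j = n/q \ge n/p$, giving the first condition and also ensuring we are in the setting where $\vm$ is well defined. The strict inequality $\frac{n-1}{n}p<q$ immediately yields $\frac{n-1}{n}p<\max\{q_1,\ldots,q_n\}$, which is the second condition. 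Finally, since $t\in(0,q)$ and $q\le p$, we have $t\in(0,\min\{q_1,\ldots,q_n,p\})$, which is the third condition.

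Having checked the hypotheses, I would invoke Theorem \ref{x3t3} to obtain a positive constant $C$ such that, for every $f\in\vm$,
\begin{equation*}
\|\mathfrak{M}_t(f)\|_{\vm}\le C\|f\|_{\vm}.
\end{equation*}
At this point I would appeal to Remark \ref{x3r1}, which asserts that, when $\vq:=(\overbrace{q,\ldots,q}^{n\ \rm times})$, the mixed Morrey space $\vm$ coincides with the classical Morrey space $\mathcal{M}^p_q(\rn)$ and the two quasi-norms agree. Rewriting the previous display under this identification yields precisely
\begin{equation*}
\|\mathfrak{M}_t(f)\|_{\mathcal{M}^p_q(\rn)}\le C\|f\|_{\mathcal{M}^p_q(\rn)}
\end{equation*}
for every $f\in \mathcal{M}^p_q(\rn)$, which is the claim of Corollary \ref{x3c1}.

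There is essentially no obstacle here: the statement is explicitly labelled a \emph{consequence} of Theorem \ref{x3t3}, and the argument reduces to the bookkeeping of specializing $\vq$ to a constant vector and invoking the norm identification from Remark \ref{x3r1}. The only minor point worth flagging is that one must choose the constant vector so that the maximum and minimum coalesce, which is what makes the seemingly two-sided constraint $\frac{n-1}{n}p<q\le p$ fit both the inequality $\sum_j 1/q_j\ge n/p$ and the inequality $\frac{n-1}{n}p<\max_j q_j$ simultaneously.
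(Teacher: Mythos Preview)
Your proposal is correct and matches the paper's approach exactly: the paper presents Corollary \ref{x3c1} simply as a consequence of Theorem \ref{x3t3}, and your verification that the constant vector $\vq=(q,\ldots,q)$ satisfies all three hypotheses of that theorem, followed by the norm identification from Remark \ref{x3r1}, is precisely the intended argument.
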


Moreover, Nogayama in \cite[Theorems 1.8 and 1.9]{tn} established
the following two succeeding Fefferman--Stein vector-valued inequalities
of the uncentered Hardy--Littlewood maximal operator $\HL$ and
the iterated maximal operator $\mathfrak{M}_t$ on mixed
Morrey spaces.

\begin{theorem}\label{x3t6}
Let $\vq:=(q_1,\ldots,q_n)\in(1,\fz)^n$, $u\in(1,\fz]$ and $p\in(1,\fz]$
satisfy $$\frac{n}{p}\leq\sum_{j=1}^n \frac{1}{q_j}.$$
Then there exists a positive constant $C$ such that,
for any sequence $\{f_j\}_{j\in\nn}$ of measurable functions,
$$
\lf\|\lf\{\sum_{j\in\nn}\lf[\HL \lf(f_j\r)\r]^u\r\}^{\frac{1}{u}}\r\|_{\vm}
\le C\lf\|\lf(\sum_{j\in\nn}\lf|f_j\r|^u \r)^{\frac{1}{u}}\r\|_{\vm},
$$
where $\HL$ is as in \eqref{x3e1}.
\end{theorem}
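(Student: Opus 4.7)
The plan is to prove the estimate cube by cube: for each cube $Q\subset\rn$, split $f_j=f_j^{(1)}+f_j^{(2)}$ with $f_j^{(1)}:=f_j\mathbf{1}_{2Q}$, bound the local and non-local contributions to $\|(\sum_j[\HL(f_j)]^u)^{1/u}\mathbf{1}_Q\|_{\lvq}$ separately, and in each case arrive at a bound by $|Q|^{(1/n)\sum_{j=1}^n 1/q_j-1/p}\|(\sum_j|f_j|^u)^{1/u}\|_{\vm}$; taking the supremum over $Q$ will then yield the theorem. As a preliminary pointwise tool, for every $t\ge 1$ Hölder's inequality applied to the average $\frac{1}{|Q'|}\int_{Q'}|g|$ gives $\HL(g)\le[\HL(|g|^t)]^{1/t}$, and the standard comparison of cubes with products of intervals yields $\HL(|g|^t)\le(M_n\circ\cdots\circ M_1)(|g|^t)$, so that $\HL(g)(x)\ls\mathfrak{M}_t(g)(x)$ pointwise.

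For the local part, since $\vq\in(1,\fz)^n$ and $u\in(1,\fz]$ one can pick $t\in[1,\min\{q_1,\ldots,q_n,u\})$; combining the preceding pointwise bound with Theorem~\ref{x3t5} (the Fefferman--Stein inequality for $\mathfrak{M}_t$ on $\lvq$) gives
\[
\bigg\|\bigg(\sum_{j\in\nn}\bigl[\HL(f_j^{(1)})\bigr]^u\bigg)^{1/u}\bigg\|_{\lvq}
\ls\bigg\|\bigg(\sum_{j\in\nn}|f_j|^u\bigg)^{1/u}\mathbf{1}_{2Q}\bigg\|_{\lvq},
\]
and the definition of $\vm$ further bounds the right-hand side by $|2Q|^{(1/n)\sum_{j=1}^n 1/q_j-1/p}\|(\sum_j|f_j|^u)^{1/u}\|_{\vm}$. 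For the non-local part, the geometric fact that any cube containing $x\in Q$ and meeting $(2Q)^\com$ has side length comparable to $2^k\ell(Q)$ for some $k\in\nn$ gives
\[
\HL(f_j^{(2)})(x)\ls\sup_{k\in\nn}\frac{1}{|2^kQ|}\int_{2^kQ}|f_j(y)|\,dy,\qquad x\in Q.
\]
Dominating the supremum by the sum, applying Minkowski's inequality in $\ell^u(j)$ (for $u\ge 1$) to pull the sum over $k$ outside, and then Minkowski's integral inequality to bring the $\ell^u(j)$-norm inside each average, one obtains, with $F:=(\sum_j|f_j|^u)^{1/u}$,
\[
\bigg(\sum_{j\in\nn}\bigl[\HL(f_j^{(2)})(x)\bigr]^u\bigg)^{1/u}
\ls\sum_{k\in\nn}\frac{1}{|2^kQ|}\int_{2^kQ}F(y)\,dy.
\]
Theorem~\ref{x2t7} applied with the conjugate exponents $\vq$ and $\vq\,'$ gives $\int_{2^kQ}F\le|2^kQ|^{(1/n)\sum_{j=1}^n 1/q_j'}\|F\mathbf{1}_{2^kQ}\|_{\lvq}$, and the mixed Morrey norm estimate then bounds $\|F\mathbf{1}_{2^kQ}\|_{\lvq}\le|2^kQ|^{(1/n)\sum_{j=1}^n 1/q_j-1/p}\|F\|_{\vm}$. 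After cancelling the $|2^kQ|$-powers and summing the resulting geometric series in $k$ (which converges since $p\in(1,\fz)$), one arrives at the pointwise bound $|Q|^{-1/p}\|F\|_{\vm}$ on $Q$; multiplying by $\|\mathbf{1}_Q\|_{\lvq}=|Q|^{(1/n)\sum_{j=1}^n 1/q_j}$ completes the non-local estimate.

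The endpoint $u=\fz$ is incorporated by the same argument, since both Minkowski inequalities collapse into trivial sup-versions and Theorem~\ref{x3t5} remains applicable; alternatively one can use the monotonicity $\sup_j\HL(f_j)\le\HL(\sup_j|f_j|)$ to reduce it to the scalar case already contained in the above. The edge case $p=\fz$ requires only a minor modification, replacing the geometric series in $k$ by a direct uniform control via the defining $\mathcal{M}^{\fz}_{\vq}(\rn)$-norm of $F$. The \textbf{main obstacle} I anticipate is the non-local estimate: verifying the dyadic-average pointwise bound via the cube geometry, and then carefully chaining the two Minkowski inequalities together with Theorem~\ref{x2t7} and the Morrey definition so that every $|2^kQ|$-power collapses into the single summable factor $2^{-kn/p}|Q|^{-1/p}$—the hypothesis $p\in(1,\fz)$ being exactly what guarantees this geometric series converges.
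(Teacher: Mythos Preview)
The survey does not prove Theorem~\ref{x3t6}; it merely quotes the result from Nogayama \cite[Theorem~1.8]{tn}, so there is no proof in the present paper against which to compare your argument. That said, your local/non-local decomposition --- handling $f_j\mathbf{1}_{2Q}$ via the pointwise bound $\HL\le\mathfrak{M}_t$ (for $t\ge 1$) combined with Theorem~\ref{x3t5}, and handling $f_j\mathbf{1}_{(2Q)^\com}$ via dyadic annuli, two applications of Minkowski, the mixed H\"older inequality (Theorem~\ref{x2t7}), and the Morrey definition, so that all powers of $|2^kQ|$ collapse into the summable factor $2^{-kn/p}$ --- is precisely the standard route for transferring such maximal inequalities to Morrey-type spaces, and it is correct for $p\in(1,\fz)$.

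Your claim that the endpoint $p=\fz$ needs ``only a minor modification'' is, however, not salvageable when $u<\fz$. One checks that $\mathcal{M}^{\fz}_{\vq}(\rn)=L^{\fz}(\rn)$ with equal norms (H\"older on cubes gives one inequality, Lebesgue differentiation the other), and the Fefferman--Stein vector-valued inequality is known to fail on $L^{\fz}(\ell^u)$ for every finite $u>1$. Concretely, in dimension one take $f_j:=\mathbf{1}_{[2^j,2^{j+1})}$ for $1\le j\le N$; the supports are disjoint, so $\bigl\|(\sum_j|f_j|^u)^{1/u}\bigr\|_{L^\fz}=1$, yet $\HL(f_j)(0)\ge\tfrac12$ for every $j$ (use the interval $[0,2^{j+1}]$), whence $\bigl(\sum_j\HL(f_j)(0)^u\bigr)^{1/u}\ge N^{1/u}/2\to\fz$. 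Thus for $p=\fz$ the conclusion can hold only in the case $u=\fz$, where it is immediate from $\sup_j\HL(f_j)\le\HL(\sup_j|f_j|)$ and the trivial $L^\fz$-bound for $\HL$; you should state this explicitly rather than defer it.
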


\begin{theorem}\label{x3t7}
Let $\vq:=(q_1,\ldots,q_n)\in(0,\fz)^n$,
$u\in(0,\fz]$ and $p\in(0,\fz)$ satisfy
\begin{align*}
\sum_{j=1}^n \frac{1}{q_j}\ge \frac{n}{p}
\quad{\rm and }\quad\frac{n-1}{n}p<\max\{q_1,\ldots,q_n\}.
\end{align*}
If $t\in(0,\min\{q_1,\ldots q_n,u\})$, then
there exists a positive constant $C$ such that,
for any $\{f_j\}_{j\in\nn}\subset\vm$,
$$
\lf\|\lf\{\sum_{j\in\nn}[\mathfrak{M}_t (f_j)]
^u \r\}^{\frac{1}{u}}\r\|_{\vm}
\le C\lf\|\lf(\sum_{j\in\nn}\lf|f_j\r|^u \r)^{\frac{1}{u}}\r\|_{\vm},
$$
where $\mathfrak{M}_t$ is as in \eqref{x2e13}.
\end{theorem}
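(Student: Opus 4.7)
The plan is to reduce the claim to the case $t = 1$ via a rescaling argument, and then to upgrade the Fefferman--Stein vector-valued inequality on mixed Lebesgue spaces (Theorem \ref{x3t5}) to the mixed Morrey setting via a local--global decomposition, in which the scalar boundedness of $\mathfrak{M}_t$ on $\vm$ (Theorem \ref{x3t3}) will handle the non-local piece.

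First, I would exploit the pointwise identity $[\mathfrak{M}_t(f_j)]^u = [\mathfrak{M}_1(|f_j|^t)]^{u/t}$ together with the elementary scaling
\begin{align*}
\lf\||g|^t\r\|_{\mathcal{M}^{p_0}_{\vec q_0}(\rn)} = \|g\|_{\mathcal{M}^{tp_0}_{t\vec q_0}(\rn)}^t
\end{align*}
of the mixed Morrey norm. Setting $F_j := |f_j|^t$ and $v := u/t \in (1,\fz]$, this reduces the target inequality to
\begin{align*}
\lf\|\lf[\sum_{j\in\nn}\mathfrak{M}_1(F_j)^v\r]^{1/v}\r\|_{\mathcal{M}^{p/t}_{\vq/t}(\rn)}
\ls \lf\|\lf[\sum_{j\in\nn}F_j^v\r]^{1/v}\r\|_{\mathcal{M}^{p/t}_{\vq/t}(\rn)},
\end{align*}
where the rescaled exponents $\vq/t \in (1,\fz)^n$ and $p/t \in (1,\fz)$ inherit the hypotheses $\sum_{j=1}^n 1/(q_j/t) \ge n/(p/t)$ and $\frac{n-1}{n}(p/t) < \max\{q_j/t\}$ from the original assumptions.

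Next, for any cube $Q \subset \rn$, I would split each $F_j = F_j\mathbf{1}_{2Q} + F_j\mathbf{1}_{(2Q)^{\complement}}$. For the local piece, Theorem \ref{x3t5} applied on $L^{\vq/t}(\rn)$ with auxiliary parameter $1 \in (0,\min\{q_j/t,v\})$ gives the Fefferman--Stein vector-valued inequality for $\mathfrak{M}_1$, and the usual Morrey normalization by powers of $|Q|$ bounds the local contribution by $\|[\sum_j F_j^v]^{1/v}\|_{\mathcal{M}^{p/t}_{\vq/t}(\rn)}$. For the non-local piece, the aim is to establish a pointwise estimate of $\mathfrak{M}_1(F_j\mathbf{1}_{(2Q)^{\complement}})$ on $Q$ by a geometrically decaying series of rectangular averages of $F_j$ over dilates of $Q$; Minkowski's inequality in $\ell^v$ (valid since $v > 1$) then commutes the $j$-summation past these averages, and the scalar boundedness of $\mathfrak{M}_t$ on $\vm$ from Theorem \ref{x3t3}, applied to $[\sum_j F_j^v]^{1/v}$, controls each resulting term by the required Morrey norm.

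The main obstacle will be the non-local piece, which is precisely the source of the subtlety flagged by Professor Weisz in Remark \ref{3r.1}. Because $\mathfrak{M}_1 = M_n\circ\cdots\circ M_1$ is a genuine iteration of one-dimensional maximal operators and can pick up large suprema along thin rectangles, the truncation $F_j\mathbf{1}_{(2Q)^{\complement}}$ still admits partial one-dimensional slices passing through $Q$, so the tail cannot be organized by cubic dyadic annuli as in the classical Morrey setting. It is here that the strict inequality $\frac{n-1}{n}p < \max\{q_1,\ldots,q_n\}$ is indispensable in producing summable geometric decay, and correspondingly the exclusion of $q_j = \fz$, namely the requirement $\vq \in (0,\fz)^n$ highlighted in Remark \ref{3r.1}, becomes essential.
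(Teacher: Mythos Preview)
The paper itself does not prove Theorem \ref{x3t7}; it is a survey that simply cites the result from Nogayama \cite[Theorem 1.9]{tn} and records the correction $\vq\in(0,\fz)^n$ in the remark that follows. There is therefore no in-paper proof to compare against.

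Your strategy---rescale to $t=1$, split $F_j=F_j\mathbf{1}_{2Q}+F_j\mathbf{1}_{(2Q)^{\complement}}$, handle the local piece by Theorem \ref{x3t5}, and feed the global piece back through the scalar result Theorem \ref{x3t3}---is the standard template for lifting Lebesgue-space maximal inequalities to Morrey spaces, and it is almost certainly the architecture of Nogayama's proof in \cite{tn}. The rescaling identity and the treatment of the local term are correct as stated. Your observation that after Minkowski in $\ell^v$ one is reduced to controlling rectangular averages of $G:=(\sum_j F_j^v)^{1/v}$, and that such averages are pointwise dominated by $\mathfrak{M}_1(G)$, is the right mechanism for invoking Theorem \ref{x3t3}; the hypotheses on the rescaled exponents $\vq/t$, $p/t$ check out (in particular $t<\min\{q_j\}\le p$, so $1<p/t$).

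The one place where your write-up stops short of a proof is exactly the step you flag: producing, for $x\in Q$, an explicit pointwise bound
\[
\mathfrak{M}_1\bigl(F_j\mathbf{1}_{(2Q)^{\complement}}\bigr)(x)\ \ls\ \sum_{\vec k}c_{\vec k}\,\Bigl(\text{average of }F_j\text{ over a rectangle }R_{\vec k}\ni x\Bigr)
\]
with $\sum_{\vec k}c_{\vec k}<\fz$. Because $\mathfrak{M}_1=M_n\circ\cdots\circ M_1$ is iterated rather than cubic, the ``annuli'' $R_{\vec k}$ must be indexed by a multi-index recording independent dyadic dilations in each coordinate, and the coefficients $c_{\vec k}$ carry the anisotropic decay; it is precisely in summing this multi-parameter series in the mixed norm $L^{\vq/t}(Q)$ that the gap $\max\{q_j\}-\tfrac{n-1}{n}p>0$ is consumed. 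You have correctly located this as the crux and correctly tied it to the restriction $\vq\in(0,\fz)^n$ from Remark~\ref{3r.1}, but you have not carried out the estimate. That is the only substantive omission.
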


\begin{remark}
Observe that, in \cite[Theorem 1.9]{tn}, $\vq:=(q_1,\ldots,q_n)\in (0,\fz]^n$.
However, similarly to Remark \ref{3r.1}, when some
$q_{i_1}\in (0,\fz)$ and $q_{i_2}:=\fz$ with
$i_1,\ i_2\in\{1,\ldots,n\}$ and $i_1<i_2$, \cite[Theorem 1.9]{tn}
is not correct, which was also pointed out to us by Professor Ferenc Weisz.
Therefore, in Theorem \ref{x3t7}, the range of $\vq$ should also be $(0,\fz)^n$.
\end{remark}

As a corollary of Theorem \ref{x3t7}, the following Fefferman--Stein
vector-valued inequality of the iterated maximal
operator $\mathfrak{M}_t$ on classical Morrey spaces was
obtained by Nogayama in \cite[Theorem 1.10]{tn}.

\begin{corollary}\label{x3c2}
Let $u\in(0,\fz]$ and
$$0<\frac{n-1}{n}p<q\leq p<\infty.$$
If $t\in(0,\min\{q,u\})$, then
there exists a positive constant $C$ such that,
for any $\{f_j\}_{j\in\nn}\subset{\mathcal{M}^{p}_{q}(\rn)}$,
$$
\lf\|\lf(\sum_{j\in\nn}[\mathfrak{M}_t (f_j)]
^u \r)^{\frac{1}{u}}\r\|_{{\mathcal{M}^{p}_{q}(\rn)}}
\le C\lf\|\lf(\sum_{j\in\nn}\lf|f_j\r|^u \r)
^{\frac{1}{u}}\r\|_{\mathcal{M}^{p}_{q}(\rn)}.
$$
\end{corollary}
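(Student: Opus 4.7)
The plan is to derive Corollary \ref{x3c2} as a direct specialization of Theorem \ref{x3t7} by taking the exponent vector $\vq$ to be the constant $n$-tuple. Specifically, I would fix some $q\in(0,\fz)$ with $(n-1)p/n<q\le p<\fz$ and set $\vq:=(\overbrace{q,\ldots,q}^{n\ \rm times})$. By Remark \ref{x3r1}, under this choice the mixed Morrey space $\vm$ coincides with the classical Morrey space $\mathcal{M}^p_q(\rn)$ with identical quasi-norms, so both sides of the vector-valued inequality in Corollary \ref{x3c2} equal the corresponding quantities in Theorem \ref{x3t7}.

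Next I would verify that the constant choice of $\vq$ satisfies all hypotheses of Theorem \ref{x3t7}. Observe that $\sum_{j=1}^n 1/q_j = n/q$, so the assumption $q\le p$ is exactly $\sum_{j=1}^n 1/q_j \ge n/p$. The condition $\max\{q_1,\ldots,q_n\}=q>(n-1)p/n$ is likewise given. Finally $\min\{q_1,\ldots,q_n,u\}=\min\{q,u\}$, so the hypothesis $t\in(0,\min\{q,u\})$ matches the admissible range for $t$ in Theorem \ref{x3t7}.

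Having checked hypotheses, I would apply Theorem \ref{x3t7} to any sequence $\{f_j\}_{j\in\nn}\subset \mathcal{M}^p_q(\rn)$, viewed as elements of $\vm$ under the above identification, and read off the estimate
\[
\lf\|\lf(\sum_{j\in\nn}\lf[\mathfrak{M}_t(f_j)\r]^u\r)^{\frac1u}\r\|_{\mathcal{M}^p_q(\rn)}
\le C\lf\|\lf(\sum_{j\in\nn}|f_j|^u\r)^{\frac1u}\r\|_{\mathcal{M}^p_q(\rn)},
\]
which is exactly the desired conclusion. Since the argument is a direct reduction, there is no substantive obstacle; the only point requiring care is to make the norm identification in Remark \ref{x3r1} explicit before invoking Theorem \ref{x3t7}, so that the resulting constant $C$ is precisely the one provided by Theorem \ref{x3t7} for the constant vector $\vq$.
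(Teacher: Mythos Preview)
Your proposal is correct and matches the paper's approach: the paper presents this result explicitly as a corollary of Theorem \ref{x3t7}, obtained by specializing to the constant vector $\vq=(q,\ldots,q)$ and invoking the identification $\vm=\mathcal{M}^p_q(\rn)$ from Remark \ref{x3r1}. Your verification of the hypotheses is accurate and nothing further is needed.
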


\subsection{Boundedness of operators on $\vm$}\label{3s3}

In this subsection, we discuss the boundedness of
Calder\'{o}n--Zygmund operators $T$
and fractional integral operators $I_{\az}$
on mixed Morrey spaces $\vm$. Then we review
a necessary and sufficient condition for the boundedness
of commutators of fractional integral operators $I_{\az}$ on $\vm$.
To this end, we first recall the following notion of
Calder\'{o}n--Zygmund operators (see, for instance, \cite{tn}).

\begin{definition}\label{x3d3}
A linear operator $T$ is called a \emph{Calder\'{o}n--Zygmund operator}
if its kernel
$$k:\ \lf\{(x,y)\in\rn\times\rn:\  x\neq y\r\}\to \mathbb{C}$$
satisfies that
\begin{enumerate}
\item[{\rm(i)}] there exists a positive constant $C$
such that, for any $x,\ y\in\rn$ with $x\neq y$,
$$\lf|k(x,y)\r|\leq \frac{C}{|x-y|^n};$$

\item[{\rm(ii)}]there exist positive constants $C$
and $\varepsilon$ such that, for any $x,\ y\in\rn$
with $|x-y|\geq 2|x-z|\neq 0$,
$$
|k(x,y)-k(z,y)|+|k(y,x)-k(y,z)|\leq C\frac{|x-z|
^{\varepsilon}}{|x-y|^{n+\varepsilon}};
$$

\item[{\rm(iii)}]if $f\in L^{\infty}(\rn)$
with compact support, then, for any $x\notin \supp(f)$,
$$
Tf(x):=\int_{\rn} k(x,y)f(y)\,dy.
$$
\end{enumerate}
\end{definition}

It was shown in \cite[Theorem 1.12]{tn} that the
Calder\'{o}n--Zygmund operator is bounded on
mixed Morrey spaces $\vm$ as follows.

\begin{theorem}\label{x3t8}
Let $\vq:=(q_1,\ldots,q_n)\in(1,\fz)^n$, $p\in(1,\fz)$ satisfy
$$
\sum_{j=1}^n \frac{1}{q_j}\ge \frac{n}{p}
$$
and $T$ be a Calder\'{o}n--Zygmund operator
defined on ${\mathcal{M}^{p}_{\min\{q_1,\ldots,q_n\}}(\rn)}$.
Then there exists a positive constant $C$ such that, for any $f \in \vm$,
$$\lf\|Tf\r\|_{\vm}\le C\|f\|_{\vm}.$$
\end{theorem}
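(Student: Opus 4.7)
The plan is to adapt the classical decomposition argument for the boundedness of Calder\'on--Zygmund operators on Morrey spaces to the mixed-norm setting. Fix an arbitrary cube $Q\subset\rn$ and split $f=f_1+f_2$ with $f_1:=f\mathbf 1_{2Q}$ and $f_2:=f\mathbf 1_{(2Q)^\com}$. By Definition \ref{x3d1}, it suffices to bound
$$|Q|^{\frac1p-\frac1n\sum_{j=1}^n\frac1{q_j}}\lf\|(Tf_1+Tf_2)\mathbf 1_Q\r\|_{\lvq}$$
by a constant multiple of $\|f\|_{\vm}$, uniformly in $Q$.

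For the local piece, I would invoke the $\lvq$-boundedness of $T$ for any $\vq\in(1,\fz)^n$, which follows from the vector-valued Calder\'on--Zygmund theory of Rubio de Francia, Ruiz and Torrea \cite{rrt86} cited in the introduction, to obtain $\|Tf_1\|_{\lvq}\ls\|f\mathbf 1_{2Q}\|_{\lvq}$; combining this with $|2Q|\sim|Q|$ and the definition of the Morrey quasi-norm applied to the cube $2Q$ gives the required bound for $Tf_1$. For $Tf_2$, the kernel size estimate of Definition \ref{x3d3}(i), together with $|x-y|\sim|y-x_Q|$ for $x\in Q$ and $y\in(2Q)^\com$, yields, upon breaking $(2Q)^\com$ into dyadic shells $2^{k+1}Q\su 2^kQ$ around the centre $x_Q$ of $Q$,
$$|Tf_2(x)|\ls\sum_{k=1}^{\fz}\frac1{|2^kQ|}\int_{2^{k+1}Q}|f(y)|\,dy.$$
Applying the mixed-norm H\"older inequality of Theorem \ref{x2t7} with exponent pair $(\vq,\vq')$ to the inner integral, using the identity $\|\mathbf 1_{2^{k+1}Q}\|_{L^{\vq'}(\rn)}=|2^{k+1}Q|^{\frac1n\sum_{j=1}^n\frac1{q_j'}}$ and the Morrey bound on $\|f\mathbf 1_{2^{k+1}Q}\|_{\lvq}$, the relation $1/q_j+1/q_j'=1$ reduces the $k$th term to a constant multiple of $2^{-kn/p}|Q|^{-1/p}\|f\|_{\vm}$. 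Since $p\in(1,\fz)$, the geometric series in $k$ converges and produces the pointwise bound $|Tf_2(x)|\ls|Q|^{-1/p}\|f\|_{\vm}$ uniformly for $x\in Q$. Multiplying by $\|\mathbf 1_Q\|_{\lvq}=|Q|^{\frac1n\sum_{j=1}^n\frac1{q_j}}$ and the Morrey weight then yields the required estimate for $Tf_2$.

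The main obstacle is the $\lvq$-boundedness of $T$ invoked in the treatment of $Tf_1$, since general (non-convolution) Calder\'on--Zygmund operators do not immediately submit to a one-variable iteration in each coordinate. If a direct citation of \cite{rrt86} is to be avoided, an alternative would be to establish a weak-type endpoint bound via an iterated Calder\'on--Zygmund decomposition combined with the extended mixed-norm maximal inequality of Theorem \ref{x2t2}, and then to interpolate with the trivial $L^\fz$ estimate through the Riesz--Thorin theorem for mixed Lebesgue spaces (Theorem \ref{x2t10}).
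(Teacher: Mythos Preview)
The paper is a survey and does not prove this theorem at all; it merely records it as \cite[Theorem 1.12]{tn}. So there is no proof in the paper to compare against.

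Your argument is the standard Morrey-space decomposition and is correct. The computation for $Tf_2$ is fine as written: the shell decomposition, the mixed H\"older inequality (Theorem~\ref{x2t7}), and the geometric series in $k$ all go through exactly as you indicate, and the condition $p<\infty$ is what makes the series converge. For $Tf_1$, the $\lvq$-boundedness of $T$ is indeed the only nontrivial input. Your citation of \cite{rrt86} is reasonable but somewhat indirect for a scalar operator acting jointly on all $n$ variables; a cleaner route, entirely within the paper's toolbox, is to use the pointwise sharp-maximal estimate $M^\#(Tf)\lesssim \HL(f)$ (standard for operators as in Definition~\ref{x3d3}), combine it with a Fefferman--Stein inequality $\|g\|_{\lvq}\lesssim\|M^\#g\|_{\lvq}$ on mixed Lebesgue spaces, and finish with the boundedness of $\HL$ on $\lvq$ (Theorem~\ref{x2t2} or its consequence \cite[Lemma~3.5]{hlyy}). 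This also matches the method Nogayama uses in \cite{tn}, as the survey hints when discussing the commutator result (Theorem~\ref{x3t10}): the original proof proceeds via a sharp-maximal inequality directly at the mixed-Morrey level together with Theorem~\ref{x3t3}, rather than the cube-by-cube split you propose. Both routes are valid; yours is more elementary but needs the $\lvq$-boundedness as a black box, while Nogayama's bypasses that step by working with $M^\#$ on $\vm$ from the start.
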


\begin{remark}\label{x3r3}
We should point out that the statement of Theorem \ref{x3t8}
contains the following fact that
$$\vm\subset{\mathcal{M}^{p}_{\min\{q_1,\ldots,q_n\}}(\rn)}.$$
Indeed, this embedding follows from Remark \ref{x3r2}.
\end{remark}

Next we present the notion of fractional integral operators
in \cite{tn} as follows.

\begin{definition}\label{x3d4}
Let $\az\in(0,n)$. The \emph{fractional integral operator} $I_{\az}$
of order $\az$ is defined by setting, for any $f\in L_{\rm loc}^1(\rn)$ and $x\in\rn$,
$$I_{\az} (f)(x):= \int_{\rn} \frac{f(y)}{|x-y|^{n-\az}}\,dy.$$
\end{definition}

Now we state the following result about the boundedness of
fractional integral operators on mixed Morrey spaces,
which was shown by Nogayama in \cite[Theorem 1.11]{tn}.

\begin{theorem}\label{x3t9}
Let $\az\in(0,n)$, $\vq:=(q_1,\ldots,q_n)\in(1,\fz)^n$,
$\vec s:=(s_1,\ldots,s_n)\in(1,\fz)^n$ and $p,\ r\in(1,\fz)$.
Assume that $$\sum_{j=1}^n \frac{1}{q_j}\ge\frac{n}p,
\quad\sum_{j=1}^n \frac{1}{s_j}\ge\frac{n}r,
\quad\frac{1}r=\frac{1}p-\frac{\alpha}n$$
and, for any $j\in\{1,\ldots,n\}$,
$\frac{q_j}{p}=\frac{s_j}{r}$.
Then there exists a positive constant $C$ such that, for any $f\in \vm$,
$$\lf\|I_{\alpha} (f)\r\|_{\mathcal{M}^{r}_{\vec s}(\rn)}\le C\|f\|_{\vm}.$$
\end{theorem}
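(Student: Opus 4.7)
The plan is to use a Hedberg-type pointwise estimate, splitting $I_{\az}f$ into its local and global parts and optimizing the splitting parameter, and then to transfer this pointwise bound to the mixed Morrey norm by exploiting the scaling relation $s_j/r=q_j/p$. To be precise, for any $f\in\vm$, $x\in\rn$ and $\dz\in(0,\fz)$, write
$$
|I_{\az}(f)(x)|\le\int_{|x-y|\le\dz}\frac{|f(y)|}{|x-y|^{n-\az}}\,dy
+\int_{|x-y|>\dz}\frac{|f(y)|}{|x-y|^{n-\az}}\,dy
=:\mathrm{I}(x)+\mathrm{II}(x).
$$
First I would decompose each of $\mathrm{I}(x)$ and $\mathrm{II}(x)$ into integrals over dyadic annuli $\{y\in\rn:\ 2^k\dz<|x-y|\le 2^{k+1}\dz\}$ and use the trivial estimate $|x-y|\sim 2^k\dz$ there. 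The inner annular integral of $|f|$ is then dominated by $\int_{Q_k}|f(y)|\,dy$, where $Q_k$ is the cube centered at $x$ of side length $\sim 2^{k+1}\dz$.

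Next I would bound $\int_{Q_k}|f|$ using the H\"older inequality for mixed Lebesgue spaces (Theorem \ref{x2t7}) together with the definition of $\vm$. Since $\mathbf{1}_{Q_k}$ is a tensor product of interval indicators, a direct computation gives $\|\mathbf{1}_{Q_k}\|_{L^{\vq'}(\rn)}=|Q_k|^{1-\frac{1}{n}\sum_{j=1}^n 1/q_j}$, so that
$$
\int_{Q_k}|f(y)|\,dy\le |Q_k|^{1-\frac{1}{n}\sum_{j=1}^n\frac{1}{q_j}}\lf\|f\mathbf{1}_{Q_k}\r\|_{L^{\vq}(\rn)}\le |Q_k|^{1-\frac{1}{p}}\|f\|_{\vm}.
$$
Combining this with the annular decomposition and summing the resulting geometric series in $k$ (which converges because $\az<n/p$, a consequence of $1/r=1/p-\az/n>0$), the far part is controlled by $\mathrm{II}(x)\ls\dz^{\az-n/p}\|f\|_{\vm}$. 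For the near part, summing over $k$ with $2^{-k}\dz\to 0$ and applying the standard inequality $|Q|^{-1}\int_Q|f|\le\HL(f)(x)$ to each annular cube yields $\mathrm{I}(x)\ls\dz^{\az}\HL(f)(x)$, where $\HL$ is as in \eqref{x3e1}.

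Then I would optimize by choosing $\dz:=[\|f\|_{\vm}/\HL(f)(x)]^{p/n}$ at every point where $\HL(f)(x)\neq 0$, and send $\dz$ to the appropriate limits otherwise. Since $p/r=1-\az p/n$, this gives the pointwise Hedberg-type estimate
$$
|I_{\az}(f)(x)|\ls [\HL(f)(x)]^{p/r}\|f\|_{\vm}^{1-p/r}.
$$
Finally, for any cube $Q\subset\rn$, the hypothesis $s_j/r=q_j/p$ implies $s_jp/r=q_j$, and hence a direct unravelling of the mixed norm yields
$$
\lf\|[\HL(f)]^{p/r}\mathbf{1}_Q\r\|_{L^{\vec s}(\rn)}=\lf\|\HL(f)\mathbf{1}_Q\r\|_{L^{\vq}(\rn)}^{p/r},
$$
while $|Q|^{1/r-\frac{1}{n}\sum 1/s_j}=|Q|^{(p/r)(1/p-\frac{1}{n}\sum 1/q_j)}$. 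Taking the supremum over cubes $Q$ therefore gives $\|[\HL(f)]^{p/r}\|_{\mathcal{M}^r_{\vec s}(\rn)}=\|\HL(f)\|_{\vm}^{p/r}$. Applying the boundedness of $\HL$ on $\vm$ (which follows from Theorem \ref{x3t6} with the single nonzero function $f_1:=f$ and any $u\in(1,\fz]$, since $\vq\in(1,\fz)^n$, $p\in(1,\fz)$ and $\sum 1/q_j\ge n/p$) completes the proof.

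The main obstacle is the careful bookkeeping in the global estimate: one must verify that in the mixed setting the natural Morrey-type control $\int_Q|f|\ls|Q|^{1-1/p}\|f\|_{\vm}$ still holds with the same exponent $1-1/p$ as in the classical case, which is exactly what the tensor-product computation of $\|\mathbf{1}_Q\|_{L^{\vq'}(\rn)}$ combined with the given condition $\sum 1/q_j\ge n/p$ provides. Once this is in hand, the rest is a direct Hedberg optimization and the scaling identity $s_jp=q_jr$ automatically matches the target space $\mathcal{M}^r_{\vec s}(\rn)$ to the source space $\vm$ through the pointwise bound.
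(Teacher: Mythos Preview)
Your argument is correct. The Hedberg-type pointwise split, the control $\int_Q|f|\le|Q|^{1-1/p}\|f\|_{\vm}$ via Theorem~\ref{x2t7} and the definition of $\vm$, the optimization in $\dz$, and the transfer identity $\|[\HL(f)]^{p/r}\|_{\mathcal{M}^r_{\vec s}(\rn)}=\|\HL(f)\|_{\vm}^{p/r}$ coming from $s_jp/r=q_j$ all go through exactly as you describe; the final appeal to Theorem~\ref{x3t6} (with a single function) is the right way to close.

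Note, however, that the present paper is a survey and does not itself supply a proof of Theorem~\ref{x3t9}; it merely records the result and attributes it to Nogayama \cite[Theorem~1.11]{tn}. So there is no ``paper's own proof'' to compare against here. Your Hedberg approach is the standard route for Adams-type fractional integral bounds on Morrey spaces, and it is the natural adaptation to the mixed-norm setting; it is very likely close in spirit to Nogayama's original argument.
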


We next introduce the notion of the commutators of fractional integral operators
in \cite{tn19} as follows.

\begin{definition}\label{x3d5}
Let $\az\in(0,n)$, $b\in L_{\rm loc}^1(\rn)$ and
$I_{\az}$ be a fractional integral operator
of order $\az$. The \emph{commutator} $[b,I_{\alpha}]$
of $I_{\az}$ is defined by setting, for any $f\in L_{\rm loc}^1(\rn)$ and $x\in\rn$,
$$[b,I_{\alpha}](f)(x):=b(x)I_{\alpha} (f)(x)-I_{\alpha} (bf)(x).$$
\end{definition}

Via Theorem \ref{x3t9} and a sharp maximal inequality on
mixed Morrey spaces, Nogayama \cite[Theorem 1.2]{tn19} gave
a necessary and sufficient condition for the boundedness
of the commutators of fractional integral operators on mixed Morrey spaces.
To present this result, we first recall the notion of ${\rm BMO}(\rn)$.

\begin{definition}\label{x3d6}
The \emph{space ${\rm BMO}(\rn)$} is
defined to be the set of all $f\in L_{\rm loc}^1(\rn)$
such that their quasi-norms
$$\|f\|_{{\rm BMO}(\rn)}:=\sup_{Q}\frac1{|Q|}\int_Q
\lf|f(x)-f_Q\r|\,dx<\fz,$$
where the supremum is taken over all cubes $Q$ in $\rn$
and $$f_Q:=\frac1{|Q|}\int_Q f(y)\,dy.$$
\end{definition}

\begin{theorem}\label{x3t10}
Let $\az\in(0,n)$, $\vq:=(q_1,\ldots,q_n)\in(1,\fz)^n$,
$\vec s:=(s_1,\ldots,s_n)\in(1,\fz)^n$, $p\in(1,n/\az)$ and $r\in(1,\fz)$.
Assume that $$\sum_{j=1}^n \frac{1}{q_j}\ge\frac{n}p,
\quad\sum_{j=1}^n \frac{1}{s_j}\ge\frac{n}r,
\quad\frac{1}r=\frac{1}p-\frac{\alpha}n$$
and, for any $j\in\{1,\ldots,n\}$,
$\frac{q_j}{p}=\frac{s_j}{r}$.
Then the following statements are mutually equivalent:
\begin{enumerate}
\item[{\rm (i)}] $b\in {\rm BMO}(\rn)$;
\item[{\rm (ii)}] $[b,I_{\alpha}]$ is bounded from $\vm$
to $\mathcal{M}^{r}_{\vec s}(\rn)$;
\item[{\rm (iii)}] $[b,I_{\alpha}]$ is bounded from $\overline{\vm}$
to $\mathcal{M}^{r}_{\vec s}(\rn)$;
\item[{\rm (iv)}] $[b,I_{\alpha}]$ is bounded from $\overline{\vm}$
to $\mathcal{M}^{r}_{1}(\rn)$.
\end{enumerate}
Here, $\overline{\vm}$ denotes the closure of $C_c^{\fz}(\rn)$ in the norm of $\vm$.
\end{theorem}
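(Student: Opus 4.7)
The plan is to establish the cycle of implications (i) $\Rightarrow$ (ii) $\Rightarrow$ (iii) $\Rightarrow$ (iv) $\Rightarrow$ (i). The first three implications are relatively standard, while the last one carries the analytic weight.

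For (i) $\Rightarrow$ (ii), I would follow the Coifman--Rochberg--Weiss strategy adapted to mixed Morrey spaces. The key step is to establish a pointwise sharp maximal function estimate of the form
\[
M^{\#}\lf([b,I_{\az}]f\r)(x)\ls \|b\|_{{\rm BMO}(\rn)}\lf\{\lf[M\lf(\lf|I_{\az}f\r|^{u}\r)(x)\r]^{1/u}+\lf[M\lf(|f|^{v}\r)(x)\r]^{1/v}\r\}
\]
for appropriate $u, v$ with $v$ slightly bigger than $1$ and controllable by the mixed Morrey hypothesis. I would then combine this with the sharp maximal inequality on $\vm$ (which Nogayama uses in \cite{tn19}), Theorem \ref{x3t9} to absorb the $I_{\az}f$ term, and Theorems \ref{x3t6}--\ref{x3t7} (or Theorem \ref{x3t3}) to handle the iterated/uncentered maximal terms on $\vm$. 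The integrability condition $p\in(1,n/\az)$ ensures $r$ is finite and the exponents align so the vector-valued inequalities apply.

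For (ii) $\Rightarrow$ (iii) the implication is immediate from $\overline{\vm}\subset \vm$. For (iii) $\Rightarrow$ (iv) I would invoke the embedding in Remark \ref{x3r2}: since $\vec s\in(1,\fz)^n$, one has $\mathcal{M}^{r}_{\vec s}(\rn)\subset \mathcal{M}^{r}_{\min\{s_1,\dots,s_n\}}(\rn)\subset \mathcal{M}^{r}_{1}(\rn)$, so the operator norm from $\overline{\vm}$ into the larger space is controlled by the norm into the smaller one.

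The hard part is (iv) $\Rightarrow$ (i), which is a Janson-type converse. Here I would fix an arbitrary cube $Q\subset\rn$ and choose an auxiliary cube $\wz Q$ with $|\wz Q|=|Q|$ and $\mathrm{dist}(Q,\wz Q)\sim \ell(Q)$, so that $|x-y|\sim \ell(Q)$ uniformly for $x\in Q$ and $y\in\wz Q$. On the product $Q\times \wz Q$, the kernel $|x-y|^{\az-n}$ is smooth, bounded above and below, and admits an absolutely convergent Fourier series expansion $|x-y|^{n-\az}=\sum_k a_k e^{i\xi_k\cdot x}e^{i\eta_k\cdot y}$ with $\sum_k|a_k|<\fz$. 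Testing the bound in (iv) against the function $f:=\mathbf{1}_{\wz Q}\,\overline{\mathrm{sgn}(b-b_{\wz Q})}$ (or the analogous unimodular function dictated by the expansion), I would compute
\[
\int_{Q}\lf|[b,I_{\az}]f(x)\r|\,dx\gs \ell(Q)^{\az-n}\int_{Q}|b(x)-b_{\wz Q}|\,dx\cdot|\wz Q|,
\]
while the right-hand side of (iv) is controlled by $\|f\|_{\overline{\vm}}\cdot|Q|^{1/r-(\sum 1/s_j)/n}$. Scaling using the homogeneity relations $1/r=1/p-\az/n$ and $q_j/p=s_j/r$ makes all powers of $\ell(Q)$ cancel, yielding
\[
\frac{1}{|Q|}\int_{Q}\lf|b(x)-b_{Q}\r|\,dx\ls \|[b,I_{\az}]\|_{\overline{\vm}\to \mathcal{M}^{r}_{1}(\rn)}
\]
uniformly in $Q$, which is the BMO condition. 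The main obstacle, and the step requiring most care, is precisely this last implication: the Fourier expansion argument must be set up so that the test functions truly lie in $\overline{\vm}$ (density of $C_c^{\fz}$ is used here) and so that the scaling in the mixed Morrey norm of $f$ matches that of the lower bound on $Q$; verifying this match is where the hypothesis $q_j/p=s_j/r$ is essential.
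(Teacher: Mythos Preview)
Your cycle (i) $\Rightarrow$ (ii) $\Rightarrow$ (iii) $\Rightarrow$ (iv) $\Rightarrow$ (i), the use of the sharp maximal inequality together with Theorem~\ref{x3t9} for (i) $\Rightarrow$ (ii), and the Janson--Fourier expansion argument for (iv) $\Rightarrow$ (i) are exactly the route taken in \cite{tn19}, which the present survey is merely reporting (no proof is given here). The embeddings you cite for (ii) $\Rightarrow$ (iii) $\Rightarrow$ (iv) and the density remark for the test functions in (iv) $\Rightarrow$ (i) are also correct.

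There is one genuine slip in your pointwise estimate for (i) $\Rightarrow$ (ii): the second term cannot be $\bigl[M(|f|^{v})\bigr]^{1/v}$. That quantity is bounded \emph{on} $\vm$ (Theorem~\ref{x3t3}), not \emph{from} $\vm$ into $\mathcal{M}^{r}_{\vec s}(\rn)$, so after applying the sharp maximal inequality on $\mathcal{M}^{r}_{\vec s}(\rn)$ you would be left controlling $\|[b,I_\az]f\|_{\mathcal{M}^r_{\vec s}(\rn)}$ by $\|f\|_{\mathcal{M}^r_{\vec s}(\rn)}$ rather than $\|f\|_{\vm}$, and the argument does not close. The correct Chanillo-type estimate carries the fractional order in the second term, namely
\[
M^{\#}\bigl([b,I_{\az}]f\bigr)(x)\ls \|b\|_{{\rm BMO}(\rn)}\Bigl\{\bigl[M\bigl(|I_{\az}f|^{u}\bigr)(x)\bigr]^{1/u}+M_{\az,v}(f)(x)\Bigr\},
\]
where $M_{\az,v}(f)(x):=\sup_{Q\ni x}|Q|^{\az/n}\bigl(|Q|^{-1}\int_Q|f|^v\bigr)^{1/v}$. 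Since $M_{\az,v}(f)\ls\bigl[I_{\az}(|f|^{v})\bigr]^{1/v}$ pointwise, Theorem~\ref{x3t9} (applied with exponents $p/v$, $\vq/v$, $r/v$, $\vec s/v$, which still satisfy the compatibility $q_j/p=s_j/r$) handles this term, and the rest of your outline goes through unchanged.
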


\section{Anisotropic mixed-norm Hardy spaces $\vh$}\label{sec4}

In this section, we first present the definition of anisotropic
mixed-norm Hardy spaces and some basic facts of them
(see Subsection \ref{4s1} below).
Then, various real-variable characterizations of
these Hardy spaces, respectively, in terms of the maximal functions,
atoms, finite atoms and Lusin area functions
as well as Littlewood--Paley $g$-functions or
$g_{\lambda}^\ast$-functions, are displayed
(see Subsection \ref{4s2.1} below). As the applications
of these various real-variable characterizations, the dual spaces of $\vh$
(see Subsection \ref{4s3} below),
and the boundedness of anisotropic Calder\'on--Zygmund operators
(see Subsection \ref{4s4} below) are presented.
Some errors and gaps existing in the proof of \cite[Theorem 4.1]{hlyy}
are also corrected and sealed (see Subsection \ref{4s2.1} below).
In addition, by providing a new proof, we improve the maximal
function characterizations of $\vh$ given in \cite[Theorem 3.1]{cgn17}
(see Subsection \ref{4s2.2} below). The revised versions of
the boundedness of anisotropic Calder\'on--Zygmund operators
are obtained (see Subsection \ref{4s4} below).

\subsection{Definitions and basic properties}\label{4s1}

This subsection is devoted to recalling the notion of anisotropic
quasi-homogeneous norms and anisotropic mixed-norm Hardy spaces
as well as some basic properties of them.
We begin with stating the notion of anisotropic quasi-homogeneous norms
given in \cite{bil66, f66} (see also \cite{sw78}) as follows.
For any $b:=(b_1,\ldots,b_n)\in\rn$,
$x:=(x_1,\ldots,x_n)\in \rn$ and $t\in[0,\fz)$,
let $t^b x:=(t^{b_1}x_1,\ldots,t^{b_n}x_n)$.

\begin{definition}\label{3d1}
Let $\va:=(a_1,\ldots,a_n)\in [1,\fz)^n$.
The \emph{anisotropic quasi-homogeneous norm} $|\cdot|_{\va}$,
associated with $\va$, is a non-negative measurable function on $\rn$
defined by setting $|\vec0_n|_{\va}:=0$ and, for any $x\in \rn\setminus\{\vec0_n\}$,
$|x|_{\va}:=t_0$, where $t_0$ is the unique positive number such that $|t_0^{-\va}x|=1$,
namely,
$$\frac{x_1^2}{t_0^{2a_1}}+\cdots+\frac{x_n^2}{t_0^{2a_n}}=1.$$
\end{definition}

We also present the following notions of the anisotropic bracket
and the homogeneous dimension (see, for instance, \cite{sw78}).

\begin{definition}\label{3d2}
Let $\va:=(a_1,\ldots,a_n)\in [1,\fz)^n$. The \emph{anisotropic bracket}, associated with $\va$,
is defined by setting,  for any $x\in \rn$, $$\lg x\rg_{\va}:=\lf|(1,x)\r|_{(1,\va)}.$$
Furthermore, the \emph{homogeneous dimension} $\nu$ is defined by setting
$$\nu:=|\vec{a}|:=a_1+\cdots+a_n.$$
\end{definition}

For any $\va\in [1,\fz)^n$, $r\in (0,\fz)$ and $x\in \rn$,
we define the \emph{anisotropic ball} $B_{\va}(x,r)$,
with center $x$ and radius $r$, by
setting $$B_{\va}(x,r):=\lf\{y\in \rn:\ \lf|y-x\r|_{\va} < r\r\}.$$
Then $B_{\va}(x,r)= x+r^{\va}B_{\va}(\vec0_n,1)$ and
$|B_{\va}(x,r)|=\upsilon_n r^{\nu}$, where $\upsilon_n:=|B(\vec{0}_n,1)|$.
In what follows, we always let $B_0:=\{y\in\rn:\ |y|<1\}=B_{\va}(\vec0_n,1)$
(see \cite[Lemma 2.4(ii)]{hlyy}) and
$\mathfrak{B}$ be the set of all anisotropic balls, namely,
\begin{align}\label{3e2}
\mathfrak{B}:=\lf\{B_{\va}(x,r):\ x\in\rn,\ r\in(0,\fz)\r\}.
\end{align}
For any $B\in\mathfrak{B}$ centered at $x\in\rn$ with radius $r\in (0,\fz)$ and $\delta\in(0,\fz)$,
let
\begin{align}\label{2e2'}
B^{(\delta)}:=B^{(\delta)}_{\va}(x,r):=B_{\va}(x,\delta r).
\end{align}
In addition, for any $x\in \rn$ and $r\in (0,\fz)$, the \emph{anisotropic cube}
$Q_{\va}(x,r)$ is defined by setting $Q_{\va}(x,r):=x + r^{\va}(-1,1)^n,$
whose Lebesgue measure $|Q_{\va}(x,r)|$ equals $2^n r^\nu$.
Denote by $\mathfrak{Q}$ the set of all anisotropic cubes, namely,
\begin{align}\label{x3e2}
\mathfrak{Q}:=\lf\{Q_{\va}(x,r):\ x\in\rn,\ r\in(0,\fz)\r\}.
\end{align}

On another hand, recall that a \emph{Schwartz function}
is a $C^\infty(\rn)$ function $\varphi$ satisfying,
for any $N\in\zz_+$ and multi-index $\az\in\zz_+^n$,
$$\|\varphi\|_{N,\alpha}:=
\sup_{x\in\rn}\lf\{(1+|x|)^N
|\partial^\alpha\varphi(x)|\r\}<\infty.$$
Denote by
$\cs(\rn)$ the set of all Schwartz functions, equipped
with the topology determined by
$\{\|\cdot\|_{N,\alpha}\}_{N\in\zz_+,\az\in\zz_+^n}$,
and $\cs'(\rn)$ the \emph{dual space} of $\cs(\rn)$, equipped
with the weak-$\ast$ topology.
For any $N\in\mathbb{Z}_+$, let
$$\cs_N(\rn):=\lf\{\varphi\in\cs(\rn):\
\|\varphi\|_{\cs_N(\rn)}:=
\sup_{x\in\rn}\lf[\lg x\rg_{\va}^N\sup_{|\az|\le N}
|\partial^\alpha\varphi(x)|\r]\le 1\r\}.$$
In what follows, for any $\varphi \in \cs(\rn)$ and $t\in (0,\fz)$,
let $\varphi_t(\cdot):=t^{-\nu}\varphi(t^{-\va}\cdot)$.

To introduce the mixed-norm Hardy spaces,
we first recall the following notions of radial maximal functions,
non-tangential maximal functions and non-tangential
grand maximal functions (see, for instance, \cite{cgn17}).

\begin{definition}\label{3d4}
Let $\varphi\in\cs(\rn)$ and  $f\in\cs'(\rn)$.
The \emph{radial maximal function} $M_\varphi^0(f)$
of $f$ associated to $\varphi$
is defined by setting, for any $x\in\rn$,
\begin{equation*}
M_\varphi^0(f)(x):= \sup_{t\in (0,\fz)}
\lf|\varphi_t\ast f(x)\r|,
\end{equation*}
and the
\emph{non-tangential maximal function} $M_{\varphi}(f)$ of $f$
associated to $\varphi$ is defined by setting, for any $x\in\rn$,
$$
M_{\varphi}(f)(x):= \sup_{y\in B_{\va}(x,t),
t\in (0,\fz)}\lf|\varphi_t\ast f(y)\r|.
$$
Moreover, for any given $N\in\mathbb{N}$, the
\emph{non-tangential grand maximal function} $M_N(f)$ of
$f$ associated to $\varphi$
is defined by setting, for any $x\in\rn$,
\begin{equation*}
M_N(f)(x):=\sup_{\varphi\in\cs_N(\rn)}
M_\varphi(f)(x).
\end{equation*}
\end{definition}

In what follows, for any $\vp:=(p_1,\ldots,p_n)\in (0,\fz)^n$,
we always let
\begin{align}\label{2e10}
p_-:=\min\{p_1,\ldots,p_n\},\hspace{0.35cm}
p_+:=\max\{p_1,\ldots,p_n\}\hspace{0.35cm}
{\rm and}\hspace{0.35cm}\underline{p}\in(0,\min\{1,p_-\}).
\end{align}
Similarly, for any $\va:=(a_1,\ldots,a_n)\in [1,\fz)^n$, let
\begin{align}\label{2e9}
a_-:=\min\{a_1,\ldots,a_n\}\hspace{0.35cm}
{\rm and}\hspace{0.35cm} a_+:=\max\{a_1,\ldots,a_n\}.
\end{align}

We now present the notion of anisotropic mixed-norm
Hardy spaces as follows, which was first introduced
by Cleanthous et al. \cite[Definition 3.3]{cgn17}.

\begin{definition}\label{3d5}
Let $\va\in[1,\fz)^n$, $\vp\in(0,\fz)^n$,
$N_{\vp}:=\lfloor\nu\frac{a_+}{a_-}
(\frac{1}{\min\{1,p_-\}}+1)+\nu+2a_+\rfloor+1$ and
\begin{align}\label{2e11}
N\in\mathbb{N}\cap \lf[N_{\vp},\fz\r),
\end{align}
where $a_-$, $a_+$ are as in \eqref{2e9} and $p_-$ as in \eqref{2e10}.
The \emph{anisotropic mixed-norm Hardy space} $\vh$ is defined by setting
\begin{equation*}
\vh:=\lf\{f\in\cs'(\rn):\ M_N(f)\in\lv\r\}
\end{equation*}
and, for any $f\in\vh$, let
$\|f\|_{\vh}:=\| M_N(f)\|_{\lv}$.
\end{definition}

\begin{remark}
\begin{enumerate}
\item[{\rm (i)}] Observe that the quasi-norm of $\vh$ in Definition \ref{3d5}
depends on $N$. However, by Theorem \ref{3t2} below,
we know that the space $\vh$ is independent
of the choice of $N$ as long as $N$ same as in Theorem \ref{3t2}.

\item[{\rm (ii)}] Recall that Ho \cite{ho18} introduced the mixed
Lebesgue spaces with variable exponents. Then, based on those spaces,
the corresponding variable mixed-norm Hardy spaces may also be worth studying.
\end{enumerate}
\end{remark}

The following completeness of $\vh$ is a consequence of
Proposition \ref{5p1} and \cite[Proposition 3.7]{hlyy19}
with $A$ therein being as
\begin{align}\label{5eq2}
A:=\left(
              \begin{array}{cccc}
                2^{a_1} & 0 & \cdots & 0\\
                0 & 2^{a_2} & \cdots & 0\\
                \vdots & \vdots& &\vdots \\
                0 & 0 & \cdots & 2^{a_n} \\
              \end{array}
            \right).
\end{align}

\begin{theorem}\label{2p1}
Let $\vp$ and $N$ be as in Definition \ref{3d5}.
Then $\vh$ is complete.
\end{theorem}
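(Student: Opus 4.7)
The plan is to reduce Theorem \ref{2p1} to an existing completeness result for anisotropic Hardy spaces with general matrix dilations. First, I would identify $\vh$ of Definition \ref{3d5} with the anisotropic mixed-norm Hardy space studied in \cite{hlyy19} corresponding to the diagonal expansive matrix $A$ of \eqref{5eq2}. Because $A$ has positive diagonal entries $2^{a_1},\ldots,2^{a_n}$, the step homogeneous quasi-norm induced by $A$ on $\rn$ is pointwise comparable to the continuous quasi-homogeneous norm $|\cdot|_{\va}$ from Definition \ref{3d1}. In particular, the anisotropic balls $B_{\va}(x,r)$, the Schwartz seminorm class $\cs_N(\rn)$, and the non-tangential grand maximal function $M_N$ used in Definition \ref{3d5} each agree (up to equivalent constants) with their counterparts in the setup of \cite{hlyy19}.

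Next I would invoke Proposition \ref{5p1}, stated later in the paper, to confirm that $\|\cdot\|_{\vh}$ is a bona fide quasi-norm on $\vh$: this step guarantees the $\underline{p}$-type triangle inequality, positive homogeneity, and non-degeneracy as a subset of $\cs'(\rn)$. Combined with the standard observation that convergence in $\vh$ forces distributional convergence via control of $M_N$ on compact sets, this reduces the proof to showing that every Cauchy sequence in $\vh$ has a limit back in $\vh$.

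The principal step is then to apply \cite[Proposition 3.7]{hlyy19}, which establishes the completeness of the anisotropic mixed-norm Hardy space associated with any expansive dilation matrix and any exponent vector in $(0,\fz)^n$. Specialized to $A$ as in \eqref{5eq2} and to the grand-maximal-function parameter $N$ chosen per \eqref{2e11}, all the hypotheses of \cite[Proposition 3.7]{hlyy19} are met, and the conclusion transfers directly to $\vh$.

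The main obstacle I expect is purely bookkeeping: verifying that the threshold $N_{\vp}$ in Definition \ref{3d5} dominates the analogous grand-maximal-function index used in \cite{hlyy19} for the specific diagonal matrix $A$, and that every auxiliary object (Schwartz seminorms, anisotropic balls, and the continuous dilations $t\mapsto t^{\va}$ versus the dyadic dilations $k\mapsto A^k$) translates correctly under this identification. Once this dictionary is in place, no fresh analytical input is needed, and the completeness assertion follows formally.
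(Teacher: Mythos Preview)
Your approach is essentially the same as the paper's: the paper states Theorem \ref{2p1} as a direct consequence of Proposition \ref{5p1} together with \cite[Proposition 3.7]{hlyy19} applied to the diagonal dilation matrix $A$ of \eqref{5eq2}, which is exactly the reduction you outline. One minor caveat: Proposition \ref{5p1} as stated covers only the case $\vp=(p,\ldots,p)$, so it does not by itself supply the quasi-norm properties or the identification for general $\vp$; those follow instead from the definition $\|f\|_{\vh}=\|M_N(f)\|_{\lv}$ and the framework comparison you describe in your first paragraph, and the paper's own citation of Proposition \ref{5p1} should be read in the same spirit.
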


The following proposition established in \cite[Theorem 6.1]{cgn17}
shows the relation between the
mixed Lebesgue spaces $\lv$ and
anisotropic mixed-norm Hardy spaces $\vh$.

\begin{proposition}\label{3p1}
Let $\vp\in(1,\fz)^n$. Then $\vh=\lv$
with equivalent norms.
\end{proposition}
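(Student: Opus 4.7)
The plan is to establish two continuous inclusions: $\lv\hookrightarrow\vh$ and $\vh\hookrightarrow\lv$. Completeness of $\vh$ (Theorem~\ref{2p1}) together with the equivalence of the two quasi-norms will then finish the proposition. Throughout, I will use that the mixed Hardy--Littlewood maximal operator, suitably anisotropically adapted, is bounded on $\lv$ for $\vp\in(1,\fz)^n$; this boundedness follows from Theorem~\ref{x2t2} (Bagby's inequality) applied iteratively, combined with the standard observation that an anisotropic ball $B_{\va}(x,r)$ is comparable to the product rectangle $x+\prod_{i=1}^n(-r^{a_i},r^{a_i})$, so that averages over $B_{\va}(x,r)$ are pointwise dominated by the iterated one-dimensional maximal function.

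For the direction $\lv\hookrightarrow\vh$, given $f\in\lv$ with $\vp\in(1,\fz)^n$, I would first check that $f$ defines a tempered distribution: for any $\psi\in\cs(\rn)$, the pairing $\langle f,\psi\rangle=\int_\rn f(x)\psi(x)\,dx$ converges absolutely by Theorem~\ref{x2t7} together with the rapid decay $|\psi(x)|\lesssim\langle x\rangle_{\va}^{-N}$ for every $N$, which belongs to $L^{\vp'}(\rn)$ once $N$ is taken large enough relative to $\va$ and $\vp'$. Next I would prove the pointwise bound
\begin{equation*}
M_N(f)(x)\lesssim \mathcal{M}_{\va}(f)(x)\qquad\text{for every }x\in\rn,
\end{equation*}
where $\mathcal{M}_{\va}$ denotes the uncentered maximal operator over anisotropic balls in $\mathfrak{B}$. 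The argument is the usual annular decomposition: for $\varphi\in\cs_N(\rn)$ and $y\in B_{\va}(x,t)$, split $\rn$ into $\{z:\,|z-y|_{\va}< t\}\cup\bigcup_{k\ge0}\{z:\,2^k t\le|z-y|_{\va}<2^{k+1}t\}$, use $|\varphi_t(y-z)|\lesssim t^{-\nu}\langle t^{-\va}(y-z)\rangle_{\va}^{-N}$ with $N$ as in Definition~\ref{3d5}, and sum the resulting geometric series in $2^{-k(N-\nu)}$ against $\mathcal{M}_{\va}(f)(x)$. Finally, the $\lv$-boundedness of $\mathcal{M}_{\va}$ (obtained as sketched above via Theorem~\ref{x2t2}) gives $\|f\|_{\vh}=\|M_N(f)\|_{\lv}\lesssim\|f\|_{\lv}$.

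For the direction $\vh\hookrightarrow\lv$, fix a non-negative $\varphi\in\cs(\rn)$ with $\int_\rn\varphi(x)\,dx=1$, and let $f\in\vh$. From Definition~\ref{3d4}, $M_\varphi^0(f)(x)\le M_\varphi(f)(x)\le M_N(f)(x)$ pointwise (after absorbing a harmless constant into the choice of $\varphi\in\cs_N(\rn)$). Since $M_N(f)\in\lv\subset L^1_{\loc}(\rn)$, the family $\{\varphi_t\ast f\}_{t\in(0,\fz)}$ is locally equi-integrable with pointwise envelope $M_\varphi^0(f)$. Combining this with the distributional convergence $\varphi_t\ast f\to f$ in $\cs'(\rn)$ as $t\to 0^+$ and a standard Lebesgue-differentiation argument, $f$ is represented almost everywhere by a measurable function satisfying
\begin{equation*}
|f(x)|\le M_\varphi^0(f)(x)\le M_N(f)(x)\quad\text{for almost every }x\in\rn.
\end{equation*}
Taking $\lv$-norms yields $\|f\|_{\lv}\le\|M_N(f)\|_{\lv}=\|f\|_{\vh}$, completing the embedding.

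The main obstacle lies in the second direction: one has to upgrade the \emph{a priori} distributional object $f\in\cs'(\rn)$ to a pointwise-defined function in $\lv$ and justify the Lebesgue-type differentiation in the anisotropic mixed-norm setting. The subtlety is that the dominated convergence step must be carried out simultaneously in the pointwise and the distributional sense, and the envelope $M_\varphi^0(f)$, although in $\lv$, is only locally integrable, so one needs the anisotropic Lebesgue differentiation theorem applied to test-function averages and a diagonal extraction of a null set independent of $\varphi_t$. Once this identification is in place, the norm inequality is immediate from the pointwise comparison $|f|\le M_\varphi^0(f)\le M_N(f)$.
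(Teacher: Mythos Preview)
The paper does not give its own proof of this proposition; it simply cites \cite[Theorem~6.1]{cgn17}. So there is no in-paper argument to compare against, and your proposal supplies what the survey omits.

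Your outline is the standard one and is correct. For the inclusion $\lv\hookrightarrow\vh$ you are using exactly the tools the paper records elsewhere: the pointwise domination $M_N(f)\ls M_{\rm HL}^{\va}(f)$ via the annular decomposition, followed by the $\lv$-boundedness of $M_{\rm HL}^{\va}$ for $\vp\in(1,\fz)^n$, which is Lemma~\ref{3l1} in the paper (you need not rebuild it from Theorem~\ref{x2t2}).

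For the reverse inclusion your diagnosis of the obstacle is accurate, but the ``Lebesgue-differentiation'' formulation you sketch is circular as written: to apply Lebesgue differentiation to $\varphi_t\ast f$ you already need to know $f\in L^1_{\loc}(\rn)$, which is precisely what is in question. The clean way to break the circle, available here because $\vp\in(1,\fz)^n$, is reflexivity: by Theorem~\ref{x2t18} the space $\lv$ is the dual of $L^{\vp'}(\rn)$ and hence reflexive, so the family $\{\varphi_t\ast f\}_{t>0}$, being norm-bounded by $\|M_\varphi^0(f)\|_{\lv}$, has a subsequence converging weakly in $\lv$ to some $g\in\lv$. Weak convergence in $\lv$ implies convergence in $\cs'(\rn)$ (Schwartz functions lie in $L^{\vp'}(\rn)$), so $g=f$ as distributions and $f\in\lv$. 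Lower semicontinuity of the norm under weak convergence then gives $\|f\|_{\lv}\le\|M_\varphi^0(f)\|_{\lv}\le\|M_N(f)\|_{\lv}$ directly, and you can dispense with the pointwise inequality $|f|\le M_\varphi^0(f)$ altogether (though once $f\in L^1_{\loc}$ is known, that inequality does follow from anisotropic Lebesgue differentiation, as you say).
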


\subsection{Real-variable characterizations of $\vh$}\label{4s2}

The goals of this subsection are twofold. The first one is in Subsection \ref{4s2.1} below to
display various real-variable characterizations of anisotropic
mixed-norm Hardy spaces $\vh$, respectively, in terms of the maximal
functions, atoms, finite atoms,
Lusin area functions as well as Littlewood--Paley
$g$-functions or $g_{\lambda}^\ast$-functions,
and also correct some errors or seal some gaps existing in the proof
of \cite[Theorem 4.1]{hlyy}.
The second one is in Subsection \ref{4s2.2} below to provide a new proof of the maximal
function characterizations of $\vh$, which allows the exponent $N$
to have a weaker restriction than \cite[Theorem 3.4]{cgn17}
which is re-stated in Theorem \ref{3t1} below.

\subsubsection{Various real-variable characterizations of $\vh$}\label{4s2.1}

To begin with, we first recall the following maximal
function characterizations of $\vh$ established
by Cleanthous et al. \cite[Theorem 3.4]{cgn17}.

\begin{theorem}\label{3t1}
Let $\va\in[1,\fz)^n$, $\vp\in(0,\fz)^n$
and $N$ be as in \eqref{2e11}.
Then, for any given $\varphi\in\cs(\rn)$
with $\int_{\rn}\varphi(x)\,dx\neq0$,
the following statements are mutually equivalent:
\begin{enumerate}
\item[{\rm(i)}] $f\in\vh;$
\item[{\rm(ii)}] $f\in\cs'(\rn)$ and $M_{\varphi}(f)\in\lv;$
\item[{\rm(iii)}] $f\in\cs'(\rn)$ and $M_\varphi^0(f)\in\lv.$
\end{enumerate}
Moreover, there exist two positive constants $C_1$ and $C_2$,
independent of $f$, such that
\begin{align*}
\|f\|_{\vh}\le C_1\lf\|M_\phi^0(f)\r\|_{\lv}
\le C_1 \lf\|M_{\varphi}(f)\r\|_{\lv}
\le C_2\|f\|_{\vh}.
\end{align*}
\end{theorem}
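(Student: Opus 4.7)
The plan is to close the chain of implications (i) $\Longrightarrow$ (ii) $\Longrightarrow$ (iii) $\Longrightarrow$ (i) together with the corresponding norm comparisons. The first two implications are almost immediate from pointwise domination of maximal functions. For (i) $\Longrightarrow$ (ii), I would observe that, since $\int_{\rn}\varphi\,dx\neq 0$, one can find a positive constant $C_\varphi$, depending only on finitely many Schwartz seminorms $\|\varphi\|_{N,\az}$, such that $C_\varphi^{-1}\varphi\in \cs_N(\rn)$; this yields the pointwise bound $M_\varphi(f)\le C_\varphi M_N(f)$ and hence $\|M_\varphi(f)\|_{\lv}\le C_\varphi\|f\|_{\vh}$. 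For (ii) $\Longrightarrow$ (iii), the definitions give $M_\varphi^0(f)(x)\le M_\varphi(f)(x)$ at every $x\in\rn$.

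The substantial part of the argument is (iii) $\Longrightarrow$ (i). I would first introduce the anisotropic tangential maximal function
\begin{equation*}
\varphi_t^{**,K}(f)(x):=\sup_{y\in\rn}\frac{|(\varphi_t\ast f)(x-y)|}{(1+t^{-1}|y|_{\va})^{K}},\hs t\in(0,\fz),
\end{equation*}
with $K$ to be fixed below, and (to guarantee a priori finiteness) work with a truncated variant that one sends to the limit at the end. The strategy has three steps: first, for any $\psi\in\cs_N(\rn)$, use a Calder\'on-type reproducing formula adapted to the anisotropic dilations $t^{\va}$ to produce a pointwise estimate of the form $M_\psi(f)(x)\ls \sup_{t\in(0,\fz)}\varphi_t^{**,K}(f)(x)$, provided $K$ is chosen large enough relative to $N$, $\va$, and $\vp$, which is ensured by the condition $N\ge N_{\vp}$.

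Second, I would prove the classical Bownik-type pointwise bound
\begin{equation*}
\sup_{t\in(0,\fz)}\lf[\varphi_t^{**,K}(f)(x)\r]^{\underline{p}}\ls \HL\lf(\lf[M_\varphi^0(f)\r]^{\underline{p}}\r)(x),
\end{equation*}
where $\underline{p}\in(0,\min\{1,p_-\})$ is as in \eqref{2e10} and $K$ is large enough in terms of $\underline{p}$, $\va$, and $\nu$; this uses the fact that $|\varphi_t\ast f(z)|\le M_\varphi^0(f)(z)$ together with a subaveraging of $|\varphi_t\ast f|^{\underline{p}}$ over anisotropic balls. Third, I would appeal to Theorem \ref{x2t2} (the extended Hardy--Littlewood inequality on mixed Lebesgue norms), combined with the containment of each anisotropic ball $B_{\va}(x,r)$ in the coordinate rectangle $\prod_{j=1}^{n}[x_j-r^{a_j},x_j+r^{a_j}]$ (up to a universal constant), to dominate $\HL$ coordinatewise and obtain
\begin{equation*}
\lf\|\HL\lf(\lf[M_\varphi^0(f)\r]^{\underline{p}}\r)^{1/\underline{p}}\r\|_{\lv}\ls \lf\|M_\varphi^0(f)\r\|_{\lv},
\end{equation*}
which is valid since $p_i/\underline{p}>1$ for every $i\in\{1,\ldots,n\}$. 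Taking the supremum over $\psi\in\cs_N(\rn)$ and chaining the three steps yields $\|f\|_{\vh}\ls \|M_\varphi^0(f)\|_{\lv}$.

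The main obstacle will be Step~1: controlling the grand maximal function (a supremum over \emph{all} $\psi\in\cs_N(\rn)$) by the tangential maximal function of a \emph{fixed} test function $\varphi$. Executing this requires a quantitative anisotropic Calder\'on reproducing formula and a careful bookkeeping of how the exponents $N_\vp$, $K$, and $\underline{p}$ are tied to $\va$ and $\vp$, so that the decay of the reproducing kernel is enough to overcome the polynomial weight $(1+t^{-1}|y|_{\va})^{K}$ after integration in the mixed norm. A secondary subtlety is justifying the a priori finiteness of $\varphi_t^{**,K}(f)(x)$, which I would handle by truncating $\varphi_t^{**,K}$ and then passing to the limit using the dominated convergence theorem once the uniform bound in Step~3 is in place.
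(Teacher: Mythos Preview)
The paper does not prove Theorem~\ref{3t1} directly---it is quoted from \cite{cgn17}---but it does prove the sharper Theorem~\ref{3t2} in Subsection~\ref{4s2.2}, and that is the natural proof to compare against. Your outline for (i)$\Rightarrow$(ii)$\Rightarrow$(iii) matches the paper's. For the reverse direction the paper takes a different route: it separates (ii)$\Rightarrow$(i) and (iii)$\Rightarrow$(ii). For (ii)$\Rightarrow$(i) it invokes Bownik's Lemma~\ref{3l6} to bound the truncated grand maximal by the truncated tangential $T_\varphi^{N(K,L)}$, then Lemma~\ref{3l2} to get the pointwise bound $[T_\varphi^{N(K,L)}(f)]^\lz\ls M_{\rm HL}^{\va}\bigl([M_\varphi^{(K,L)}(f)]^\lz\bigr)$ with the \emph{non-tangential} maximal function on the right, and closes with Lemma~\ref{3l1}. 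For (iii)$\Rightarrow$(ii) it does \emph{not} attempt a pointwise bound by the radial maximal function; instead it uses Lemma~\ref{3l7} to obtain a priori $\lv$-finiteness of the truncated $M_\varphi^{(K,L)}(f)$ and then runs a good-set argument (the set $G_K$ and the estimate borrowed from \cite[(4.17)]{lyy16}) to deduce $\|M_\varphi^{(K,L)}(f)\|_{\lv}\ls\|M_\varphi^{0(K,L)}(f)\|_{\lv}$.

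Your Step~2 asserts the stronger pointwise inequality $[\varphi_t^{**,K}(f)]^{\underline{p}}\ls M_{\rm HL}^{\va}\bigl([M_\varphi^0(f)]^{\underline{p}}\bigr)$ with the \emph{radial} maximal function on the right. This is ultimately true, but your justification (``$|\varphi_t\ast f(z)|\le M_\varphi^0(f)(z)$ together with a subaveraging'') skips the real content: one needs a sub-mean-value inequality for $|\varphi_t\ast f|^{\underline{p}}$ over anisotropic balls, and that is obtained by a Peetre-type bootstrap---Taylor expansion of $\varphi_t\ast f$ together with the assumed finiteness of the truncated tangential maximal function to control the derivative terms. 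That is precisely where the constraints linking $K$, $\underline{p}$ and $\va$ enter, and it is the analogue of the paper's good-set argument. So you have misidentified the main obstacle: Step~1 is dispatched cleanly by Lemma~\ref{3l6}; it is Step~2 that needs the care. If you either supply the Peetre-type sub-mean-value inequality explicitly (with the truncation you already propose), or replace Step~2 by the paper's two-stage route through the non-tangential maximal function and the set $G_K$, your argument goes through.
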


To complete the real-variable theory of the Hardy spaces $\vh$,
Huang et al. \cite[Theorem 3.16]{hlyy} established
the atomic characterizations of $\vh$.
To state this atomic characterizations, we first introduce the
notions of anisotropic mixed-norm $(\vp,r,s)$-atoms and
anisotropic mixed-norm atomic Hardy spaces as follows, which are,
respectively, \cite[Definitions 3.1 and 3.2]{hlyy}.
In what follows, for any $q\in(0,\fz]$,
denote by $L^q(\rn)$ the \emph{space of all measurable functions} $f$ such that
$$\|f\|_{L^q(\rn)}:=\lf\{\int_{\rn}\lf|f(x)\r|^q\,dx\r\}^{1/q}<\fz$$
with the usual modification made when $q=\fz$.

\begin{definition}\label{5d1}
Let $\va\in[1,\fz)^n$, $\vp:=(p_1,\ldots,p_n)\in(0,\fz)^n$, $r\in (1,\fz]$ and
\begin{align}\label{5eq1}
s\in\lf[\lf\lfloor\frac{\nu}{a_-}\lf(\frac{1}{p_-}-1\r) \r\rfloor,\fz\r)\cap\zz_+,
\end{align}
where $a_-$ is as in \eqref{2e9} and $p_-$ as in \eqref{2e10}.
An \emph{anisotropic mixed-norm $(\vp,r,s)$-atom} $a$ is
a measurable function on $\rn$ satisfying
\begin{enumerate}
\item[{\rm (i)}] $\supp a:=\{x\in\rn:\ a(x)\neq 0\} \subset B$, where
$B\in\mathfrak{B}$ with $\mathfrak{B}$ as in \eqref{3e2};

\item[{\rm (ii)}] $\|a\|_{L^r(\rn)}\le \frac{|B|^{1/r}}{\|{\mathbf 1}_B\|_{\lv}}$;

\item[{\rm (iii)}] $\int_{\mathbb R^n}a(x)x^\az\,dx=0$ for any $\az\in\zz_+^n$
with $|\az|\le s$.
\end{enumerate}
\end{definition}

In what follows, we always call an anisotropic mixed-norm
$(\vp,r,s)$-atom simply by a \emph{$(\vp,r,s)$-atom}.

\begin{definition}\label{5d2}
Let $\va\in[1,\fz)^n$, $\vp\in(0,\fz)^n$, $r\in (1,\fz]$
and $s$ be as in \eqref{5eq1}. The \emph{anisotropic mixed-norm
atomic Hardy space} $\vah$ is defined to be the
set of all $f\in\cs'(\rn)$ satisfying that there exist
$\{\lz_i\}_{i\in\nn}\subset\mathbb{C}$
and a sequence $\{a_i\}_{i\in\nn}$ of $(\vp,r,s)$-atoms
supported, respectively, in
$\{B_i\}_{i\in\nn}\subset\mathfrak{B}$ such that
\begin{align*}
f=\sum_{i\in\nn}\lz_ia_i
\quad\mathrm{in}\quad\cs'(\rn).
\end{align*}
Moreover, for any $f\in\vah$, let
\begin{align*}
\|f\|_{\vah}:=
{\inf}\lf\|\lf\{\sum_{i\in\nn}
\lf[\frac{|\lz_i|{\mathbf 1}_{B_i}}{\|{\mathbf 1}_{B_i}\|_{\lv}}\r]^
{\underline{p}}\r\}^{1/{\underline{p}}}\r\|_{\lv},
\end{align*}
where $\underline{p}$ is as in \eqref{2e10} and
the infimum is taken over all decompositions of $f$ as above

\end{definition}

\begin{remark}\label{3r.2}
Recall that, in \cite[Definition 3.2]{hlyy}, $\underline{p}:=\min\{1,p_-\}$
with $p_-$ as in \eqref{2e10}.
However, in Definition \ref{5d2} above, we correct the range of
$\underline{p}$ to be $(0,\min\{1,p_-\})$
due to the fact that Lemma \ref{3l1} below holds true only for $\vp\in(1,\fz)^n$.
\end{remark}

It is well known that the Calder\'on--Zygmund decomposition is a
key tool in the real-variable theory of function spaces.
The idea behind this decomposition is that it is often useful to
split a function or distribution into its ``good" and ``bad" part, and then
use different techniques to analysis each part. Recall that
Huang et al. \cite[Lemma 3.12]{hlyy} obtained the
following adapted Calder\'on--Zygmund decomposition, which plays a
curial role in the proof of atomic characterizations of $\vh$.
Indeed, as has been demonstrated in the proof of the atomic
decomposition for classical Hardy spaces, we need to use this lemma
to break down functions or distributions into atoms.

Let $\Phi$ be some fixed $C^\fz(\rn)$ function satisfying
$\supp \Phi\subset B(\vec{0}_n,1)$ and $\int_{\rn} \Phi(x)\,dx \neq 0$.
For any $f\in\cs'(\rn)$ and $x\in \rn$, we always let
\begin{equation}\label{3e16}
M_0(f)(x):=M_\Phi^0(f)(x),
\end{equation}
where $M_\Phi^0(f)$ is as in Definition \ref{3d4} with $\phi$ replaced by $\Phi$.
In what follows, for any given $s\in \mathbb{Z}_+$,
the \emph{symbol $\cp_s(\rn)$} denotes the linear space of all polynomials
on $\rn$ with degree not greater than $s$.

\begin{lemma}\label{5l1}
Let $\va \in [1,\fz)^n$, $\vp\in(0,\fz)^n$,
$s\in \mathbb{Z}_+$ and
$N$ be as in \eqref{2e11}.
For any $\sa\in (0,\fz)$ and $f\in \vh$,
let $$\CO := \{x\in \rn:\ M_N(f)(x)>\sa\},$$
where $M_N$ is as in Definition \ref{3d4}. Then
the following statements hold true:
\begin{enumerate}
\item[\rm(i)] There exists a sequence
$\{B_k^*\}_{k\in\nn}\subset \mathfrak{B}$
with $\mathfrak{B}$ as in \eqref{3e2},
which has finite intersection property,
such that $$\CO = \bigcup_{k\in \nn}\Qkk.$$

\item[\rm(ii)] There exist two distributions $g$ and $b$
such that $f=g+b$ in $\cs'(\rn)$.

\item[\rm(iii)] For the distribution $g$ as in {\rm (ii)} and any $x\in \rn$,
\begin{equation*}
M_0(g)(x)\ls M_N(f)(x){\mathbf 1}_{\CO^{\com}}(x)+\sum_{k\in\nn}
\frac{\sa r_k^{\nu+(s+1)a_-}}{(r_k+|x-x_k|_{\va})^{\nu+(s+1)a_-}},
\end{equation*}
where $a_-$ and $M_0$ are as in \eqref{2e9}, respectively, \eqref{3e16}, and the implicit positive
constant is independent of $f$ and $g$.
Moreover, for any $k\in \nn$, $x_k$ denotes
the center of $B_k^*$ and there exists a constant
$A^*\in (1,\fz)$, independent of $k$, such that $A^*-1$ is small enough
and $A^* r_k$ equals the radius of $B_k^*$.

\item[\rm(iv)] If $f\in L_{\loc}^1(\rn)$, then the distribution $g$ as in
{\rm (ii)} belongs to $L^{\fz}(\rn)$
and $\|g\|_{L^{\fz}(\rn)} \ls \sa$ with the implicit positive
constant independent of $f$ and $g$.

\item[\rm(v)] If $s$ is as in \eqref{5eq1} and $b$ as in {\rm(ii)}, then $b=\sum_{k\in\nn}b_k$ in $\cs'(\rn)$,
where, for any $k\in \nn,~b_k:=(f-c_k)\eta_k$, $\{\eta_k\}_{k\in \nn}$
is a partition of unity with respect to $\{B_k^*\}_{k\in \nn}$,
namely, for any $k\in\nn$, $\eta_k\in C_c^{\fz}(\rn)$,
$\supp \eta_k\subset B_k^*$, $0\le\eta_k \le 1$
and $${\mathbf 1}_{\CO}=\sum_{k\in \nn}\eta_k,$$ and
$c_k\in \cp_s(\rn)$ is a polynomial such that, for any $q\in\cp_s(\rn)$,
$$\langle f-c_k,q\eta_k\rangle=0.$$
Moreover, for any $k\in \nn$ and $x\in \rn$,
\begin{equation*}
M_0(b_k)(x)\ls M_N(f)(x){\mathbf 1}_{B_k^*}(x)+\frac{\sa r_k^{\nu+(s+1)a_-}}
{|x-x_k|_{\va}^{\nu+(s+1)a_-}}{\mathbf 1}_{({B_k^*})^{\com}}(x),
\end{equation*}
where $a_-$ and $M_0$ are as in \eqref{2e9}, respectively, \eqref{3e16}, and the implicit
positive constant is independent of $f$ and $k$.
\end{enumerate}
\end{lemma}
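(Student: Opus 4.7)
The plan is to adapt the classical Calder\'on--Zygmund decomposition of Stein (refined for anisotropic Hardy spaces by Bownik) to the mixed-norm anisotropic setting. First, for part (i), I would apply a Whitney-type covering to the open set $\CO$ using anisotropic balls. The openness of $\CO$ follows from the lower semicontinuity of $M_N(f)$, and one constructs a Whitney family $\{\widetilde B_k\}_{k\in\nn}\subset\mathfrak{B}$ with $r_k$ comparable to the anisotropic distance from $\widetilde B_k$ to $\CO^{\complement}$; dilating by a uniform factor $A^\ast>1$ gives the desired $\{B_k^\ast\}$, and the finite intersection property is obtained by a standard packing argument using the doubling property of the anisotropic balls (with constants depending only on $\va$).

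For parts (ii) and (v), I would fix a smooth partition of unity $\{\eta_k\}_{k\in\nn}\subset C_c^{\fz}(\rn)$ subordinate to $\{B_k^\ast\}_{k\in\nn}$ with $\sum_{k\in\nn}\eta_k={\mathbf 1}_{\CO}$ and $\eta_k$ enjoying the usual size estimates adapted to $B_k^\ast$. For each $k\in\nn$, the polynomial $c_k\in\cp_s(\rn)$ is defined as the orthogonal projection of $f$ onto $\cp_s(\rn)$ with respect to the inner product $(p,q)_k:=\langle f,pq\eta_k\rangle / \int \eta_k$; this makes sense because $q\eta_k\in\cs(\rn)$, so the pairing $\langle f,q\eta_k\rangle$ is well defined, and $c_k$ is uniquely characterized by $\langle f-c_k,q\eta_k\rangle=0$ for every $q\in\cp_s(\rn)$. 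Then set $b_k:=(f-c_k)\eta_k$, $b:=\sum_{k\in\nn}b_k$ and $g:=f-b$, with convergence in $\cs'(\rn)$ to be verified using the pointwise bounds proved in (v).

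For part (iii), the pointwise estimate on $M_0(g)$ splits by the location of $x$. On $\CO^{\complement}$, the good part $g$ coincides with $f$ up to contributions from the terms $c_k\eta_k$ concentrated in $\CO$; testing $\Phi_t\ast g$ against each such piece and using the size bound $\|c_k\|_{L^{\fz}(B_k^\ast)}\ls\sa$ (derived from the defining projection and the definition of $\CO$) together with a Taylor expansion of $\Phi_t$ around $x_k$ of order $s$, one gains the decay factor $r_k^{\nu+(s+1)a_-}/(r_k+|x-x_k|_{\va})^{\nu+(s+1)a_-}$; the anisotropic quasi-norm appears naturally because the Taylor remainder is bounded in the $\va$-quasi-homogeneous scale and the partial derivatives $\partial^\az\Phi$ scale by $t^{-\nu-\va\cdot\az}$. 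On $\CO$, only the $c_k\eta_k$ contributions survive and similar Taylor estimates give the required tail. Part (iv) follows directly once one has $|f|\le\sa$ a.e.\ on $\CO^{\complement}$ by the Lebesgue differentiation theorem applied to $M_N(f)$, since then $g=f{\mathbf 1}_{\CO^{\complement}}+\sum_{k\in\nn}c_k\eta_k$ pointwise and the finite intersection property from (i) combined with the uniform bound on each $c_k$ gives $\|g\|_{L^{\fz}(\rn)}\ls\sa$.

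Finally, for the pointwise bound on $M_0(b_k)$ in part (v), inside $B_k^\ast$ one uses the triangle inequality, the smoothness of $\Phi$, the bound on $c_k$, and the defining inequality $M_N(f)>\sa$ on $\CO$ to bound $M_0(b_k)$ by $M_N(f){\mathbf 1}_{B_k^\ast}$ up to a harmless constant. Outside $B_k^\ast$, one uses the vanishing moments of $b_k$ up to order $s$ (which follow from $\langle f-c_k,q\eta_k\rangle=0$ for $q\in\cp_s(\rn)$) to subtract the Taylor polynomial of $\Phi_t(x-\cdot)$ of degree $s$ centered at $x_k$, yielding the decay $r_k^{\nu+(s+1)a_-}/|x-x_k|_{\va}^{\nu+(s+1)a_-}$. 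The main obstacle throughout is the careful anisotropic bookkeeping: each Taylor estimate involves derivatives weighted according to $\va$, each dilation of balls shifts radii by factors depending on $\va$, and the relationship $s\ge\lfloor(\nu/a_-)(1/p_--1)\rfloor$ ensures that the resulting decay is strong enough to be summable in the $\lv$ quasi-norm when the $b_k$'s are reassembled; all constants must depend only on the geometric data $(\va,\vp,s,N,A^\ast)$ and not on $f$.
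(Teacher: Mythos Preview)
Your proposal is correct in spirit and outlines the standard Stein--Bownik adaptation of the Calder\'on--Zygmund decomposition to the anisotropic setting. Note, however, that the paper under review is a survey and does not prove this lemma at all: it merely recalls it as \cite[Lemma~3.12]{hlyy}, so there is no in-paper proof to compare against. That said, the argument in \cite{hlyy} is precisely the one you sketch --- Whitney covering by anisotropic balls, smooth partition of unity $\{\eta_k\}$, polynomial correction $c_k$ via projection, and Taylor-remainder estimates in the $|\cdot|_{\va}$ scale --- so your plan matches the intended source.

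One small slip worth fixing: your definition of the inner product, $(p,q)_k:=\langle f,pq\eta_k\rangle/\int\eta_k$, accidentally involves $f$ and so is not an inner product on $\cp_s(\rn)$. What you want is the weighted $L^2$ inner product $(p,q)_k:=\int_{\rn} p(x)q(x)\eta_k(x)\,dx\big/\int_{\rn}\eta_k(x)\,dx$ on $\cp_s(\rn)$, and then $c_k$ is the unique element of $\cp_s(\rn)$ satisfying $(c_k,q)_k=\langle f,q\eta_k\rangle/\int\eta_k$ for all $q\in\cp_s(\rn)$; equivalently, $\langle f-c_k,q\eta_k\rangle=0$. With this correction your outline is sound.
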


Now we state the atomic characterizations of $\vh$ as follows,
which was established by Huang et al.
in \cite[Theorem 3.16]{hlyy}.

\begin{theorem}\label{5t1}
Let $\va\in [1,\fz)^n,\ \vp\in (0,\fz)^n,\ r\in(\max\{p_+,1\},\fz]$
with $p_+$ as in \eqref{2e10}, $N$ be as in \eqref{2e11} and $s$
as in \eqref{5eq1}.
Then $$\vh=\vah$$ with equivalent quasi-norms.
\end{theorem}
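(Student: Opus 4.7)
The plan is to establish the two continuous inclusions $\vah\hookrightarrow\vh$ and $\vh\hookrightarrow\vah$ separately, by the classical scheme of pointwise maximal estimates for atoms in one direction and a Calder\'on--Zygmund decomposition argument in the other.

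For the direction $\vah\hookrightarrow\vh$, I would start from a decomposition $f=\sum_{i\in\nn}\lz_ia_i$ in $\vah$ with $(\vp,r,s)$-atoms $a_i$ supported in balls $B_i\in\mathfrak{B}$, and establish a pointwise estimate on each $M_N(a_i)$ that isolates the atom. Inside a fixed dilate of $B_i$, the size bound of Definition \ref{5d1}(ii) controls $M_N(a_i)$; outside the dilate, the vanishing moments of order $s$ from Definition \ref{5d1}(iii), together with the choice \eqref{5eq1}, produce polynomial decay $|x-x_i|_{\va}^{-\nu-(s+1)a_-}$. Combining these yields
$$M_N(a_i)(x)\ls\frac{1}{\|{\mathbf 1}_{B_i}\|_{\lv}}\lf[\mathcal{M}\lf({\mathbf 1}_{B_i}\r)(x)\r]^{\beta}$$
for a suitable anisotropic maximal operator $\mathcal M$ and an exponent $\beta\in(1/\underline{p},\fz)$. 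Using the $\underline{p}$-subadditivity $[M_N(f)]^{\underline{p}}\le\sum_i|\lz_i|^{\underline{p}}[M_N(a_i)]^{\underline{p}}$ and applying the Fefferman--Stein vector-valued inequality on $\lv$ (in the spirit of Theorem \ref{x3t5}) with index $\beta\underline{p}>1$ and vector $\vp/\underline{p}\in(1,\fz)^n$ then delivers $\|f\|_{\vh}\ls\|f\|_{\vah}$ after taking the infimum over admissible decompositions.

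For the converse $\vh\hookrightarrow\vah$, I apply Lemma \ref{5l1} at each level $\sa:=2^k$ with $k\in\zz$ to obtain open sets $\CO_k:=\{x\in\rn:\ M_N(f)(x)>2^k\}=\bigcup_{i\in\nn}B_{k,i}^*$ with finite intersection property, partitions of unity $\{\eta_{k,i}\}_{i\in\nn}$, projection polynomials $c_{k,i}\in\cp_s(\rn)$, and the decompositions $f=g_k+\sum_{i\in\nn}b_{k,i}$ in $\cs'(\rn)$, where $b_{k,i}:=(f-c_{k,i})\eta_{k,i}$. Telescoping via
$$f=\sum_{k\in\zz}(g_{k+1}-g_k)\quad\mathrm{in}\quad\cs'(\rn),$$
I rewrite each difference $g_{k+1}-g_k$ as $\sum_{i\in\nn}h_{k,i}$, where each $h_{k,i}$ is assembled from the $b_{k,i}$ and $b_{k+1,j}$ together with $\cp_s(\rn)$-projections that restore the vanishing moment condition. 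Setting $\lz_{k,i}:=C\cdot 2^k\|{\mathbf 1}_{B_{k,i}^*}\|_{\lv}$ and $a_{k,i}:=\lz_{k,i}^{-1}h_{k,i}$, I verify that each $a_{k,i}$ is a $(\vp,\fz,s)$-atom: support in $B_{k,i}^*$ comes from the partition of unity, the $L^\fz$ bound follows from $L^\fz$-control of $g_{k+1}-g_k$ (which holds level by level even when $f$ is merely a distribution, via Lemma \ref{5l1}(iv) combined with the pointwise estimate in Lemma \ref{5l1}(iii)), and the moment condition is immediate from the projection construction. The atomic quasi-norm is then estimated by exploiting $\sum_{i\in\nn}{\mathbf 1}_{B_{k,i}^*}\ls{\mathbf 1}_{\CO_k}$, which gives, for every $x\in\rn$,
$$\sum_{k\in\zz}\sum_{i\in\nn}\lf[\frac{|\lz_{k,i}|{\mathbf 1}_{B_{k,i}^*}(x)}{\|{\mathbf 1}_{B_{k,i}^*}\|_{\lv}}\r]^{\underline{p}}\ls\sum_{k\in\zz}2^{k\underline{p}}{\mathbf 1}_{\CO_k}(x)\sim\lf[M_N(f)(x)\r]^{\underline{p}},$$
where the last equivalence is the layer-cake identity $\sum_{k:\,2^k<M_N(f)(x)}2^{k\underline{p}}\sim[M_N(f)(x)]^{\underline{p}}$, valid since $\underline{p}>0$. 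Taking the $1/\underline{p}$-th power and the $\lv$ quasi-norm yields $\|f\|_{\vah}\ls\|M_N(f)\|_{\lv}=\|f\|_{\vh}$.

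The main obstacle I expect is in the hard direction, specifically in constructing the polynomial corrections $c_{k,i}$ carefully enough that each telescoping difference can be rewritten as a sum of atoms with genuinely vanishing moments of order $s$, while keeping the $L^\fz$ norms of the pieces bounded by a constant multiple of $2^k$ uniformly in $k$, and controlling the interaction between the two Whitney-type covers $\{B_{k,i}^*\}_{i\in\nn}$ and $\{B_{k+1,j}^*\}_{j\in\nn}$ at adjacent scales. A secondary technical subtlety is justifying the convergence of $\sum_k(g_{k+1}-g_k)$ to $f$ in $\cs'(\rn)$ as $k\to-\fz$, which cannot rest on $L^\fz$ bounds alone and instead requires pairing against Schwartz test functions combined with the polynomial decay provided by Lemma \ref{5l1}(iii). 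The restriction $\underline{p}\in(0,\min\{1,p_-\})$ from Remark \ref{3r.2} is precisely what enables the Fefferman--Stein step in the easy direction, since it forces $\vp/\underline{p}$ to have all components strictly above~$1$.
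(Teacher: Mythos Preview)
The paper is a survey and does not give its own proof of Theorem \ref{5t1}; it simply records the result as \cite[Theorem~3.16]{hlyy}. Your outline follows the expected classical scheme---maximal-function control of atoms for $\vah\hookrightarrow\vh$, and iterated Calder\'on--Zygmund decomposition (Lemma~\ref{5l1}) with telescoping for $\vh\hookrightarrow\vah$---which is precisely the strategy of \cite{hlyy}, itself adapted from Bownik's anisotropic theory in \cite{mb03}.

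One point deserves care. You write that the $L^\fz$ bound on the pieces $h_{k,i}$ follows from Lemma~\ref{5l1}(iv) ``even when $f$ is merely a distribution,'' but that item explicitly assumes $f\in L^1_{\loc}(\rn)$. For a general $f\in\cs'(\rn)\cap\vh$ the route in \cite{hlyy} (following \cite[Chapter~6]{mb03}) is more delicate: one shows directly that each telescoping piece $h_{k,i}$, being a finite combination of terms of the form $(f-c_{k,i})\eta_{k,i}$, $(f-c_{k+1,j})\eta_{k+1,j}$ and polynomial corrections, is actually a bounded function supported in a fixed dilate of $B_{k,i}^\ast$, with $\|h_{k,i}\|_{L^\fz(\rn)}\ls 2^k$, by exploiting the pointwise maximal estimate of Lemma~\ref{5l1}(iii) and (v) together with the fact that the polynomial projections $c_{k,i}$ are uniformly controlled by $\sup_{y\in B_{k,i}^\ast}M_N(f)(y)\ls 2^k$ on $\partial\CO_k$. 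This is not the same as invoking Lemma~\ref{5l1}(iv), and your parenthetical remark glosses over the distinction. Apart from this, your plan and its identified obstacles match the structure of the original argument.
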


Combining Proposition \ref{3p1} and Theorem \ref{5t1}, the
following result given in \cite[Corollary 3.18]{hlyy} was obtained.

\begin{corollary}\label{5c1}
Let $\va$ and $s$ be as in Theorem \ref{5t1}, $\vp\in(1,\fz)^n$
and $r\in(p_+,\fz]$ with $p_+$ as in \eqref{2e10}.
Then $$\lv=\vah$$ with equivalent quasi-norms.
\end{corollary}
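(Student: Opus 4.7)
The plan is to obtain this corollary as an immediate chaining of the two previously stated results, namely Proposition \ref{3p1} and Theorem \ref{5t1}, so the proof itself will be very short; the work is entirely in checking that the hypotheses of these two theorems are simultaneously met under the stated assumptions.

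First I would note that, since $\vp\in(1,\fz)^n$, in particular each component satisfies $p_i>1$, so by Proposition \ref{3p1} we have the identification
\begin{equation*}
\vh=\lv
\end{equation*}
with equivalent quasi-norms. This disposes of one half of the required equivalence, reducing the problem to showing that $\vh=\vah$ with the atomic parameters $(r,s)$ as specified in the corollary.

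Next, to invoke Theorem \ref{5t1}, I need to verify the admissibility of the atomic parameters. Since $\vp\in(1,\fz)^n$, we have $p_+:=\max\{p_1,\ldots,p_n\}>1$, hence
\begin{equation*}
\max\{p_+,1\}=p_+,
\end{equation*}
so the assumption $r\in(p_+,\fz]$ coincides with $r\in(\max\{p_+,1\},\fz]$. The integer $N$ and the vanishing-moment order $s$ are chosen exactly as in \eqref{2e11} and \eqref{5eq1}, which are the same hypotheses required by Theorem \ref{5t1}. Consequently Theorem \ref{5t1} applies and yields
\begin{equation*}
\vh=\vah
\end{equation*}
with equivalent quasi-norms.

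Combining the two equalities above gives $\lv=\vah$ with equivalent quasi-norms, which is the claim. I do not anticipate any real obstacle: the only thing to be careful about is the bookkeeping on the range of $r$, namely that the condition $r>p_+$ is genuinely equivalent to $r>\max\{p_+,1\}$ under the hypothesis $\vp\in(1,\fz)^n$, and that the parameter $\underline{p}\in(0,\min\{1,p_-\})$ implicit in the definition of the atomic quasi-norm (cf.\ Remark \ref{3r.2}) is available precisely in this regime, so that $\|\cdot\|_{\vah}$ is well-defined.
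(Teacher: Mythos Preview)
Your proposal is correct and follows exactly the approach indicated in the paper: the corollary is obtained by combining Proposition \ref{3p1} (which gives $\vh=\lv$ for $\vp\in(1,\fz)^n$) with Theorem \ref{5t1} (which gives $\vh=\vah$), and your verification that $r\in(p_+,\fz]$ coincides with $r\in(\max\{p_+,1\},\fz]$ when $\vp\in(1,\fz)^n$ is precisely the bookkeeping needed.
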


Moreover, the finite atomic characterizations of $\vh$
was also shown by Huang et al.
in \cite[Theorem 5.9]{hlyy} as follows. To state this theorem,
we first recall the following anisotropic mixed-norm
finite atomic Hardy space given in \cite[Definition 5.1]{hlyy}.

\begin{definition}\label{5d3}
Let $\va\in[1,\fz)^n$, $\vp\in(0,\fz)^n$, $r\in (1,\fz]$
and $s$ be as in \eqref{5eq1}. The \emph{anisotropic mixed-norm
finite atomic Hardy space} $\vfah$ is defined to be the set of all
$f\in\cs'(\rn)$ satisfying that there exist $I\in\nn$,
$\{\lz_i\}_{i\in[1,I]\cap\nn}\subset\mathbb{C}$ and
a finite sequence $\{a_i\}_{i\in[1,I]\cap\nn}$ of $(\vp,r,s)$-atoms
supported, respectively, in
$\{B_i\}_{i\in[1,I]\cap\nn}\subset\mathfrak{B}$
such that
\begin{align*}
f=\sum_{i=1}^I\lambda_ia_i
\quad\mathrm{in}\quad\cs'(\rn).
\end{align*}
Moreover, for any $f\in\vfah$, let
\begin{align*}
\|f\|_{\vfah}:=
{\inf}\lf\|\lf\{\sum_{i=1}^{I}
\lf[\frac{|\lz_i|{\mathbf 1}_{B_i}}{\|{\mathbf 1}_{B_i}\|_{\lv}}\r]^
{\underline{p}}\r\}^{1/\underline{p}}\r\|_{\lv},
\end{align*}
where $\underline{p}$ is as in \eqref{2e10} and the
infimum is taken over all decompositions of $f$ as above.
\end{definition}

\begin{remark}
Similarly to Remark \ref{3r.2},
in Definition \ref{5d3} above, we correct the range of
$\underline{p}$ to be $(0,\min\{1,p_-\})$
due to the fact that Lemma \ref{3l1} below holds true only for $\vp\in(1,\fz)^n$.
\end{remark}

\begin{theorem}\label{5t2}
Let $\va\in [1,\fz)^n$, $\vp\in(0,\fz)^n$ and $s$ be as in \eqref{5eq1}.
\begin{enumerate}
\item[{\rm (i)}]
If $r\in(\max\{p_+,1\},\fz)$ with $p_+$ as in
\eqref{2e10}, then $\|\cdot\|_{\vfah}$
and $\|\cdot\|_{\vh}$ are equivalent quasi-norms on $\vfah$;
\item[{\rm (ii)}]
$\|\cdot\|_{\vfahfz}$
and $\|\cdot\|_{\vh}$ are equivalent quasi-norms on
$\vfahfz\cap C(\rn)$, here and thereafter, $C(\rn)$ denotes
the set of all continuous functions on $\rn$.
\end{enumerate}
\end{theorem}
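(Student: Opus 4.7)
The plan is to prove, for each part, that the finite atomic quasi-norm and $\|\cdot\|_{\vh}$ are equivalent on the indicated space. One direction is essentially automatic: a finite $(\vp,r,s)$-atomic decomposition is a special case of a countable one, so Theorem \ref{5t1} gives $\|f\|_{\vh}\ls\|f\|_{\vfah}$ for part (i); the analogous inclusion for $L^\fz$-atoms with continuity handles part (ii). The substance is the reverse inequality: given $f\in\vfah$ (resp.~$f\in\vfahfz\cap C(\rn)$), I must construct a \emph{finite} atomic decomposition whose quasi-norm is dominated by $\|M_N(f)\|_{\lv}=\|f\|_{\vh}$.

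For part (i), fix $f\in\vfah$; then $f$ is supported in a single anisotropic ball $B_0\in\mathfrak{B}$ and lies in $L^r(\rn)$. I would apply the anisotropic Calder\'on--Zygmund decomposition of Lemma \ref{5l1} at each dyadic height $\sigma_j:=2^j$, $j\in\zz$, producing $f=g_j+b_j$ and balls $\{B_{k,j}^\ast\}_{k\in\nn}$ of bounded overlap, and then telescope to write $f=g_{j_0}+\sum_{j\ge j_0}(g_{j+1}-g_j)$ in $\cs'(\rn)$. Parts (ii), (iii) and (v) of Lemma \ref{5l1} allow each difference $g_{j+1}-g_j$ to be expanded as $\sum_k\lambda_k^j a_k^j$, where $a_k^j$ is a $(\vp,r,s)$-atom on $B_{k,j}^\ast$ with $|\lambda_k^j|\ls 2^j\|{\mathbf 1}_{B_{k,j}^\ast}\|_{\lv}$. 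Two truncations force the decomposition to be \emph{finite}: for $j$ sufficiently large, $\Omega_j=\{M_N(f)>2^j\}$ is empty, since $M_N(f)$ is essentially bounded on the compact support region of $f$; for $j_0$ sufficiently small, the remainder $g_{j_0}$ is itself, after normalization, a single $(\vp,r,s)$-atom supported in a fixed dilate of $B_0$, thanks to Lemma \ref{5l1}(iv) and the support constraint, together with moment conditions inherited from $f$.

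For part (ii) with $r=\fz$, the same scheme applies, but continuity of $f$ is essential: it ensures that the pieces $b_k^j$ and the polynomial projections $c_k^j$ appearing in Lemma \ref{5l1}(v) are continuous and uniformly bounded, so that the resulting $a_k^j$ and the large atom formed from $g_{j_0}$ are genuine $(\vp,\fz,s)$-atoms. The truncation argument yielding finiteness is unchanged.

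The main technical hurdle is quantitatively controlling the atomic quasi-norm of the telescoped series by $\|f\|_{\vh}$, namely establishing
\begin{align*}
\Bigl\|\Bigl\{\sum_{j,k}\Bigl[\tfrac{|\lambda_k^j|{\mathbf 1}_{B_{k,j}^\ast}}{\|{\mathbf 1}_{B_{k,j}^\ast}\|_{\lv}}\Bigr]^{\underline{p}}\Bigr\}^{1/\underline{p}}\Bigr\|_{\lv}\ls\|M_N(f)\|_{\lv}.
\end{align*}
Via the pointwise bound $|\lambda_k^j|{\mathbf 1}_{B_{k,j}^\ast}\ls 2^j\|{\mathbf 1}_{B_{k,j}^\ast}\|_{\lv}{\mathbf 1}_{\Omega_j}$, the finite-intersection property from Lemma \ref{5l1}(i), and the layer-cake identity for the $\lv$ quasi-norm, this reduces to a Fefferman--Stein type vector-valued maximal inequality on $\lv$ in the spirit of Theorem \ref{x3t5}. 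The relaxed choice $\underline{p}\in(0,\min\{1,p_-\})$, as corrected in Remark \ref{3r.2}, is precisely what provides the slack needed to apply such a vector-valued maximal bound in the mixed-norm setting.
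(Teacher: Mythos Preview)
The paper does not prove this theorem itself (it is quoted from \cite[Theorem 5.9]{hlyy}), so I compare your sketch to the standard argument used there. Your overall plan is correct, but the upper truncation in part (i) contains a real error: you claim $\Omega_j=\{M_N(f)>2^j\}$ is empty for large $j$ because ``$M_N(f)$ is essentially bounded''. This is false. A finite sum of $(\vp,r,s)$-atoms with $r<\fz$ lies only in $L^r(\rn)$, not in $L^\fz(\rn)$, and $M_N(f)$ is then typically unbounded, so $\Omega_j$ need not be empty for any $j$. The correct upper truncation is different: one shows that the infinite atomic series coming from the proof of Theorem \ref{5t1} converges to $f$ in $L^r(\rn)$, that all the atoms are supported in a fixed dilate $\wz B_0$ of $B_0$, and that $f$ itself inherits vanishing moments up to order $s$. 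Then for any finite partial sum $f_F$, the remainder $f-f_F$ is supported in $\wz B_0$, has the required moments, and has $L^r$ norm as small as one likes; it is therefore one additional $(\vp,r,s)$-atom with a harmless coefficient, which is what yields a genuinely finite decomposition.

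Two smaller points. Your lower truncation is also misstated: taking $j_0$ ``sufficiently small'' makes $\Omega_{j_0}$ expand to all of $\rn$, so $g_{j_0}=f-b_{j_0}$ is supported in $B_0\cup\Omega_{j_0}$ and loses control of its support. Instead one fixes a \emph{specific} $j_0$ using the pointwise bound $M_N(f)(x)\ls\inf_{y\in B_0}M_N(f)(y)$ valid for $x$ outside a fixed dilate of $B_0$ (a consequence of $\supp f\subset B_0$); this simultaneously confines $\Omega_{j_0}\subset\wz B_0$ and gives $2^{j_0}\|\mathbf 1_{\wz B_0}\|_{\lv}\ls\|M_N(f)\|_{\lv}$. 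Finally, in part (ii), even though $f\in C_c(\rn)\subset L^\fz(\rn)$ does make $\Omega_j$ eventually empty, each remaining level $j$ may still contribute infinitely many Whitney balls; the truncation in $k$ relies on the \emph{uniform} continuity of $f$ to force the small-ball tail to vanish in $L^\fz$, so that it can be absorbed as a single $(\vp,\fz,s)$-atom.
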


To establish the Littlewood--Paley function characterizations of $\vh$.
Huang et al. \cite[Lemma 4.13]{hlyy} first established the
anisotropic Calder\'{o}n reproducing formula as follows. Indeed, it is
known that the Calder\'{o}n reproducing formulae are bridges to
connect the theory of function spaces and the boundedness of operators.
In what follows, for any $\phi\in\cs(\rn)$,
$\widehat{\phi}$ denotes its \emph{Fourier transform},
namely, for any $\xi\in\rn$,
\begin{align*}
\widehat \phi(\xi) := \int_{\rn} \phi(x) e^{-2\pi\imath x \cdot \xi} \, dx,
\end{align*}
where $\imath:=\sqrt{-1}$.

\begin{lemma}\label{x4l1}
Let $\va\in [1,\fz)^n$ and $s\in\zz_+$.
For any $\varphi\in C_c^{\fz}(\rn)$ satisfying $\supp\varphi\subset B_0,$
$$\int_{\rn}x^\gamma\varphi(x)\,dx=0\ \mathrm{for\ any}\
\gamma\in\zz_+^n\ \mathrm{with}\ |\gamma|\le s,$$
$|\widehat{\varphi}(\xi)|\ge C$
for any $\xi\in\{x\in\rn:\ 2^{-(1+a_+)}\le|x|\le 1\}$,
where $C\in(0,\fz)$ is a constant,
there exists a $\psi\in\cs(\rn)$ such that
\begin{enumerate}
\item[{\rm(i)}] $\supp \widehat{\psi}$
is compact and away from the origin;
\item[{\rm(ii)}] for any $\xi\in\rn\setminus\{\vec{0}_n\}$,
$\sum_{k\in\mathbb{Z}}
\widehat{\psi}(2^{k\va}\xi)\widehat{\varphi}(2^{k\va}\xi)=1$.
\end{enumerate}

Moreover, for any $f\in L^2(\rn)$,
$f=\sum_{k\in\mathbb{Z}}f\ast\psi_k\ast\varphi_k$ in $L^2(\rn)$.
The same holds true in $\cs'(\rn)$ for any $f\in \cs'_0(\rn)$.
\end{lemma}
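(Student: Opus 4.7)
The plan is to build $\psi$ via an anisotropic Littlewood--Paley partition of unity on the Fourier side, then deduce the reproducing identities by taking Fourier transforms. First, I would produce a nonnegative $\eta\in C_c^\infty(\rn)$ with $\supp\eta$ contained in the annulus $\{\xi\in\rn:\ 2^{-(1+a_+)}\le|\xi|\le 1\}$ and satisfying
$$\sum_{k\in\mathbb{Z}}\eta(2^{k\va}\xi)=1\quad\mathrm{for\ every}\ \xi\in\rn\setminus\{\vec{0}_n\}.$$
Such an $\eta$ is obtained by a standard telescoping construction: choose a smooth bump $\rho\in C_c^\infty(\rn)$ that equals $1$ near the origin and vanishes far away, calibrated so that $\eta(\xi):=\rho(\xi)-\rho(2^{\va}\xi)$ is supported in the prescribed annulus; the identity $\sum_k\eta(2^{k\va}\xi)=1$ then telescopes using the pointwise limits $\rho(2^{k\va}\xi)\to 1$ as $k\to-\infty$ and $\rho(2^{k\va}\xi)\to 0$ as $k\to+\infty$, which hold for every $\xi\ne\vec{0}_n$ since $\va\in[1,\fz)^n$. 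By the lower bound hypothesis on $|\widehat{\varphi}|$ on this annulus,
$$\widehat{\psi}(\xi):=\frac{\eta(\xi)}{\widehat{\varphi}(\xi)},$$
extended by zero off $\supp\eta$, is a well-defined $C_c^\infty(\rn)$ function with support compact and bounded away from the origin, so $\psi\in\cs(\rn)$, giving (i). Property (ii) is immediate from $\widehat{\psi}(\xi)\widehat{\varphi}(\xi)=\eta(\xi)$.

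For the $L^2$ reproducing formula, the change of variables $y=t^{-\va}x$ yields $\widehat{\varphi_t}(\xi)=\widehat{\varphi}(t^{\va}\xi)$, so the partial sum $S_K f:=\sum_{|k|\le K}f\ast\psi_k\ast\varphi_k$ has Fourier transform $\widehat{f}\,m_K$, where $m_K(\xi):=\sum_{|k|\le K}\widehat{\psi}(2^{k\va}\xi)\widehat{\varphi}(2^{k\va}\xi)$. Because the dilates $\{2^{-k\va}\supp\eta\}_{k\in\mathbb{Z}}$ form a finite-overlap cover of $\rn\setminus\{\vec{0}_n\}$, $|m_K|$ is bounded uniformly on $\rn$ by a constant depending only on $\eta$ and $\varphi$, and by (ii) it converges pointwise to $1$ on $\rn\setminus\{\vec{0}_n\}$. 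Plancherel combined with the Lebesgue dominated convergence theorem then gives $S_K f\to f$ in $L^2(\rn)$.

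The main obstacle is the passage to $f\in\cs'_0(\rn)$. Here I would argue by duality: for $\phi\in\cs(\rn)$, the pairing $\langle S_K f,\phi\rangle$ equals $\langle f,\sum_{|k|\le K}\widetilde{\psi}_k\ast\widetilde{\varphi}_k\ast\phi\rangle$ with $\widetilde{h}(x):=h(-x)$, which on the Fourier side amounts to multiplying $\widehat{\phi}$ by $\overline{m_K}$. Because $f\in\cs'_0(\rn)$ annihilates polynomials, we may freely subtract any finite Taylor segment of $\widehat{\phi}$ at the origin from $\widehat{\phi}$ before pairing, reducing to test functions whose Fourier transforms vanish to arbitrarily high order at $\vec{0}_n$. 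For such reduced test functions, the rapid decay of $\widehat{\phi}$ at infinity, the pointwise convergence $m_K\to 1$ on $\rn\setminus\{\vec{0}_n\}$, and the uniform bound on $|m_K|$ together yield $\overline{m_K}\widehat{\phi}\to\widehat{\phi}$ in every Schwartz seminorm; dualising then produces $f=\sum_{k\in\mathbb{Z}}f\ast\psi_k\ast\varphi_k$ in $\cs'(\rn)$. The delicate point throughout is that the tail contributions near $\xi=\vec{0}_n$ and at infinity must be controlled uniformly in $k$; once the finite-overlap estimate for $\supp\eta$ is in hand, this proceeds exactly as in the isotropic Calder\'on--Peetre--Triebel construction, now adapted to the anisotropic dilation matrix $A$ of \eqref{5eq2}.
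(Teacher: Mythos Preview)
The paper does not give a proof of this lemma; it is quoted from \cite[Lemma~4.13]{hlyy}. Your construction of $\psi$ via the telescoping partition $\eta(\xi):=\rho(\xi)-\rho(2^{\va}\xi)$, the verification of (i) and (ii), and the $L^2$ argument via Plancherel and dominated convergence are all correct and standard.

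The $\cs'_0(\rn)$ step, however, contains a genuine error. In this paper $\cs'_0(\rn)$ is \emph{not} the space of distributions that ``annihilate polynomials''; it is defined (just before Theorem~\ref{5t3}) as the set of $f\in\cs'(\rn)$ vanishing weakly at infinity, meaning $f\ast\phi_k\to 0$ in $\cs'(\rn)$ as $k\to\fz$ for every $\phi\in\cs(\rn)$. These are different conditions: for instance $\delta_0\in\cs'_0(\rn)$, yet $\delta_0$ does not annihilate anything of the sort. Your Taylor-subtraction device is also not well posed, since subtracting a polynomial $P$ from $\widehat{\phi}$ yields $\phi-\mathcal{F}^{-1}P$, and $\mathcal{F}^{-1}P$ is a linear combination of derivatives of $\delta_0$, not a Schwartz function against which a general $f\in\cs'(\rn)$ can be paired.

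The low-frequency remainder should instead be treated by invoking the definition of $\cs'_0(\rn)$ directly. With $\rho$ chosen radial and $\chi:=\mathcal{F}^{-1}\rho\in\cs(\rn)$, the telescoping identity gives
\[
\widehat{\Phi_K^{\mathrm{low}}}(\xi)=\rho\lf(2^{(K+1)\va}\xi\r)\widehat{\phi}(\xi),
\qquad\text{hence}\qquad
\Phi_K^{\mathrm{low}}=\chi_{K+1}\ast\phi.
\]
Therefore
\[
\lf\langle f,\Phi_K^{\mathrm{low}}\r\rangle
=\lf\langle f\ast(\widetilde{\chi}\,)_{K+1},\phi\r\rangle\longrightarrow 0
\quad\text{as }K\to\fz,
\]
precisely because $\widetilde{\chi}\in\cs(\rn)$ and $f$ vanishes weakly at infinity. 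The high-frequency piece already tends to $0$ in $\cs(\rn)$ for every $f\in\cs'(\rn)$, as you noted. Combining the two yields the reproducing formula in $\cs'(\rn)$.
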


Let $\va\in [1,\fz)^n$. Assume that $\phi\in\cs(\rn)$
satisfies the same assumptions as $\varphi$ in Lemma \ref{x4l1}
with $s$ as in \eqref{5eq1}. Then, for any $\lambda\in(0,\fz)$ and
$f\in\cs'(\rn)$, the \emph{anisotropic Lusin area function} $S(f)$,
the \emph{anisotropic Littlewood--Paley} $g$-\emph{function} $g(f)$ and
the \emph{anisotropic Littlewood--Paley} $g_\lambda^\ast$-\emph{function}
$g_\lambda^\ast(f)$ are defined, respectively, by setting, for any $x\in\rn$,
\begin{align*}
S(f)(x):=\lf[\sum_{k\in\mathbb{Z}}2^{-k\nu}\int_{B_{\va}(x,2^k)}
\lf|f\ast\phi_{k}(y)\r|^2\,dy\r]^{1/2},
\end{align*}
\begin{align*}
g(f)(x):=\lf[\sum_{k\in\mathbb{Z}}
\lf|f\ast\phi_{k}(x)\r|^2\r]^{1/2}
\end{align*}
and
\begin{align*}
g_\lambda^\ast(f)(x):=
\lf\{\sum_{k\in\mathbb{Z}}2^{-k\nu}\int_{\rn}
\lf[\frac{2^{k}}{2^{k}+|x-y|_{\va}}\r]^{\lambda\nu}
\lf|f\ast\phi_{k}(y)\r|^2\,dy\r\}^{1/2},
\end{align*}
where, for any $k\in \mathbb{Z}$, $\phi_{k}(\cdot)
:=2^{-k\nu}\phi(2^{-k\va}\cdot)$.

Recall that $f\in\cs'(\rn)$ is said to
\emph{vanish weakly at infinity} if, for any $\phi\in\cs(\rn)$,
$f\ast\phi_{k}\to0$ in $\cs'(\rn)$ as $k\to \fz$.
In what follows, we always let $\cs'_0(\rn)$ be the set of all $f\in\cs'(\rn)$
vanishing weakly at infinity.

As an application of the atomic characterizations of $\vh$,
Huang et al. in
\cite[Theorems 4.1 through 4.3]{hlyy} obtained the following
Littlewood--Paley function characterizations of $\vh$
with the help of Lemma \ref{x4l1}.

\begin{theorem}\label{5t3}
Let $\va\in [1,\fz)^n$, $\vp\in(0,\fz)^n$ and
$N$ be as in \eqref{2e11}.
Then $f\in\vh$ if and only if
$f\in\cs'_0(\rn)$ and $S(f)\in\lv$. Moreover,
there exists a positive constant $C$ such that,
for any $f\in\vh$,
$$C^{-1}\|S(f)\|_{\lv}\le\|f\|_{\vh}\le C\|S(f)\|_{\lv}.$$
\end{theorem}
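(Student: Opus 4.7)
The plan is to establish the two implications separately, using the atomic characterization in Theorem \ref{5t1} as a pivot and the anisotropic Calder\'on reproducing formula in Lemma \ref{x4l1} as the reconstruction tool.

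For the necessity direction, i.e., $f\in\vh$ implies $f\in\cs'_0(\rn)$ and $\|S(f)\|_{\lv}\ls\|f\|_{\vh}$, I would first verify that every $f\in\vh$ vanishes weakly at infinity by reducing $|f\ast\phi_k(x)|$ to a pointwise majorant of $M_N(f)$ on an anisotropic ball centered at $x$, and observing that the $\lv$-control of $M_N(f)$ forces the resulting testing integral to tend to zero as $k\to+\fz$. Next I would invoke Theorem \ref{5t1} to decompose $f=\sum_i\lz_i a_i$ in $\cs'(\rn)$ into $(\vp,r,s)$-atoms with $r\in(\max\{p_+,1\},\fz]$ and $s$ as in \eqref{5eq1}. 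The key single-atom estimate is the pointwise decay
$$S(a)(x)\ls\frac{1}{\|\mathbf{1}_B\|_{\lv}}\cdot\frac{r_B^{\nu+(s+1)a_-}}{(r_B+|x-x_B|_{\va})^{\nu+(s+1)a_-}}\quad\text{for }x\notin B^{(\delta)},$$
obtained by Taylor-expanding the kernel of $\phi_k$ to order $s$ and invoking the moment conditions of $a$; on $B^{(\delta)}$, the $L^2$-boundedness of $S$, which follows from Plancherel applied to Lemma \ref{x4l1}, controls $\|S(a)\mathbf{1}_{B^{(\delta)}}\|_{\lv}$ in terms of $\|\mathbf{1}_B\|_{\lv}^{-1}$. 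Summing atomically with the $\underline{p}$-subadditivity of $[S(\cdot)]^{\underline{p}}$ and then applying the Fefferman--Stein vector-valued inequality in Theorem \ref{x3t5} yields $\|S(f)\|_{\lv}\ls\|f\|_{\vah}\sim\|f\|_{\vh}$.

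For the sufficiency direction, i.e., $f\in\cs'_0(\rn)$ with $S(f)\in\lv$ implies $f\in\vh$, I would start from the Calder\'on reproducing formula $f=\sum_{k\in\zz}f\ast\psi_k\ast\varphi_k$ in $\cs'(\rn)$ with $\varphi$ enjoying the required vanishing moments. For each $i\in\zz$, set $\Omega_i:=\{x\in\rn:\ S(f)(x)>2^i\}$ and choose a Whitney-type anisotropic cube covering $\{Q_j^i\}_{j\in\nn}$ of $\Omega_i$, together with the associated tents $T_j^i\subset\rn\times\zz$ that encode the localized contributions of $\varphi_k(\cdot-y)$ to $Q_j^i$. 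Grouping the reproducing formula tent by tent produces a formal expansion $f=\sum_{i,j}\lz_j^i\,a_j^i$ with
$$\lz_j^i:=C\cdot 2^i\,\|\mathbf{1}_{Q_j^i}\|_{\lv}\quad\mathrm{and}\quad a_j^i:=\frac{1}{\lz_j^i}\sum_{k\in\zz}\int_{\{y:\,(y,k)\in T_j^i\}}(f\ast\psi_k)(y)\,\varphi_k(\cdot-y)\,dy.$$
I would verify that, for an appropriate absolute $C$, each $a_j^i$ is a $(\vp,2,s)$-atom: its support lies in a fixed dilate of $Q_j^i$ dictated by the compact support of $\varphi$; the moment conditions transfer directly from those of $\varphi$; and the $L^2$ size bound is obtained by dualizing against unit vectors in $L^2(\rn)$, converting the resulting expression via Plancherel into a tent-integral of $|f\ast\psi_k|^2$ which, by definition of $S(f)$ and the fact that the tent almost-lies in $\Omega_i\setminus\Omega_{i+1}$, is bounded by $2^{2i}|Q_j^i|$. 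The layer-cake identity $\|\{\sum_i 2^{i\underline{p}}\mathbf{1}_{\Omega_i}\}^{1/\underline{p}}\|_{\lv}\sim\|S(f)\|_{\lv}$ then delivers $\|f\|_{\vah}\ls\|S(f)\|_{\lv}$ and hence $f\in\vah=\vh$ by Theorem \ref{5t1}.

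The hard part is the sufficiency direction, which is precisely where the gaps in \cite{hlyy} occur. Three delicate points must be handled. First, the exchange of the double sum $\sum_{i,j}$ inside $\cs'(\rn)$ requires using $f\in\cs'_0(\rn)$ together with the $\lv$-control of $S(f)$ to rule out leakage at both $k\to-\fz$ and $k\to+\fz$. Second, the size estimate on $a_j^i$ demands a Whitney decomposition finely adapted to the anisotropic quasi-metric $|\cdot|_{\va}$, so that the tents $T_j^i$ are pairwise almost-disjoint and remain inside a controlled enlargement of $\Omega_i$; small errors in the choice of Whitney constants invalidate the bound. Third, the mixed-norm assembly of coefficients cannot be carried out by a single Carleson-type estimate because the anisotropic area function does not respect the product structure of $\lv$; instead, one iterates Theorem \ref{x3t5} in the variables, exploiting that $\lz_j^i\,\mathbf{1}_{Q_j^i}/\|\mathbf{1}_{Q_j^i}\|_{\lv}$ is dominated pointwise by a constant multiple of $2^i\,\mathbf{1}_{\Omega_i\setminus\Omega_{i+1}}$. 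Once these three issues are resolved, Theorem \ref{5t1} closes the argument and establishes the norm equivalence claimed in Theorem \ref{5t3}.
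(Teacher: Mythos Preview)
Your overall architecture matches the paper's: necessity via atomic decomposition plus single-atom pointwise decay, sufficiency via the Calder\'on reproducing formula, level sets $\Omega_i:=\{S(f)>2^i\}$, and reassembly into atoms. The necessity sketch is essentially what \cite{hlyy} does and is fine.

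The sufficiency outline, however, has a genuine gap in the tent construction. You propose a Whitney covering $\{Q_j^i\}_j$ of each $\Omega_i$ separately and ``associated tents'' $T_j^i$. But a Whitney cube of $\Omega_i$ can lie entirely inside $\Omega_{i+1}$, so your claim that ``the tent almost-lies in $\Omega_i\setminus\Omega_{i+1}$'' is false as stated, and with it the $L^2$ size bound $\int_{T_j^i}|f\ast\psi_k|^2\,dy\,dm(k)\ls 2^{2i}|Q_j^i|$ breaks down. Worse, the family $\{T_j^i\}_{i,j}$ built this way does not partition $\rr^{n+1}_+$: a fixed $(y,k)$ sits in Whitney tents for \emph{every} $i$ with $y\in\Omega_i$, so the identity $f=\sum_{i,j}\lz_j^i a_j^i$ in $\cs'(\rn)$ has no chance of holding. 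Your pointwise domination ``$\lz_j^i\mathbf{1}_{Q_j^i}/\|\mathbf{1}_{Q_j^i}\|_{\lv}\ls 2^i\mathbf{1}_{\Omega_i\setminus\Omega_{i+1}}$'' is likewise incorrect, since $Q_j^i\subset\Omega_i$ but need not meet $\Omega_{i+1}^{\com}$.

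The paper avoids all of this by replacing the Whitney covering with a \emph{dyadic median selection}: using the Christ-type dyadic cubes of Lemma~\ref{4l2x}, one sets
\[
\mathcal{Q}_k:=\lf\{Q\in\mathcal{Q}:\ |Q\cap\Omega_k|>\tfrac{|Q|}{2}\ \text{and}\ |Q\cap\Omega_{k+1}|\le\tfrac{|Q|}{2}\r\},
\]
so that every dyadic $Q$ belongs to exactly one $\mathcal{Q}_k$. The tents $\widehat{Q}$ over these cubes (see \eqref{3e1}) then genuinely partition $\rr^{n+1}_+$, and the condition $|Q\cap\Omega_{k+1}|\le|Q|/2$ supplies the point where $S_\psi(f)\le 2^{k+1}$ needed for the $L^r$ size bound \eqref{4e17}. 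The coefficient estimate is then recovered not by a pointwise bound on $\Omega_k\setminus\Omega_{k+1}$ but via \eqref{3e20}, which uses $\HL^{\va}(\mathbf{1}_{\Omega_k})\gs 2^{-2u\nu-1}$ on $Q_i^k$ to pass from $\mathbf{1}_{Q_i^k}$ to $\mathbf{1}_{Q_i^k\cap\Omega_k}$ in $\lv$-norm.

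Finally, you correctly flag the convergence issue $f=\sum_{k,i}h_i^k$ in $\cs'(\rn)$ as delicate, but the paper's resolution is substantially more involved than ``ruling out leakage'': it introduces truncated versions $\eta_M^{\vaz,\kappa}$, proves the auxiliary square-function bound \eqref{4e11'} for the truncated pieces, and invokes Lemma~\ref{3l9} with a $\liminf$ and the Fatou lemma to control $\|\eta_M\|_{\vh}\to 0$. This is precisely where the errors in the original \cite[Theorem~4.1]{hlyy} occurred, and a bare appeal to $f\in\cs'_0(\rn)$ does not suffice.
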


\begin{theorem}\label{5t4}
Let $\va$, $\vp$ and $N$ be as in Theorem \ref{5t3}.
Then $f\in\vh$ if and only if
$f\in\cs'_0(\rn)$ and $g(f)\in\lv$. Moreover,
there exists a positive constant $C$ such that,
for any $f\in\vh$,
$$C^{-1}\|g(f)\|_{\lv}\le\|f\|_{\vh}\le C\|g(f)\|_{\lv}.$$
\end{theorem}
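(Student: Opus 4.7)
The plan is to prove Theorem \ref{5t4} by establishing the two inequalities separately, in parallel with the Lusin area characterization Theorem \ref{5t3}. As a preliminary, observe that any $f\in\vh$ automatically belongs to $\cs'_0(\rn)$, since $M_N(f)\in\lv$ forces $f\ast\phi_k\to 0$ in $\cs'(\rn)$ as $k\to\fz$; this takes care of the vanishing-weakly-at-infinity requirement implicit in defining $g(f)$.

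For the necessity $\|g(f)\|_{\lv}\ls\|f\|_{\vh}$, I would invoke the atomic characterization Theorem \ref{5t1} to write $f=\sum_{i\in\nn}\lz_i a_i$ in $\cs'(\rn)$, where each $a_i$ is a $(\vp,r,s)$-atom with $r\in(\max\{p_+,1\},\fz)$ and $s$ as in \eqref{5eq1}, supported in $B_i=B_{\va}(x_i,r_i)\in\mathfrak{B}$. The key pointwise bound, for any such atom $a$ supported in $B=B_{\va}(x_0,r_0)$ and any fixed $\delta\in(1,\fz)$, reads
$$g(a)(x)\ls\HL(a)(x)\mathbf{1}_{B^{(\delta)}}(x)+\frac{r_0^{\nu+(s+1)a_-}|B|^{1/r}}{\|\mathbf{1}_B\|_{\lv}\,(r_0+|x-x_0|_{\va})^{\nu+(s+1)a_-}}\mathbf{1}_{(B^{(\delta)})^{\com}}(x),$$
obtained by Taylor expanding $\phi_k$ off $B^{(\delta)}$ and using the vanishing moments of $a$ up to order $s$, together with crude size estimates on $B^{(\delta)}$. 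The Fefferman--Stein vector-valued inequality on $\lv$ (Theorem \ref{x3t5}, see also \cite[Lemma 3.5]{hlyy}) controls the $\HL$ term, while the $\underline p$-subadditivity of $\|\cdot\|_{\lv}^{\underline p}$ with $\underline p$ as in \eqref{2e10} reassembles the atomic bounds into $\|g(f)\|_{\lv}\ls\|f\|_{\vah}\sim\|f\|_{\vh}$.

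For the sufficiency $\|f\|_{\vh}\ls\|g(f)\|_{\lv}$, the cleanest route is to prove $\|S(f)\|_{\lv}\ls\|g(f)\|_{\lv}$ and then invoke Theorem \ref{5t3}. Introduce the anisotropic Peetre maximal function
$$\phi_k^{\ast,\lambda}(f)(x):=\sup_{y\in\rn}\frac{|f\ast\phi_k(y)|}{(1+2^{-k}|x-y|_{\va})^{\lambda}},\quad x\in\rn,\ k\in\zz,\ \lambda\in(0,\fz).$$
Choosing $\underline r\in(0,\min\{1,p_-\})$ and $\lambda$ with $\lambda\underline r>\nu$, the standard anisotropic Peetre argument yields the uniform pointwise bound $[\phi_k^{\ast,\lambda}(f)(x)]^{\underline r}\ls\HL(|f\ast\phi_k|^{\underline r})(x)$. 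Since $|f\ast\phi_k(y)|\le 2^{\lambda}\phi_k^{\ast,\lambda}(f)(x)$ whenever $y\in B_{\va}(x,2^k)$, it follows that $S(f)(x)\ls\lf(\sum_{k\in\zz}[\phi_k^{\ast,\lambda}(f)(x)]^2\r)^{1/2}$. Raising to the power $\underline r$ and applying the Fefferman--Stein vector-valued inequality in $L^{\vp/\underline r}(\rn)$ (which is admissible as $\vp/\underline r\in(1,\fz)^n$) collapses the Peetre maximal functions to $|f\ast\phi_k|$, giving $\|S(f)\|_{\lv}\ls\|g(f)\|_{\lv}$ and hence $\|f\|_{\vh}\ls\|S(f)\|_{\lv}\ls\|g(f)\|_{\lv}$ by Theorem \ref{5t3}.

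The main obstacle is the verification of the anisotropic Peetre pointwise estimate together with its interplay with Theorem \ref{x3t5}. One must carefully justify the \emph{a priori} finiteness of $\phi_k^{\ast,\lambda}(f)(x)$ for $f\in\cs'_0(\rn)$ with $g(f)\in\lv$, typically by first establishing the estimate on a dense subclass and then passing to the limit. The parameters must also be simultaneously balanced: $\underline r<p_-$ so that $L^{\vp/\underline r}(\rn)$ lies in the range of validity of the Fefferman--Stein inequality, and $\lambda\underline r>\nu$ so that the Peetre convolution integral converges. Both constraints are reconcilable thanks to the flexibility in the corrected range $\underline p\in(0,\min\{1,p_-\})$ highlighted in Remark \ref{3r.2}, which simultaneously repairs the analogous step in the proof of Theorem \ref{5t3} noted as gapped in Subsection \ref{4s2.1}.
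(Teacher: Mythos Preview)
The survey itself does not supply a proof of Theorem~\ref{5t4}; the result is quoted from \cite[Theorem~4.2]{hlyy}, so there is no in-paper proof to compare against line by line. Your overall two-direction strategy---atomic decomposition for the bound $\|g(f)\|_{\lv}\ls\|f\|_{\vh}$, and a Peetre-maximal-function comparison $\|S(f)\|_{\lv}\ls\|g(f)\|_{\lv}$ combined with Theorem~\ref{5t3} for the converse---is the standard route and matches what \cite{hlyy} does.

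There is, however, a genuine error in your necessity argument. The claimed pointwise bound $g(a)(x)\ls M_{\rm HL}^{\va}(a)(x)$ on $B^{(\delta)}$ is false. For each fixed $k$ one has $|a\ast\phi_k(x)|\ls M_{\rm HL}^{\va}(a)(x)$, but the $\ell^2$-sum over $k$ contains infinitely many small-scale terms (all $k$ with $2^k\le r_0$) with no decay, so $g(a)(x)$ is \emph{not} pointwise dominated by $M_{\rm HL}^{\va}(a)(x)$. The near part must be handled in norm, not pointwise: one uses the $L^r(\rn)$-boundedness of $g$ for $r\in(1,\fz)$ to obtain $\|g(a_i)\mathbf{1}_{B_i^{(\delta)}}\|_{L^r(\rn)}\ls\|a_i\|_{L^r(\rn)}\le|B_i|^{1/r}/\|\mathbf{1}_{B_i}\|_{\lv}$, and then reassembles the sum via a result of the type of Lemma~\ref{3l9}, which is designed exactly for this purpose. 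Your far-part decay estimate via vanishing moments is structurally correct.

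A smaller point in the sufficiency direction: the single-scale Peetre inequality $[\phi_k^{\ast,\lambda}(f)]^{\underline r}\ls M_{\rm HL}^{\va}(|f\ast\phi_k|^{\underline r})$ in the clean form you state requires $f\ast\phi_k$ to be band-limited (Plancherel--Polya), which fails for the compactly supported $\phi$ of Lemma~\ref{x4l1}. You should either first run the argument with the $\psi$ of that lemma (whose Fourier transform is compactly supported away from the origin) and transfer to general $\phi$ via a $g$-function analog of Theorem~\ref{4t1}, or replace the single-scale bound by the multi-scale Peetre inequality involving a rapidly decaying sum over neighbouring levels.
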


\begin{theorem}\label{5t5}
Let $\va$, $\vp$ and $N$ be as in Theorem \ref{5t3} and
$\lambda\in(1+\frac{2}{{\min\{p_-,2\}}}, \fz)$,
where $p_-$ is as in \eqref{2e10}.
Then $f\in\vh$ if and only if $f\in\cs'_0(\rn)$ and
$g_\lambda^{\ast}(f)\in\lv$. Moreover,
there exists a positive constant $C$ such that,
for any $f\in\vh$,
$$C^{-1}\lf\|g_\lz^\ast(f)\r\|_{\lv}\le\|f\|_{\vh}
\le C\lf\|g_\lz^\ast(f)\r\|_{\lv}.$$
\end{theorem}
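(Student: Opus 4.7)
\textbf{Plan for Theorem \ref{5t5}.} The idea is to squeeze $g_\lz^\ast(f)$ between two constant multiples of the Lusin area function $S(f)$ and then invoke Theorem \ref{5t3}. The easy half is a direct pointwise bound; the harder half leans on an anisotropic Peetre-type subordination together with the Fefferman--Stein inequality on mixed Lebesgue spaces (Theorem \ref{x3t5}).

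For the implication ``$g_\lz^\ast(f)\in\lv\Rightarrow f\in\vh$'', I would restrict the $y$-integration in the definition of $[g_\lz^\ast(f)(x)]^2$ to the anisotropic tent $\{y:\ |x-y|_\va<2^k\}$ and use the trivial lower bound $(2^k/(2^k+|x-y|_\va))^{\lz\nu}\ge 2^{-\lz\nu}$ on this set. This immediately yields the pointwise inequality $S(f)(x)\le 2^{\lz\nu/2}g_\lz^\ast(f)(x)$; taking $\lv$-norms and applying Theorem \ref{5t3} (together with the hypothesis $f\in\cs'_0(\rn)$) produces $\|f\|_{\vh}\ls\|g_\lz^\ast(f)\|_{\lv}$.

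For the converse ``$f\in\vh\Rightarrow g_\lz^\ast(f)\in\lv$'' (noting that $f\in\cs'_0(\rn)$ is already supplied by Theorem \ref{5t3}), I would split the $y$-integral into dyadic annuli $\{y:\ 2^{k+j-1}\le 2^k+|x-y|_\va<2^{k+j}\}$ for $j\ge 0$. On each annulus the weight is bounded by $2^{-j\lz\nu}$ and the integration domain lies inside $B_{\va}(x,2^{k+j})$; summing over $k$ and absorbing the resulting $2^{j\nu}$ volume factor gives
\begin{align*}
[g_\lz^\ast(f)(x)]^2
\ls\sum_{j=0}^{\fz} 2^{-j(\lz-1)\nu}[S_{2^j}(f)(x)]^2,
\end{align*}
with $S_{2^j}(f)$ denoting the anisotropic Lusin area function whose aperture has been enlarged by a factor $2^j$. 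An anisotropic Peetre-type estimate (stemming from $|f\ast\phi_k(y)|\ls(1+2^{-k}|x-y|_{\va})^{\nu/r}\mathfrak{M}_r(f\ast\phi_k)(x)$ valid for $y\in B_{\va}(x,2^{k+j})$ and any $r\in(0,\fz)$) further gives
\begin{align*}
S_{2^j}(f)(x)\ls 2^{j\nu/r}\lf\{\sum_{k\in\zz}[\mathfrak{M}_r(f\ast\phi_k)(x)]^2\r\}^{1/2}.
\end{align*}
Inserting this back, swapping the summation order, and choosing $r\in(2/(\lz-1),\min\{p_-,2\})$---which is possible precisely under the hypothesis $\lz>1+2/\min\{p_-,2\}$---the geometric sum in $j$ converges and Theorem \ref{x3t5} (applied with $u:=2$ and $t:=r$) transports $\mathfrak{M}_r$ past the $\ell^2$-norm and the $\lv$-norm, yielding $\|g_\lz^\ast(f)\|_{\lv}\ls\|g(f)\|_{\lv}$; Theorem \ref{5t4} closes the chain with $\|g(f)\|_{\lv}\ls\|f\|_{\vh}$.

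The main obstacle is the anisotropic Peetre-type subordination bound $|f\ast\phi_k(y)|\ls(1+2^{-k}|x-y|_\va)^{\nu/r}\mathfrak{M}_r(f\ast\phi_k)(x)$ tailored to the iterated one-variable maximal operator $\mathfrak{M}_r$. Because Theorem \ref{x3t5} only controls $\mathfrak{M}_r$ (rather than an anisotropic Hardy--Littlewood maximal function), the pointwise subordination must respect the product structure of the mixed-norm space, and the admissible range of $r$ is exactly what forces the threshold $\lz>1+2/\min\{p_-,2\}$ appearing in the statement.
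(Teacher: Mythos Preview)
Your plan is correct and follows the standard route for the $g_\lz^\ast$-characterization. The survey itself does not supply a proof of Theorem~\ref{5t5}; it merely records the result from \cite[Theorem~4.3]{hlyy}, so there is no in-paper argument to compare against line by line. That said, the proof in \cite{hlyy} proceeds along essentially the same two-step scheme you outline: the pointwise bound $S(f)\le 2^{\lz\nu/2}g_\lz^\ast(f)$ for one direction, and the dyadic-annuli decomposition combined with a Peetre-type maximal estimate and a vector-valued maximal inequality for the other, finishing via Theorem~\ref{5t4}.

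One simplification worth noting: the anisotropic Peetre inequality naturally produces the anisotropic Hardy--Littlewood maximal operator $M_{\rm HL}^{\va}$, namely $|f\ast\phi_k(y)|\ls(1+2^{-k}|x-y|_{\va})^{\nu/r}\{M_{\rm HL}^{\va}(|f\ast\phi_k|^r)(x)\}^{1/r}$, and Lemma~\ref{4l2} already gives the Fefferman--Stein inequality for $M_{\rm HL}^{\va}$ on $\lv$. Applying it with exponents $\vp/r$ and $u=2/r$ (both admissible exactly when $r<\min\{p_-,2\}$) yields $\|g_\lz^\ast(f)\|_{\lv}\ls\|g(f)\|_{\lv}$ directly. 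Your detour through the iterated operator $\mathfrak{M}_r$ and Theorem~\ref{x3t5} is also correct---since the anisotropic cubes $Q_{\va}(x,r)=x+r^{\va}(-1,1)^n$ in \eqref{x3e2} are products of intervals, one has $M_{\rm HL}^{\va}(h)\ls M_n\circ\cdots\circ M_1(h)$ pointwise---but it is not needed, and the ``main obstacle'' you flag dissolves once you invoke Lemma~\ref{4l2} instead.
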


We point out that, in the proof of Theorem \ref{5t3}, namely,
in the proof of \cite[Theorem 4.1]{hlyy},
we first need to show that the $\lv$
quasi-norms of the anisotropic Lusin area function $S(f)$ are independent
of the choices of $\varphi$ and $\psi$ as in Lemma \ref{x4l1}.
For this purpose, we denote by $S_\varphi(f)$ and $S_\psi(f)$
the anisotropic Lusin area functions defined, respectively,
by $\varphi$ and $\psi$.

\begin{theorem}\label{4t1}
Let $\vp\in(0,\fz)^n$, $\varphi$ and $\psi$ be as in Lemma \ref{x4l1}
with $s$ as in \eqref{5eq1}.
Then there exists a positive constant $C$ such that, for any $f\in\cs'(\rn)$,
$$C^{-1}\|S_\varphi(f)\|_{\lv}\le\|S_\psi(f)\|_{\lv}\le C\|S_\varphi(f)\|_{\lv}.$$
\end{theorem}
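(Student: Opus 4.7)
The plan is to prove both inequalities by a symmetric argument, so I would focus on showing $\|S_\varphi(f)\|_{\lv}\le C\|S_\psi(f)\|_{\lv}$. Assuming the right-hand side is finite (otherwise the bound is trivial), one can verify that $f$ vanishes weakly at infinity, so Lemma~\ref{x4l1} applies. Applying that lemma to $\psi$ in place of $\varphi$, I would produce an auxiliary Schwartz function $\eta$ whose Fourier transform is compactly supported away from the origin and satisfies $\sum_{k\in\zz}\widehat{\eta}(2^{k\va}\xi)\widehat{\psi}(2^{k\va}\xi)=1$ for any $\xi\in\rn\setminus\{\vec 0_n\}$. This yields the reproducing identity $f=\sum_{k\in\zz} f\ast\psi_k\ast\eta_k$ in $\cs'(\rn)$, and consequently, for each $j\in\zz$,
\begin{align*}
f\ast\varphi_j=\sum_{k\in\zz}(f\ast\psi_k)\ast(\eta_k\ast\varphi_j).
\end{align*}

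The technical heart of the argument is an anisotropic almost-orthogonality estimate for the kernels $\eta_k\ast\varphi_j$. Exploiting the vanishing moments of $\varphi$ (of order $s$ as in \eqref{5eq1}) together with the smoothness and rapid decay of $\eta$, and conversely the spectral support of $\widehat{\eta}$ together with the smoothness of $\varphi$, a standard Taylor expansion argument (carried out in the anisotropic dilation setting) would yield, for any prescribed $M\in\nn$, a $\sigma>0$ such that, for any $z\in\rn$,
\begin{align*}
|\eta_k\ast\varphi_j(z)|\ls 2^{-|k-j|\sigma}\,2^{-(k\vee j)\nu}\lf[1+2^{-(k\vee j)}|z|_{\va}\r]^{-M}.
\end{align*}
Combining this with the anisotropic Peetre-type maximal function $\psi_k^{\ast,M}(f)(y):=\sup_{z\in\rn}|f\ast\psi_k(z)|[1+2^{-k}|y-z|_{\va}]^{-M}$, I would obtain the pointwise bound $|(f\ast\psi_k)\ast(\eta_k\ast\varphi_j)(y)|\ls 2^{-|k-j|\sigma}\psi_k^{\ast,M}(f)(y)$ for any $y\in\rn$.

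Substituting this into the defining integral of $S_\varphi(f)(x)$ and applying the Cauchy--Schwarz inequality with weight $2^{-|k-j|\sigma/2}$ to exchange the sum over $k$ with the $L^2$ integral over $B_\va(x,2^j)$, I would estimate
\begin{align*}
[S_\varphi(f)(x)]^2\ls\sum_{j\in\zz}\sum_{k\in\zz}2^{-|k-j|\sigma}\,2^{-j\nu}\int_{B_\va(x,2^j)}\lf[\psi_k^{\ast,M}(f)(y)\r]^2\,dy.
\end{align*}
An elementary comparison, based on covering $B_\va(x,2^j)$ by the slightly enlarged Peetre denominator, then bounds the right-hand side (after absorbing the geometric factor by reindexing) by a weighted sum of tent integrals of $f\ast\psi_k$, so that $S_\varphi(f)(x)\ls\sum_{\ell\in\zz}2^{-|\ell|\sigma/2}\mathcal T_\ell(x)$, where each $\mathcal T_\ell$ is dominated by a shifted variant of $S_\psi(f)$ composed with the anisotropic Hardy--Littlewood maximal operator acting on $|f\ast\psi_k|^r$ for sufficiently small $r\in(0,\min\{1,p_-\}]$. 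Taking $\lv$-quasi-norms, using the $\underline{p}$-triangle inequality to handle the sum when $p_-<1$, and finally invoking the Fefferman--Stein vector-valued inequality on $\lv$ (Theorem~\ref{x3t5}) applied with parameter $t<r$, I would arrive at $\|S_\varphi(f)\|_{\lv}\ls\|S_\psi(f)\|_{\lv}$.

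The main obstacle will be making the Peetre maximal estimate and its comparison with $[M_{\rm HL}(|f\ast\psi_k|^r)]^{1/r}$ work in the \emph{anisotropic} regime while keeping $M$ large enough (depending on $a_+/a_-$, $p_-$, and $\nu$) that the Fefferman--Stein inequality remains available; this dictates the choice of $s$ through Lemma~\ref{x4l1} and forces $\underline p$ to live in $(0,\min\{1,p_-\})$, consistent with Remark~\ref{3r.2}.
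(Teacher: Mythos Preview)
Your overall strategy is sound and would succeed, but it differs from the paper's route in its core mechanism, and one early step is misstated.

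First, the misstep: you cannot apply Lemma~\ref{x4l1} \emph{to $\psi$}. That lemma takes as input a function in $C_c^\infty(\rn)$ with compact support in $B_0$ and finitely many vanishing moments; the function $\psi$ produced by Lemma~\ref{x4l1} is merely Schwartz with $\widehat\psi$ compactly supported away from the origin, so it does not satisfy those hypotheses. The good news is that the auxiliary $\eta$ is unnecessary. Lemma~\ref{x4l1} already gives $f=\sum_k f\ast\psi_k\ast\varphi_k$ in $\cs'(\rn)$ (for $f\in\cs'_0(\rn)$), so convolving with $\varphi_j$ yields $f\ast\varphi_j=\sum_k (f\ast\psi_k)\ast(\varphi_k\ast\varphi_j)$ directly, and the almost-orthogonality estimate you need is for $\varphi_k\ast\varphi_j$ --- exactly the content of Lemma~\ref{4l3}. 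With this correction your Peetre-maximal-function argument goes through: bound $|(f\ast\psi_k)\ast(\varphi_k\ast\varphi_j)(y)|$ by $2^{-|k-j|\sigma}\psi_k^{\ast,M}(f)(y)$, invoke the anisotropic Peetre inequality $\psi_k^{\ast,M}(f)\ls[M_{\rm HL}^{\va}(|f\ast\psi_k|^r)]^{1/r}$ for $M>\nu/r$, and finish with Lemma~\ref{4l2}. The ``symmetry'' for the reverse inequality works because $\widehat\psi$ vanishes near the origin, so $\psi$ has all vanishing moments and Lemma~\ref{4l3} applies equally to $\psi_k\ast\psi_j$.

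The paper takes a different path after the reproducing formula. Rather than passing through the Peetre maximal function, it discretizes the convolution integral over dyadic anisotropic cubes $Q_{\va}$ of scale $2^{k-1}$, controls the average of $f\ast\psi_k$ on each cube by the local $L^2$ quantity $m_{\psi_k}(f)$, and then invokes the technical Lemma~\ref{4l1} to convert the resulting weighted sum over cubes into a power of $M_{\rm HL}^{\va}$ acting on $\sum_{Q_{\va}}[m_{\psi_k}(f)]^r\mathbf{1}_{Q_{\va}}$. Both routes land at the Fefferman--Stein inequality (Lemma~\ref{4l2}). Your Peetre approach is more streamlined and closer to the classical Triebel--Lizorkin toolkit; the paper's dyadic-cube approach is more hands-on and makes the role of the exponent $r\in(\frac{\nu}{\nu+(s+1)a_-},\min\{1,p_-\})$ explicit through Lemma~\ref{4l1}, which is proved separately. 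Either way, the restriction on $s$ from \eqref{5eq1} is what guarantees that such an $r$ exists.
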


To prove Theorem \ref{4t1}, we first recall the notion of the
anisotropic Hardy--Littlewood maximal operator as follows.

\begin{definition}\label{3d8}
The \emph{anisotropic Hardy--Littlewood maximal function}
$M_{{\rm HL}}^{\va}(f)$ of $f\in L_{\rm loc}^1(\rn)$ is defined by setting,
for any $x\in\rn$,
\begin{align}\label{3e4}
M_{{\rm HL}}^{\va}(f)(x):=\sup_{x\in Q\in\mathfrak{Q}}
\frac1{|Q|}\int_Q\lf|f(y)\r|\,dy,
\end{align}
where $\mathfrak{Q}$ is as in \eqref{x3e2}.
\end{definition}

The following boundedness of the anisotropic Hardy--Littlewood
maximal operator $M_{{\rm HL}}^{\va}$ on mixed Lebesgue spaces
$\lv$ is just \cite[Lemma 3.5]{hlyy}.

\begin{lemma}\label{3l1}
Let $\vp\in (1,\fz)^n$. Then there exists a positive
constant C, depending on $\vp$, such that, for any $f\in \lv$,
\begin{align}\label{3.2}
\lf\|M_{{\rm HL}}^{\va}(f)\r\|_{\lv}\le C\|f\|_{\lv},
\end{align}
where $M_{{\rm HL}}^{\va}$ is as in \eqref{3e4}.
\end{lemma}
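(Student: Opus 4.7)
The plan is to reduce this anisotropic estimate to the iterated maximal operator $\mathfrak{M}_1$ (as in \eqref{x2e13} with $t:=1$) via a pointwise domination, and then invoke its boundedness on $\lv$, which is already available from earlier in the survey.

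First, I would establish the pointwise bound
$$M_{{\rm HL}}^{\va}(f)(x)\le 2^{n}\,\mathfrak{M}_1(f)(x)$$
for every $x:=(x_1,\ldots,x_n)\in\rn$. Fix any anisotropic cube $Q:=Q_{\va}(x_0,r)=x_0+r^{\va}(-1,1)^n$ containing $x$. Then, for each $k\in\{1,\ldots,n\}$, one has $|x_k-x_{0,k}|<r^{a_k}$, so the $k$-th coordinate projection of $Q$, namely $(x_{0,k}-r^{a_k},x_{0,k}+r^{a_k})$, is contained in the interval $I_k:=(x_k-2r^{a_k},x_k+2r^{a_k})\in\mathbb{I}_{x_k}$. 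Writing the average over $Q$ as an iterated average via the Tonelli theorem, enlarging each one-dimensional integration domain from $(x_{0,k}-r^{a_k},x_{0,k}+r^{a_k})$ to $I_k$ at the cost of a factor $2$, and then recognizing the resulting innermost-to-outermost averages as $M_1, M_2, \ldots, M_n$ applied in succession, I obtain $M_{{\rm HL}}^{\va}(f)(x)\le 2^{n} M_n\cdots M_1(|f|)(x)=2^{n}\,\mathfrak{M}_1(f)(x)$.

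Second, since $\vp\in(1,\fz)^n$ we have $\min\{p_1,\ldots,p_n\}>1$, so the exponent $t:=1$ is admissible in Theorem \ref{x3t5}; applying it with any $u\in(1,\fz]$ and the single-function sequence $f_1:=f$, $f_j:\equiv 0$ for $j\ge 2$, yields $\|\mathfrak{M}_1(f)\|_{\lv}\ls\|f\|_{\lv}$. Equivalently, the same estimate follows directly by iterating the classical one-dimensional $L^{p_k}(\rr)$ boundedness of the Hardy--Littlewood maximal operator via Fubini in the variables $x_1,x_2,\ldots,x_n$. Combining this with the pointwise domination from the first step produces \eqref{3.2}.

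The main point requiring care --- more bookkeeping than a genuine obstacle --- is the pointwise domination of the first step: one must verify carefully that an anisotropic cube centered at an arbitrary $x_0$ embeds, coordinate-by-coordinate, into a product of symmetric intervals centered at the test point $x$, keeping track of the factor $2$ lost at each axis. Everything after that reduction is formal.
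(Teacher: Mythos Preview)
Your main argument is correct and follows the same route the survey points to: the paper does not reprove Lemma~\ref{3l1} but cites \cite[Lemma 3.5]{hlyy}, and Remark~\ref{x2r2} identifies Theorem~\ref{x2t2} as the key ingredient there; your path through the pointwise domination $M_{{\rm HL}}^{\va}(f)\ls\mathfrak{M}_1(f)$ and Theorem~\ref{x3t5} runs on the same engine, since Theorem~\ref{x3t5} is itself built on Theorem~\ref{x2t2}. Two small remarks. First, the factor $2^{n}$ is superfluous: since $x\in Q$, each coordinate side $(x_{0,k}-r^{a_k},x_{0,k}+r^{a_k})$ already contains $x_k$ and hence already lies in $\mathbb{I}_{x_k}$ from \eqref{x2e14} (the $M_k$ are uncentered), so no enlargement is needed and one gets $M_{{\rm HL}}^{\va}(f)\le\mathfrak{M}_1(f)$ directly. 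Second, your aside that $\|\mathfrak{M}_1(f)\|_{\lv}\ls\|f\|_{\lv}$ ``follows directly by iterating the one-dimensional $L^{p_k}$ boundedness via Fubini'' is not valid as stated: for $k\ge 2$, the operator $M_k$ acts in $x_k$ while the mixed norm takes the $L^{p_1},\dots,L^{p_{k-1}}$ integrals first, and passing $M_k$ through those inner norms is precisely the content of Theorem~\ref{x2t2}, not a Fubini step. This does not damage your proof since you already invoke Theorem~\ref{x3t5}, but the Fubini alternative should be dropped.
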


\begin{remark}
Observe that, in \cite[Lemma 3.5]{hlyy}, $\vp\in(1,\fz]^n$.
However, Professor Ferenc Weisz pointed out to us that,
when $n:=2$ and $\vp:=(p_1,\infty)$ with $p_1\in(1,\fz)$, \cite[Lemma 3.5]{hlyy}
is not correct. To show this, we find the following counterexample.

Let $\va:=(1,1)$, $I:=\{(x_1,x_2)\in(0,\fz)^2:\ x_1\ge x_2\}$, $\delta:=1-\frac1{p_1}$
and, for any $(x_1,x_2)\in I$, $f(x_1,x_2):=\frac{x_2^{\delta}}{x_1}$;
otherwise, $f\equiv0$. Then, for any $x_2\in\rn$, we have
$$\int_{\rr^2}|f(x_1,x_2)|^{p_1}\,dx_1
=x_2^{\delta p_1}\int_{x_2}^{\fz}\frac{dx_1}{x_1^{p_1}}
=\frac1{p_1-1}.$$
Thus,
\begin{align}\label{3.3}
\|f\|_{L^{(p_1,\fz)}(\rr^2)}=\lf(\frac1{p_1-1}\r)^{1/p_1}.
\end{align}
On another hand, for any $(x_1,x_2)\in I$, let $Q:=[x_1,2x_1]\times[0,x_1]$.
Obviously, $(x_1,x_2)\in Q\subset I$. Therefore, for any $(x_1,x_2)\in I$,
\begin{align*}
\HL^{\va}(f)(x_1,x_2)\ge \frac1{|Q|}\int_Q|f(t_1,t_2)|\,dt_1\,dt_2
=\frac1{x_1^2}\int_0^{x_1}t_2^\delta\,dt_2\int_{x_1}^{2x_2}\frac1{t_1}\,dt_1
\sim x_1^{\delta-1},
\end{align*}
which further implies that
$$\int_{\rr^2}\lf|\HL^{\va}(f)(x_1,x_2)\r|^{p_1}\,dx_1
\ge \int_{I}\lf|\HL^{\va}(f)(x_1,x_2)\r|^{p_1}\,dx_1
\gs \int_{x_2}^{\fz}x_1^{(\delta-1) p_1}\,dx_1
\sim \int_{x_2}^{\fz}x_1^{-1}\,dx_1\sim \fz.$$
From this and \eqref{3.3}, it follows that, in this case,
\eqref{3.2} does not hold true.

Thus, in Lemma \ref{3l1},
we restrict the range of $\vp$ to be $(1,\fz)^n$.
Moreover, due to the fact that Lemma \ref{3l1} does not hold true for some
$p_{i_1}\in(1,\fz)$ and $p_{i_2}=\fz$ with $i_1<i_2$, which was used
in the proof of \cite[Lemma 3.15]{hlyy},
the range of exponent $\widetilde{p}_-$ in \cite{hlyy}, namely, $\underline{p}$
in this survey, should be corrected to be $(0,\min\{1,p_-\})$.
In addition, we point out that if $n:=2$ and $\vp:=(\infty,p_2)$
with $p_2\in(1,\fz)$, then \eqref{3.2} still holds true;
its proof is similar to the proof of \cite[Lemma 3.5]{hlyy}.
\end{remark}

To show Theorem \ref{4t1}, the following three succeeding technical
lemmas are also necessary. Lemmas \ref{4l2} and \ref{4l3} are just,
respectively, \cite[Lemma 3.7]{hlyy} and a consequence of
\cite[Lemma 5.4]{blyz10} with $A$ therein as in \eqref{5eq2}.

\begin{lemma}\label{4l2}
Let $\vp\in (1,\fz)^n$ and $u\in(1,\fz]$.
Then there exists a positive
constant $C$ such that, for any
sequence $\{f_k\}_{k\in\nn}\subset L^1_{\rm loc}(\rn)$,
$$\lf\|\lf\{\sum_{k\in\nn}
\lf[\HL^{\va}(f_k)\r]^u\r\}^{1/u}\r\|_{\lv}
\le C\lf\|\lf(\sum_{k\in\nn}|f_k|^u\r)^{1/u}\r\|_{\lv}$$
with the usual modification made when $u=\fz$,
where $\HL^{\va}$ is as in \eqref{3e4}.
\end{lemma}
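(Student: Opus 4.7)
The plan is to deduce Lemma \ref{4l2} from Theorem \ref{x3t5}, the Fefferman--Stein vector-valued inequality for the iterated (strong) maximal operator $\mathfrak{M}_t$ on mixed Lebesgue spaces, by establishing a pointwise domination of $\HL^{\va}$ by $\mathfrak{M}_1$. This reduction is natural because anisotropic cubes $Q_{\va}(x,r) = x + r^{\va}(-1,1)^n$ are axis-aligned rectangles (with highly constrained side ratios $r^{a_1}:\cdots:r^{a_n}$), so they form a subfamily of the rectangles implicitly used by $\mathfrak{M}_1$.

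First I would prove the pointwise inequality
$$\HL^{\va}(f)(x)\le \mathfrak{M}_1(f)(x)\quad\text{for every } x\in\rn,$$
where $\mathfrak{M}_1$ is as in \eqref{x2e13} with $t=1$. Fixing any $Q:=Q_{\va}(x,r)\in\mathfrak{Q}$ containing $x$, one writes $Q=I_1\times\cdots\times I_n$ with $I_j:=(x_j-r^{a_j},x_j+r^{a_j})\ni x_j$, expresses the average over $Q$ as an iterated integral via Fubini, and majorizes the innermost average by $M_1(|f|)$ (keeping the remaining variables free), then the next by $M_2$, and so on. After $n$ steps one obtains
$$\frac{1}{|Q|}\int_Q |f(y)|\,dy\le M_n\cdots M_1(|f|)(x)=\mathfrak{M}_1(f)(x),$$
and taking the supremum over all such $Q$ yields the claimed pointwise domination.

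Second, I would invoke Theorem \ref{x3t5} with $t=1$. Since $\vp\in(1,\fz)^n$ and $u\in(1,\fz]$, one has $1<\min\{p_1,\ldots,p_n,u\}$, so $t=1$ is admissible in the hypothesis of Theorem \ref{x3t5}. Combining the pointwise bound of the first step with the monotonicity of the $\ell^u$ quasi-norm, and then applying Theorem \ref{x3t5}, one obtains
$$\left\|\Big\{\sum_{k\in\nn}[\HL^{\va}(f_k)]^u\Big\}^{1/u}\right\|_{\lv}\le \left\|\Big\{\sum_{k\in\nn}[\mathfrak{M}_1(f_k)]^u\Big\}^{1/u}\right\|_{\lv}\le C\left\|\Big(\sum_{k\in\nn}|f_k|^u\Big)^{1/u}\right\|_{\lv},$$
which is exactly the desired inequality. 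The case $u=\fz$ is handled uniformly by reading $\ell^u$ as the pointwise supremum throughout.

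The main obstacle is essentially cosmetic: the whole argument is a short reduction, provided one correctly identifies that anisotropic cubes are axis-aligned rectangles and that $t=1$ lies in the admissible range $(0,\min\{p_-,u\})$ thanks to the stronger hypothesis $\vp\in(1,\fz)^n$ and $u\in(1,\fz]$ (rather than merely $\vp\in(0,\fz)^n$). No interpolation or Calder\'on--Zygmund decomposition is needed for the lemma itself, since the heavy lifting is already absorbed into Theorem \ref{x3t5}.
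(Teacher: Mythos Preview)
Your proposal is correct. The paper itself does not prove Lemma \ref{4l2}; it simply records that the statement is \cite[Lemma 3.7]{hlyy} and moves on. Your argument gives an actual derivation internal to the survey, by pointwise dominating $\HL^{\va}$ by the iterated strong maximal function $\mathfrak{M}_1$ and then invoking Theorem \ref{x3t5} with $t=1$, which is admissible precisely because $\vp\in(1,\fz)^n$ and $u\in(1,\fz]$ force $1<\min\{p_1,\ldots,p_n,u\}$. This is a clean and natural route: anisotropic cubes $Q_{\va}(z,r)$ are axis-aligned rectangles, so the Fubini/iteration step is immediate, and the vector-valued bound is then inherited from the stronger operator.

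One cosmetic correction: in your first step you write $Q:=Q_{\va}(x,r)$ with $I_j=(x_j-r^{a_j},x_j+r^{a_j})$, but the supremum in \eqref{3e4} is over all $Q\in\mathfrak{Q}$ \emph{containing} $x$, not only those centered at $x$. You should take $Q=Q_{\va}(z,r)$ for some $z\in\rn$ with $x\in Q$, so that $I_j=(z_j-r^{a_j},z_j+r^{a_j})\ni x_j$; the Fubini majorization by $M_1,\ldots,M_n$ then proceeds exactly as you describe, since each $M_k$ in \eqref{x2e14} ranges over all intervals containing $x_k$.
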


\begin{lemma}\label{4l3}
Let $a_-$ and $s$ be, respectively, as in \eqref{2e9} and
\eqref{5eq1} and $\varphi$, $\phi\in\cs(\rn)$ satisfy
that, for any $\az\in\zz_+^n$ with $|\az|\le s$,
$\int_{\rn}\varphi(x)x^\az\,dx=0$ and
$\int_{\rn}\phi(x)x^\az\,dx=0$.
Then there exists a positive constant $C$ such that,
for any $k$, $\ell\in\zz$ and $x\in\rn$,
$$\lf|\varphi_k\ast\psi_\ell(x)\r|\le C 2^{-(s+1)|k-\ell|a_-}
\frac{2^{(k\vee \ell)(s+1)\nu a_-}}{[2^{(k\vee \ell)}
+|x|_{\va}]^{(s+1)\nu a_-+\nu}},$$
here and thereafter, for any $k$, $\ell\in\zz$, $k\vee\ell:=\max\{k,\ell\}$.
\end{lemma}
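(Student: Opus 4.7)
The plan is to reduce by symmetry, use cancellation via vanishing moments to subtract off a Taylor polynomial, and then exploit the Schwartz decay of both functions. By interchanging the roles of $\varphi$ and $\psi$ (and noting that $\varphi_k\ast\psi_\ell(x)=\psi_\ell\ast\varphi_k(x)$), it suffices to consider the case $k\ge \ell$, so that $k\vee\ell=k$ and $|k-\ell|=k-\ell$.

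In this case I will use the moment conditions on $\psi$, which pass to $\psi_\ell$ up to order $s$. Writing $\varphi_k\ast\psi_\ell(x)=\int_{\rn}\varphi_k(x-y)\psi_\ell(y)\,dy$ and subtracting the Taylor polynomial of $y\mapsto \varphi_k(x-y)$ of order $s$ at the origin (whose integral against $\psi_\ell$ vanishes), the integral Taylor remainder yields
\begin{align*}
\varphi_k\ast\psi_\ell(x)=\sum_{|\az|=s+1}\frac{(s+1)}{\az!}\int_0^1(1-t)^s\lf[\int_{\rn}(-y)^\az\lf(\partial^\az\varphi_k\r)(x-ty)\psi_\ell(y)\,dy\r]\,dt.
\end{align*}
For each multi-index $\az$ with $|\az|=s+1$, a direct computation from $\varphi_k(\cdot)=2^{-k\nu}\varphi(2^{-k\va}\cdot)$ gives $\partial^\az\varphi_k(z)=2^{-k\nu-k(\va\cdot\az)}(\partial^\az\varphi)(2^{-k\va}z)$, and the Schwartz property of $\varphi$ delivers the anisotropic Schwartz bound $|(\partial^\az\varphi_k)(z)|\ls 2^{-k(\nu+\va\cdot\az)}[1+2^{-k}|z|_{\va}]^{-M}$ for any large $M$; an analogous bound with scale $2^\ell$ holds for $\psi_\ell$.

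The decisive gain $2^{-(s+1)(k-\ell)a_-}$ is then extracted as follows. Combining the anisotropic relation $|y_i|\le |y|_{\va}^{a_i}$ (so that $|y^\az|\le |y|_{\va}^{\va\cdot\az}$) with the Schwartz decay of $\psi_\ell$ at scale $2^\ell$, the $y$-integral of $|(-y)^\az\psi_\ell(y)|$ (and similar moments weighted against the $\ts$-decay of $\partial^\az\varphi_k$) scales like $2^{\ell(\va\cdot\az)}$. Pairing with $2^{-k(\va\cdot\az)}$ coming from $\partial^\az\varphi_k$ produces a factor $2^{-(k-\ell)\va\cdot\az}$, and the elementary inequality $\va\cdot\az\ge |\az|a_-=(s+1)a_-$ (valid for any $|\az|=s+1$) gives the advertised $2^{-(s+1)(k-\ell)a_-}$. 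The remaining decay factor $2^{k(s+1)\nu a_-}[2^{k}+|x|_{\va}]^{-(s+1)\nu a_--\nu}$ is obtained by choosing $M$ in the Schwartz bound for $\partial^\az\varphi_k$ large enough (namely $M=(s+1)\nu a_-+\nu$) and splitting the convolution region into $|y|_{\va}\le 2^{-1}(2^k+|x|_{\va})$ (where $|x-ty|_{\va}\sim 2^k+|x|_{\va}$ so Schwartz decay of $\varphi_k$ dominates) and its complement (where the scale-$2^\ell$ Schwartz decay of $\psi_\ell$ suffices, using $k\ge \ell$).

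The main obstacle I anticipate is the careful handling of the anisotropic Taylor remainder: unlike the isotropic case, different multi-indices $\az$ carry different weights $\va\cdot\az$, so one must argue uniformly over $|\az|=s+1$ and track that the worst case is exactly $(s+1)a_-$. A secondary bookkeeping issue is making the region-splitting in $y$ compatible with both scales $2^k$ and $2^\ell$ so that the final decay comes out in the form $[2^{k\vee\ell}+|x|_{\va}]^{-(s+1)\nu a_--\nu}$ rather than a weaker bound; this is what forces the explicit choice $M=(s+1)\nu a_-+\nu$ in the Schwartz norm and justifies the precise exponents stated in the lemma.
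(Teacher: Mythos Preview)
Your approach is correct and is precisely the standard argument for anisotropic almost-orthogonality estimates: reduce by symmetry, subtract the order-$s$ Taylor polynomial using the vanishing moments, and pair the $2^{-k(\va\cdot\az)}$ from $\partial^\az\varphi_k$ against the $2^{\ell(\va\cdot\az)}$ from the moments of $\psi_\ell$, invoking $\va\cdot\az\ge (s+1)a_-$ to get the gain. The paper does not prove this lemma at all; it simply imports it as a consequence of \cite[Lemma~5.4]{blyz10} specialized to the diagonal dilation in \eqref{5eq2}, so your direct argument is exactly what one would find upon unwinding that citation.

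Two minor remarks on the write-up. First, in the region-splitting step be careful that $|\cdot|_{\va}$ is only a quasi-norm and $|ty|_{\va}\neq t|y|_{\va}$; the clean way to carry the decay through is to use the Peetre-type inequality $1+2^{-k}|x|_{\va}\lesssim (1+2^{-k}|x-ty|_{\va})(1+2^{-k}|y|_{\va})$ (valid since $t\in[0,1]$ implies $|ty|_{\va}\le|y|_{\va}$) rather than a literal comparison of $|x-ty|_{\va}$ with $|x|_{\va}$. Second, note that the exponent $(s+1)\nu a_-$ in the statement appears to be a typo for $(s+1)a_-$ (compare the application in \eqref{x3e4} and Lemma~\ref{4l1}); your argument is unaffected since you are free to choose $M$ as large as needed in the Schwartz bound.
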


The following lemma plays a key role
in the proof of Theorem \ref{4t1}.

\begin{lemma}\label{4l1}
Let $s$ be as in \eqref{5eq1} and $r\in(\frac{\nu}{\nu+(s+1)a_-},1]$. Then
there exists a positive constant $C$ such that, for any
$k,$ $\ell\in\zz$, $\{c_{Q_{\va}}\}_{Q_{\va}\in\mathfrak{Q}}\subset [0,\fz)$
with $\mathfrak{Q}$ as in \eqref{x3e2}, and $x\in\rn$,
\begin{align*}
&\sum_{l(Q_{\va})=2^{k-1}}|Q_{\va}|
\frac{2^{(k\vee\ell)(s+1) a_-}}{\lf[2^{(k\vee\ell)}
+|x-z_{Q_{\va}}|_{\va}\r]^{(s+1)a_-+\nu}}c_{Q_{\va}}\\
&\hs\hs \le C 2^{-[k-(k\vee\ell)](1/r-1)\nu}\lf\{M_{\rm HL}^{\va}
\lf(\sum_{l(Q_{\va})=2^{k-1}}\lf[c_{Q_{\va}}\r]^r\mathbf{1}_{Q_{\va}}\r)(x)\r\}^{1/r},
\end{align*}
where $l(Q_{\va})$ denotes the side-length of $Q_{\va}$
and $z_{Q_{\va}}\in Q_{\va}$.
\end{lemma}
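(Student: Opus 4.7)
The plan is to fix $x\in\rn$, write $m:=k\vee\ell$, and decompose the sum over cubes $Q_{\va}$ with $l(Q_{\va})=2^{k-1}$ into anisotropic annuli based on where the point $z_{Q_{\va}}$ sits relative to $x$. Specifically, for each $j\in\zz_+$, let
$$\Omega_j:=\lf\{Q_{\va}\in\mathfrak{Q}:\ l(Q_{\va})=2^{k-1},\ z_{Q_{\va}}\in B_{\va}(x,2^{m+j})\setminus B_{\va}(x,2^{m+j-1})\r\}$$
for $j\ge 1$, and $\Omega_0$ consist of those cubes with $z_{Q_{\va}}\in B_{\va}(x,2^m)$. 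On $\Omega_j$ we have $2^m+|x-z_{Q_{\va}}|_{\va}\sim 2^{m+j}$, so the kernel is pointwise comparable to $2^{m(s+1)a_-}\cdot 2^{-(m+j)[(s+1)a_-+\nu]}$, reducing the task to controlling $\sum_{Q\in\Omega_j}|Q|\,c_Q$.

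All cubes of side-length $2^{k-1}$ share the common volume $V_k:=2^n\cdot 2^{(k-1)\nu}$, so $\sum_{Q\in\Omega_j}|Q|c_Q=V_k\sum_{Q\in\Omega_j}c_Q$. The key trick is to invoke the elementary inequality $\sum_i b_i\le(\sum_i b_i^r)^{1/r}$, valid for $r\in(0,1]$ and non-negative $b_i$, in order to bound $\sum_{Q\in\Omega_j}c_Q$ by $(\sum_{Q\in\Omega_j}c_Q^r)^{1/r}$. Since the cubes of a fixed side-length tile $\rn$ (up to a null set), setting $F:=\sum_{l(Q_{\va})=2^{k-1}}c_{Q_{\va}}^r\mathbf{1}_{Q_{\va}}$ gives
$$V_k\sum_{Q\in\Omega_j}c_Q^r=\int_{\bigcup_{Q\in\Omega_j}Q}F(y)\,dy.$$
The union on the right is contained in an anisotropic ball $B_{\va}(x,c\,2^{m+j})$ with $c$ depending only on $\va$, so this integral is dominated by $|B_{\va}(x,c\,2^{m+j})|\cdot M_{\rm HL}^{\va}(F)(x)\sim 2^{(m+j)\nu}\,M_{\rm HL}^{\va}(F)(x)$.

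Collecting the estimates and carefully tracking the powers of two yields, for each $j\in\zz_+$,
\begin{align*}
\sum_{Q_{\va}\in\Omega_j}|Q_{\va}|\frac{2^{m(s+1)a_-}}{\lf[2^m+|x-z_{Q_{\va}}|_{\va}\r]^{(s+1)a_-+\nu}}c_{Q_{\va}}
\ls 2^{-(k-m)(1/r-1)\nu}\,2^{-j[(s+1)a_-+\nu(1-1/r)]}\lf[M_{\rm HL}^{\va}(F)(x)\r]^{1/r}.
\end{align*}
The exponent $(s+1)a_-+\nu(1-1/r)$ is strictly positive precisely when $r>\nu/[\nu+(s+1)a_-]$, which is exactly the hypothesis; hence summing the resulting geometric series in $j$ produces a finite constant and delivers the claimed pointwise bound.

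The principal obstacle is the bookkeeping of the exponents of $2$ after applying the $r$-power trick, in particular verifying that the algebraic simplification produces the factor $2^{-[k-(k\vee\ell)](1/r-1)\nu}$ and that the geometric series in $j$ converges exactly under the stated lower bound on $r$. A minor subtlety is that the ``center'' $z_{Q_{\va}}$ need only lie in $Q_{\va}$ rather than be the geometric center; this is handled by slightly enlarging the enclosing ball by an $\va$-dependent constant when passing to the maximal function, which affects only the implicit constant.
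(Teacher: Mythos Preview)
Your proof is correct and follows essentially the same strategy as the paper's: both apply the $r$-subadditivity inequality $\sum b_i\le(\sum b_i^r)^{1/r}$, decompose into anisotropic annuli at scales $2^{(k\vee\ell)+j}$, dominate each shell by $M_{\rm HL}^{\va}(F)(x)$, and sum the resulting geometric series using $r>\nu/[\nu+(s+1)a_-]$. The only cosmetic differences are that the paper splits into the cases $\ell\ge k$ and $\ell<k$ separately and first rewrites the sum as an integral in a running variable $y\in Q_{\va}$ before decomposing, whereas you handle both cases at once via $m=k\vee\ell$ and decompose directly according to the location of $z_{Q_{\va}}$; neither choice affects the substance of the argument.
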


\begin{proof}
We show this lemma by the following two cases.

\emph{Case I)} $\ell\ge k$. In this case, applying the well-known inequality that,
for any $\{\lz_i\}_{i\in\nn}\subset\mathbb{C}$ and $\theta\in[0,1]$,
\begin{align}\label{x3e10}
\lf(\sum_{i\in\nn}|\lz_i|\r)^{\theta}\le \sum_{i\in\nn}|\lz_i|^{\theta},
\end{align}
and the Tonelli theorem,
we conclude that, for any $x\in\rn$,
\begin{align}\label{x3e7}
&\lf\{\sum_{l(Q_{\va})=2^{k-1}}|Q_{\va}|\frac{2^{\ell (s+1)a_-}}{\lf[2^{\ell}
+|x-z_{Q_{\va}}|_{\va}\r]^{(s+1)a_-+\nu}}c_{Q_{\va}}\r\}^r\noz\\
&\hs\hs\ls \sum_{l(Q_{\va})=2^{k-1}}\int_{Q_{\va}}|Q_{\va}|^{r-1}
\lf[\frac1{2^{\ell}+|x-z_{Q_{\va}}|_{\va}}\r]^{\nu r}
\lf[\frac{2^{\ell}}{2^{\ell}
+|x-z_{Q_{\va}}|_{\va}}\r]^{(s+1)ra_-}\lf(c_{Q_{\va}}\r)^r\,dy\noz\\
&\hs\hs\ls \int_{\rn}\lf[\frac1{|B_{\va}(x,2^k)|}\r]^{1-r}
\lf[\frac1{2^{\ell}+|x-y|_{\va}}\r]^{\nu r}
\lf[\frac{2^{\ell}}{2^{\ell}+|x-y|_{\va}}\r]^{(s+1)ra_-}\noz\\
&\quad\hs\hs\times\sum_{l(Q_{\va})=2^{k-1}}\lf[(c_{Q_{\va}})^r
\mathbf{1}_{Q_{\va}}(y)\r]\,dy\noz\\
&\hs\hs\sim \textrm{I}_1+\textrm{I}_2,
\end{align}
where, for any $x\in\rn$,
\begin{align*}
\textrm{I}_1&:=\int_{|x-y|_{\va}\le 2^\ell}\lf[\frac1{|B_{\va}(x,2^k)|}\r]^{1-r}
\lf[\frac1{2^{\ell}+|x-y|_{\va}}\r]^{\nu r}
\lf[\frac{2^{\ell}}{2^{\ell}+|x-y|_{\va}}\r]^{(s+1)ra_-}\\
&\hs\hs\times
\sum_{l(Q_{\va})=2^{k-1}}\lf[(c_{Q_{\va}})^r\mathbf{1}_{Q_{\va}}(y)\r]\,dy
\end{align*}
and
\begin{align*}
\textrm{I}_2&:=\sum_{j\in\nn}\int_{ 2^{j+\ell-1}<|x-y|_{\va}\le 2^{j+\ell}}
\lf[\frac1{|B_{\va}(x,2^k)|}\r]^{1-r}
\lf[\frac1{2^{\ell}+|x-y|_{\va}}\r]^{\nu r}
\lf[\frac{2^{\ell}}{2^{\ell}+|x-y|_{\va}}\r]^{(s+1)ra_-}\\
&\hs\hs\times\sum_{l(Q_{\va})=2^{k-1}}\lf[(c_{Q_{\va}})^r
\mathbf{1}_{Q_{\va}}(y)\r]\,dy.
\end{align*}
For $\textrm{I}_1$, we have
\begin{align}\label{x3e8}
\textrm{I}_1&\ls\int_{|x-y|_{\va}\le 2^\ell}\lf[\frac1{|B_{\va}(x,2^k)|}\r]^{1-r}
\lf(\frac1{2^{\ell}}\r)^{\nu r}
\sum_{l(Q_{\va})=2^{k-1}}\lf[(c_{Q_{\va}})^r\mathbf{1}_{Q_{\va}}(y)\r]\,dy\noz\\
&\sim \frac1{|B_{\va}(x,2^\ell)|}
\int_{|x-y|_{\va}\le 2^\ell}\lf[\frac{|B_{\va}(x,2^\ell)|}
{|B_{\va}(x,2^k)|}\r]^{1-r}
\sum_{l(Q_{\va})=2^{k-1}}\lf[(c_{Q_{\va}})^r\mathbf{1}_{Q_{\va}}(y)\r]\,dy\noz\\
&\ls 2^{(\ell-k)(1-r)\nu}M_{\rm HL}^{\va}
\lf(\sum_{l(Q_{\va})=2^{k-1}}\lf[c_{Q_{\va}}\r]^r\mathbf{1}_{Q_{\va}}\r)(x).
\end{align}
For $\textrm{I}_2$, from the fact that $2^{j+\ell-1}<|x-y|_{\va}\le 2^{j+\ell}$,
it follows that, for any $x\in\rn$,
\begin{align*}
\textrm{I}_2&\ls\sum_{j\in\nn}\int_{ 2^{j+\ell-1}<|x-y|_{\va}\le 2^{j+\ell}}
\lf[\frac1{|B_{\va}(x,2^k)|}\r]^{1-r}\lf(\frac1{2^{j+\ell}}\r)^{\nu r}
\lf(\frac1{2^{j}}\r)^{(s+1)ra_-}\\
&\hs\hs\times\sum_{l(Q_{\va})=2^{k-1}}\lf[(c_{Q_{\va}})^r\mathbf{1}_{Q_{\va}}(y)\r]\,dy.\\
\end{align*}
Thus,
\begin{align*}
\textrm{I}_2&\ls \sum_{j\in\nn}2^{-j(s+1)ra_-}\frac1{|B_{\va}(x,2^{j+\ell})|}
\int_{2^{j+\ell-1}<|x-y|_{\va}\le 2^{j+\ell}}
\lf[\frac{|B_{\va}(x,2^{j+\ell})|}{|B_{\va}(x,2^k)|}\r]^{1-r}\\
&\hs\hs\times\sum_{l(Q_{\va})=2^{k-1}}\lf[(c_{Q_{\va}})^r\mathbf{1}_{Q_{\va}}(y)\r]\,dy\\
&\ls \sum_{j\in\nn}2^{-j[(s+1)ra_--(1-r)\nu]}2^{(\ell-k)(1-r)\nu}M_{\rm HL}^{\va}
\lf(\sum_{l(Q_{\va})=2^{k-1}}\lf[c_{Q_{\va}}\r]^r\mathbf{1}_{Q_{\va}}\r)(x),
\end{align*}
which, together with \eqref{x3e7}, \eqref{x3e8} and the
fact that $r>\frac{\nu}{\nu+(s+1)a_-}$, further implies that
\begin{align}\label{x3e9}
&\lf\{\sum_{l(Q_{\va})=2^{k-1}}|Q_{\va}|\frac{2^{\ell (s+1)a_-}}{\lf[2^{\ell}
+|x-z_{Q_{\va}}|_{\va}\r]^{(s+1)a_-+\nu}}c_{Q_{\va}}\r\}^r\noz\\
&\hs\hs\ls  2^{(\ell-k)(1-r)\nu}M_{\rm HL}^{\va}
\lf(\sum_{l(Q_{\va})=2^{k-1}}\lf[c_{Q_{\va}}\r]^r\mathbf{1}_{Q_{\va}}\r)(x).
\end{align}

\emph{Case II)} $\ell<k$. In this case, by \eqref{x3e10} and
the Tonelli theorem again, we conclude
that, for any $x\in\rn$,
\begin{align}\label{x3e12}
&\lf\{\sum_{l(Q_{\va})=2^{k-1}}|Q_{\va}|\frac{2^{k (s+1)a_-}}{\lf[2^{k}
+|x-z_{Q_{\va}}|_{\va}\r]^{(s+1)a_-+\nu}}c_{Q_{\va}}\r\}^r\noz\\
&\hs\hs\ls \sum_{l(Q_{\va})=2^{k-1}}\int_{Q_{\va}}|Q_{\va}|^{r-1}
\lf[\frac1{2^{k}+|x-z_{Q_{\va}}|_{\va}}\r]^{\nu r}
\lf[\frac{2^{k}}{2^{k}
+|x-z_{Q_{\va}}|_{\va}}\r]^{(s+1)ra_-}\lf(c_{Q_{\va}}\r)^r\,dy\noz\\
&\hs\hs\ls \int_{\rn}\lf[\frac1{|B_{\va}(x,2^k)|}\r]^{1-r}
\lf[\frac1{2^{k}+|x-y|_{\va}}\r]^{\nu r}
\lf[\frac{2^{k}}{2^{k}+|x-y|_{\va}}\r]^{(s+1)ra_-}\noz\\
&\quad\hs\hs\times\sum_{l(Q_{\va})=2^{k-1}}\lf[(c_{Q_{\va}})^r\mathbf{1}_{Q_{\va}}(y)\r]\,dy\noz\\
&\hs\hs\sim \textrm{I}_3+\textrm{I}_4,
\end{align}
where, for any $x\in\rn$,
\begin{align*}
\textrm{I}_3&:=
\int_{|x-y|_{\va}\le 2^k}\lf[\frac1{|B_{\va}(x,2^k)|}\r]^{1-r}
\lf[\frac1{2^{k}+|x-y|_{\va}}\r]^{\nu r}
\lf[\frac{2^{k}}{2^{k}+|x-y|_{\va}}\r]^{(s+1)ra_-}\noz\\
&\hs\hs\times
\sum_{l(Q_{\va})=2^{k-1}}\lf[(c_{Q_{\va}})^r\mathbf{1}_{Q_{\va}}(y)\r]\,dy
\end{align*}
and
\begin{align*}
\textrm{I}_4&:=
\sum_{j\in\nn}\int_{2^{k+j-1}<|x-y|_{\va}\le
2^{k+j}}\lf[\frac1{|B_{\va}(x,2^k)|}\r]^{1-r}
\lf[\frac1{2^{k}+|x-y|_{\va}}\r]^{\nu r}
\lf[\frac{2^{k}}{2^{k}+|x-y|_{\va}}\r]^{(s+1)ra_-}\noz\\
&\hs\hs\times
\sum_{l(Q_{\va})=2^{k-1}}\lf[(c_{Q_{\va}})^r\mathbf{1}_{Q_{\va}}(y)\r]\,dy.
\end{align*}
For $\textrm{I}_3$, we have
\begin{align}\label{x3e13}
\textrm{I}_3&\ls\int_{|x-y|_{\va}\le 2^k}\lf[\frac1{|B_{\va}(x,2^k)|}\r]^{1-r}
\lf(\frac1{2^k}\r)^{\nu r}
\sum_{l(Q_{\va})=2^{k-1}}\lf[(c_{Q_{\va}})^r\mathbf{1}_{Q_{\va}}(y)\r]\,dy\noz\\
&\sim \frac1{|B_{\va}(x,2^k)|}\int_{|x-y|_{\va}\le 2^k}
\sum_{l(Q_{\va})=2^{k-1}}\lf[(c_{Q_{\va}})^r\mathbf{1}_{Q_{\va}}(y)\r]\,dy\noz\\
&\ls M_{\rm HL}^{\va}
\lf(\sum_{l(Q_{\va})=2^{k-1}}\lf[c_{Q_{\va}}\r]^r\mathbf{1}_{Q_{\va}}\r)(x).
\end{align}
For $\textrm{I}_4$, from the facts that $2^{j+k-1}<|x-y|_{\va}\le 2^{j+k}$
and $r>\frac{\nu}{\nu+(s+1)a_-}$, it follows that, for any $x\in\rn$,
\begin{align}\label{x3e14}
\textrm{I}_4&\ls\sum_{j\in\nn}\int_{ 2^{j+k-1}<|x-y|_{\va}\le 2^{j+k}}
\lf[\frac1{|B_{\va}(x,2^k)|}\r]^{1-r}\lf(\frac1{2^{j+k}}\r)^{\nu r}
\lf(\frac1{2^{j}}\r)^{(s+1)ra_-}\noz\\
&\hs\hs\times\sum_{l(Q_{\va})=2^{k-1}}\lf[(c_{Q_{\va}})^r
\mathbf{1}_{Q_{\va}}(y)\r]\,dy\noz\\
&\sim \sum_{j\in\nn}2^{-j(s+1)ra_-}\frac1{|B_{\va}(x,2^{j+k})|}
\int_{2^{j+k-1}<|x-y|_{\va}\le 2^{j+k}}
\lf[\frac{|B_{\va}(x,2^{j+k})|}{|B_{\va}(x,2^k)|}\r]^{1-r}\noz\\
&\hs\hs\times\sum_{l(Q_{\va})=2^{k-1}}
\lf[(c_{Q_{\va}})^r\mathbf{1}_{Q_{\va}}(y)\r]\,dy\noz\\
&\ls \sum_{j\in\nn}2^{-j[(s+1)ra_--(1-r)\nu]}M_{\rm HL}^{\va}
\lf(\sum_{l(Q_{\va})=2^{k-1}}\lf[c_{Q_{\va}}\r]^r\mathbf{1}_{Q_{\va}}\r)(x)\noz\\
&\ls M_{\rm HL}^{\va}
\lf(\sum_{l(Q_{\va})=2^{k-1}}\lf[c_{Q_{\va}}\r]^r\mathbf{1}_{Q_{\va}}\r)(x).
\end{align}
From \eqref{x3e12}, \eqref{x3e13} and \eqref{x3e14}, we deduce that,
for any $x\in\rn$,
\begin{align*}
\lf\{\sum_{l(Q_{\va})=2^{k-1}}|Q_{\va}|\frac{2^{k (s+1)a_-}}{\lf[2^{k}
+|x-z_{Q_{\va}}|_{\va}\r]^{(s+1)a_-+\nu}}c_{Q_{\va}}\r\}^r
\ls M_{\rm HL}^{\va}
\lf(\sum_{l(Q_{\va})=2^{k-1}}\lf[c_{Q_{\va}}\r]^r\mathbf{1}_{Q_{\va}}\r)(x).
\end{align*}

Combining this and \eqref{x3e9}, we further conclude that, for
any given $r\in(\frac{\nu}{\nu+(s+1)a_-},1]$, any
$k,$ $\ell\in\zz$ and $x\in\rn$,
\begin{align*}
&\sum_{l(Q_{\va})=2^{k-1}}|Q_{\va}|\frac{2^{(k\vee\ell)(s+1)a_-}}{\lf[2^{(k\vee\ell)}
+|x-z_{Q_{\va}}|_{\va}\r]^{(s+1)a_-+\nu}}c_{Q_{\va}}\\
&\hs\hs\ls  2^{-[k-(k\vee\ell)](1/r-1)\nu}\lf\{M_{\rm HL}^{\va}
\lf(\sum_{l(Q_{\va})=2^{k-1}}\lf[c_{Q_{\va}}\r]^r\mathbf{1}_{Q_{\va}}\r)(x)\r\}^{1/r}.
\end{align*}
This finishes the proof of Lemma \ref{4l1}.
\end{proof}

Now we prove Theorem \ref{4t1}.

\begin{proof}[Proof of Theorem \ref{4t1}]
By symmetry, to finish the proof of this theorem,
we only need to show that, for any $f\in\cs'(\rn)$,
\begin{align}\label{x3e11}
\|S_\varphi(f)\|_{\lv}\ls\|S_\psi(f)\|_{\lv}.
\end{align}
To this end, for any $k\in\zz$ and $z\in\rn$, let
$$m_{\psi_k}(f)(z):=\lf[2^{-k\nu}\int_{B_{\va}(z,2^k)}
\lf|f\ast\psi_k(y)\r|^2\,dy\r]^{1/2}$$
and, for any $\ell\in\zz$, $x\in\rn$ and $y\in B_{\va}(x,2^\ell)$,
$$E_{\varphi_\ell}(f)(y):=f\ast \varphi_{\ell}(y).$$ Then, from Lemma
\ref{x4l1} and the Lebesgue dominated convergence theorem,
it follows that, for any $\ell\in\zz$, $x\in\rn$ and $y\in B_{\va}(x,2^\ell)$,
\begin{align}\label{x3e5}
E_{\varphi_\ell}(f)(y)&=\sum_{k\in\zz}f\ast\psi_k\ast\varphi_k\ast\varphi_{\ell}(y)
=\sum_{k\in\zz}\int_{\rn}f\ast\psi_k(z)\varphi_k\ast\varphi_{\ell}(y-z)\,dz\noz\\
&=\sum_{k\in\zz}\sum_{l(Q_{\va})=2^{k-1}}\int_{Q_{\va}}
f\ast\psi_k(z)\varphi_k\ast\varphi_{\ell}(y-z)\,dz
\end{align}
in $\cs'(\rn)$, where, for any $Q_{\va}\in\mathfrak{Q}$ with $\mathfrak{Q}$ as
in \eqref{x3e2}, $l(Q_{\va})$ denotes the side-length of $Q_{\va}$.

On another hand, by Lemma \ref{4l3}, we know that,
for any $k,$ $\ell\in\zz$, $z\in Q_{\va}$,
$x\in\rn$ and $y\in B_{\va}(x,2^\ell)$,
\begin{align*}
\lf|\varphi_k\ast\varphi_{\ell}(y-z)\r|\ls 2^{-(s+1)|k-\ell|a_-}
\frac{2^{(k\vee\ell)(s+1)a_-}}
{\lf[2^{(k\vee\ell)}+|y-z|_{\va}\r]^{(s+1)a_-+\nu}},
\end{align*}
where $s$ is as in \eqref{5eq1}.
From this and the fact that
$y\in B_{\va}(x,2^\ell)$, we further deduce that
there exists $z_{Q_{\va}}\in Q_{\va}$
such that, for any $k,$ $\ell\in\zz$ and $z\in Q_{\va}$,
\begin{align}\label{x3e4}
\lf|\varphi_k\ast\varphi_{\ell}(y-z)\r|
\ls 2^{-(s+1)|k-\ell|a_-}\frac{2^{(k\vee\ell)(s+1)a_-}}
{\lf[2^{(k\vee\ell)}+|x-z_{Q_{\va}}|_{\va}\r]^{(s+1)a_-+\nu}}.
\end{align}
In addition, observe that, for any $z\in Q_{\va}$,
$Q_{\va}\subset B_{\va}(z,2^{k})$
and $|Q_{\va}|\sim| B_{\va}(z,2^{k})|$ and, therefore,
via the H\"{o}lder inequality, we have
$$\lf|\frac1{|Q_{\va}|}\int_{Q_{\va}}f\ast\psi_k(y)\,dy\r|
\ls \lf[\frac1{|B_{\va}(z,2^{k})|}\int_{B_{\va}(z,2^{k})}
\lf|f\ast\psi_k(y)\r|^2\,dy\r]^{1/2}
\sim m_{\psi_k}(f)(z),$$
which further implies that
$$\lf|\frac1{|Q_{\va}|}\int_{Q_{\va}}f\ast\psi_k(y)\,dy\r|
\ls \inf_{z\in Q_{\va}} m_{\psi_k}(f)(z).$$
By this, \eqref{x3e5}, \eqref{x3e4} and Lemma \ref{4l1}, we find that,
for any given $r\in(\frac{\nu}{\nu+(s+1)a_-},1],$ any $\ell\in\zz$,
$x\in\rn$ and $y\in B_{\va}(x,2^\ell)$,
\begin{align}\label{x3e6}
\lf|E_{\varphi_\ell}(f)(y)\r|&\ls\sum_{k\in\zz}2^{-(s+1)|k-\ell|a_-}\sum_{l(Q_{\va})
=2^{k-1}}|Q_{\va}|\frac{2^{(k\vee\ell)(s+1)a_-}}{\lf[2^{(k\vee\ell)}
+|x-z_{Q_{\va}}|_{\va}\r]^{(s+1)a_-+\nu}}
\inf_{z\in Q_{\va}} m_{\psi_k}(f)(z)\noz\\
&\ls \sum_{k\in\zz}2^{-(s+1)|k-\ell|a_-}
2^{-[k-(k\vee\ell)](1/r-1)\nu}\noz\\
&\quad\times\lf\{M_{\rm HL}^{\va}
\lf(\sum_{l(Q_{\va})=2^{k-1}}\inf_{z\in Q_{\va}}\lf[
m_{\psi_k}(f)(z)\r]^r\mathbf{1}_{Q_{\va}}\r)(x)\r\}^{1/r}.
\end{align}
Due to the fact that $s$ is as in \eqref{5eq1}, we can
choose $r$ such that $$r\in\lf(\frac{\nu}{\nu+(s+1)a_-},\min\{1,p_-\}\r)$$
with $p_-$ as in \eqref{2e10}.
Then, from \eqref{x3e6}, we deduce that, for any $x\in\rn$,
\begin{align*}
[S_{\varphi}(f)(x)]^2&=\sum_{\ell\in\mathbb{Z}}2^{-\ell\nu}\int_{B_{\va}(x,2^\ell)}
\lf|E_{\varphi_\ell}(f)(y)\r|^2\,dy\\
&\ls \sum_{\ell\in\mathbb{Z}}\Bigg[\sum_{k\in\zz}2^{-(s+1)|k-\ell|a_-}
2^{-[k-(k\vee\ell)](1/r-1)\nu}\\
&\quad\lf.\times\lf\{M_{\rm HL}^{\va}
\lf(\sum_{l(Q_{\va})=2^{k-1}}\inf_{z\in Q_{\va}}\lf[
m_{\psi_k}(f)(z)\r]^r\mathbf{1}_{Q_{\va}}\r)(x)\r\}^{1/r}\r]^2,
\end{align*}
which, together with the H\"{o}lder inequality
and the fact that $r>\frac{\nu}{\nu+(s+1)a_-}$,
implies that
\begin{align*}
[S_{\varphi}(f)(x)]^2
&\ls \sum_{\ell\in\mathbb{Z}}\sum_{k\in\zz}2^{-(s+1)|k-\ell|a_-}
2^{-[k-(k\vee\ell)](1/r-1)\nu}\\
&\quad\times\lf\{M_{\rm HL}^{\va}
\lf(\sum_{l(Q_{\va})=2^{k-1}}\inf_{z\in Q_{\va}}\lf[
m_{\psi_k}(f)(z)\r]^r\mathbf{1}_{Q_{\va}}\r)(x)\r\}^{2/r}\\
&\ls \sum_{k\in\zz}\lf\{M_{\rm HL}^{\va}
\lf(\sum_{l(Q_{\va})=2^{k-1}}\inf_{z\in Q_{\va}}\lf[
m_{\psi_k}(f)(z)\r]^r\mathbf{1}_{Q_{\va}}\r)(x)\r\}^{2/r}\\
&\ls \sum_{k\in\zz}\lf\{M_{\rm HL}^{\va}
\lf(\lf[m_{\psi_k}(f)\r]^r\r)(x)\r\}^{2/r}.
\end{align*}
Therefore, by the fact that $r<p_-$
and Lemma \ref{4l2}, we find that
\begin{align*}
\lf\|S_{\varphi}(f)\r\|_{\lv}
&\ls \lf\|\lf(\sum_{k\in\zz}\lf\{M_{\rm HL}^{\va}
\lf(\lf[m_{\psi_k}(f)\r]^r\r)\r\}^{2/r}\r)^{r/2}\r\|_{L^{\vp/r}(\rn)}^{1/r}\\
&\ls \lf\|\lf(\sum_{k\in\zz}\lf[m_{\psi_k}(f)\r]^2\r)^{1/2}\r\|_{\lv}
\sim \lf\|S_{\psi}(f)\r\|_{\lv},
\end{align*}
which implies \eqref{x3e11} holds true and
hence completes the proof of Theorem \ref{4t1}.
\end{proof}

We point out that, in the proof of the sufficiency of \cite[Theorem 4.1]{hlyy},
there exist some errors and gaps. Therefore we give a revised proof as follows.

To show the sufficiency of Theorem \ref{5t3}, the following
lemma is necessary.

\begin{lemma}\label{3l9}
Let $\va\in[1,\fz)^n$, $\vp\in(0,\fz)^n$, $\bz\in(0,\fz)$,
$\vaz\in(0,\fz)$, $\kappa\in(0,\fz)$ and $r\in[1,\fz]\cap(p_+,\fz]$
with $p_+$ as in \eqref{2e10}. Assume that
$\{\lz_i\}_{i\in\nn}\subset\mathbb{C}$, $\{B_i\}_{i\in\nn}\subset\mathfrak{B}$
and $\{m_i^{\vaz,\kappa}\}_{i\in\nn}\subset L^r(\rn)$ satisfy that,
for any $\vaz\in(0,\fz)$, $\kappa\in(0,\fz)$ and  $i\in\nn$,
$\supp m_i^{\vaz,\kappa}\subset B_i^{(\bz)}$ with $B_i^{(\bz)}$ as in \eqref{2e2'},
\begin{align}\label{3e37}
\lf\|m_i^{\vaz,\kappa}\r\|_{L^r(\rn)}
\le\frac{|B_i|^{1/r}}{\|\mathbf{1}_{B_i}\|_{\lv}}
\end{align}
and
\begin{align*}
\lf\|\lf\{\sum_{i\in\nn}
\lf[\frac{|\lz_i|\mathbf{1}_{B_i}}{\|\mathbf{1}_{B_i}\|_{\lv}}\r]^
{{\underline{p}}}\r\}^{1/{\underline{p}}}\r\|_{\lv}<\fz.
\end{align*}
Then, for any $\kappa\in(0,\fz)$,
\begin{align}\label{3e38}
\lf\|\liminf_{\vaz\to0^+}\lf[\sum_{i\in\nn}\lf|\lz_im_i^{\vaz,\kappa}\r|^{{\underline{p}}}\r]
^{1/{\underline{p}}}\r\|_{\lv}
\le C\lf\|\lf\{\sum_{i\in\nn}
\lf[\frac{|\lz_i|\mathbf{1}_{B_i}}{\|\mathbf{1}_{B_i}\|_{\lv}}\r]^
{{\underline{p}}}\r\}^{1/{\underline{p}}}\r\|_{\lv},
\end{align}
where $\underline{p}$ is as in \eqref{2e10} and $C$ is a
positive constant independent of $\lz_i$, $B_i$,
$m_i^{\vaz,\kappa}$, $\vaz$ and $\kappa$.
\end{lemma}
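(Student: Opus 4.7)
The plan is to first establish, for each fixed $\kappa\in(0,\fz)$ and each fixed $\vaz\in(0,\fz)$, the $\vaz$-uniform estimate
\[
\lf\|\lf[\sum_{i\in\nn}\lf|\lz_im_i^{\vaz,\kappa}\r|^{\underline{p}}\r]^{1/\underline{p}}\r\|_{\lv}
\ls \lf\|\lf\{\sum_{i\in\nn}\lf[\frac{|\lz_i|\mathbf{1}_{B_i}}{\|\mathbf{1}_{B_i}\|_{\lv}}\r]^{\underline{p}}\r\}^{1/\underline{p}}\r\|_{\lv}
\]
with the implicit constant independent of $\vaz$, and then, along any sequence $\vaz_j\to 0^+$, to invoke an iterated Fatou lemma---applied successively in each of the $n$ variables of the nested integrals in Definition \ref{x2d1}---to pull $\liminf_{\vaz\to 0^+}$ inside the $\lv$-norm, yielding \eqref{3e38}. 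Since $\underline{p}\in(0,\min\{1,p_-\})$, raising both sides to the $\underline{p}$-th power recasts the uniform step as a bound on $\|\sum_i|\lz_im_i^{\vaz,\kappa}|^{\underline{p}}\|_{L^{\vp/\underline{p}}(\rn)}$, whose exponent vector $\vp/\underline{p}$ lies componentwise in $(1,\fz)$, so duality (Theorem \ref{x2t3}) becomes available.

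Testing against an arbitrary non-negative $g$ in the unit sphere of $L^{(\vp/\underline{p})'}(\rn)$, I would fix $i$ and apply H\"older's inequality on $B_i^{(\bz)}$ with the scalar conjugate pair $r/\underline{p}$ and $(r/\underline{p})'$; combined with $\supp m_i^{\vaz,\kappa}\subset B_i^{(\bz)}$, the size bound \eqref{3e37}, and the comparability $|B_i^{(\bz)}|\sim|B_i|$, this produces
\[
\int_{\rn}\lf|m_i^{\vaz,\kappa}(x)\r|^{\underline{p}}g(x)\,dx
\ls \frac{|B_i|}{\|\mathbf{1}_{B_i}\|_{\lv}^{\underline{p}}}
\inf_{x\in B_i}\lf[M_{\rm HL}^{\va}\lf(g^{(r/\underline{p})'}\r)(x)\r]^{1/(r/\underline{p})'}.
\]
Rewriting $|B_i|=\int_{B_i}\,dx$, weighting by $|\lz_i|^{\underline{p}}$, and summing over $i\in\nn$ collapses both factors into a single integral, after which mixed-norm H\"older (Theorem \ref{x2t7}) with the paired exponents $\vp/\underline{p}$ and $(\vp/\underline{p})'$ separates the right-hand side into the desired atomic norm times the maximal factor $\|M_{\rm HL}^{\va}(g^{(r/\underline{p})'})\|_{L^{(\vp/\underline{p})'/(r/\underline{p})'}(\rn)}^{1/(r/\underline{p})'}$.

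The crux is that a direct computation of conjugate exponents shows the vector $(\vp/\underline{p})'/(r/\underline{p})'$ has every component in $(1,\fz)$ precisely when $r>p_i$ for each $i\in\{1,\ldots,n\}$, i.e., when $r\in(p_+,\fz]$---exactly the standing hypothesis. Lemma \ref{3l1} then dispatches the maximal factor, the residual norm collapses to $\|g\|_{L^{(\vp/\underline{p})'}(\rn)}^{(r/\underline{p})'}=1$, and supremizing over $g$ followed by extracting an $\underline{p}$-th root yields the $\vaz$-uniform estimate above. The main obstacle I anticipate is the exponent bookkeeping: verifying simultaneously that $\vp/\underline{p}\in(1,\fz)^n$ (so Theorem \ref{x2t3} applies), that $r/\underline{p}>1$ (so the inner H\"older is legitimate), and that $(\vp/\underline{p})'/(r/\underline{p})'\in(1,\fz)^n$ (so Lemma \ref{3l1} applies); the endpoint case $r=\fz$, where $(r/\underline{p})'=1$ and \eqref{3e37} becomes an $L^\fz$-bound, will need a parallel but slightly simplified treatment.
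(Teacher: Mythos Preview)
Your proposal is correct and follows essentially the same approach as the paper: duality via Theorem \ref{x2t3} to pass to $L^{\vp/\underline{p}}$, scalar H\"older on each $B_i^{(\bz)}$ with the pair $(r/\underline{p},(r/\underline{p})')$, the pointwise maximal-function domination of $\|g\|_{L^{(r/\underline{p})'}(B_i^{(\bz)})}$, mixed H\"older (Theorem \ref{x2t7}), and Lemma \ref{3l1} on $L^{(\vp/\underline{p})'/(r/\underline{p})'}$---with the same exponent check $r>p_+$ that you isolate. The only cosmetic difference is the placement of Fatou: the paper applies it once inside the single integral obtained after duality, whereas you establish the fixed-$\vaz$ bound first and then iterate Fatou through the nested $\lv$-norm; both orderings are valid.
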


\begin{proof}
By Theorem \ref{x2t3}, we find that there
exists some $g\in L^{(\vp/{\underline{p}})'}(\rn)$
with $\|g\|_{L^{(\vp/{\underline{p}})'}(\rn)}=1$ such that,
for any $\kappa\in(0,\fz)$,
\begin{align*}
\lf\|\liminf_{\vaz\to0^+}\lf[\sum_{i\in\nn}\lf|
\lz_im_i^{\vaz,\kappa}\r|^{{\underline{p}}}\r]
^{1/{\underline{p}}}\r\|_{\lv}^{\underline{p}}
&=\lf\|\liminf_{\vaz\to0^+}\sum_{i\in\nn}\lf|
\lz_im_i^{\vaz,\kappa}\r|^{{\underline{p}}}
\r\|_{L^{\vp/{\underline{p}}}(\rn)}\\
&\ls \int_{\rn}\liminf_{\vaz\to0^+}\sum_{i\in\nn}
\lf|\lz_im_i^{\vaz,\kappa}(x)\r|^{{\underline{p}}}\lf|g(x)\r|\,dx.
\end{align*}
From this, the Fatou lemma, the Tonelli theorem, the H\"{o}lder
inequality and \eqref{3e37}, we deduce that, for any $\kappa\in(0,\fz)$
and $r\in[1,\fz]\cap(p_+,\fz]$,
\begin{equation*}\begin{aligned}
\int_{\rn}\liminf_{\vaz\to0^+}\sum_{i\in\nn}\lf|
\lz_im_i^{\vaz,\kappa}(x)\r|^{{\underline{p}}}\lf|g(x)\r|\,dx
&\le \liminf_{\vaz\to0^+} \sum_{i\in \nn}|\lz_i|^{\underline{p}}
\lf\|m_i^{\vaz,\kappa}\r\|_{L^r (\rn)}^{\underline{p}}
\lf\|g\r\|_{L^{(r/{\underline{p}})'}(B_i^{(\bz)})}\\
&\le \sum_{i\in \nn}\frac{|\lz_i|
^{\underline{p}}|B_i|^{{\underline{p}}/r}}
{\|\mathbf{1}_{B_i}\|_{\lv}^{\underline{p}}}
\lf\|g\r\|_{L^{(r/{\underline{p}})'}(B_i^{(\bz)})}\\
&\ls \sum_{i\in \nn}\frac{|\lz_i|^{\underline{p}}|B_i^{(\bz)}|}
{\|\mathbf{1}_{B_i}\|_{\lv}^{\underline{p}}}\inf_{z\in B_i^{(\bz)}}
\lf[M_{\rm HL}^{\va}\lf(|g|^{(r/{\underline{p}})'}\r)(z)
\r]^{1/{(r/{\underline{p}})'}}\\
&\ls \int_{\rn} \sum_{i\in \nn}\frac{|\lz_i|^{\underline{p}}
\mathbf{1}_{B_i^{(\bz)}}(x)}{\|\mathbf{1}_{B_i}\|_{\lv}^{\underline{p}}}
\lf[M_{\rm HL}^{\va}\lf(|g|^{(r/{\underline{p}})'}\r)(x)\r]
^{1/{(r/{\underline{p}})'}}\,dx,
\end{aligned}\end{equation*}
which, together with Theorem \ref{x2t7}, \cite[Remark 3.8]{hlyy},
Lemma \ref{3l1} and the fact that $r\in (p_+,\fz]$,
further implies that
\begin{align}\label{3.4}
&\int_{\rn}\liminf_{\vaz\to0^+}\sum_{i\in\nn}
\lf|\lz_im_i^{\vaz,\kappa}(x)\r|^{{\underline{p}}}\lf|g(x)\r|\,dx\noz\\
&\hs\hs\ls \lf\|\sum_{i\in \nn}\frac{|\lz_i|^{\underline{p}}
\mathbf{1}_{B_i}}{\|\mathbf{1}_{B_i}\|_{\lv}^{\underline{p}}}
\r\|_{L^{\vp/{\underline{p}}}(\rn)}
\lf\|\lf[M_{\rm HL}^{\va}\lf(|g|^{(r/{\underline{p}})'}\r)\r]
^{1/{(r/{\underline{p}})'}}
\r\|_{L^{(\vp/{\underline{p}})'}(\rn)}\noz\\
&\hs\hs\ls \lf\|\lf\{\sum_{i\in\nn}
\lf[\frac{|\lz_i|\mathbf{1}_{B_i}}{\|\mathbf{1}_{B_i}
\|_{\lv}}\r]^{{\underline{p}}}\r\}^{1/{\underline{p}}}
\r\|_{L^{\vp}(\rn)}^{\underline{p}}
\lf\|g\r\|_{L^{(\vp/{\underline{p}})'}(\rn)}.
\end{align}
By this and the fact that $\lf\|g\r\|_{L^{(\vp/{\underline{p}})'}(\rn)}=1$,
we know that \eqref{3e38} holds true.
This finishes the proof of Lemma \ref{3l9}.
\end{proof}

\begin{remark}
Let $\vp:=(p_1,\ldots,p_n)\in(0,\fz)^n$.
Notice that, if $\underline{p}:=\min\{1,p_-\}$ with
$p_-$ as in \eqref{2e10}, then there may exist some
$p_{i_0}$ such that $p_{i_0}/\underline{p}=1$ and hence
$(p_{i_0}/\underline{p})'=\fz$. By this and the fact that
Lemma \ref{3l1} holds true only for $\vp\in(1,\fz)^n$, we know that
\eqref{3.4} does not hold true in this case.
Thus, we need to restrict the range of $\underline{p}$ to be $(0,\min\{1,p_-\})$.
\end{remark}

To prove the sufficiency of Theorem \ref{5t3},
we also need the following lemma, which is just \cite[Lemma 4.9]{hlyy}.

\begin{lemma}\label{4l2x}
Let $\va\in [1,\fz)^n$. Then there exists a set
$$\mathcal{Q}:=\lf\{Q_\alpha^k\subset\rn:\ k\in\mathbb{Z},
\,\alpha\in E_k\r\}$$
of open subsets, where $E_k$ is some index set, such that
\begin{enumerate}
\item[{\rm (i)}] for any $k\in\zz$,
$\lf|\rn\setminus\bigcup_{\alpha}Q_\alpha^k\r|=0$
and, when $\alpha\neq\beta$,
$Q_\alpha^k\cap Q_\beta^k=\emptyset$;
\item[{\rm(ii)}] for any $\alpha,\ \beta,\ k,\ \ell$ with $\ell\ge k$,
either $Q_\alpha^k\cap Q_\beta^\ell=\emptyset$ or
$Q_\alpha^\ell\subset Q_\beta^k$;
\item[{\rm(iii)}] for any $(\ell,\beta)$ and $k<\ell$,
there exists a unique $\alpha$ such that
$Q_\beta^\ell\subset Q_\alpha^k$;
\item[{\rm(iv)}] there exist some $w\in\zz\setminus\zz_+$
and $u\in\nn$ such that, for any $Q_\alpha^k$
with $k\in\mathbb{Z}$ and $\alpha\in E_k$,
there exists $x_{Q_\alpha^k}\in Q_\alpha^k$
such that, for any $x\in Q_\alpha^k$,
$$x_{Q_\alpha^k}+2^{(wk-u)\va}B_0
\subset Q_\alpha^k\subset x+2^{(wk+u)\va}B_0,$$
where $B_0$ denotes the unit ball of $\rn$.
\end{enumerate}
\end{lemma}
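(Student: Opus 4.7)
The plan is to derive Lemma \ref{4l2x} as the anisotropic specialization of M.\ Christ's construction of dyadic cubes on spaces of homogeneous type. The first step is to verify that $(\rn,|\cdot|_{\va},dx)$ is a space of homogeneous type. Two ingredients are needed: (a) the anisotropic quasi-homogeneous norm satisfies a quasi-triangle inequality $|x+y|_{\va}\le K(|x|_{\va}+|y|_{\va})$ for some $K\ge 1$ depending only on $\va$, which follows from the definition of $|\cdot|_{\va}$ via $t_0^{-\va}x$ and a routine estimate using $a_-\ge 1$; and (b) the measure is doubling with respect to anisotropic balls, since $|B_{\va}(x,r)|=\upsilon_n r^{\nu}$ gives $|B_{\va}(x,2r)|=2^{\nu}|B_{\va}(x,r)|$ for all $x\in\rn$ and $r\in(0,\fz)$.

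Once these are in hand, I would apply Christ's construction (whose output is a family of open sets $\{Q_\alpha^k\}$ at geometric scales $\delta^k$ for a fixed $\delta\in(0,1)$ chosen small enough relative to the quasi-triangle constant $K$). In our setting, I would choose $\delta=2^{w}$ for a negative integer $w\in\zz\setminus\zz_+$ with $|w|$ large enough that $2^{w}$ is admissible in Christ's scheme; this is possible because $K$ is fixed and only depends on $\va$. Christ's output automatically gives properties (i), (ii), (iii): the almost-partition at each scale, the nesting property across scales, and the existence of a unique ancestor. The only point that needs translation is (iv), where Christ's formulation provides a distinguished center $x_{Q_\alpha^k}\in Q_\alpha^k$ together with two constants $a_1,a_2\in(0,\fz)$ such that
$$B_{\va}\lf(x_{Q_\alpha^k},a_1\delta^k\r)\subset Q_\alpha^k\subset B_{\va}\lf(x_{Q_\alpha^k},a_2\delta^k\r).$$

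The final step is to rewrite the $|\cdot|_{\va}$-balls in the notation $t^{\va}B_0$ used in the statement. Since $B_{\va}(\vec0_n,t)=t^{\va}B_0$ by the very definition of $|\cdot|_{\va}$ and $B_0$, one has $B_{\va}(x_{Q_\alpha^k},a_1 2^{wk})=x_{Q_\alpha^k}+2^{(wk-u_1)\va}B_0$ and, for any $x\in Q_\alpha^k$, $Q_\alpha^k\subset B_{\va}(x_{Q_\alpha^k},a_2 2^{wk})\subset B_{\va}(x,K a_2 2^{wk})=x+2^{(wk+u_2)\va}B_0'$ for suitable integers $u_1,u_2\in\nn$ absorbing the constants $a_1,a_2,K$ into a single dilation exponent; taking $u:=\max\{u_1,u_2\}\in\nn$ yields the inclusions in (iv).

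The main obstacle will be the bookkeeping in the last step: the anisotropic dilations $t^{\va}$ do not behave multiplicatively under the usual translations (they commute with translations but only in a weak sense under the quasi-norm), so one must be careful that the inclusion $B_{\va}(x_{Q_\alpha^k},a_2 2^{wk})\subset x+2^{(wk+u)\va}B_0$, for arbitrary $x\in Q_\alpha^k$, uses the quasi-triangle inequality with the correct constant and that the integer $u$ can be chosen uniformly in $k$ and $\alpha$. This uniformity is the key point and follows precisely from the scale-invariance of the anisotropic quasi-norm under the dilations $t\mapsto t^{\va}$, combined with the fact that $K$ depends only on $\va$. No additional technical input beyond the doubling property and the quasi-triangle inequality is required, so the lemma reduces essentially to an invocation of Christ's theorem.
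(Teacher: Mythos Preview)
Your proposal is correct and follows the standard route: the paper itself does not prove this lemma but simply cites it as \cite[Lemma~4.9]{hlyy}, and the underlying proof in that reference is precisely the specialization of Christ's dyadic-cube construction on spaces of homogeneous type that you outline. Your handling of the bookkeeping in part (iv)---rounding the constants $a_1,a_2,K$ to powers of $2$ so as to obtain a uniform integer $u$---is the right way to pass from Christ's ball containments to the dilated-$B_0$ form stated here.
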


In what follows, we call
$\mathcal{Q}:=
\{Q_\alpha^k\}_{k\in\mathbb{Z},\,\alpha\in E_k}$
from Lemma \ref{4l2x} \emph{dyadic cubes} and
$k$ the \emph{level}, denoted by $\ell(Q_\alpha^k)$,
of the dyadic cube $Q_\alpha^k$
for any $k\in\mathbb{Z}$ and $\alpha\in E_k$.

We now prove the sufficiency of Theorem \ref{5t3}.

\begin{proof}[Proof of the sufficiency of Theorem \ref{5t3}]
Let $f\in\cs'_0(\rn)$ and $S(f)\in\lv$ with $\vp\in(0,\fz)^n$.
Then, from Theorem \ref{4t1}, it follows that $S_{\psi}(f)\in\lv$.
Thus, we only need to prove that $f\in\vh$ and
\begin{align}\label{4e7}
\|f\|_{\vh}\ls\|S_{\psi}(f)\|_{\lv}.
\end{align}
To this end, for any $k\in\mathbb{Z}$, let
$\Omega_k:=\{x\in\rn:\ S_{\psi}(f)(x)>2^k\}$ and
$$\mathcal{Q}_k:=\lf\{Q\in\mathcal{Q}:
\ |Q\cap\Omega_k|>\frac{|Q|}2\ \ {\rm and}\
\ |Q\cap\Omega_{k+1}|\le\frac{|Q|}2\r\}.$$
It is easy to see that, for any $Q\in\mathcal{Q}$,
there exists a unique $k\in\mathbb{Z}$
such that $Q\in\mathcal{Q}_k$.
For any given $k\in\zz$, denote by $\{Q_i^k\}_i$ the collection of all \emph{maximal dyadic cubes}
in $\mathcal{Q}_k$,
namely, there exists no $Q\in\mathcal{Q}_k$
such that $Q_i^k\subsetneqq Q$ for any $i$.

For any $Q\in\mathcal{Q}$, let
\begin{align}\label{3e1}
\widehat{Q}:=\lf\{(y,t)\in\rr^{n+1}_+:\
y\in Q\ \ {\rm and}\
\ t\sim 2^{w\ell(Q)+u}\r\},
\end{align}
here and thereafter, $\rr^{n+1}_+:=\rn\times (0,\fz)$
and $t\sim 2^{w\ell(Q)+u}$ always means
\begin{align}\label{4e8}
2^{w\ell(Q)+u+1}\le t<2^{w[\ell(Q)-1]+u+1},
\end{align}
where $w$ and $u$ are as in Lemma \ref{4l2x}(iv)
and $\ell(Q)$ denotes the level of $Q$.
Clearly, $\{\widehat{Q}\}_{Q\in\mathcal{Q}}$ are mutually disjoint and
\begin{align}\label{4.26x}
\rr^{n+1}_+=\bigcup_{k\in\mathbb{Z}}\bigcup_i B_{k,\,i},
\end{align}
where, for any $k\in\mathbb{Z}$ and $i$,
$B_{k,\,i}:=\bigcup_{Q\subset Q_i^k,Q\in\mathcal{Q}_k}\widehat{Q}$.
Then, by Lemma \ref{4l2x}(ii), we easily know that $\{B_{k,i}\}_{k\in\zz,\,i}$
are mutually disjoint.

Let $\psi$ and $\varphi$ be as in Lemma \ref{x4l1}. Then
$\varphi$ has the vanishing moments up to order $s$ as in \eqref{5eq1}. By
Lemma \ref{x4l1}, the properties of tempered distributions
(see \cite[Theorem 2.3.20]{lg14} or \cite[Theorem 3.13]{sw71}),
we find that, for any $f\in\cs'_0(\rn)$ with
$S_{\psi}(f)\in \lv$, and for any $x\in\rn$,
\begin{align}\label{3e12}
f(x)
=\sum_{k\in\mathbb{Z}}f\ast\psi_k\ast\varphi_k(x)
=\int_{\rr^{n+1}_+}
f\ast\psi_t(y)\varphi_t(x-y)\,dy\,dm(t)
\end{align}
in $\cs'(\rn)$, where $m$ denotes
the \emph{integer counting measure} on $\rn$, namely,
for any set $E\subset \rr$, $m(E)$ is the number of integers
contained in $E$.
For any $k\in\mathbb{Z}$, $i$ and $x\in\rn$, let
\begin{align*}
h_i^k(x)
:=\int_{B_{k,\,i}}f\ast\psi_t(y)\varphi_t(x-y)\,dy\,dm(t).
\end{align*}
Next we show that
\begin{align}\label{3e21}
\sum_{k\in\zz}\sum_i h_i^k
\quad{\rm converges~~in}\quad\cs'(\rn).
\end{align}

To this end, we first claim that

\begin{enumerate}
\item[{\rm (i)}]
for any given $r\in(1,\fz)$,
any $k\in\zz,$ $i$ and $x\in\rn$,
$$h_i^k(x)
=\sum_{Q\subset Q_i^k,Q\in\mathcal{Q}_k}
\int_{\widehat{Q}}f\ast\psi_t(y)\varphi_t(x-y)\,dy\,dm(t)
=:\sum_{Q\subset Q_i^k,Q\in\mathcal{Q}_k}e_{Q}(x)
$$
converges in $L^r(\rn)$ and hence in $\cs'(\rn)$;

\item[{\rm (ii)}] for any $k\in\zz,$ $i$,
$h_i^k$ is a multiple of a $(\vp,r,s)$-atom.

\end{enumerate}

Indeed, using \cite[(3.23)]{lyy16LP}
with the dilation $A$ as in \eqref{5eq2}, we conclude that,
for any $x\in\rn$,
\begin{align}\label{4e11}
\lf[S_{\varphi}\lf(\sum_{Q\subset Q_i^k,Q\in\mathcal{Q}_k}e_Q\r)(x)\r]^2
\ls\sum_{Q\subset Q_i^k,Q\in\mathcal{Q}_k}
\lf[M_{\rm HL}^{\va}(c_Q{\mathbf{1}}_Q)(x)\r]^2,
\end{align}
where, for any $Q\subset Q_i^k$ and $Q\in\mathcal{Q}_k$,
$$c_Q:=\lf[\int_{\widehat{Q}}|\psi_t\ast f(y)|^2
\,dy\frac{dm(t)}{t^{\nu}}\r]^{1/2}.$$

We now show the above two assertions and we first show assertion (i). 
To this end, for any $k\in\zz$, $Q\in\mathcal{Q}_k$ and
$x\in Q$, by Lemma \ref{4l2x}(iv), we find that
$$M_{\rm HL}^{\va}\lf({\mathbf{1}}_{\Omega_k}\r)(x)\gs\frac1{2^{[w\ell(Q)+u]\nu}}
\int_{x_Q+2^{[w\ell(Q)+u]\va}B_0}{\mathbf{1}}_{\Omega_k}(z)\,dz\gs2^{-2u\nu}
\frac{|\Omega_k\cap Q|}{|Q|}\gs 2^{-2u\nu-1},$$
which implies that
\begin{align*}
\bigcup_{Q\subset Q_i^k,Q\in\mathcal{Q}_k}Q
\subset\widehat{\Omega}_k:=\lf\{x\in\rn:\
M_{\rm HL}^{\va}\lf({\mathbf{1}}_{\Omega_k}\r)(x)\gs 2^{-2u\nu-1}\r\}.
\end{align*}
In addition, for any $k\in\zz$, $Q\in\mathcal{Q}_k$ and $x\in Q$,
by $Q\subset\widehat{\Omega}_k$, we know that
$$M_{\rm HL}^{\va}\lf({\mathbf{1}}_{Q\cap(\widehat{\Omega}_k
\setminus\Omega_{k+1})}\r)(x)
\ge\frac1{|Q|}\int_{Q}{\mathbf{1}}_{Q\cap(\widehat{\Omega}_k
\setminus\Omega_{k+1})}(z)\,dz
\ge\frac{|Q|-|Q|/2}{|Q|}\ge\frac{{\mathbf{1}}_Q(x)}2.$$
From this, \cite[Theorem 3.2]{blyz10} with the
dilation $A$ as in \eqref{5eq2}, \eqref{4e11}, \cite[Lemma 3.7]{hlyy}
and an argument similar to that used in
the proof of \cite[(3.26)]{lyy16LP},
it follows that, for any given $r\in(1,\fz)$, any $k\in\zz$ and $i$,
\begin{align}\label{4e14}
\lf\|\sum_{Q\subset Q_i^k,Q\in\mathcal{Q}_k}
e_{Q}\r\|_{L^r(\rn)}
\ls\lf\|\lf[\sum_{Q\subset Q_i^k,Q\in\mathcal{Q}_k}
\lf(c_Q\r)^2{\mathbf{1}}_{Q\cap(\widehat{\Omega}_k
\setminus\Omega_{k+1})}\r]^{1/2}\r\|_{L^r(\rn)}.
\end{align}

On another hand, for any $k\in\zz$, $Q\in\mathcal{Q}_k,$ $x\in Q$
and $(y,t)\in\widehat{Q}$, by \cite[Lemmas 4.9(iv) and 2.5(ii)]{hlyy}
and \eqref{4e8}, we easily know that
$$x-y\in 2^{[w\ell(Q)+u]\va}B_0+2^{[w\ell(Q)+u]\va}B_0
\subset 2^{[w\ell(Q)+u+1]\va}B_0\subset t^{\va}B_0,$$
here and thereafter, $B_0:=B_{\va}(\vec{0}_n,1)$.
From this and the disjointness of
$\{\widehat{Q}\}_{Q\subset Q_i^k}$,
we deduce that, for any  $k\in\zz$, $i$ and $x\in\rn$,
\begin{align}\label{4e15}
\sum_{Q\subset Q_i^k,Q\in\mathcal{Q}_k}
\lf(c_Q\r)^2{\mathbf{1}}_{Q\cap(\widehat{\Omega}_k
\setminus\Omega_{k+1})}(x)
&=\sum_{Q\subset Q_i^k,Q\in\mathcal{Q}_k}
\int_{\widehat{Q}}
|\psi_t\ast f(y)|^2\,dy\frac{dm(t)}{t^{\nu}}
{\mathbf{1}}_{Q\cap(\widehat{\Omega}_k
\setminus\Omega_{k+1})}(x)\noz\\
&\ls\lf[S_{\psi}(f)(x)\r]^2{\mathbf{1}}_{Q_i^k\cap(\widehat
{\Omega}_k\setminus\Omega_{k+1})}(x).
\end{align}
Combining this and Lemma \ref{4l2x}(iv), we further
conclude that, for any given $r\in(1,\fz)$, any $k\in\zz$ and $i$,
\begin{align}\label{4e16}
&\lf\|\lf\{\sum_{Q
\subset Q_i^k,Q\in\mathcal{Q}_k}
\lf(c_Q\r)^2{\mathbf{1}}_{Q\cap(\widehat{\Omega}_k
\setminus\Omega_{k+1})}\r\}^
{1/2}\r\|^r_{L^r(\rn)}\noz\\
&\hs\le\int_{\rn}\lf[S_{\psi}(f)(x)\r]^r
{\mathbf{1}}_{Q_i^k\cap(\widehat{\Omega}_k
\setminus\Omega_{k+1})}(x)\,dx\noz\\
&\hs\ls 2^{kr}\lf|Q_i^k\r|
\ls 2^{kr}\lf|2^{(wk+u)\va}B_0\r|
\ls 2^{kr}2^{(wk+u)\nu}< \fz.
\end{align}
For any $N\in\mathbb{N}$, let
$\mathcal{Q}_{k,N}:=
\{Q\in\mathcal{Q}_k:\ |\ell(Q)|>N\}$.
Then, replacing
$\sum_{Q\subset Q_i^k,Q\in\mathcal{Q}_k}e_{Q}$
by $\sum_{Q\subset Q_i^k,Q\in\mathcal{Q}_{k,N}}e_Q$
in \eqref{4e14}, we obtain, for any $N\in\nn$, $k\in\zz$ and $i$,
\begin{align*}
\lf\|\sum_{Q\subset Q_i^k,Q\in
\mathcal{Q}_{k,N}}e_{Q}\r\|_{L^r(\rn)}^r
&\ls\lf\|\lf[\sum_{Q\subset Q_i^k,Q\in
\mathcal{Q}_{k,N}}\lf(c_Q\r)^2{\mathbf{1}}_{Q\cap
(\widehat{\Omega}_k\setminus\Omega_{k+1})}
\r]^{1/2}\r\|_{L^r(\rn)}^r.
\end{align*}
From this, \eqref{4e16} and the
Lebesgue dominated convergence theorem, we deduce that,
for any given $r\in(1,\fz)$, any $k\in\zz$ and $i$,
$$\lf\|\sum_{Q\subset Q_i^k,Q\in
\mathcal{Q}_{k,N}}e_{Q}\r\|_{L^r(\rn)}\rightarrow0$$
as $N\rightarrow\fz$, and hence
$$\lf\|
\int_{\cup_{Q\subset Q_i^k,Q\in\mathcal{Q}_{k,N}}\widehat{Q}}
f\ast\psi_t(y)\varphi_t(x-y)\,dy\,dm(t)\r\|_{L^r(\rn)}\rightarrow0$$
as $N\rightarrow\fz$. Thus,
$h_i^k=\sum_{Q\subset Q_i^k,Q\in\mathcal{Q}_k}e_{Q}$
in $L^r(\rn)$. This finishes the proof of assertion (i) above.
By this, \eqref{4e14}, the estimation of \eqref{4e16} and
Lemma \ref{4l2x}(iv), we know that
\begin{equation}\label{4e17}
\lf\|h_i^k\r\|_{L^r(\rn)}\ls\lf\{\int_{\rn}\lf[S_{\psi}(f)(x)\r]^r
{\mathbf{1}}_{Q_i^k\cap(\widehat{\Omega}_k
\setminus\Omega_{k+1})}(x)\,dx\r\}^{1/r}\ls2^k\lf|Q_i^k\r|^{1/r}
\le \wz{C_1} 2^k\lf|B_i^k\r|^{1/r},
\end{equation}
where $\wz{C_1}$ is a positive constant
independent of $f$, $k$ and $i$ and, for any $k\in\zz$ and $i$,
$$B_i^k:=x_{Q_i^k}+2^{(w[\ell(Q_i^k)-1]+u+3)\va}B_0.$$

We now show assertion (ii). To this end, observe that,
for any $x\in\supp h_i^k$ with $k\in\zz$, $h_i^k(x)\neq0$
implies that there exists a $Q\subset Q_i^k$ and
$Q\in\mathcal{Q}_k$ such that $e_{Q}(x)\neq0$.
Then there exists $(y,t)\in\widehat{Q}$ such that
$t^{-\va}(x-y)\in B_0$.
By this, Lemma \ref{4l2x}(iv),
\eqref{4e8} and \cite[Lemma 2.5(ii)]{hlyy}, we have
$$x\in y+t^{\va}B_0\subset x_Q+2^{(w\ell(Q)+u)\va}B_0
+2^{(w[\ell(Q)-1]+u+1)\va}B_0\subset x_Q+2^{(w[\ell(Q)-1]+u+2)\va}B_0.$$
Thus,
$$\supp e_Q\subset x_Q+2^{(w[\ell(Q)-1]+u+2)\va}B_0.$$
From this, the fact that
$h_i^k=\sum_{Q\subset Q_i^k,Q\in\mathcal{Q}_k}e_{Q}$,
(ii) and (iv) of Lemma \ref{4l2x} and
\cite[Lemma 2.5(ii)]{hlyy},
we further deduce that, for any $k\in\zz$ and $i$,
\begin{align}\label{4e12}
\supp h_i^k
&\subset\bigcup_{Q\subset Q_i^k,Q\in\mathcal
{Q}_k}\lf(x_Q+2^{(w[\ell(Q)-1]+u+2)\va}B_0\r)\noz\\
&\subset x_{Q_i^k}+2^{w[\ell(Q_i^k)+u]\va}B_0+
2^{(w[\ell(Q_i^k)-1]+u+2)\va}B_0
\subset B_i^k.
\end{align}

Recall that $\varphi$ has the vanishing moments up to
order $s\ge\lfloor\frac{\nu}{a_-}(\frac1{p_-}-1)\rfloor$
and so does $e_Q$. For any
$k\in\mathbb{Z},$ $i$, $\gamma\in\zz_+^n$ with
$|\gamma|\le s$ and $x\in\rn$,
let $g(x):=x^\gamma{\mathbf{1}}_{B_i^k}(x)$.
Clearly, $g\in L^{r'}(\rn)$ with $r\in(1,\fz)$
and $1/r+1/r'=1$.
Thus, by \eqref{4e12} and the facts that
$(L^{r'}(\rn))^\ast=L^r(\rn)$ and
$$\supp e_Q\subset x_Q+
2^{(w[\ell(Q)-1]+u+2)\va}B_0,$$
we conclude that, for any $k$, $i$ and $\gamma$ as above,
\begin{equation*}
\int_{\rn}h_i^k(x)x^\gamma\,dx
=\langle h_i^k,g\rangle
=\sum_{Q\subset Q_i^k,Q\in
\mathcal{Q}_k}\langle e_{Q},g\rangle
=\sum_{Q\subset Q_i^k,Q\in\mathcal{Q}_k}
\int_{\rn}e_{Q}(x)x^\gamma\,dx=0,
\end{equation*}
namely, $h_i^k$ has the vanishing moments up to order $s$,
which, combined with \eqref{4e17} and \eqref{4e12},
implies that $h_i^k$ is a multiple of a $(\vp,r,s)$-atom
supported in $B_i^k$. This finishes the proof of assertion (ii) above.

Now we prove \eqref{3e21}. For any $k\in\zz$ and $i$,
let
\begin{align}\label{3e24}
\lambda_i^k:=\wz{C_1} 2^k\lf\|{\mathbf{1}}_{B_i^k}\r\|_{\lv}\quad
{\rm and}\quad a_i^k:=(\lambda_i^k)^{-1}h_i^k,
\end{align}
 where
$\wz{C_1}$ is as in \eqref{4e17}.
Then it is easy to see that, for any $k\in\zz$ and $i$,
$a_i^k$ is a $(\vp,r,s)$-atom. Notice that $i\in\nn$ or
there exists a $D\in\nn$ such that $i\in\{1,\ldots,D\}$.
When $i\in\nn$, by \eqref{3e24}, to prove \eqref{3e21},
it suffices to show that
\begin{align}\label{3e22}
\lim_{l\to\fz}\lf\|\sum_{l\le|k|\le m}\sum_{l\le i\le m} \lz_i^k a_i^k\r\|_{\vh}=0.
\end{align}
Assuming that \eqref{3e22} holds true for the moment, then,
for any $\phi\in\cs(\rn)$, let $\wz{\phi}(\cdot):=\phi(-\cdot)$.
Obviously, for any $l,$ $m\in\nn$,
$$\lf|\lf\langle\sum_{l\le|k|\le m}
\sum_{l\le i\le m} \lz_i^k a_i^k,\phi\r\rangle\r|
=\lf|\lf(\sum_{l\le|k|\le m}\sum_{l\le i\le m}
\lz_i^k a_i^k\r)\ast\wz{\phi}(\vec 0_n)\r|.$$
Combining this, the proof of \cite[Lemma 4.8]{hlyy}
with $f,$ $\phi$ and $k$ therein
replaced, respectively, by $\sum_{l\le|k|\le m}
\sum_{l\le i\le m} \lz_i^k a_i^k,$ $\wz{\phi}$ and $0$,
and \eqref{3e22}, we further conclude that
\begin{align*}
\lim_{l\to\fz}\lf|\lf\langle \sum_{l\le|k|\le m}\sum_{l\le i\le m}
\lz_i^k a_i^k,\phi\r\rangle\r|
&\ls \lim_{l\to\fz}\lf\|M_N\lf(\sum_{l\le|k|\le m}\sum_{l\le i\le m}
\lz_i^k a_i^k\r)\r\|_{\lv}\\
&\sim \lim_{l\to\fz}\lf\|\sum_{l\le|k|\le m}\sum_{l\le i\le m}
\lz_i^k a_i^k\r\|_{\vh}\to 0,
\end{align*}
where $N$ is as in \eqref{2e11}. From this and the completeness
of $\cs'(\rn)$, we deduce that \eqref{3e21} holds true.
Therefore, to show \eqref{3e21}, it remains to prove \eqref{3e22}.
To do this, for any $k\in\mathbb{Z}$ and $i\in\nn$, by the fact that
$|\Qik\cap\Omega_k|\ge\frac{|\Qik|}{2}$, we find that, for any $x\in\rn$,
\begin{align*}
M_{\rm HL}^{\va}\lf({\mathbf{1}}_{\Qik\cap\Omega_k}\r)(x)
\gs \frac1{|\Qik|}\int_{\Qik}{\mathbf{1}}_{\Qik\cap\Omega_k}(y)\,dy
\sim\frac{|{\Qik\cap\Omega_k}|}{|\Qik|}\gs \frac1{2},
\end{align*}
which, together with \cite[(2.5) and Lemma 3.5]{hlyy}, implies that
\begin{align}\label{3e20}
\lf\|{\mathbf{1}}_{\Qik}\r\|_{\lv}
&=\lf\|{\mathbf{1}}_\Qik\r\|_{L^{2\vp/p_-}(\rn)}^{2/p_-}
\ls \lf\|M_{\rm HL}^{\va}\lf({\mathbf{1}}_{\Qik\cap\Omega_k}\r)
\r\|_{L^{2\vp/p_-}(\rn)}^{2/p_-}\noz\\
&\ls \lf\|{\mathbf{1}}_{\Qik\cap\Omega_k}\r\|_{L^{2\vp/p_-}(\rn)}^{2/p_-}
\sim\lf\|{\mathbf{1}}_{\Qik\cap\Omega_k}\r\|_{\lv}.
\end{align}
From the fact that, for any $l,$ $m\in\nn$,
$\sum_{l\le|k|\le m}\sum_{l\le i\le m}
\lz_i^k a_i^k\in\vah$,
Theorem \ref{5t1},
the mutual disjointness of $\{Q_i^k\}_{k\in\mathbb{Z},\,i\in\nn}$ and
\cite[Lemma 5.9(iv)]{hlyy}, it follows that
\begin{align*}
&\lf\|\sum_{l\le|k|\le m}\sum_{l\le i\le m}
\lz_i^k a_i^k\r\|_{\vh}\\
&\hs\ls\lf\|\lf\{\sum_{l\le|k|\le m}\sum_{l\le i\le m}
\lf[\frac{\lz_i^k{\mathbf{1}}_{\Bik}}{\|{\mathbf{1}}_{\Bik}\|_{\lv}}\r]^
{\underline{p}}\r\}^{1/\underline{p}}\r\|_{\lv}
\sim\lf\|\lf[\sum_{l\le|k|\le m}\sum_{l\le i\le m}\lf(2^{k}
{\mathbf{1}}_{\Bik}\r)^{{\underline{p}}}\r]^{{1/\underline{p}}}\r\|_{\lv}\\
&\hs\sim\lf\|\lf[\sum_{l\le|k|\le m}\sum_{l\le i\le m}\lf(2^{k}
{\mathbf{1}}_{\Qik}\r)^{\underline{p}}\r]^{{1/\underline{p}}}\r\|_{\lv}
\sim\lf\|\sum_{l\le|k|\le m}\sum_{l\le i\le m}
\lf(2^{k}{\mathbf{1}}_{\Qik}\r)^{\underline{p}}\r\|
_{L^{\vp/\underline{p}}(\rn)}^{{1/\underline{p}}},
\end{align*}
which, combined with \eqref{3e20}, implies that
\begin{align*}
&\lf\|\sum_{l\le|k|\le m}\sum_{l\le i\le m}
\lz_i^k a_i^k\r\|_{\vh}\\
&\hs\sim\lf\|\sum_{l\le|k|\le m}\sum_{l\le i\le m}
\lf(2^{k}{\mathbf{1}}_{\Qik\cap\Omega_k}\r)^{\underline{p}}\r\|
_{L^{\vp/\underline{p}}(\rn)}^{{1/\underline{p}}}
\ls\lf\|\lf[\sum_{l\le|k|\le m}\lf(2^k{\mathbf{1}}_{\Omega_k}\r)^
{\underline{p}}\r]^{1/{\underline{p}}}\r\|_{\lv}\\
&\hs\sim \lf\|\lf[\sum_{l\le|k|\le m}\lf(2^k{\mathbf{1}}
_{\Omega_k\setminus\Omega_{k+1}}\r)^
{\underline{p}}\r]^{1/{\underline{p}}}\r\|_{\lv}
\sim \lf\|S_{\psi}(f)\lf(\sum_{l\le|k|\le m}{\mathbf{1}}
_{\Omega_k\setminus\Omega_{k+1}}\r)
^{1/{\underline{p}}}\r\|_{\lv}\to 0
\end{align*}
as $l\to \fz$. This further implies that \eqref{3e22} holds true
and so does \eqref{3e21} in the case $i\in\nn$.
When $i\in\{1,\ldots,D\}$, to prove \eqref{3e21}, it suffices to show that
\begin{align}\label{3e23}
\lim_{l\to\fz}\lf\|\sum_{l\le|k|\le m}\sum_{i=1}^{D} \lz_i^k a_i^k\r\|_{\vh}=0.
\end{align}
Applying a similar argument to that used in the proof of \eqref{3e22} above,
we know that \eqref{3e23} also holds true. Thus,
\begin{align*}
\sum_{k\in\zz}\sum_i h_i^k=\sum_{k\in\zz}\sum_i \lz_i^k a_i^k\quad
{\rm converges~~~in}\quad\cs'(\rn).
\end{align*}

Now, for any $x\in\rn$, let
\begin{align*}
\eta(x)
:=\sum_{k\in\zz}\sum_i h_i^k(x)
=\sum_{k\in\mathbb{Z}}\sum_i\int_{B_{k,i}}
f\ast\psi_t(y)\varphi_t(x-y)\,dy\,dm(t)
\quad{\rm in}\quad \cs'(\rn),
\end{align*}
here and thereafter, for any $k\in\zz$ and $i$,
$B_{k,i}$ is as in \eqref{4.26x}.
To finish  the proof of the sufficiency of Theorem \ref{5t3},
we next show that
\begin{align}\label{3e28}
f=\eta\quad{\rm in}\quad \cs'(\rn).
\end{align}
Indeed, by assertion (i) above, \eqref{3e1} and \eqref{4e8}, we know that,
for any given $r\in(1,\fz)$, any $k\in\zz,\ i$ and $x\in\rn$,
\begin{align}\label{3.1}
h_i^k(x)
&=\lim_{N\to\fz}\int_0^\fz\int_{\rn}
f\ast\psi_t(y)\varphi_t(x-y)\mathbf{1}_{\cup_{\gfz{Q\subset Q_i^k,Q\in\mathcal{Q}_k}{|\ell(Q)|\le N}}\widehat{Q}}(y,t)\,dy\,dm(t)\noz\\
&=\lim_{N\to\fz}\int_{\az(N)}^{\bz(N)}\int_{\rn}
f\ast\psi_t(y)\varphi_t(x-y)\mathbf{1}_{B_{k,i}}(y,t)\,dy\,dm(t)
\end{align}
converges in $L^r(\rn)$ and hence in $\cs'(\rn)$,
where, for any $N\in\nn$, $\az(N):=2^{w N+u+1}$ and
$\bz(N):=2^{-w(N+1)+u+1}$ with $w$ and $u$ as in Lemma \ref{4l2x}(iv).
For the convenience of the notation, we rewrite $\eta$ as,
for any $x\in\rn$,
\begin{align*}
\eta(x)=\sum_{j\in\nn}\int_{R^{(j)}}
f\ast\psi_t(y)\varphi_t(x-y)\,dy\,dm(t),
\end{align*}
where $\{R^{(j)}\}_{j\in\nn}$ is any rearrangement
of $\{B_{k,\,i}\}_{k\in\zz,i}$. For any $M\in\nn$
and $x\in\rn$, let
\begin{align*}
\eta_M(x):=f(x)-\sum_{j=1}^M\int_{R^{(j)}}
f\ast\psi_t(y)\varphi_t(x-y)\,dy\,dm(t).
\end{align*}
Then, using \eqref{4.26x}, \eqref{3e12} and \eqref{3.1}, we have,
for any $M\in\nn$ and $x\in\rn$,
\begin{align}\label{3e29}
\eta_M(x)&=\lim_{N\to\fz}\int_{\az(N)}^{\bz(N)}\int_{\rn}
f\ast\psi_t(y)\varphi_t(x-y)\mathbf{1}_{\cup_{j=1}^\fz R^{(j)}}(y,t)\,dy\,dm(t)\noz\\
&\hs\hs-\lim_{N\to\fz}\int_{\az(N)}^{\bz(N)}\int_{\rn}
f\ast\psi_t(y)\varphi_t(x-y)\mathbf{1}_{\cup_{j=1}^M R^{(j)}}(y,t)\,dy\,dm(t)\noz\\
&=\lim_{N\to\fz}\int_{\az(N)}^{\bz(N)}\int_{\rn}
f\ast\psi_t(y)\varphi_t(x-y)\mathbf{1}_{\cup_{j=M+1}^\fz R^{(j)}}(y,t)\,dy\,dm(t)
\end{align}
converges in $\cs'(\rn)$.

Note that $\vh$ is continuously embedded into $\cs'(\rn)$
(see \cite[Lemma 3.6]{hlyy19}). Thus, to prove \eqref{3e28},
it suffices to show that $\|\eta_M\|_{\vh}\to 0$ as $M\to \fz$.
To do this, we borrow some ideas from the proof of the atomic
characterizations of $\vh$ (see \cite[Theorem 3.16]{hlyy}).
Indeed, for any $\vaz\in(0,\fz)$, $M\in\nn$ and $x\in\rn$, let
\begin{align*}
\eta_M^{\vaz,\kappa}(x):=\int_{\vaz}^{\kappa/\vaz}\int_{\rn}
f\ast\psi_t(y)\varphi_t(x-y)
\mathbf{1}_{\cup_{j=M+1}^\fz R^{(j)}}(y,t)\,dy\,dm(t),
\end{align*}
where $\kappa$ is any given positive constant.
Therefore, by the Lebesgue dominated convergence theorem,
for any given $\kappa\in(0,\fz)$, any $\vaz\in(0,\fz)$, $j\in\nn$ and $x\in\rn$,
we have
\begin{align*}
\eta_M^{\vaz,\kappa}(x)
=\sum_{j=M+1}^\fz \int_{\vaz}^{\kappa/\vaz}\int_{\rn}
f\ast\psi_t(y)\varphi_t(x-y)\mathbf{1}_{R^{(j)}}(y,t)\,dy\,dm(t)
=:\sum_{j=M+1}^\fz h_j^{\vaz,\kappa}(x)
\end{align*}
in $\cs'(\rn)$.

In addition, for any given $\kappa\in(0,\fz)$, any
$\vaz\in(0,\fz)$ and $Q\in\mathcal{Q}$, let
$$\widehat{Q}_{\vaz,\kappa}:=\lf\{(y,t)\in\rn\times(\vaz,\kappa/\vaz):\
y\in Q\ \ {\rm and}\
\ t\sim 2^{w\ell(Q)+u}\r\},$$
where $w$ and $u$ are as in Lemma \ref{4l2x}(iv)
and $\ell(Q)$ denotes the level of $Q$.
Obviously, for any given $\kappa\in(0,\fz)$ and any $\vaz\in(0,\fz)$,
$\{\widehat{Q}_{\vaz,\kappa}\}_{Q\in\mathcal{Q}}$ are mutually disjoint and
\begin{align*}
\rn\times(\vaz,\kappa/\vaz)=\bigcup_{k\in\mathbb{Z}}\bigcup_i B_{k,\,i}^{\vaz,\kappa},
\end{align*}
where, for any $k\in\zz$ and $i$,
$B_{k,\,i}^{\vaz,\kappa}
:=\bigcup_{Q\subset Q_i^k,Q\in\mathcal{Q}_k}\widehat{Q}_{\vaz,\kappa}$.
Then, by Lemma \ref{4l2x}(ii),
we easily know that, for any given $\kappa\in(0,\fz)$ and any $\vaz\in(0,\fz)$,
$\{B_{k,i}^{\vaz,\kappa}\}_{k\in\zz,\,i}$ are mutually disjoint.

Now we claim that, for any given $\kappa\in(0,\fz)$, any $\vaz\in(0,\fz)$ and $x\in\rn$,
\begin{align}\label{4e11'}
\lf[S_{\varphi}\lf(\sum_{Q\subset Q_i^k,Q\in\mathcal{Q}_k}e_Q^{\vaz,\kappa}\r)(x)\r]^2
\ls\sum_{Q\subset Q_i^k,Q\in\mathcal{Q}_k}
\lf[M_{\rm HL}^{\va}(c_Q^{\vaz,\kappa}{\mathbf{1}}_Q)(x)\r]^2,
\end{align}
where, for any $Q\subset Q_i^k$, $Q\in\mathcal{Q}_k$
and $x\in\rn$,
$$e_Q^{\vaz,\kappa}(x)
:=\int_{\widehat{Q}_{\vaz,\kappa}}f\ast\psi_t(y)\varphi_t(x-y)\,dy\,dm(t)$$
and
$$c_Q^{\vaz,\kappa}:=\lf[\int_{\widehat{Q}_{\vaz,\kappa}}|\psi_t\ast f(y)|^2
\,dy\frac{dm(t)}{t^{\nu}}\r]^{1/2}.$$
Assuming this inequality holds true for the moment, then,
for any given $\kappa\in(0,\fz)$ and any $\vaz\in(0,\fz)$,
by some arguments similar to these used in the proofs of
the above assertions (i) and (ii)  with $\widehat{Q}$ and \eqref{4e11} therein
replaced, respectively, by $\widehat{Q}_{\vaz,\kappa}$ and \eqref{4e11'},
we conclude that, for any $j\in\nn$ and $x\in\rn$,
\begin{align*}
h_j^{\vaz,\kappa}(x)
=\sum_{Q\subset Q_i^k,Q\in\mathcal{Q}_k} e_Q^{\vaz,\kappa}(x)
\end{align*}
converges in $\cs'(\rn)$ and, for any given $r\in(1,\fz)$,
$h_j^{\vaz,\kappa}$ is a multiple of a $(\vp,r,s)$-atom, namely,
there exist $\{\lz_j\}_{j\in\nn}\subset\mathbb{C}$
and a sequence of $(\vp,r,s)$-atoms, $\{a_j^{\vaz,\kappa}\}_{j\in\nn}$,
supported, respectively, in $\{B_j\}_{j\in\nn}\subset\mathfrak{B}$
such that, for any $j\in\nn$, $h_j^{\vaz,\kappa}=\lz_j a_j^{\vaz,\kappa}$,
where, for any $j\in\nn$, $\lz_j$ and $B_j$ are
independent of $\vaz$ and $\kappa$.
Thus, for any given $\kappa\in(0,\fz)$,
any $\vaz\in(0,\fz)$, $M\in\nn$ and $x\in\rn$,
\begin{align}\label{x3e16}
\eta_M^{\vaz,\kappa}(x)
=\sum_{j=M+1}^\fz \lz_j a_j^{\vaz,\kappa}(x)
\quad{\rm in}\quad \cs'(\rn)
\end{align}
and
\begin{align}\label{x3e18}
\lf\|\lf\{\sum_{j=M+1}^{\fz}
\lf[\frac{|\lz_j|\mathbf{1}_{B_j}}{\|\mathbf{1}_{B_j}
\|_{\lv}}\r]^{\underline{p}}\r\}^{1/\underline{p}}\r\|_{\lv}
< \fz.
\end{align}

For any $f\in\cs'(\rn)$, let $M_0(f)$ be as in \eqref{3e16}.
Then, by the fact that, for any given $\kappa\in(0,\fz)$ and any $\vaz\in(0,\fz)$,
$\{a_j^{\vaz,\kappa}\}_{j\in\nn}$ is a sequence of $(\vp,r,s)$-atoms
and \cite[(3.41)]{hlyy},
we know that, for any $j\in\nn$ and $x\in\rn$,
\begin{align}\label{x3e15}
M_0(a_j^{\vaz,\kappa})(x)\ls M_{\rm HL}^{\va}(a_j^{\vaz,\kappa})(x)\mathbf{1}_{B_j^{(2)}}(x)
+\frac1{\|\mathbf{1}_{B_j}\|_{\lv}}
\lf[M_{\rm HL}^{\va}(\mathbf{1}_{B_j})(x)\r]^
{\frac{\nu+(s+1)a_-}{\nu}},
\end{align}
where, for any $j\in\nn$, $B_j^{(2)}$ is as in \eqref{2e2'} with $\delta:=2$.
Let $r\in (\max\{p_+,1\},\fz)$. Then, by \cite[(3.38)]{hlyy} and Lemma \ref{3l1},
we find that, for any given $\kappa\in(0,\fz)$ and any $\vaz\in(0,\fz)$,
$$\lf\|M_0(a_j^{\vaz,\kappa})\mathbf{1}_{B_j^{(2)}}\r\|_{L^r(\rn)}
\ls\lf\|M_{\rm HL}^{\va}(a_j^{\vaz,\kappa})
\mathbf{1}_{B_j^{(2)}}\r\|_{L^r(\rn)}
\ls\frac{|B_j|^{1/r}}{\|\mathbf{1}_{B_j}\|_{\lv}},$$
which, combined with Lemma \ref{3l9}, further implies that
\begin{align}\label{x3e17}
&\lf\|\liminf_{\vaz\to0^+} \lf\{\sum_{j=M+1}^{\fz}\lf[\lf|\lz_j\r|M_0(a_j^{\vaz,\kappa})
\mathbf{1}_{B_j^{(2)}}\r]^{{\underline{p}}}\r\}^{1/{\underline{p}}}\r\|_{\lv}\noz\\
&\hs\hs\ls \lf\|\lf\{\sum_{j=M+1}^{\fz}
\lf[\frac{|\lz_i|\mathbf{1}_{B_j}}{\|\mathbf{1}_{B_j}\|_{\lv}}\r]^
{{\underline{p}}}\r\}^{1/{\underline{p}}}\r\|_{\lv}.
\end{align}
Let $\vaz:=\az(N)$ and $$\kappa:=2^{-w+2(u+1)}$$ with $N\in\nn$,
$w$ and $u$ as in Lemma \ref{4l2x}(iv). Then, by \eqref{3e29},
we know that
\begin{align*}
M_0(\eta_M)&=M_0\lf(\lim_{N\to\fz}\eta_M^{\az(N),\kappa}\r)
=\sup_{t\in(0,\fz)}\lf|\lim_{N\to\fz}\Phi_t\ast\eta_M^{\az(N),\kappa}\r|
\le \liminf_{N\to\fz}\sup_{t\in(0,\fz)}\lf|\Phi_t\ast\eta_M^{\az(N),\kappa}\r|\\
&=\liminf_{N\to\fz}M_0(\eta_M^{\az(N),\kappa}),
\end{align*}
where $\Phi$ is as in \eqref{3e16}.
From this, Theorem \ref{3t1}, \eqref{x3e16} and \eqref{x3e15},
we deduce that
\begin{align*}
\lf\|\eta_M\r\|_{\vh}&
\le\lf\|\liminf_{N\to\fz}M_0\lf(\eta_M^{\az(N),\kappa}\r)\r\|_{\lv}
\le\lf\|\liminf_{N\to\fz} \sum_{j=M+1}^{\fz}|\lz_j| M_0(a_j^{\az(N),\kappa})\r\|_{\lv}\\
&\ls \lf\|\liminf_{N\to\fz} \sum_{j=M+1}^{\fz}
|\lz_j| M_0(a_j^{\az(N),\kappa})\mathbf{1}_{B_j^{(2)}}\r\|_{\lv}\\
&\quad+\lf\|\sum_{j=M+1}^{\fz}\frac{|\lz_j|}{\|\mathbf{1}_{B_j^{(2)}}\|_{\lv}}
\lf[M_{\rm HL}^{\va}(\mathbf{1}_{B_j})\r]
^{\frac{\nu+(s+1)a_-}{\nu}}\r\|_{\lv},
\end{align*}
which, together with \eqref{x3e17}, Lemma \ref{4l2}
and \eqref{x3e18}, further implies that
\begin{align*}
\lf\|\eta_M\r\|_{\vh}
&\ls\lf\|\liminf_{N\to\fz}\lf\{\sum_{j=M+1}^{\fz}\lf[\lf|\lz_j\r|
M_{\rm HL}^{\va}(a_j^{\az(N),\kappa})
\mathbf{1}_{B_j^{(2)}}\r]^{{\underline{p}}}\r\}^{1/{\underline{p}}}\r\|_{\lv}\noz\\
&\quad+\lf\|\sum_{j=M+1}^{\fz}\lf\{\frac{|\lz_j|}{\|\mathbf{1}_{B_j}\|_{\lv}}
\lf[M_{\rm HL}(\mathbf{1}_{B_j})\r]^
{\frac{\nu+(s+1)a_-}{\nu}}\r\}^{\frac{\nu}{\nu+(s+1)a_-}}\r\|_{L^
{\frac{\nu+(s+1)a_-}{\nu}\vp}(\rn)}^
{\frac{\nu+(s+1)a_-}{\nu}}\\
&\ls\lf\|\lf\{\sum_{j=M+1}^{\fz}
\lf[\frac{|\lz_j|\mathbf{1}_{B_j}}{\|\mathbf{1}_{B_j}
\|_{\lv}}\r]^{\underline{p}}\r\}^{1/\underline{p}}\r\|_{\lv}\to 0
\end{align*}
as $M\to \fz$. This implies that \eqref{3e28} holds true. Therefore,
\begin{align*}
f=\sum_{k\in\zz}\sum_i \lz_i^k a_i^k\quad
{\rm in}\quad\cs'(\rn).
\end{align*}
By this, Theorem \ref{5t1},
the mutual disjointness of $\{Q_i^k\}_{k\in\mathbb{Z},\,i}$
and \cite[Lemma 5.9(iv)]{hlyy}, we conclude that
\begin{align*}
\|f\|_{\vh}
&\ls\lf\|\lf\{\sum_{k\in\zz}\sum_{i}
\lf[\frac{\lz_i^k\mathbf{1}_{\Bik}}{\|\mathbf{1}_{\Bik}\|_{\lv}}\r]^
{\underline{p}}\r\}^{1/\underline{p}}\r\|_{\lv}
\sim\lf\|\lf[\sum_{k\in\zz}\sum_{i}\lf(2^{k}
\mathbf{1}_{\Bik}\r)^{{\underline{p}}}\r]^{{1/\underline{p}}}\r\|_{\lv}\\
&\sim\lf\|\lf[\sum_{k\in\zz}\sum_{i}\lf(2^{k}
\mathbf{1}_{\Qik}\r)^{\underline{p}}\r]^{{1/\underline{p}}}\r\|_{\lv},
\end{align*}
which, combined with \eqref{3e20}, further implies that
\begin{align*}
\|f\|_{\vh}
&\sim\lf\|\sum_{k\in\zz}\sum_{i}
\lf(2^{k}\mathbf{1}_{\Qik}\r)^{\underline{p}}\r\|
_{L^{\vp/\underline{p}}(\rn)}^{{1/\underline{p}}}
\sim\lf\|\sum_{k\in\zz}\sum_{i}
\lf(2^{k}\mathbf{1}_{\Qik\cap\Omega_k}\r)^{\underline{p}}\r\|
_{L^{\vp/\underline{p}}(\rn)}^{{1/\underline{p}}}\\
&\ls\lf\|\lf[\sum_{k\in\zz}\lf(2^k\mathbf{1}_{\Omega_k}\r)^
{\underline{p}}\r]^{1/{\underline{p}}}\r\|_{\lv}
\sim\lf\|\lf[\sum_{k\in\zz}
\lf(2^k\mathbf{1}_{\Omega_k\setminus\Omega_{k+1}}\r)^
{\underline{p}}\r]^{1/{\underline{p}}}\r\|_{\lv}\\
&\sim\lf\|S_{\psi}(f)\lf[\sum_{k\in\zz}
\mathbf{1}_{\Omega_k\setminus\Omega_{k+1}}
\r]^{1/{\underline{p}}}\r\|_{\lv}
\sim\lf\|S_{\psi}(f)\r\|_{\lv}.
\end{align*}
Thus, $f\in\vh$ and hence \eqref{4e7} holds true, which then
completes the proof of the sufficiency of Theorem \ref{5t3}.

To finish the whole proof, it remains to prove \eqref{4e11'}.
Indeed, for any given $\kappa\in(0,\fz)$, any $\vaz\in(0,\fz)$ and $x\in\rn$,
we have
\begin{align*}
\lf[S_{\varphi}\lf(\sum_{Q\subset Q_i^k,Q\in\mathcal{Q}_k}e_Q^{\vaz,\kappa}\r)(x)\r]^2
&=\int_{\Gamma(x)}\lf|\varphi_t\ast\lf(\sum_{Q\subset
Q_i^k,Q\in\mathcal{Q}_k}e_Q^{\vaz,\kappa}\r)(y)\r|^2\,\frac{dy\,dm(t)}{t^\nu}\\
&\le \sum_{P\in\mathcal{Q},\widehat{P}\cap\Gamma(x)\neq\emptyset}
\int_{\widehat{P}} \lf[\sum_{Q\subset Q_i^k,Q\in\mathcal{Q}_k}
\lf|\varphi_t\ast e_Q^{\vaz,\kappa}(y)\r|\r]^2\,\frac{dy\,dm(t)}{t^\nu},
\end{align*}
where, for any $x\in\rn$, $$\Gamma(x):=\{(y,t)\in\rr_+^{n+1}:\ |y-x|_{\va}<t\}$$
and, for any $P\in\mathcal{Q}$, $\widehat{P}$ is as in \eqref{3e1}.

Let $x\in\supp e_Q^{\vaz,\kappa}$. Note that $\supp \varphi\subset B_0$.
Then there exists $(y,t)\in\widehat{Q}_{\vaz,\kappa}$ such that
$t^{-\va}(x-y)\in B_0$. From this, Lemma \ref{4l2x}(iv),
\eqref{4e8} and \cite[Lemma 2.5(ii)]{hlyy}, it follows that
$$x\in y+t^{\va}B_0\subset x_Q+2^{(w\ell(Q)+u)\va}B_0
+2^{(w[\ell(Q)-1]+u+1)\va}B_0\subset x_Q+2^{(w[\ell(Q)-1]+u+2)\va}B_0.$$
Thus,
\begin{align*}
\supp e_Q^{\vaz,\kappa}\subset
x_Q+2^{(w[\ell(Q)-1]+u+2)\va}B_0=:R_Q.
\end{align*}
Moreover, by the fact that $\supp \varphi\subset B_0$,
Lemma \ref{4l2x}(iv), \eqref{4e8} and \cite[Lemma 2.5(ii)]{hlyy},
we know that, for any $(y,t)\in\widehat{P}$,
\begin{align*}
\supp \varphi_t(y-\cdot)&\subset y+t^{\va}B_0\subset x_P+2^{(w\ell(P)+u)\va}B_0
+2^{(w[\ell(P)-1]+u+1)\va}B_0\\
&\subset x_P+2^{(w[\ell(P)-1]+u+2)\va}B_0=:R_P.
\end{align*}
For any $x\in\rn$ and $(y,t)\in\widehat{P}$ with
$\widehat{P}\cap\Gamma(x)\neq\emptyset$, we will show
below that, for any given $\kappa\in(0,\fz)$ and any $\vaz\in(0,\fz)$, when $R_Q\cap R_P=\emptyset$,
$\varphi_t\ast e_Q^{\vaz,\kappa}(y)\equiv0$, otherwise,
\begin{align}\label{x3e20}
\lf|\varphi_t\ast e_Q^{\vaz,\kappa}(y)\r|
\ls c_Q^{\vaz,\kappa}M_{\rm HL}^{\va}(\mathbf{1}_Q)(x)
2^{(s+1)w|\ell(P)-\ell(Q)|a_-}.
\end{align}
Assuming that this holds true for the moment, then,
via the Cauchy--Schwarz inequality, we have
\begin{align*}
\lf[\sum_{Q\subset Q_i^k,Q\in\mathcal{Q}_k}
\lf|\varphi_t\ast e_Q^{\vaz,\kappa}(y)\r|\r]^2
&\ls \lf[\sum_{\gfz{Q\subset Q_i^k,Q\in\mathcal{Q}_k}{R_Q\cap R_P\neq\emptyset}}
c_Q^{\vaz,\kappa}M_{\rm HL}^{\va}(\mathbf{1}_Q)(x)
2^{(s+1)w|\ell(P)-\ell(Q)|a_-}\r]^2\\
&\ls \sum_{\gfz{Q\subset Q_i^k,Q\in\mathcal{Q}_k}{R_Q\cap R_P\neq\emptyset}}
(c_Q^{\vaz,\kappa})^2\lf[M_{\rm HL}^{\va}(\mathbf{1}_Q)(x)\r]^2
2^{(s+1)w|\ell(P)-\ell(Q)|a_-}\\
&\hs\hs \times\sum_{\gfz{Q\subset Q_i^k,Q\in\mathcal{Q}_k}
{R_Q\cap R_P\neq\emptyset}}
2^{(s+1)w|\ell(P)-\ell(Q)|a_-}.
\end{align*}
In addition, from \cite[(4.18)]{blyz10} with $A$ as in
\eqref{5eq2}, we deduce that, for any $P\in\mathcal{Q}$,
$$\sum_{\gfz{Q\subset Q_i^k,Q\in\mathcal{Q}_k}{R_Q\cap R_P\neq\emptyset}}
2^{(s+1)w|\ell(P)-\ell(Q)|a_-}\ls 1$$
and, for any $Q\subset Q_i^k$ and $Q\in\mathcal{Q}_k$,
$$\sum_{P\in\mathcal{Q},R_Q\cap R_P\neq\emptyset}
2^{(s+1)w|\ell(P)-\ell(Q)|a_-}\ls 1.$$
Therefore, for any given $\kappa\in(0,\fz)$, any $\vaz\in(0,\fz)$ and $x\in\rn$, we have
\begin{align*}
&\lf[S_{\varphi}\lf(\sum_{Q\subset Q_i^k,Q\in\mathcal{Q}_k}e_Q^{\vaz,\kappa}\r)(x)\r]^2\\
&\hs\hs\ls \sum_{\gfz{P\in\mathcal{Q}}{\widehat{P}\cap\Gamma(x)\neq\emptyset}}
\int_{\widehat{P}} \sum_{\gfz{Q\subset Q_i^k,Q\in\mathcal{Q}_k}
{R_Q\cap R_P\neq\emptyset}}
(c_Q^{\vaz,\kappa})^2\lf[M_{\rm HL}^{\va}(\mathbf{1}_Q)(x)\r]^2
2^{(s+1)w|\ell(P)-\ell(Q)|a_-}\,\frac{dy\,dm(t)}{t^\nu}\\
&\hs\hs\ls\sum_{Q\subset Q_i^k,Q\in\mathcal{Q}_k}
(c_Q^{\vaz,\kappa})^2\lf[M_{\rm HL}^{\va}(\mathbf{1}_Q)(x)\r]^2
\lf[\sum_{P\in\mathcal{Q},R_Q\cap R_P\neq\emptyset}2^{(s+1)w|\ell(P)-\ell(Q)|a_-}\r]\\
&\hs\hs\ls \sum_{Q\subset Q_i^k,Q\in\mathcal{Q}_k}
\lf[M_{\rm HL}^{\va}(c_Q^{\vaz,\kappa}\mathbf{1}_Q)(x)\r]^2,
\end{align*}
which implies \eqref{4e11'}. 

To finish the whole proof,
we still need to show \eqref{x3e20}. To this end, observe that,
when $R_Q\cap R_P=\emptyset$, from Lemma \ref{4l2x}(iv),
it follows that, for any $(y,t)\in\widehat{P}$,
$$y\in P\subset x_P+2^{[w\ell(P)+u]\va}B_0\subset R_P,$$
which implies that, for any $z\in R_Q$,
$$|y-z|_{\va}\ge2^{w[\ell(P)-1]+u+2}>t.$$
Thus, for any given $\kappa\in(0,\fz)$, any $\vaz\in(0,\fz)$ and $(y,t)\in\widehat{P}$,
$$\varphi_t\ast e_Q^{\vaz,\kappa}(y)
=\int_{R_Q}\varphi_t(y-z)e_Q^{\vaz,\kappa}(z)\,dz=0.$$

Now we deal with the non-trivial case $R_Q\cap R_P\neq\emptyset$ 
by considering the following two subcases.

\emph{Case I)} $\ell(P)\le \ell(Q)$. In this case,
for any $(y,t)\in\widehat{P}$ and $z\in R_Q$, we have
\begin{align*}
z':=t^{-\va}z\in t^{-\va}x_Q+t^{-\va} 2^{(w[\ell(Q)-1]+u+2)\va}B_0
\subset t^{-\va}x_Q+2^{(w[\ell(Q)-\ell(P)-1]+1)\va}B_0=:R_{QP}.
\end{align*}
By this, Lemma \ref{3l8} and the facts
that $-w>0$ and $w[\ell(Q)-\ell(P)]\le 0$,
we find that, for any $z'\in R_{QP}$,
\begin{align}\label{x3e21}
\lf|z'-t^{-\va}x_Q\r|
&=\lf|2^{(-w+1)\va}\lf[2^{(w-1)\va}\lf(z'-t^{-\va}x_Q\r)\r]\r|\noz\\
&\le 2^{(-w+1)a_+}\lf|2^{(w-1)\va}\lf(z'-t^{-\va}x_Q\r)\r|
\ls 2^{[\ell(Q)-\ell(P)]wa_-}.
\end{align}

On another hand, from the H\"{o}lder inequality, it follows that,
for any given $\kappa\in(0,\fz)$, any $\vaz\in(0,\fz)$ and $x\in\rn$,
\begin{align*}
\lf|e_Q^{\vaz,\kappa}(x)\r|^2
\le (c_Q^{\vaz,\kappa})^2\int_{\widehat{Q}_{\vaz,\kappa}}
|\varphi_{\tau}(x-y)|^2\tau^{\nu}\,dy\,dm(\tau)
\ls (c_Q^{\vaz,\kappa})^2 \sum_{\tau\sim 2^{w\ell(Q)+u}}|Q|\tau^{-\nu}
\ls (c_Q^{\vaz,\kappa})^2,
\end{align*}
which, together with the vanishing moments of $e_Q^{\vaz,\kappa}$
and \eqref{x3e21}, further implies that,
for any given $\kappa\in(0,\fz)$, any $\vaz\in(0,\fz)$ and $(y,t)\in\widehat{P}$,
\begin{align}\label{x3e22}
\lf|e_Q^{\vaz,\kappa}\ast\varphi_t(y)\r|
&=\lf|\int_{\rn}e_Q^{\vaz,\kappa}(z)\varphi_t(y-z)\,dz\r|
=\lf|\int_{\rn}e_Q^{\vaz,\kappa}(t^{\va}z)\varphi(t^{-\va}y-z)\,dz\r|\noz\\
&=\lf|\int_{R_{QP}}e_Q^{\vaz,\kappa}(t^{\va}z)\lf[\varphi(t^{-\va}y-z)
-\sum_{|\az|\le s}\frac{\partial^{\az}\varphi(t^{-\va}y-t^{-\va}x_Q)}{\az!}
(t^{-\va}x_Q-z)^{\az}\r]\,dz\r|\noz\\
&\ls \int_{R_{QP}}\lf|e_Q^{\vaz,\kappa}(t^{\va}z)\r| \lf|t^{-\va}x_Q-z\r|^{s+1}\,dz
\ls c_Q^{\vaz,\kappa} 2^{[\ell(Q)-\ell(P)](s+1)wa_-}2^{[\ell(Q)-\ell(P)]w\nu}.
\end{align}

Note that $\ell(P)\le\ell(Q)$ and $R_Q\cap R_P\neq \emptyset$. Thus,
\begin{align}\label{x3e3}
R_Q:=x_Q+2^{(w[\ell(Q)-1]+u+2)\va}B_0
\subset x_P+2^{(w[\ell(P)-1]+u+4)\va}B_0=:R_P'.
\end{align}
Moreover, for any $x\in\rn$ such that $\widehat{P}\cap\Gamma(x)\neq\emptyset$,
we know that there exists $(y_0,t_0)\in \widehat{P}\cap\Gamma(x)$ such that
$|x-y_0|_{\va}<t_0<2^{w[\ell(P)-1]+u+1}$; thus, from Lemma \ref{4l2x}(iv)
and \cite[Lemma 2.5(ii)]{hlyy}, we deduce that
\begin{align*}
x&\in y_0+2^{(w[\ell(P)-1]+u+1)\va}B_0
\subset x_P+2^{[w\ell(P)+u]\va}B_0+2^{(w[\ell(P)-1]+u+1)\va}B_0\\
&\subset x_P+2^{(w[\ell(P)-1]+u+2)\va}B_0=R_P\subset R_P'.
\end{align*}
By this, \eqref{x3e3} and \cite[Lemma 2.7]{blyz10}, we conclude that,
for any $x\in\rn$ such that $\widehat{P}\cap\Gamma(x)\neq\emptyset$,
\begin{align*}
2^{[\ell(Q)-\ell(P)]w\nu}\sim \frac{|R_Q|}{|R_P'|}
\sim \frac{|R_Q\cap R_P'|}{|R_P'|}\ls M_{\rm HL}^{\va}(\mathbf{1}_{R_Q})(x)
\ls M_{\rm HL}^{\va}(\mathbf{1}_{Q})(x),
\end{align*}
which, combined with \eqref{x3e22}, implies \eqref{x3e20} holds true
in this case.

\emph{Case II)} $\ell(P)> \ell(Q)$. In this case, for any $(y,t)\in\widehat{P}$
and $z\in R_P$, it is easy to see that
\begin{align*}
z':=2^{[-w\ell(Q)-u]\va}z\in 2^{[-w\ell(Q)-u]\va}x_P
+2^{(w[\ell(P)-\ell(Q)-1]+2)\va}B_0=:R_{PQ}.
\end{align*}
From this, Lemma \ref{3l8} and the facts
that $-w>0$ and $w[\ell(P)-\ell(Q)]>0$,
it follows that, for any $z'\in R_{PQ}$,
\begin{align}\label{x3e23}
\lf|z'-2^{[-w\ell(Q)-u]\va}x_P\r|
&=\lf|2^{(-w+2)\va}\lf[2^{(w-2)\va}\lf(z'-2^{[-w\ell(Q)-u]\va}x_P\r)\r]\r|\noz\\
&\le 2^{(-w+2)a_+}\lf|2^{(w-2)\va}\lf(z'-2^{[-w\ell(Q)-u]\va}x_P\r)\r|
\ls 2^{[\ell(P)-\ell(Q)]wa_-}.
\end{align}
Let $\wz{e_Q^{\vaz,\kappa}}(\cdot):=e_Q^{\vaz,\kappa}(2^{[w\ell(Q)+u]\va}\cdot)$.
For any $\az\in\zz_+^n$, using the H\"{o}lder inequality, we find that,
for any given $\kappa\in(0,\fz)$, any $\vaz\in(0,\fz)$ and $z\in\rn$,
\begin{align*}
\lf|\partial^{\az}\wz{e_Q^{\vaz,\kappa}}(z)\r|^2
&=\lf|\int_{\widehat{Q}_{\vaz,\kappa}}f\ast\psi_{\tau}(y)
\partial^{\az}\varphi_{\tau}(2^{[w\ell(Q)+u]\va}z-y)\,dy\,dm(\tau)\r|^2\\
&\le (c_Q^{\vaz,\kappa})^2\int_{\widehat{Q}_{\vaz,\kappa}}\lf|
\partial^{\az}\varphi_{\tau}(2^{[w\ell(Q)+u]\va}z-y)\r|^2\tau^{\nu}\,dy\,dm(\tau)\\
&\ls (c_Q^{\vaz,\kappa})^2 \sum_{\tau\sim 2^{w\ell(Q)+u}}|Q|\tau^{-\nu}\ls (c_Q^{\vaz,\kappa})^2.
\end{align*}
By this, the vanishing moments of $\varphi$ and \eqref{x3e23},
we know that, for any given $\kappa\in(0,\fz)$, any $\vaz\in(0,\fz)$
and $(y,t)\in\widehat{P}$,
\begin{align}\label{x3e24}
\lf|e_Q^{\vaz,\kappa}\ast\varphi_t(y)\r|
&=\lf|2^{[w\ell(Q)+u]\nu}\int_{\rn}e_Q^{\vaz,\kappa}(2^{[w\ell(Q)+u]\va}z)
\varphi_t(y-2^{[w\ell(Q)+u]\va}z)\,dz\r|\noz\\
&=\lf|2^{[w\ell(Q)+u]\nu}\int_{\rn}\widetilde{e_Q^{\vaz,\kappa}}(z)
\varphi_t(y-2^{[w\ell(Q)+u]\va}z)\,dz\r|\noz\\
&=\lf|2^{[w\ell(Q)+u]\nu}\int_{R_{PQ}}\lf[\wz{e_Q^{\vaz,\kappa}}(z)
-\sum_{|\az|\le s}\frac{\partial^{\az}\wz{e_Q^{\vaz,\kappa}}
(2^{[-w\ell(Q)-u]\va}x_P)}{\az!}
\lf(z-2^{[-w\ell(Q)-u]\va}x_P\r)^{\az}\r]\r.\noz\\
&\hs\hs\times \varphi_t(y-2^{[w\ell(Q)+u]\va}z)\,dz\Bigg|\noz\\
&=2^{[w\ell(Q)+u]\nu}c_Q^{\vaz,\kappa}\int_{R_{PQ}}
\lf|z-2^{[-w\ell(Q)-u]\va}x_P\r|^{s+1}\lf|
\varphi_t(y-2^{[w\ell(Q)+u]\va}z)\r|\,dz\noz\\
&\ls c_Q^{\vaz,\kappa} 2^{[\ell(P)-\ell(Q)](s+1)wa_-}.
\end{align}

Note that $\ell(P)>\ell(Q)$ and $R_Q\cap R_P\neq \emptyset$. Thus,
\begin{align}\label{x3e19}
R_P:=x_P+2^{(w[\ell(P)-1]+u+2)\va}B_0
\subset x_Q+2^{(w[\ell(Q)-1]+u+4)\va}B_0=:R_Q'.
\end{align}
Moreover, for any $x\in\rn$ such that $\widehat{P}\cap\Gamma(x)\neq\emptyset$,
we know that there exists $(y_1,t_1)\in \widehat{P}\cap\Gamma(x)$ such that
$|x-y_1|_{\va}<t_1<2^{w[\ell(P)-1]+u+1}$. Thus, by Lemma \ref{4l2x}(iv)
and \cite[Lemma 2.5(ii)]{hlyy}, we find that, for any
$x\in\rn$ such that $\widehat{P}\cap\Gamma(x)\neq\emptyset$,
\begin{align*}
x&\in y_1+2^{(w[\ell(P)-1]+u+1)\va}B_0
\subset x_P+2^{[w\ell(P)+u]\va}B_0+2^{(w[\ell(P)-1]+u+1)\va}B_0\\
&\subset x_P+2^{(w[\ell(P)-1]+u+2)\va}B_0=R_P.
\end{align*}
By this, \eqref{x3e19} and \cite[Lemma 2.7]{blyz10}, we conclude that,
for any $x\in\rn$ such that $\widehat{P}\cap\Gamma(x)\neq\emptyset$,
\begin{align*}
1\sim \frac{|R_P|}{|R_P|}\sim \frac{|R_Q'\cap R_P|}{|R_P|}
\ls M_{\rm HL}^{\va}(\mathbf{1}_{R_Q'})(x)
\ls M_{\rm HL}^{\va}(\mathbf{1}_{Q})(x),
\end{align*}
which, together with \eqref{x3e24}, implies \eqref{x3e20} also holds true
in this case. This finishes the proof of \eqref{4e11'} and hence of the
sufficiency of Theorem \ref{5t3}.
\end{proof}

Using Theorem \ref{5t4}, in \cite[Proposition 4.5]{hlyy}, Huang
 et al. further established the relation between $\vh$ and $H_A^p(\rn)$ as follows,
where $H_A^p(\rn)$ denotes the anisotropic Hardy space introduced by Bownik in
\cite[p.\,17, Definition 3.11]{mb03}.

\begin{proposition}\label{5p1}
Let $\va:=(a_1,\ldots,a_n)\in[1,\fz)^n$,
$\vp:=(\overbrace{p,\ldots,p}^{n\ \rm times})$ with $p\in(0,\fz)$
and $A$ be as in \eqref{5eq2}.
Then $\vh$ and the anisotropic Hardy space $\vAh$
coincide with equivalent quasi-norms.
\end{proposition}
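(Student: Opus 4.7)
The plan is to reduce the claim to the classical grand-maximal-function characterization by exploiting the fact that when $\vp=(p,\ldots,p)$ with $p\in(0,\fz)$ is constant, $\lv$ coincides with the classical Lebesgue space $L^p(\rn)$ with identical quasi-norms (Remark \ref{x2r1}). Thus it suffices to match the anisotropic ingredients in the definition of $\vh$ with those of Bownik's anisotropic Hardy space $\vAh$.

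First I would verify the dilation-level correspondence. Since $A=\mathrm{diag}(2^{a_1},\ldots,2^{a_n})$, one has $A^kx=(2^{ka_1}x_1,\ldots,2^{ka_n}x_n)=2^{k\va}x$ for every $k\in\zz$ and $|\det A|^k=2^{k\nu}$ with $\nu=a_1+\cdots+a_n$. Hence the expansive dyadic dilations used in Bownik's framework coincide exactly with the anisotropic dyadic dilations $2^{k\va}$ used to build $\vh$. Next I would establish the quasi-norm equivalence $|x|_{\va}^{\nu}\sim \rho_A(x)$ for $x\neq\vec0_n$, where $\rho_A$ is Bownik's step homogeneous quasi-norm: from Definition \ref{3d1}, $|x|_{\va}=t_0$ is equivalent to $t_0^{-\va}x\in\partial B_0$, and comparing $t_0$ with the unique $k\in\zz$ such that $x\in A^{k+1}B_0\setminus A^kB_0$ yields $|x|_{\va}\sim 2^k$ up to a constant depending only on $a_-$ and $a_+$, so that $\rho_A(x)\sim 2^{k\nu}\sim |x|_{\va}^{\nu}$. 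In particular, the anisotropic balls $B_{\va}(x,r)$ and Bownik's dilated balls are equivalent, and the bracket $\lg x\rg_{\va}$ is equivalent to Bownik's weight $(1+\rho_A(x))^{1/\nu}$.

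Using this equivalence, I would compare the grand maximal functions. The Schwartz seminorm $\|\varphi\|_{\cs_N(\rn)}$ in Definition \ref{3d4} is, up to multiplicative constants, equal to the analogous Bownik-type seminorm in which $\lg x\rg_{\va}$ is replaced by $(1+\rho_A(x))^{1/\nu}$; hence the admissible test function classes match up to an inflation of $N$ that can be absorbed. Moreover the non-tangential region $\{(y,t):\ y\in B_{\va}(x,t)\}$ agrees with the anisotropic cones employed in the definition of $M_N$ for $\vAh$. Consequently, for every $f\in\cs'(\rn)$ and $x\in\rn$, the non-tangential grand maximal function $M_N(f)(x)$ in the sense of Definition \ref{3d4} is pointwise equivalent to the Bownik grand maximal function (for a possibly larger parameter $N$, which is harmless thanks to Bownik's own $N$-independence lemma). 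Taking $\lv=L^p(\rn)$ norms on both sides, we conclude $\|f\|_{\vh}\sim \|f\|_{\vAh}$.

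The main obstacle will be the careful bookkeeping between the two notational frameworks — in particular, verifying that the dilation $\varphi\mapsto t^{-\nu}\varphi(t^{-\va}\cdot)$ for continuous $t>0$ can be sandwiched between its dyadic Bownik counterparts $\varphi\mapsto |\det A|^k\varphi(A^k\cdot)$, and that the admissible sets of test functions at the two endpoints are comparable uniformly in scale. Once the quasi-norm and dilation equivalences of the first two steps are in place, this sandwiching is essentially routine, and no deep new estimate is required beyond what is already available in \cite{mb03}.
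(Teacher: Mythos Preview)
Your approach is correct but takes a genuinely different route from the paper. The paper (citing \cite[Proposition~4.5]{hlyy}) establishes the identification via Theorem~\ref{5t4}, the Littlewood--Paley $g$-function characterization: since the dyadic dilations $\phi_k(\cdot)=2^{-k\nu}\phi(2^{-k\va}\cdot)$ coincide exactly with Bownik's $|\det A|^{-k}\phi(A^{-k}\cdot)$ for the diagonal $A$ in \eqref{5eq2}, the anisotropic $g$-functions in the two settings are literally the same, and the equivalence $\vh=\vAh$ then follows at once from $\lv=L^p(\rn)$ together with the known $g$-function characterization of $\vAh$. Your argument instead works directly at the level of the defining grand maximal functions, comparing the continuous-parameter operator of Definition~\ref{3d4} with Bownik's discrete-parameter version and matching the test-function classes $\cs_N$ through the quasi-norm equivalence $|x|_{\va}^{\nu}\sim\rho_A(x)$. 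This is more elementary in that it bypasses the Littlewood--Paley machinery entirely, but it requires the bookkeeping you correctly flag --- the continuous-to-discrete sandwiching of dilations and the shift in $N$ when trading $\lg\cdot\rg_{\va}$ for Bownik's step quasi-norm, both absorbed by the $N$-independence of the respective Hardy spaces. The paper's route is shorter once Theorem~\ref{5t4} is available, precisely because the $g$-function lives only at the dyadic scales where the two frameworks agree on the nose and no sandwiching is needed.
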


\subsubsection{A new proof for maximal function characterizations of $\vh$}\label{4s2.2}

This subsection is devoted to providing a new proof of maximal
function characterizations of $\vh$, which improves Theorem \ref{3t1}.
We first state the main results of this subsection as follows.

\begin{theorem}\label{3t2}
Let $\vp\in(0,\fz)^n$, $N\in\nn\cap[\lfloor\frac1{p_-}\rfloor+2\nu+3,\fz)$
and $\varphi$ be as in Theorem \ref{3t1}.
Then the following statements are mutually equivalent:
\begin{enumerate}
\item[{\rm(i)}] $f\in\vh;$
\item[{\rm(ii)}] $f\in\cs'(\rn)$ and $M_\varphi(f)\in\lv;$
\item[{\rm(iii)}] $f\in\cs'(\rn)$ and $M_\varphi^0(f)\in\lv.$
\end{enumerate}
Moreover, there exist two positive constants $C_3$ and $C_4$, independent
of $f$, such that
\begin{align*}
\|f\|_{\vh}\le C_3\lf\|M_\varphi^0(f)
\r\|_{\lv}\le C_3\lf\|M_\varphi(f)\r\|_{\lv}
\le C_4\|f\|_{\vh}.
\end{align*}
\end{theorem}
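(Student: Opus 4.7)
The plan is to establish the chain $(\mathrm{i})\Rightarrow(\mathrm{ii})\Rightarrow(\mathrm{iii})\Rightarrow(\mathrm{i})$. The first two implications are immediate from the pointwise bounds
\begin{align*}
M_\varphi^0(f)(x)\le M_\varphi(f)(x)\le \|\varphi\|_{\cs_N(\rn)}\,M_N(f)(x),
\end{align*}
obtained by taking $y=x$ in the non-tangential supremum and by normalizing $\varphi$ so that $\varphi/\|\varphi\|_{\cs_N(\rn)}\in\cs_N(\rn)$. Both yield the corresponding $\lv$-norm estimates directly. The entire substance of the theorem lies in the reverse implication $(\mathrm{iii})\Rightarrow(\mathrm{i})$, where the improvement over Theorem \ref{3t1} concerns the lowered threshold $N\geq\lfloor 1/p_-\rfloor+2\nu+3$.

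The approach is to introduce, for each $K\in(0,\fz)$, the tangential (Peetre-type) maximal function
\begin{align*}
M_\varphi^{\ast\ast,K}(f)(x):=\sup_{(y,t)\in\rn\times(0,\fz)}
\frac{|\varphi_t\ast f(y)|}{(1+t^{-1}|x-y|_{\va})^K},
\end{align*}
and to prove two assertions. \emph{Step A (subordination).} For $N$ as in the hypothesis and a suitable $K=K(N,\vp)$, the pointwise comparison
\begin{align*}
M_N(f)(x)\ls M_\varphi^{\ast\ast,K}(f)(x)
\end{align*}
holds for every $f\in\cs'(\rn)$. This is proved by expanding an arbitrary $\psi\in\cs_N(\rn)$ by an anisotropic Calder\'on reproducing formula (Lemma \ref{x4l1}) with an auxiliary compactly supported bump $\eta$, writing $\psi_t=\sum_{k\in\zz}\psi\ast\eta_k\ast\varphi_k$, estimating the kernel $\psi\ast\eta_k\ast\varphi_k$ via Lemma \ref{4l3} in terms of the anisotropic quasi-homogeneous norm, and summing. \emph{Step B ($\lv$-boundedness).} For $K>\nu/p_-$,
\begin{align*}
\lf\|M_\varphi^{\ast\ast,K}(f)\r\|_{\lv}\ls \lf\|M_\varphi^0(f)\r\|_{\lv}.
\end{align*}
The key ingredient is the pointwise majorization
\begin{align*}
M_\varphi^{\ast\ast,K}(f)(x)\ls\lf\{M_{\rm HL}^{\va}\lf([M_\varphi^0(f)]^r\r)(x)\r\}^{1/r}
\end{align*}
valid for every $r\in(\nu/K,\min\{1,p_-\})$, a consequence of the subharmonicity-type inequality for $M_\varphi^0(f)$ established (in the classical setting) by Fefferman and Stein and adapted to the anisotropic setting in \cite{mb03}. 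Once this is available, choose $r$ so that $\vp/r\in(1,\fz)^n$ and apply Lemma \ref{3l1} to obtain the desired $\lv$-estimate.

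Combining Steps A and B with the threshold $N\geq\lfloor 1/p_-\rfloor+2\nu+3$, which is dictated exactly by requiring the subordination sum in Step A to converge and $K=N-\nu>\nu/p_-$, yields $\|f\|_{\vh}=\|M_N(f)\|_{\lv}\ls\|M_\varphi^0(f)\|_{\lv}$, closing the chain. A minor technical point, handled by a standard truncation argument (replacing $\varphi_t\ast f$ by $\varphi_t\ast f\cdot\mathbf{1}_{B_{\va}(\vec 0_n,R)}$ and letting $R\to\fz$), is the a priori finiteness of $M_\varphi^{\ast\ast,K}(f)$ needed to make the pointwise bound in Step B rigorous.

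The main obstacle will be Step A, namely pushing the exponent $N$ down to $\lfloor 1/p_-\rfloor+2\nu+3$. The subordination argument in \cite[Theorem 3.1]{cgn17} carries an implicit loss of the factor $a_+/a_-$ coming from converting $\lg\cdot\rg_{\va}$-decay into Euclidean decay; the improvement is achieved by performing the estimate entirely in the anisotropic norm $|\cdot|_{\va}$, so that only the homogeneous dimension $\nu$ (rather than $\nu\,a_+/a_-$) enters the decay condition on $\psi\ast\eta_k\ast\varphi_k$. Coupled with the sharp anisotropic Peetre-to-radial inequality available once $K>\nu/p_-$, this yields the stated lower threshold on $N$.
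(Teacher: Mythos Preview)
Your overall architecture---reduce the grand maximal function to a Peetre-type tangential maximal function, then control the latter by the radial one via $M_{\rm HL}^{\va}$---is essentially the same strategy the paper follows (its $T_\varphi^{N(K,L)}$ is a truncated Peetre maximal function). However, your Step~A contains a genuine gap. You propose to subordinate $M_N(f)$ to $M_\varphi^{\ast\ast,K}(f)$ by invoking Lemma~\ref{x4l1}, but that lemma is the \emph{homogeneous} Calder\'on reproducing formula: it requires both building blocks to have vanishing moments and to satisfy a Tauberian condition away from the origin. Here $\varphi$ satisfies $\int_{\rn}\varphi\neq0$, so it has no vanishing moments at all, and the formula $\psi_t=\sum_k\psi\ast\eta_k\ast\varphi_k$ you write down is not what Lemma~\ref{x4l1} provides. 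The correct subordination tool is Lemma~\ref{3l6} (Bownik's lemma for $\int\varphi\neq0$), which the paper quotes directly and which already delivers the sharp relation $I=N+2(\nu+1)$ when $L=0$; with $N>1/p_-$ this gives exactly the threshold $I\ge\lfloor 1/p_-\rfloor+2\nu+3$. Your own bookkeeping ``$K=N-\nu>\nu/p_-$'' does not reproduce this threshold.

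A second, smaller issue is the a~priori finiteness of $M_\varphi^{\ast\ast,K}(f)$ needed to justify the pointwise Peetre-to-radial inequality. Your proposed fix---multiplying $\varphi_t\ast f$ by $\mathbf 1_{B_{\va}(\vec 0_n,R)}$---does not obviously commute with the supremum defining $M_\varphi^{\ast\ast,K}$ or with the subharmonicity-type estimate. The paper instead works throughout with the damped maximal functions $M_\varphi^{0(K,L)}$, $M_\varphi^{(K,L)}$, $T_\varphi^{N(K,L)}$ of Definition~\ref{3d7}: Lemma~\ref{3l7} guarantees $M_\varphi^{(K,L)}(f)\in\lv$ for a suitable $L$, the good-set argument with $G_K$ then yields $\|M_\varphi^{(K,L)}(f)\|_{\lv}\ls\|M_\varphi^{0(K,L)}(f)\|_{\lv}$ (this is the step $(\mathrm{iii})\Rightarrow(\mathrm{ii})$), and one lets $K\to\fz$ via Lemma~\ref{3l5}. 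This $(K,L)$-truncation is the standard device from \cite{mb03} and is what makes the argument rigorous.
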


\begin{remark}\label{3r2}
Note that, in Theorem \ref{3t1},
the exponent $N$ belongs to
$$\mathbb{N}\cap\lf[\lf\lfloor\nu\frac{a_+}{a_-}
\lf(\frac{1}{\min\{1,p_-\}}+1\r)+\nu+2a_+\r\rfloor+1,\fz\r),$$
which is a proper subset of
$$\nn\cap\lf[\lf\lfloor\frac1{p_-}\r\rfloor+2\nu+3,\fz\r).$$
In this sense, Theorem \ref{3t2} improves Theorem \ref{3t1}.
\end{remark}

To show Theorem \ref{3t2}, we need several technical lemmas.
We begin with introducing some notation as follows.
In what follows, for any $x\in\rn$, let
\begin{equation*}
\rho_{\va}(x):=\sum_{j\in\mathbb{Z}}
2^{\nu j}\mathbf{1}_{2^{(j+1)\va}B_0\setminus 2^{j\va}B_0}(x)\hspace{0.25cm}
{\rm when}\ x\neq\vec0_n,\hspace{0.35cm} {\rm or\ else}
\hspace{0.25cm}\rho_{\va}(\vec0_n):=0.
\end{equation*}

We first recall the following notions of some maximal functions,
which are used later to prove Theorem \ref{3t2}.

\begin{definition}\label{3d7}
Let $K\in\zz$, $L\in[0,\fz)$ and $N\in\nn$. For any $\varphi\in\cs(\rn)$,
the \emph{maximal functions} $M_\varphi^{0(K,L)}(f)$,
$M_\varphi^{(K,L)}(f)$ and $T_\varphi^{N(K,L)}(f)$ of
$f\in\cs'(\rn)$ are defined, respectively, by
setting, for any $x\in\rn$,
\begin{align*}
M_\varphi^{0(K,L)}(f)(x):=\sup_{k\in\mathbb{Z},\,k\le K}
\lf|(f\ast\varphi_k)(x)\r|\lf[\max\lf\{1,\rho_{\va}\lf(2^{-\va K}x\r)\r\}\r]^
{-L}\lf[1+2^{-\nu(k+K)}\r]^{-L},
\end{align*}
\begin{align*}
M_\varphi^{(K,L)}(f)(x):=\sup_{k\in\mathbb{Z},\,k\le K}
\sup_{y\in B_{\va}(x,2^k)}\lf|(f\ast\varphi_k)(y)\r|\lf[\max\lf\{1,\rho_{\va}
\lf(2^{-\va K}y\r)\r\}\r]^{-L}\lf[1+2^{-\nu(k+K)}\r]^{-L}
\end{align*}
and
\begin{align*}
T_\varphi^{N(K,L)}(f)(x):=\sup_{k\in\mathbb{Z},\,k\le K}
\sup_{y\in\rn}\frac{|(f\ast\varphi_k)(y)|}{[\max\{1,\rho_{\va}
(2^{-\va k}(x-y))\}]^{N}}\frac{[1+2^{-\nu(k+K)}]^{-L}}
{[\max\{1,\rho_{\va}(2^{-\va K}y)\}]^{L}},
\end{align*}
here and thereafter, for any $\varphi\in\cs(\rn)$ and $k\in\zz$,
let $\varphi_{k}(\cdot):=2^{-k\nu}\varphi(2^{-k\va}\cdot)$.
Moreover, the \emph{maximal functions} $M_N^{0(K,L)}(f)$
and $M_N^{(K,L)}(f)$ of $f\in\cs'(\rn)$ are defined, respectively, by
setting, for any $x\in\rn$,
\begin{align*}
M_N^{0(K,L)}(f)(x):=
\sup_{\varphi\in\cs_N(\rn)}M_\varphi^{0(K,L)}(f)(x)
\end{align*} and
\begin{align*}
M_N^{(K,L)}(f)(x):=
\sup_{\varphi\in\cs_N(\rn)}M_\varphi^{(K,L)}(f)(x).
\end{align*}
\end{definition}

The following Lemmas \ref{3l2} and \ref{3l3} are just \cite[Lemma 2.3]{abr16}
with $A$ as in \eqref{5eq2} and \cite[Remark 2.8(iii)]{hlyy}, respectively.

\begin{lemma}\label{3l2}
There exists a positive constant $C$ such that, for any
$K\in\zz$, $L\in[0,\fz)$, $\lz\in(0,\fz)$, $N\in\nn\cap[\frac1{\lz},\fz)$,
$\varphi\in\cs(\rn)$, $f\in\cs'(\rn)$ and $x\in\rn$,
$$\lf[T_\varphi^{N(K,L)}(f)(x)\r]^{\lz}\le CM_{{\rm HL}}^{\va}
\lf(\lf[M_\varphi^{(K,L)}(f)\r]^{\lz}\r)(x),$$
where $T_\varphi^{N(K,L)}$ and $M_\varphi^{(K,L)}$ are as in Definition \ref{3d1}
and $M_{{\rm HL}}^{\va}$ denotes the anisotropic
Hardy--Littlewood maximal operator as in \eqref{3e4}.
\end{lemma}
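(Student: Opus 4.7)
The plan is to reduce the tangential maximal function $T_\varphi^{N(K,L)}(f)$ at the point $x$ to a pointwise average of $[M_\varphi^{(K,L)}(f)]^{\lz}$ over a suitable anisotropic ball centred at $x$, and then to invoke the obvious lower bound of $M_{\rm HL}^{\va}$ by such averages. The weight produced by this reduction will be a single power of $\max\{1,\rho_{\va}(2^{-\va k}(x-y))\}$, and the hypothesis $N\lz\ge 1$ is precisely what permits this power to be absorbed by the tangential factor $[\max\{1,\rho_{\va}(2^{-\va k}(x-y))\}]^{-N}$ already built into the definition of $T_\varphi^{N(K,L)}(f)$.

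First I would fix $x\in\rn$, $k\in\zz$ with $k\le K$, and $y\in\rn$, and derive a pointwise lower bound for $M_\varphi^{(K,L)}(f)(z)$ in terms of $|(f\ast\varphi_k)(y)|$ that holds uniformly over $z\in B_{\va}(y,2^k)$. Since $|\cdot|_{\va}$ is symmetric, $z\in B_{\va}(y,2^k)$ forces $y\in B_{\va}(z,2^k)$, so plugging $y$ into the inner supremum defining $M_\varphi^{(K,L)}(f)(z)$ gives
$$|(f\ast\varphi_k)(y)|\le M_\varphi^{(K,L)}(f)(z)\,\bigl[\max\{1,\rho_{\va}(2^{-\va K}y)\}\bigr]^{L}\bigl[1+2^{-\nu(k+K)}\bigr]^{L}.$$
The crucial observation is that the weight factors on the right are evaluated at the fixed point $y$ rather than at the running variable $z$, so the bound is uniform in $z\in B_{\va}(y,2^k)$ after raising to the power $\lz$.

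Next I would set $r:=\max\{|x-y|_{\va},2^k\}$ and use the quasi-triangle inequality for $|\cdot|_{\va}$ to obtain an inclusion $B_{\va}(y,2^k)\subset B_{\va}(x,Cr)$ with $C\in[1,\fz)$ depending only on $\va$. Averaging the $\lz$th power of the preceding display over $B_{\va}(x,Cr)$, discarding the positive integrand on the complement of $B_{\va}(y,2^k)$, and using $|B_{\va}(x,Cr)|/|B_{\va}(y,2^k)|\sim \max\{1,\rho_{\va}(2^{-\va k}(x-y))\}$, I expect to arrive at
$$|(f\ast\varphi_k)(y)|^{\lz}\bigl[\max\{1,\rho_{\va}(2^{-\va K}y)\}\bigr]^{-L\lz}\bigl[1+2^{-\nu(k+K)}\bigr]^{-L\lz}\lesssim \max\{1,\rho_{\va}(2^{-\va k}(x-y))\}\,M_{\rm HL}^{\va}\!\Bigl(\bigl[M_\varphi^{(K,L)}(f)\bigr]^{\lz}\Bigr)(x).$$

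Dividing both sides by $[\max\{1,\rho_{\va}(2^{-\va k}(x-y))\}]^{N\lz}$ and invoking $N\lz\ge 1$ absorbs the remaining weight, and taking the supremum over $k\le K$ and $y\in\rn$ on the left reproduces $[T_\varphi^{N(K,L)}(f)(x)]^{\lz}$. The only place where genuine care is required is verifying the anisotropic geometry in the second step: one must pin down both the inclusion $B_{\va}(y,2^k)\subset B_{\va}(x,Cr)$ with a constant $C$ independent of $k,K,x,y$ and the volume comparability $|B_{\va}(x,Cr)|\sim 2^{k\nu}\max\{1,\rho_{\va}(2^{-\va k}(x-y))\}$. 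The first of these is essentially the quasi-triangle inequality together with the scaling identity $B_{\va}(z,t)=z+t^{\va}B_0$, and the second follows from $|B_{\va}(z,s)|=\upsilon_n s^{\nu}$ combined with the definition of $\rho_{\va}$. Once these geometric facts are in place, the remaining chain of inequalities is a routine substitution.
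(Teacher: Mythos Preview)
Your argument is correct and is essentially the standard Fefferman--Stein reduction of a tangential maximal function to the Hardy--Littlewood maximal of the nontangential one; the geometric facts you flag (the quasi-triangle inequality $|x-z|_{\va}\le C_0(|x-y|_{\va}+|y-z|_{\va})$, the scaling $|t^{\va}w|_{\va}=t|w|_{\va}$, and the comparability $\rho_{\va}(w)\sim\max\{1,|w|_{\va}^{\nu}\}$ for $|w|_{\va}\ge 1$) are all available in this anisotropic setting, and none of the implicit constants depend on $K$, $L$, $\lz$, or $N$. The one cosmetic point is that $M_{\rm HL}^{\va}$ is defined in the paper via anisotropic \emph{cubes} $Q_{\va}(x,r)=x+r^{\va}(-1,1)^n$ rather than balls, but since $B_{\va}(x,r)$ and $Q_{\va}(x,r)$ are nested up to a fixed dilation this only affects the constant.

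The paper itself does not supply a proof: it records the lemma as a direct consequence of \cite[Lemma~2.3]{abr16} specialised to the diagonal dilation $A$ in \eqref{5eq2}. Your write-up therefore goes further than the paper by giving the argument explicitly; the route you take is the same as the one underlying the cited reference.
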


\begin{lemma}\label{3l3}
Let $\vp\in [1,\fz)^n$. Then, for any $r\in (0,\fz)$ and $f\in\lv$,
$$\lf\||f|^r\r\|_{\lv}=\|f\|_{L^{r \vp}(\rn)}^r.$$
In addition, for any $\mu\in{\mathbb C}$, $\theta\in [0,\min\{1,p_-\}]$
with $p_-$ as in \eqref{2e10} and $f,\ g\in\lv$,
$\|\mu f\|_{\lv}=|\mu|\|f\|_{\lv}$ and
\begin{align*}
\|f+g\|_{\lv}^{\theta}\le \|f\|_{\lv}^{\theta}+\|g\|_{\lv}^{\theta}.
\end{align*}
\end{lemma}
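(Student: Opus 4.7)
The plan is to prove the three assertions of Lemma \ref{3l3} directly from the iterated definition of the mixed-norm $\|\cdot\|_{\lv}$ in Definition \ref{x2d1}, unwinding the nested integrals from the innermost to the outermost and exploiting that raising to a positive power $r$ commutes with the nested structure.

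For the identity $\||f|^r\|_{\lv}=\|f\|_{L^{r\vp}(\rn)}^r$, I would simply write out both sides using Definition \ref{x2d1}. The left-hand side reads
\[
\lf\{\int_{\mathbb R}\cdots\lf[\int_{\mathbb R}|f(x_1,\ldots,x_n)|^{rp_1}\,dx_1\r]^{\frac{p_2}{p_1}}\cdots dx_n\r\}^{\frac{1}{p_n}},
\]
while the right-hand side, by pulling the outer power $r$ inside, becomes
\[
\lf\{\int_{\mathbb R}\cdots\lf[\int_{\mathbb R}|f(x_1,\ldots,x_n)|^{rp_1}\,dx_1\r]^{\frac{rp_2}{rp_1}}\cdots dx_n\r\}^{\frac{r}{rp_n}}.
\]
Since $\frac{rp_{j+1}}{rp_j}=\frac{p_{j+1}}{p_j}$ for every $j\in\{1,\ldots,n-1\}$ and $\frac{r}{rp_n}=\frac{1}{p_n}$, the two expressions coincide (with the obvious modifications when some $p_j=\fz$, handled coordinate-wise as in Definition \ref{x2d1}).

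For the positive homogeneity $\|\mu f\|_{\lv}=|\mu|\,\|f\|_{\lv}$, I would factor $|\mu|^{p_1}$ out of the innermost integral, then successively factor $|\mu|^{p_2}$ after taking the $p_2/p_1$ power, and so on; the cumulative exponent on $|\mu|$ telescopes to $1$ after the outer $1/p_n$ root. For the $\theta$-subadditivity, I would first invoke Theorem \ref{x2t1}, which yields, under the hypothesis $\vp\in[1,\fz)^n$, that $(\lv,\|\cdot\|_{\lv})$ is a Banach space, so in particular
\[
\|f+g\|_{\lv}\le \|f\|_{\lv}+\|g\|_{\lv}.
\]
Since $\vp\in[1,\fz)^n$ forces $p_-\ge 1$ and hence $\min\{1,p_-\}=1$, the permitted range of $\theta$ is $[0,1]$. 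For any $\theta\in[0,1]$ and any non-negative reals $a$, $b$, the elementary concavity inequality $(a+b)^\theta\le a^\theta+b^\theta$ holds; applying it with $a:=\|f\|_{\lv}$ and $b:=\|g\|_{\lv}$ completes the proof.

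There is no real obstacle here: the computations are entirely mechanical, with the only subtlety being the book-keeping of the nested exponents in the first identity and a careful handling of the modifications when $p_i=\fz$ for some coordinate (which is excluded here by the assumption $\vp\in[1,\fz)^n$, so no such case arises). The triangle inequality step relies crucially on already having Theorem \ref{x2t1} at our disposal, which is why the hypothesis $\vp\in[1,\fz)^n$ cannot be relaxed within this argument.
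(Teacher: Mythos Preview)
Your proof is correct. The paper does not supply its own proof of this lemma; it simply cites it as \cite[Remark 2.8(iii)]{hlyy}, so there is nothing to compare against, and your direct verification from Definition \ref{x2d1} together with the triangle inequality from Theorem \ref{x2t1} and the elementary inequality $(a+b)^\theta\le a^\theta+b^\theta$ for $\theta\in[0,1]$ is entirely adequate.
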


Applying the monotone convergence theorem (see \cite[p.\,62, Corollary 1.9]{ss05}) and
\cite[p.\,304, Theorem 2]{bp61}, we have the following monotone
convergence property of $\lv$ and we omit the details.

\begin{lemma}\label{3l5}
Let $\vp\in[1,\fz)^n$ and $\{g_i\}_{i\in\nn}\subset\lv$
be any sequence of non-negative functions satisfying that
$g_i$, as $i\to\fz$, increases pointwisely almost everywhere
to some $g\in\lv$. Then
$$\|g-g_i\|_{\lv}\to0\hspace{0.5cm} as\ \ i\to\fz.$$
\end{lemma}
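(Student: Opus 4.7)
The strategy is to set $h_i := g - g_i$ and show $\|h_i\|_{\lv}\to 0$ by iterating the monotone (equivalently, dominated) convergence theorem in each of the $n$ variables of the iterated mixed-norm integral. Note that $\{h_i\}_{i\in\nn}$ is a sequence of measurable functions with $0\le h_i\le g$ and $h_i\downarrow 0$ pointwise a.e.\ as $i\to\fz$, and that $g\in\lv$ supplies the required integrable dominating functions at every stage.

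First, I would unfold the quasi-norm as
$$
\|h_i\|_{\lv}=\lf\{\int_\rr\cdots\lf[\int_\rr [h_i(x_1,\ldots,x_n)]^{p_1}\,dx_1\r]^{p_2/p_1}\cdots\,dx_n\r\}^{1/p_n}.
$$
By the Tonelli theorem together with $g\in\lv$, for almost every $(x_2,\ldots,x_n)\in\rr^{n-1}$ the slice $g(\cdot,x_2,\ldots,x_n)$ lies in $L^{p_1}(\rr)$, so $g(\cdot,x_2,\ldots,x_n)^{p_1}\in L^1(\rr)$. Since $h_i(\cdot,x_2,\ldots,x_n)^{p_1}\downarrow 0$ pointwise a.e.\ and is dominated by this integrable function, Corollary 1.9 of \cite{ss05} (monotone convergence for decreasing sequences, which is DCT in disguise) gives
$$
F_i(x_2,\ldots,x_n):=\int_\rr h_i(x_1,\ldots,x_n)^{p_1}\,dx_1\;\downarrow\;0\quad\text{as }i\to\fz
$$
for a.e.\ $(x_2,\ldots,x_n)$, with $F_i\le F:=\int_\rr g^{p_1}\,dx_1$.

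Next, since $t\mapsto t^{p_2/p_1}$ is continuous and monotone on $[0,\fz)$, one has $F_i^{p_2/p_1}\downarrow 0$ a.e., dominated by $F^{p_2/p_1}$; the latter is in $L^1_{x_2}(\rr)$ for a.e.\ $(x_3,\ldots,x_n)$, again by Tonelli and $g\in\lv$. Applying the same version of the monotone convergence theorem in the $x_2$-variable drives $\int_\rr F_i^{p_2/p_1}\,dx_2$ to $0$ a.e.\ in $(x_3,\ldots,x_n)$. Iterating this procedure through the remaining variables $x_3,\ldots,x_n$, where at the $j$-th stage the dominating function is an integrable power of $g$ provided by the hypothesis $g\in\lv$, we arrive at $\|h_i\|_{\lv}^{p_n}\to 0$, hence $\|h_i\|_{\lv}\to 0$. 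The only point requiring care, and the main (mild) obstacle, is bookkeeping: at each stage one must verify, using Tonelli on $g\in\lv$, that the newly formed dominating function is integrable with respect to the next variable for almost every choice of the remaining variables; this is precisely what the definition of the mixed Lebesgue norm guarantees, so no additional input beyond the stated monotone convergence theorem and \cite[p.\,304, Theorem 2]{bp61} is needed.
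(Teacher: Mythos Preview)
Your proof is correct: setting $h_i:=g-g_i$ and iterating the dominated/monotone convergence theorem through each of the $n$ variables, using at each step that the corresponding partial integral of $g$ furnishes an integrable majorant, is a clean and complete argument. The paper does not give a proof --- it only cites the monotone convergence theorem \cite[p.\,62, Corollary~1.9]{ss05} together with the dual representation \cite[p.\,304, Theorem~2]{bp61} and omits the details --- so your variable-by-variable iteration is a faithful (and arguably the most direct) way to fill in what the authors indicate. One small remark: although you mention \cite[p.\,304, Theorem~2]{bp61} in your closing sentence, your argument never actually invokes the dual representation; everything runs on Tonelli plus dominated convergence, so you may simply drop that reference.
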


By Lemmas \ref{3l1}, \ref{3l2} and \ref{3l3}, we easily
obtain the following conclusion; the details are omitted.

\begin{lemma}\label{3l4}
Let $\vp\in(0,\fz)^n$. Then there exists a positive constant $C$ such that,
for any $K\in\zz$, $L\in[0,\fz)$, $N\in\nn\cap(\frac1{p_-},\fz)$,
$\varphi\in\cs(\rn)$ and $f\in\cs'(\rn)$,
\begin{align*}
\lf\|T_\varphi^{N(K,L)}(f)\r\|_{\lv}
\le C\lf\|M_\varphi^{(K,L)}(f)\r\|_{\lv},
\end{align*}
where $T_\varphi^{N(K,L)}$ and $M_\varphi^{(K,L)}$ are as in Definition \ref{3d1}.
\end{lemma}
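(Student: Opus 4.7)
The plan is to combine the pointwise bound from Lemma \ref{3l2} with the scaling identity in Lemma \ref{3l3} and the vector-valued (in this case, scalar) boundedness of the anisotropic Hardy--Littlewood maximal operator on mixed Lebesgue spaces from Lemma \ref{3l1}. The key observation is that the hypothesis $N\in\nn\cap(1/p_-,\fz)$ allows us to choose an exponent $\lz$ strictly between $1/N$ and $p_-$, so that $\vp/\lz$ falls inside the region $(1,\fz)^n$ where Lemma \ref{3l1} applies.

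First, I would fix $\vp\in(0,\fz)^n$ and $N\in\nn\cap(1/p_-,\fz)$, and pick $\lz\in(1/N,\min\{1,p_-\})$; this is possible precisely because $1/N<p_-$. By Lemma \ref{3l2} (with this $\lz$, noting that $N\ge1/\lz$), we obtain the pointwise estimate
\begin{align*}
\lf[T_\varphi^{N(K,L)}(f)(x)\r]^{\lz}
\le C\,M_{{\rm HL}}^{\va}\!\lf(\lf[M_\varphi^{(K,L)}(f)\r]^{\lz}\r)(x)
\end{align*}
for every $x\in\rn$, uniformly in $K\in\zz$, $L\in[0,\fz)$, $\varphi\in\cs(\rn)$ and $f\in\cs'(\rn)$.

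Next, I would take the $\lv$ quasi-norm on both sides. Applying Lemma \ref{3l3} (with $r:=\lz$) to rewrite
\begin{align*}
\lf\|\lf[T_\varphi^{N(K,L)}(f)\r]^{\lz}\r\|_{L^{\vp/\lz}(\rn)}
=\lf\|T_\varphi^{N(K,L)}(f)\r\|_{\lv}^{\lz},
\end{align*}
and similarly for the right-hand side, reduces the problem to an estimate on the space $L^{\vp/\lz}(\rn)$. Since $\lz<p_-$, we have $\vp/\lz\in(1,\fz)^n$, so Lemma \ref{3l1} yields
\begin{align*}
\lf\|M_{{\rm HL}}^{\va}\!\lf(\lf[M_\varphi^{(K,L)}(f)\r]^{\lz}\r)\r\|_{L^{\vp/\lz}(\rn)}
\le C\lf\|\lf[M_\varphi^{(K,L)}(f)\r]^{\lz}\r\|_{L^{\vp/\lz}(\rn)}.
\end{align*}
Combining these three steps and taking the $\lz$-th root delivers the desired inequality with a constant depending only on $\vp$ and $N$ (through $\lz$).

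There is no real obstacle here; the only subtlety is the choice of the auxiliary exponent $\lz$, which must be simultaneously small enough that $N\ge1/\lz$ (so that Lemma \ref{3l2} applies) and small enough that $\vp/\lz\in(1,\fz)^n$ (so that Lemma \ref{3l1} applies). The hypothesis $N>1/p_-$ is exactly what makes both constraints compatible, which is why this assumption appears in the statement.
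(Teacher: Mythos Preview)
Your proposal is correct and follows exactly the approach the paper indicates: the paper states that the lemma follows easily from Lemmas \ref{3l1}, \ref{3l2} and \ref{3l3} and omits the details, and your argument is precisely the natural way to combine these three ingredients via an auxiliary exponent $\lz\in(1/N,p_-)$.
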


To prove Theorem \ref{3t2}, we also need
the following three technical lemmas, which are just
\cite[p.\,45, Lemma 7.5, p.\,46, Lemma 7.6 and p.\,11, Lemma 3.2]{mb03}
with $A$ as in \eqref{5eq2}, respectively.
In what follows, for any $t\in\mathbb{R}$,
we denote by $\lceil t\rceil$ the \emph{least integer not less
than $t$}.

\begin{lemma}\label{3l6}
Let $\varphi\in\cs(\rn)$ and
$\int_{\rn}\varphi(x)\,dx\neq0$. Then,
for any given $N\in\mathbb{N}$ and $L\in[0,\fz)$, there
exist an $I=N+2(\nu+1)+L\lceil\frac{\nu}{a_-}\rceil$ and
a positive constant $C$, depending on $N$ and $L$,
such that, for any $K\in\zz_+$, $f\in\cs'(\rn)$ and $x\in\rn$,
$$M_I^{0(K,L)}(f)(x)\le C T_\varphi^{N(K,L)}(f)(x),$$
where $M_I^{0(K,L)}$ and $T_\varphi^{N(K,L)}$ are as in Definition \ref{3d1}.
\end{lemma}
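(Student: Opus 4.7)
The plan is to adapt the subordination argument of Bownik (\cite[p.\,45]{mb03}) to the anisotropic mixed-norm setting. Since $\int_{\rn}\varphi(x)\,dx\neq 0$, a standard construction (see, for instance, \cite[Lemma 6.9]{mb03} with $A$ as in \eqref{5eq2}) produces two families of Schwartz functions $\{\eta^{(j)}\}_{j\in\zz_+}\subset\cs(\rn)$ and $\{\theta^{(j)}\}_{j\in\zz_+}\subset\cs(\rn)$, uniformly bounded in $\cs(\rn)$, such that for every $\phi\in\cs_I(\rn)$ one has the discrete reproducing identity
\begin{equation*}
\phi=\sum_{j\in\zz_+}\eta^{(j)}_{j}\ast\varphi_{j},
\end{equation*}
together with pointwise control $\|\eta^{(j)}\|_{\cs_M(\rn)}\lesssim 2^{-ja_-(I-M-2(\nu+1))}$ for any $M\in\zz_+$ with $M\le I$, where the implicit constant depends only on $\varphi,\,I,\,M$. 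Applied at scale $k\le K$, this gives $\phi_k=\sum_{j\ge 0}\eta^{(j)}_{k+j}\ast\varphi_{k+j}$.

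Given this, the first step is to convolve against $f\in\cs'(\rn)$ and evaluate at a point $y\in\rn$:
\begin{equation*}
(f\ast\phi_k)(y)=\sum_{j\in\zz_+}\int_{\rn}(f\ast\varphi_{k+j})(z)\,\eta^{(j)}_{k+j}(y-z)\,dz.
\end{equation*}
Using the definition of $T_\varphi^{N(K,L)}(f)$, bound the integrand by
\begin{equation*}
\bigl|(f\ast\varphi_{k+j})(z)\bigr|\le T_\varphi^{N(K,L)}(f)(x)\bigl[\max\{1,\rho_{\va}(2^{-\va(k+j)}(x-z))\}\bigr]^{N}\bigl[\max\{1,\rho_{\va}(2^{-\va K}z)\}\bigr]^{L}\bigl[1+2^{-\nu(k+j+K)}\bigr]^{L},
\end{equation*}
valid whenever $k+j\le K$ (and when $k+j>K$ one cuts off at $K$ using the $L$-decay in $2^{-\nu(k+j+K)}$). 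Then exploit the rapid decay $|\eta^{(j)}_{k+j}(y-z)|\lesssim 2^{-ja_-(I-M-2(\nu+1))}\,2^{-(k+j)\nu}\langle 2^{-\va(k+j)}(y-z)\rangle_{\va}^{-M}$, choosing $M$ large.

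The second step is to pull out the factors $[\max\{1,\rho_{\va}(2^{-\va K}y)\}]^{-L}[1+2^{-\nu(k+K)}]^{-L}$ that appear in the definition of $M_I^{0(K,L)}(f)(x)=\sup_{\phi\in\cs_I(\rn)}M_\phi^{0(K,L)}(f)(x)$ (evaluated at $y=x$), and compare them with the weights in the integrand. Since $\rho_{\va}(2^{-\va K}z)\lesssim\max\{1,\rho_{\va}(2^{-\va K}x)\}\max\{1,\rho_{\va}(2^{-\va(k+j)}(x-z))\}^{\lceil\nu/a_-\rceil}$ (this is the standard triangle-type inequality for the anisotropic norm, using $2^{j a_-}\le 2^{j\nu/\lceil\nu/a_-\rceil^{-1}}$), the $L$-power of the middle factor costs exactly $L\lceil\nu/a_-\rceil$ units in the decay exponent $M$. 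The integral in $z$ then converges provided $M>N+L\lceil\nu/a_-\rceil+\nu$.

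The last step is to sum in $j$: one needs $I-M-2(\nu+1)>0$, so choosing $M=N+L\lceil\nu/a_-\rceil+\nu+1$ and $I=N+2(\nu+1)+L\lceil\nu/a_-\rceil$ exactly balances the budget (the extra $\nu+1$ absorbs the $2^{-(k+j)\nu}$ Jacobian and a convergent geometric series in $j$). Taking the supremum over $\phi\in\cs_I(\rn)$, over $k\le K$, and multiplying by the weight $[\max\{1,\rho_{\va}(2^{-\va K}x)\}]^{-L}[1+2^{-\nu(k+K)}]^{-L}$ yields $M_I^{0(K,L)}(f)(x)\lesssim T_\varphi^{N(K,L)}(f)(x)$. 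The principal obstacle is the bookkeeping in the second step, namely verifying that the anisotropic weight $[\max\{1,\rho_{\va}(2^{-\va K}\cdot)\}]^{L}$ transplants from $z$ to $y$ at a cost of precisely $L\lceil\nu/a_-\rceil$ powers of the middle weight; once this is established, the remaining estimates are standard.
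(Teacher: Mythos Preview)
The paper does not supply its own proof of this lemma; it simply records that the statement is \cite[p.\,45, Lemma~7.5]{mb03} specialized to the diagonal dilation $A$ in \eqref{5eq2}. Your sketch is precisely the adaptation of Bownik's argument that this citation points to: the Calder\'on-type resolution of an arbitrary $\phi\in\cs_I(\rn)$ through dilates of $\varphi$, followed by the weight-transfer estimate and the geometric summation in $j$, with the exponent bookkeeping forcing $I=N+2(\nu+1)+L\lceil\nu/a_-\rceil$. So your approach and the paper's (implicit) approach coincide.

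One remark on the part you flag as the principal obstacle. Your displayed inequality
\[
\max\{1,\rho_{\va}(2^{-\va K}z)\}\lesssim\max\{1,\rho_{\va}(2^{-\va K}x)\}\,\bigl[\max\{1,\rho_{\va}(2^{-\va(k+j)}(x-z))\}\bigr]^{\lceil\nu/a_-\rceil}
\]
is not where the $\lceil\nu/a_-\rceil$ actually enters. Since $\rho_{\va}$ is genuinely $A$-homogeneous ($\rho_{\va}(2^{m\va}\cdot)=2^{m\nu}\rho_{\va}(\cdot)$) and satisfies a quasi-triangle inequality, the passage from scale $K$ to scale $k+j\le K$ in the $z$-weight costs only a factor $\max\{1,\rho_{\va}(2^{-\va(k+j)}(x-z))\}$ to the first power. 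The $\lceil\nu/a_-\rceil$ instead arises when you convert between the Schwartz seminorms $\|\cdot\|_{\cs_M(\rn)}$ (which are built from the Euclidean weight $(1+|x|)^M$ or the anisotropic bracket $\langle x\rangle_{\va}^M$) and powers of $\rho_{\va}$; by Lemma~\ref{3l8} one has $(1+|x|)\lesssim[\max\{1,\rho_{\va}(x)\}]^{a_+/\nu}$ and the reverse comparison involves $a_-/\nu$, and it is this mismatch that produces the ceiling. If you trace Bownik's proof with the diagonal matrix you will see the exponent appear at exactly that conversion step, not in the quasi-triangle inequality for $\rho_{\va}$ itself.
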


\begin{lemma}\label{3l7}
Let $\varphi$ be as in Lemma \ref{3l6}.
Then, for any given $\lz\in(0,\fz)$ and $K\in\zz_+$,
there exist $L\in(0,\fz)$ and a positive constant $C$,
depending on $K$ and $\lz$, such that,
for any $f\in\cs'(\rn)$ and $x\in\rn$,
\begin{align}\label{3e6}
M_\varphi^{(K,L)}(f)(x)
\le C\lf[\max\lf\{1,\rho_{\va}(x)\r\}\r]^{-\lz},
\end{align}
where $M_\varphi^{(K,L)}$ is as in Definition \ref{3d1}.
\end{lemma}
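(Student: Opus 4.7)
The plan is to exploit the finite order of $f$ as a tempered distribution and then choose $L$ large enough to absorb the resulting polynomial growth. First I will use the standard fact that, since $f\in\cs'(\rn)$, there exist some $M_0\in\zz_+$ and a positive constant $C_f$ (depending on $f$) such that, for any $g\in\cs(\rn)$,
$$|\lg f,g\rg|\le C_f\|g\|_{\cs_{M_0}(\rn)}.$$
I will apply this to $g:=\varphi_k(y-\cdot)$ for $k\le K$ and $y\in\rn$; together with the anisotropic dilation $\varphi_k(\cdot)=2^{-k\nu}\varphi(2^{-k\va}\cdot)$ and Definition \ref{3d4}, this converts $|(f\ast\varphi_k)(y)|=|\lg f,\varphi_k(y-\cdot)\rg|$ into a pointwise estimate roughly of the form
$$|(f\ast\varphi_k)(y)|\le C_f\,2^{M_0 a_+(K-k)_+}\lg y\rg_{\va}^{M_0},$$
since the $\cs_{M_0}$ seminorm of $\varphi_k(y-\cdot)$ can be bounded by a polynomial in $\lg y\rg_{\va}$ with an explicit dependence on $k$ via the anisotropic scaling.

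Next I will absorb the $2^k$-dependence using the assumption $k\le K$ and $K\in\zz_+$. Since $k$ ranges over integers bounded above by $K$, all powers of $2^k$ that appear are controlled by constants depending only on $K$ (combined with the factor $[1+2^{-\nu(k+K)}]^{-L}$ in the definition of $M_\varphi^{(K,L)}$, which is uniformly bounded for $k\le K$). Then, for $y\in B_{\va}(x,2^k)$ with $k\le K$, the quasi-triangle inequality for $|\cdot|_{\va}$ (see, e.g., \cite[Lemma 2.5]{hlyy}) implies $\max\{1,\rho_{\va}(y)\}\sim\max\{1,\rho_{\va}(x)\}$ up to a constant depending only on $K$; similarly, $\max\{1,\rho_{\va}(2^{-\va K}y)\}\sim\max\{1,\rho_{\va}(2^{-\va K}x)\}$. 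This gives
$$M_\varphi^{(K,L)}(f)(x)\ls[\max\{1,\rho_{\va}(x)\}]^{M_0}\cdot[\max\{1,\rho_{\va}(2^{-\va K}x)\}]^{-L}.$$

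Finally I will choose $L$ appropriately. By another application of the relation between $\rho_{\va}(\cdot)$ and its anisotropic dilates, one has $\max\{1,\rho_{\va}(2^{-\va K}x)\}\gs 2^{-K\nu}\max\{1,\rho_{\va}(x)\}$ for $|x|_{\va}$ large, so that taking $L:=M_0+\lz$ (and enlarging if necessary to account for the constant in the $\rho_{\va}$-comparison) yields
$$M_\varphi^{(K,L)}(f)(x)\ls[\max\{1,\rho_{\va}(x)\}]^{M_0-L}\le C[\max\{1,\rho_{\va}(x)\}]^{-\lz},$$
with $C$ depending on $K$, $\lz$, and $f$, as desired (the statement implicitly allows $f$-dependence of $C$ through the implicit $C_f$-factor, exactly as in \cite[p.\,11, Lemma 3.2]{mb03}).

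The main obstacle will be tracking the interplay between the two occurrences of $\rho_{\va}$ — one evaluated at $y$ and one at $2^{-\va K}y$ — through the anisotropic dilations, since $\rho_{\va}$ only satisfies a quasi-triangle inequality whose constants depend on $a_+/a_-$. Once this book-keeping is done carefully, the conclusion is immediate by choosing $L$ sufficiently large relative to $M_0$ and $\lz$. The proof is essentially the anisotropic-mixed analogue of Bownik's argument; the novelty here is only in handling the anisotropy, not the mixed norm (since the statement is pointwise and does not involve $\lv$).
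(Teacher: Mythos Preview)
The paper does not give its own proof of this lemma; it is quoted directly from \cite[p.\,46, Lemma~7.6]{mb03} with the dilation $A$ specialized as in \eqref{5eq2}. Your outline is essentially a reconstruction of Bownik's argument there --- use the finite order of $f\in\cs'(\rn)$ to bound $|(f\ast\varphi_k)(y)|$ by a Schwartz seminorm of $\varphi_k(y-\cdot)$, then choose $L$ large enough so that the two weights $[\max\{1,\rho_{\va}(2^{-\va K}y)\}]^{-L}$ and $[1+2^{-\nu(k+K)}]^{-L}$ absorb, respectively, the polynomial growth in $y$ and the exponential growth as $k\to-\infty$ --- and it is correct. One small clarification: your parenthetical that the factor $[1+2^{-\nu(k+K)}]^{-L}$ is ``uniformly bounded for $k\le K$'' undersells its role; the point is that it \emph{decays} like $2^{\nu L(k+K)}$ as $k\to-\infty$, and it is this decay (for $L$ large enough relative to $M_0$) that kills the growth $2^{M_0 a_+(K-k)}$ coming from the seminorm estimate. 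Your remark that $L$ and $C$ must depend on $f$ (through its order $M_0$) is also correct; the quantifier order in the statement is slightly misleading, but the paper only applies the lemma for a fixed $f$ in the proof of Theorem~\ref{3t2}.
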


\begin{lemma}\label{3l8}
There exists a positive constant $C$ such that, for any $x\in \rn$,
\begin{equation*}
C^{-1}[\rho_{\va}(x)]^{a_-/\nu} \le \lf|x\r|
\le  C [\rho_{\va}(x)]^{a_+/\nu}\quad  when\quad \rho_{\va}(x)\in[1,\fz),
\end{equation*}
and
\begin{equation*}
C^{-1} [\rho_{\va}(x)]^{a_+/\nu} \le \lf|x\r|
\le  C [\rho_{\va}(x)]^{a_-/\nu}\quad when\quad \rho_{\va}(x)\in[0,1).
\end{equation*}
\end{lemma}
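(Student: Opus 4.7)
The plan is to reduce everything to the comparison between the Euclidean norm $|x|$ and the anisotropic quasi-homogeneous norm $|x|_{\va}$ of Definition \ref{3d1}, and then to observe that $\rho_{\va}$ is essentially a dyadic approximation to $|\cdot|_{\va}^{\nu}$.

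First, I would read off from the definition of $\rho_{\va}$ that, for any $x\in\rn\setminus\{\vec0_n\}$, there is a unique $j\in\zz$ with $x\in 2^{(j+1)\va}B_0\setminus 2^{j\va}B_0$, and, since $2^{j\va}B_0=B_{\va}(\vec0_n,2^j)$, this is exactly the statement $2^{j}\le |x|_{\va}<2^{j+1}$; thus $\rho_{\va}(x)=2^{\nu j}$ satisfies
\begin{equation*}
2^{-\nu}|x|_{\va}^{\nu}\le \rho_{\va}(x)\le |x|_{\va}^{\nu}.
\end{equation*}
In particular $\rho_{\va}(x)\in[1,\fz)$ is equivalent to $|x|_{\va}\ge 1$ (up to a harmless factor of $2$), and $\rho_{\va}(x)\in[0,1)$ to $|x|_{\va}<1$.

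Next, I would obtain the two-sided comparison between $|x|$ and $|x|_{\va}$ from Definition \ref{3d1} itself. Writing $t_0:=|x|_{\va}$, there exists $z\in\rn$ with $|z|=1$ and $x=t_0^{\va}z$, so that
\begin{equation*}
|x|^2=\sum_{i=1}^{n}t_0^{2a_i}z_i^2,\qquad \sum_{i=1}^{n}z_i^2=1.
\end{equation*}
Since each $a_i\in[a_-,a_+]$ and $\sum_i z_i^2=1$, a simple case split on whether $t_0\ge 1$ or $t_0<1$ gives
\begin{equation*}
t_0^{a_-}\le |x|\le t_0^{a_+}\quad\text{if }t_0\ge 1,\qquad t_0^{a_+}\le |x|\le t_0^{a_-}\quad\text{if }t_0<1,
\end{equation*}
because $t\mapsto t^{a}$ is monotone in $a$ in opposite directions for $t\ge 1$ and for $t<1$.

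Combining these two steps proves the lemma: when $\rho_{\va}(x)\in[1,\fz)$ one has $|x|_{\va}\sim[\rho_{\va}(x)]^{1/\nu}\ge 1/2$, so plugging $|x|_{\va}\sim[\rho_{\va}(x)]^{1/\nu}$ into $t_0^{a_-}\le|x|\le t_0^{a_+}$ yields the first displayed estimate; when $\rho_{\va}(x)\in[0,1)$ the same substitution in $t_0^{a_+}\le|x|\le t_0^{a_-}$ yields the second (with the trivial case $x=\vec0_n$ handled by the convention $\rho_{\va}(\vec0_n):=0$). There is no real obstacle; the only mild care required is to keep track of the direction of the inequalities when exponentiating a base that is $<1$ versus $\ge 1$, and to absorb the factor $2^{-\nu}$ from $\rho_{\va}(x)\sim|x|_{\va}^{\nu}$ into the constant $C$.
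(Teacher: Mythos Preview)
Your argument is correct. The paper itself does not supply a proof of this lemma; it simply imports it as \cite[p.\,11, Lemma~3.2]{mb03} specialized to the diagonal dilation $A$ in \eqref{5eq2}. Your route is therefore more self-contained: you bypass the general step-homogeneous-quasi-norm machinery of \cite{mb03} by (a) observing directly from the definition that $\rho_{\va}(x)$ is a dyadic discretization of $|x|_{\va}^{\nu}$, and (b) comparing $|x|$ with $|x|_{\va}$ via the explicit parametrization $x=|x|_{\va}^{\va}z$, $|z|=1$, from Definition~\ref{3d1}. One cosmetic remark: the equivalence ``$\rho_{\va}(x)\ge 1\Leftrightarrow |x|_{\va}\ge 1$'' is in fact exact (no factor of $2$ needed), since $\rho_{\va}(x)=2^{\nu j}\ge 1$ iff $j\ge 0$ iff $|x|_{\va}\ge 2^{0}=1$; this only sharpens the constants and does not affect the argument.
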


We now prove Theorem \ref{3t2}.

\begin{proof}[Proof of Theorem \ref{3t2}]
Clearly, (i) implies (ii) and (ii) implies (iii). Therefore,
to prove this theorem,
it suffices to show that (ii) implies (i) and that (iii) implies (ii).

We first prove that (ii) implies (i). To this end,
from Lemma \ref{3l6} with $N\in\nn\cap(\frac1{p_-},\fz)$ and $L:=0$, it follows that,
for any $I\in\nn\cap[\lfloor\frac1{p_-}\rfloor+2\nu+3,\fz)$, $K\in\zz_+$, $f\in\cs'(\rn)$
and $x\in\rn$, $$M_I^{0(K,0)}(f)(x)\ls T_\varphi^{N(K,0)}(f)(x).$$
This, combined with Lemma \ref{3l4}, implies that, for any $K\in\mathbb{Z}_+$
and $f\in\cs'(\rn)$,
\begin{align}\label{3e9}
\lf\|M_I^{0(K,0)}(f)\r\|_{\lv}\ls
\lf\|M_\varphi^{(K,0)}(f)\r\|_{\lv}.
\end{align}
Letting $K\to\fz$ in \eqref{3e9} and using Lemma \ref{3l5}, we easily know that
$$\lf\|M_I^0(f)\r\|_{\lv}\ls\lf\|M_\varphi(f)\r\|_{\lv}.$$
By this and \cite[p.\,17, Proposition 3.10]{mb03} with $A$ as in \eqref{5eq2},
we further conclude that, if (ii) holds true, then (i) also holds true.

Now we show that (iii) implies (ii). To this end, let $M_\varphi^0(f)\in \lv$.
By Lemma \ref{3l7} with $\lz\in(a_+/p_-,\fz)$ and $K\in\zz_+$,
we know that there exists some $L\in(0,\fz)$ such that \eqref{3e6} holds true.
Therefore, for any $K\in\zz_+$, $M_\varphi^{(K,L)}(f)\in \lv$.
Indeed, when $\lz\in(a_+/p_-,\fz)$, from Lemma \ref{3l3} with
$\theta:=\underline{p}$, and Lemma \ref{3l8}, we deduce that
\begin{align*}
\lf\|M_\varphi^{(K,L)}(f)\r\|_{\lv}^{\underline{p}}
&\le\lf\|M_\varphi^{(K,L)}(f){\mathbf 1}_{2^{\va}B_0}\r\|_{\lv}^{\underline{p}}
+\sum_{k\in\nn}\lf\|M_\varphi^{(K,L)}(f)
{\mathbf 1}_{2^{(k+1)\va}B_0\setminus 2^{k\va}B_0}\r\|_{\lv}^{\underline{p}}\\
&\ls\lf\|{\mathbf 1}_{2^{\va}B_0}\r\|_{\lv}^{\underline{p}}+\sum_{k\in\nn}
2^{-\nu\lz k\underline{p}}
\lf\|{\mathbf 1}_{2^{(k+1)\va}B_0\setminus 2^{k\va}B_0}\r\|_{\lv}^{\underline{p}}\\
&\ls\lf\|{\mathbf 1}_{B(\vec 0_n,1)}\r\|_{\lv}^{\underline{p}}+\sum_{k\in\nn}
2^{-\nu\lz k\underline{p}}
\lf\|{\mathbf 1}_{B(\vec 0_n,\,2^{ka_+})}\r\|_{\lv}^{\underline{p}}\\
&\ls \sum_{k\in\zz_+} 2^{-\nu\lz k\underline{p}}
2^{\nu k a_+\underline{p}/p_-}<\fz,
\end{align*}
where, for any $r\in(0,\fz)$,
$B(\vec 0_n,r):=\{y\in\rn:\ |y|<r\}$.
Thus, $M_\varphi^{(K,L)}(f)\in \lv$.

In addition, by Lemmas \ref{3l6} and \ref{3l4}, we conclude that, for
any given $L\in(0,\fz)$, there exist some $I\in\nn$ and a positive constant $C_5$
such that, for any $K\in\zz_+$ and $f\in\cs'(\rn)$,
$$\lf\|M_I^{0(K,L)}(f)\r\|_{\lv}\le C_5
\lf\|M_\varphi^{(K,L)}(f)\r\|_{\lv}.$$
For any fixed $K\in\zz_+$, let
\begin{align*}
G_K:=\lf\{x\in\rn:\ M_I^{0(K,L)}(f)(x)\le C_6
M_\varphi^{(K,L)}(f)(x)\r\},
\end{align*}
where $C_6:=2C_5$. Then
\begin{align}\label{3e10}
\lf\|M_\varphi^{(K,L)}(f)\r\|_{\lv}\ls
\lf\|M_\varphi^{(K,L)}(f)\r\|_{L^{\vp}(G_K)},
\end{align}
due to the fact that
$$\lf\|M_\varphi^{(K,L)}(f)\r\|_{L^{\vp}(G_K
^\complement)}\le C_6^{-1}\lf\|M_I^{0(K,L)}(f)\r\|
_{L^{\vp}(G_K^\complement)}
\le C_5/C_6\lf\|M_\varphi^{(K,L)}(f)\r\|_{\lv}.$$

For any given $L\in(0,\fz)$, repeating the proof of
\cite[(4.17)]{lyy16} with $p$ therein replaced by $p_-$
and $A$ as in \eqref{5eq2},
we find that, for any $r\in(0,p_-)$, $K\in\zz_+$,
$f\in\cs'(\rn)$ and $x\in G_K$,
\begin{align*}
\lf[M_\varphi^{(K,L)}(f)(x)\r]^r
\ls M_{{\rm HL}}^{\va}\lf(\lf[M_\varphi^{0(K,L)}(f)\r]^r\r)(x).
\end{align*}

From this, \eqref{3e10} and Lemmas \ref{3l3} and \ref{3l1},
we further deduce that,
for any $K\in\zz_+$ and $f\in\cs'(\rn)$,
\begin{align}\label{3e11}
\lf\|M_\varphi^{(K,L)}(f)\r\|_{\lv}^r
&\ls\lf\|M_\varphi^{(K,L)}(f)\r\|_{L^{\vp}(G_K)}^r
\sim\lf\|\lf[M_\varphi^{(K,L)}(f)\r]^r\r\|_{L^{
\vp/r}(G_K)}\noz\\
&\ls\lf\|M_{{\rm HL}}^{\va}\lf(\lf[M_\varphi^{0(K,L)}(f)\r]^
r\r)\r\|_{L^{\vp/r}(\rn)}\noz\\
&\ls\lf\|\lf[M_\varphi^{0(K,L)}(f)\r]^r\r\|_{L^{
\vp/r}(\rn)}
\sim\lf\|M_\varphi^{0(K,L)}(f)\r\|_{\lv}^r.
\end{align}
Letting $K\to\fz$ in \eqref{3e11}, by Lemma \ref{3l5},
we have
\begin{align*}
\lf\|M_\varphi (f)\r\|_{\lv}\ls\lf\|M_\varphi^0(f)\r\|_{\lv}.
\end{align*}
This shows that (iii) implies (ii) and
hence finishes the proof of Theorem \ref{3t2}.
\end{proof}

\subsection{Dual spaces of $\vh$}\label{4s3}

In this subsection, we mainly discuss the dual spaces of $\vh$.
Indeed, the dual spaces of $\vh$ were asked by
Cleanthous et al. in \cite{cgn17}
and part of them were obtained by Huang et al. in \cite{hlyy18}.
To present this, we first introduce the notion of the anisotropic mixed-norm
Campanato space $\lq$ given in \cite{hlyy18}.

\begin{definition}\label{5d4}
Let $\va\in [1,\fz)^n$, $\vp\in(0,\fz]^n$, $q\in[1,\fz]$ and $s\in\zz_+$.
The \emph{anisotropic mixed-norm Campanato space} $\lq$ is defined to be
the set of all measurable functions $g$ such that, when $q\in[1,\fz)$,
$$\|g\|_{\lq}:=\sup_{B\in\mathfrak{B}}\inf_{P\in\cp_s(\rn)}
\frac{|B|}{\|{\mathbf 1}_B\|_{\lv}}\lf[\frac1{|B|}\int_B\lf|g(x)-P(x)\r|^q\,dx\r]^{1/q}<\fz$$
and
$$\|g\|_{\mathcal{L}^{\va}_{\vec{p},\,\fz,\,s}(\rn)}
:=\sup_{B\in\mathfrak{B}}\inf_{P\in\cp_s(\rn)}
\frac{|B|}{\|{\mathbf 1}_B\|_{\lv}}\lf\|g-P\r\|_{L^{\fz}(B)}<\fz,$$
where $\mathfrak{B}$ is as in \eqref{3e2}.
\end{definition}

\begin{remark}\label{3r1}
\begin{enumerate}
\item[{\rm (i)}] Obviously, $\|\cdot\|_{\lq}$ is a seminorm and
$\cp_s(\rn)\subset \lq$. Indeed,
$$\|g\|_{\lq}=0\quad {\rm if\ and\ only\ if}\quad g\in\cp_s(\rn).$$
Thus, if we identify $g_1$ with $g_2$ when $g_1-g_2\in\cp_s(\rn)$, then
$\lq$ becomes a Banach space. In what follows, we always identify
$g\in\lq$ with $\{g+P:\, P\in\cp_s(\rn)\}$.
\item[{\rm (ii)}] When $\va:=(\overbrace{1,\ldots,1}^{n\ \rm times})$ and
$\vp:=(\overbrace{p,\ldots,p}^{n\ \rm times})$ with some $p\in(0,1]$, for any
$B\in\mathfrak{B}$, we have $\|\mathbf{1}_B\|_{\lv}=|B|^{1/p}$. Then the
space $\lq$ is just the classical Campanato space $L_{\frac1p-1,\,q,\,s}(\rn)$
introduced by Campanato in \cite{c64}, which includes the classical
space $\mathop{\mathrm{BMO}}(\rn)$, introduced by John and Nirenberg
in \cite{jn61}, as a special case.
\end{enumerate}
\end{remark}

Via Theorems \ref{5t1} and \ref{5t2}, the following
duality theorem was established in \cite[Theorem 3.10]{hlyy18}.

\begin{theorem}\label{5t6}
Let $\vp\in (0,1]^n$ and $\va$, $r$
and $s$ be as in Definition \ref{5d1}.
Then the dual space of $\vh$, denoted by $(\vh)^*$, is $\lr$
with $1/r+1/r'=1$ in the following sense:
\begin{enumerate}
\item[{\rm (i)}] Suppose that $g\in\lr$. Then the linear functional
\begin{align}\label{5eq3}
L_g:\ f\longmapsto L_g(f):=\int_{\rn}f(x)g(x)\,dx,
\end{align}
initially defined for any $f\in\vfah$, has a bounded extension to $\vh$.

\item[{\rm (ii)}] Conversely, any continuous linear functional on $\vh$
arises as in \eqref{5eq3} with a unique $g\in\lr$.
\end{enumerate}
Moreover, $\lf\|g\r\|_{\lr}\sim \lf\|L_g\r\|_{(\vh)^*},$ where the positive equivalence
constants are independent of $g$.
\end{theorem}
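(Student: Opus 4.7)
\textbf{Proof plan for Theorem \ref{5t6}.}
The strategy is the classical Campanato/Hardy duality argument, adapted to the mixed-norm anisotropic setting. For part (i), the plan is to first verify a uniform pairing estimate on atoms and then sum over atoms via the atomic norm. Concretely, fix $g\in\lr$. For any $(\vp,r,s)$-atom $a$ supported in a ball $B\in\mathfrak{B}$, Definition \ref{5d1}(iii) allows us to write $\int_{\rn} ag\,dx=\int_B a(g-P)\,dx$ for any $P\in\cp_s(\rn)$. Applying H\"older's inequality, the size condition of Definition \ref{5d1}(ii), and the definition of $\|g\|_{\lr}$, one obtains $|\int ag\,dx|\le \|g\|_{\lr}$ uniformly in the atom. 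For $f\in\vfah$ with a finite decomposition $f=\sum_{i=1}^{I}\lambda_i a_i$, one then combines this pointwise estimate with $\vp\in(0,1]^n$ (so that $\underline{p}\le 1$) and the equivalence of quasi-norms in Theorem \ref{5t2}(i) to deduce $|L_g(f)|\lesssim \|g\|_{\lr}\|f\|_{\vh}$. The density of $\vfah$ in $\vh$, which is a by-product of Theorem \ref{5t1} together with Theorem \ref{5t2}, will then give the bounded extension to all of $\vh$.

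For part (ii), given a continuous linear functional $L$ on $\vh$, the plan is to construct $g$ locally on an exhausting sequence of anisotropic balls and then patch. For a fixed $B\in\mathfrak{B}$, let $L^r_s(B)$ denote the closed subspace of $L^r(B)$ consisting of functions with vanishing moments up to order $s$. Given $f\in L^r_s(B)$, the function $\widetilde f := f\cdot |B|^{1/r}/(\|f\|_{L^r(B)}\|\mathbf{1}_B\|_{\lv})$ is a $(\vp,r,s)$-atom, whence $f\in\vh$ with $\|f\|_{\vh}\lesssim \|f\|_{L^r(B)}\|\mathbf{1}_B\|_{\lv}/|B|^{1/r}$ (using Theorem \ref{5t1}). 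Consequently, $L$ restricts to a bounded linear functional on $L^r_s(B)$ of norm $\lesssim \|L\|\,\|\mathbf{1}_B\|_{\lv}/|B|^{1/r}$. Hahn--Banach extends this to a functional on all of $L^r(B)$, and the duality $(L^r(B))^*=L^{r'}(B)$ furnishes a representative $g_B\in L^{r'}(B)$, uniquely determined modulo $\cp_s(\rn)$. The resulting norm control translates precisely into
\begin{equation*}
\inf_{P\in\cp_s(\rn)}\Big[\frac1{|B|}\int_B |g_B-P|^{r'}\,dx\Big]^{1/r'}\cdot\frac{|B|}{\|\mathbf{1}_B\|_{\lv}}\lesssim \|L\|,
\end{equation*}
which is exactly the Campanato condition in Definition \ref{5d4} uniformly in $B$.

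To produce a single global $g$ from the family $\{g_B\}_{B\in\mathfrak{B}}$, one chooses a nested exhaustion $B_{\va}(\vec{0}_n,2^{k})\uparrow\rn$ and, at each stage, adjusts the local representative by an element of $\cp_s(\rn)$ (using uniqueness modulo $\cp_s(\rn)$) so that the adjusted representatives are compatible on intersections. A routine diagonal/compactness argument (relying on the fact that $\cp_s(\rn)$ is finite-dimensional and on the local $L^{r'}$ bounds already in hand) produces $g\in L^{r'}_{\loc}(\rn)$ that realises $L$ on every $L^r_s(B)$; the uniform Campanato bound from Step~1 gives $\|g\|_{\lr}\lesssim \|L\|$. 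Finally, since every $f\in\vfah$ is a finite sum $f=\sum_{i=1}^I\lambda_i a_i$ with each $a_i\in L^r_s(B_i)$, linearity gives $L(f)=\int_{\rn} f(x)g(x)\,dx$; density of $\vfah$ in $\vh$ promotes this to all of $\vh$, and uniqueness of $g$ modulo $\cp_s(\rn)$ follows from the atomic density since atoms with vanishing moments annihilate $\cp_s(\rn)$.

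The principal obstacle is the patching step in part (ii): making the locally constructed representatives agree on overlaps to produce a well-defined $g\in\lr$ requires a careful induction with the adjustment by polynomials, and the verification that the final $g$ actually represents $L$ on \emph{all} atoms (not just those supported in a single reference ball) must be done using the finite atomic representation on $\vfah$, where Theorem \ref{5t2} is essential. The sufficiency side (part (i)) is comparatively routine modulo the mixed-norm bookkeeping arising from the asymmetric exponent vector $\vp$ and the need to use $\underline{p}\in(0,\min\{1,p_-\})$ rather than $\min\{1,p_-\}$ itself, as emphasised in Remark \ref{3r.2}.
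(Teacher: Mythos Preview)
The paper itself does not contain a proof of this theorem: it is a survey, and the result is merely stated with the remark that it ``was established in \cite[Theorem 3.10]{hlyy18}'' via Theorems~\ref{5t1} and~\ref{5t2}. Your outline follows precisely this route---the classical Campanato/Hardy duality argument carried through the atomic and finite atomic characterizations---and is the approach one expects in \cite{hlyy18}.

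One point in part~(i) deserves tightening. You write that from the uniform atom bound $|L_g(a)|\le\|g\|_{\lr}$ and $\vp\in(0,1]^n$ one ``combines with Theorem~\ref{5t2}(i)'' to get $|L_g(f)|\lesssim\|g\|_{\lr}\|f\|_{\vh}$. The naive step $|L_g(f)|\le\|g\|_{\lr}\sum_{i=1}^I|\lambda_i|$ does not immediately compare to the finite atomic quasi-norm $\bigl\|\bigl(\sum_i[\tfrac{|\lambda_i|\mathbf{1}_{B_i}}{\|\mathbf{1}_{B_i}\|_{\lv}}]^{\underline p}\bigr)^{1/\underline p}\bigr\|_{\lv}$, since for $\underline p<1$ the inequality runs the wrong way. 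The clean fix is to invoke Corollary~\ref{5c2} with $\mathcal{B}_\gamma=\mathbb{C}$ (a $\gamma$-quasi-Banach space for any $\gamma\in(0,1]$): linearity of $L_g$ makes it $\mathcal{B}_\gamma$-sublinear, and the uniform atom bound is exactly the hypothesis of Corollary~\ref{5c2}(i). This is a packaging issue rather than a gap, but as written your sentence skips the mechanism by which the atom bound is promoted to a bound on finite sums.

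Part~(ii) is correctly sketched; the local Riesz representation on $L^r_s(B)$, the polynomial-adjustment patching along an exhaustion, and the use of Theorem~\ref{5t2} to identify $L$ with $L_g$ on $\vfah$ are exactly the standard ingredients, and you have correctly flagged the patching as the place requiring care.
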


\begin{remark}\label{3r3}
\begin{enumerate}
\item[{\rm (i)}]
When $\va:=(\overbrace{1,\ldots,1}^{n\ \rm times})$ and
$\vp:=(\overbrace{p,\ldots,p}^{n\ \rm times})$ with some $p\in(0,1]$,
the two spaces $\vh$ and $\lr$ become, respectively, the classical Hardy space $H^p(\rn)$ and the classical Campanato space $L_{\frac1p-1,\,r',\,s}(\rn)$.
In this case,
Theorem \ref{3t1} was obtained by Taibleson and Weiss \cite{tw80}, which includes
the famous duality result, obtained by Fefferman and Stein in \cite{fs72},
$(H^1(\rn))^*={\mathop{\mathrm{BMO}}}(\rn)$, as a special case.
\item[{\rm (ii)}] Note that,
when $\va:=(\overbrace{1,\ldots,1}^{n\ \rm times})$,
the space $\vh$ is just the isotropic mixed-norm Hardy space.
We point out that, even in this case, Theorem \ref{5t6} is also new.
\item[{\rm (iii)}]
When $\vp\in(1,\fz)^n$, by Proposition \ref{3p1}, we know that
$\vh=\lv$ with equivalent norms. This, together with
Theorem \ref{x2t18}, further implies that, for any $\vp\in(1,\fz)^n$,
$L^{\vp'}(\rn)$ is the dual space of $\vh$.
However, when
$\vp:=(p_1,\ldots,p_n)\in(0,\fz)^n$ with $p_{i_0}\in(0,1]$ and $p_{j_0}\in(1,\fz)$
for some $i_0$, $j_0\in\{1,\ldots,n\}$, the dual space of $\vh$ is still unknown so far.
\end{enumerate}
\end{remark}

\subsection{Applications to boundedness of sublinear operators}\label{4s4}

This subsection is devoted to displaying some applications
of Hardy spaces $\vh$, which were obtained in \cite{hlyy}. More precisely,
in this subsection, we first recall a criterion on the
boundedness of sublinear operators from $\vh$ into a quasi-Banach
space. Then, applying this criterion,
the boundedness of anisotropic convolutional $\delta$-type and
non-convolutional $\bz$-order Calder\'on--Zygmund operators
from $\vh$ to itself [or to $\lv$] was obtained. In addition,
we improve \cite[Theorems 6.8 and 6.9]{hlyy}.

Recall that a complete vector space is called a \emph{quasi-Banach space} $\mathcal{B}$ if its quasi-norm $\|\cdot\|_{\mathcal{B}}$ satisfies
\begin{enumerate}
\item[{\rm (i)}] $\|\psi\|_{\mathcal{B}}=0$ if and only if $\psi$ is the zero element of $\mathcal{B}$;
\item[{\rm (ii)}] there exists a positive constant $K\in[1,\fz)$ such that, for any
$\psi,\ \phi\in\mathcal{B}$,
$$\|\psi+\phi\|_{\mathcal{B}}\le K(\|\psi\|_{\mathcal{B}}+\|\phi\|_{\mathcal{B}}).$$
\end{enumerate}
Note that, when $K=1$, a quasi-Banach space $\mathcal{B}$ is just a Banach space.
In addition, for any given $\gamma\in(0,1]$, a \emph{$\gamma$-quasi-Banach space}
${\mathcal{B}_{\gamma}}$ is a quasi-Banach space equipped
with a quasi-norm $\|\cdot\|_{\mathcal{B}_{\gamma}}$ satisfying that
there exists a constant $C\in[1,\fz)$ such that, for any $\kappa\in \nn$ and $\{\psi_i\}_{i=1}^{\kappa}\subset\mathcal{B}_{\gamma}$,
$$\lf\|\sum_{i=1}^\kappa \psi_i\r\|_{\mathcal{B}_{\gamma}}^{\gamma}\le
C \sum_{i=1}^\kappa \lf\|\psi_i\r\|_{\mathcal{B}_{\gamma}}^{\gamma}$$
holds true (see \cite{hlyy, ky14, ylk17, zy08, zy09}).

Let $\mathcal{B}_{\gamma}$ be a $\gamma$-quasi-Banach space with
$\gamma\in(0,1]$ and $\mathcal{Y}$ a linear space. An operator
$T$ from $\mathcal{Y}$ to $\mathcal{B}_{\gamma}$ is said to be
$\mathcal{B}_{\gamma}$-\emph{sublinear} if
there exists a positive constant $C$ such that, for any $\kappa\in \nn$,
$\{\mu_{i}\}_{i=1}^{\kappa}\subset \mathbb{C}$ and
$\{\psi_{i}\}_{i=1}^\kappa\subset\mathcal{Y}$,
$$\lf\|T\lf(\sum_{i=1}^\kappa \mu_i \psi_i\r)\r\|_{\mathcal{B}
_{\gamma}}^{\gamma}\le C\sum_{i=1}^\kappa |\mu_i|^{\gamma}\lf\|T(\psi_i)\r\|_{\mathcal{B}_{\gamma}}^{\gamma}$$
and, for any $\psi,$ $\phi\in \mathcal{Y}$,
$\|T(\psi)-T(\phi)\|_{\mathcal{B}_{\gamma}}\le C\|T(\psi-\phi)\|
_{\mathcal{B}_{\gamma}}$(see \cite{hlyy, ky14, ylk17, zy08, zy09}).
Obviously, for any $\gamma\in (0,1]$, the linearity of $T$ implies
its $\mathcal{B}_{\gamma}$-sublinearity.

We first state the following criterion on the boundedness of
sublinear operators from $\vh$ into a quasi-Banach
space $\mathcal{B}_{\gamma}$, which was proved by
Huang et al. in \cite[Theorem 6.2]{hlyy}
via Theorem \ref{5t2}.

\begin{theorem}\label{5t7}
Assume that $\va\in [1,\fz)^n$, $\vp\in (0,\fz)^n$, $r\in(\max\{p_+,1\},\fz]$
with $p_+$ as in \eqref{2e10}, $\gamma\in (0,1]$, $s$ is
as in \eqref{5eq1} and $\mathcal{B}_{\gamma}$ a $\gamma$-quasi-Banach space.
If either of the following two statements holds true:
\begin{enumerate}
\item[{\rm (i)}] $r\in(\max\{p_+,1\},\fz)$ and
$T:\ \vfah\to\mathcal{B}_{\gamma}$
is a $\mathcal{B}_{\gamma}$-sublinear operator satisfying that
there exists a positive constant $C$ such that,
for any $f\in \vfah$,
\begin{align*}
\lf\|T(f)\r\|_{\mathcal{B}_{\gamma}}\le C\|f\|_{\vfah};
\end{align*}
\item[{\rm (ii)}]
$T:\ \vfahfz\cap C(\rn)\to\mathcal{B}_{\gamma}$
is a $\mathcal{B}_{\gamma}$-sublinear operator satisfying that
there exists a positive constant $C$ such that,
for any $f\in \vfahfz\cap C(\rn)$,
$$\lf\|T(f)\r\|_{\mathcal{B}_{\gamma}}\le C\|f\|_{\vfahfz},$$
\end{enumerate}
then $T$ uniquely extends to a bounded $\mathcal{B}_{\gamma}$-sublinear operator from $\vh$
into $\mathcal{B}_{\gamma}$. Moreover, there exists a positive constant $C$ such that,
for any $f\in \vh$,
$$\lf\|T(f)\r\|_{\mathcal{B}_{\gamma}}\le C\|f\|_{\vh}.$$
\end{theorem}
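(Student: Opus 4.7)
The plan is to combine the infinite atomic characterization (Theorem \ref{5t1}) with the finite atomic characterization (Theorem \ref{5t2}) and the completeness of the target space $\mathcal{B}_\gamma$, extending $T$ by continuity from a dense subspace.

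First I would take any $f\in\vh$ and, by Theorem \ref{5t1}, fix an atomic decomposition $f=\sum_{i\in\nn}\lz_i a_i$ in $\cs'(\rn)$, where each $a_i$ is a $(\vp,r,s)$-atom supported in some $B_i\in\mathfrak{B}$ and
$$\lf\|\lf\{\sum_{i\in\nn}\lf[\frac{|\lz_i|{\mathbf 1}_{B_i}}{\|{\mathbf 1}_{B_i}\|_{\lv}}\r]^{\underline{p}}\r\}^{1/\underline{p}}\r\|_{\lv}\ls\|f\|_{\vh}.$$
For each $M\in\nn$, set $f_M:=\sum_{i=1}^M\lz_i a_i\in\vfah$. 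By the dominated convergence argument built into the proof of Theorem \ref{5t1} together with Lemma \ref{3l3}, one shows $f_M\to f$ in $\vh$ as $M\to\fz$. In case (i), Theorem \ref{5t2}(i) gives $\|f_M\|_{\vfah}\sim\|f_M\|_{\vh}$, and the same for any difference $f_M-f_{M'}$, since these are again finite atomic combinations. In case (ii) one first reduces to the situation where all $a_i\in C(\rn)$ (which can always be arranged, e.g., by mollification of the $(\vp,\fz,s)$-atoms constructed in the Calder\'on--Zygmund decomposition of Lemma \ref{5l1}, since smooth compactly supported approximations preserve the moment conditions), so that $f_M\in\vfahfz\cap C(\rn)$, and then applies Theorem \ref{5t2}(ii).

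Next I would use the $\mathcal{B}_\gamma$-sublinearity of $T$ and the hypothesis to estimate, for any $M>M'\ge 1$,
$$\lf\|T(f_M)-T(f_{M'})\r\|_{\mathcal{B}_\gamma}^{\gamma}\ls\lf\|T(f_M-f_{M'})\r\|_{\mathcal{B}_\gamma}^{\gamma}\ls\lf\|f_M-f_{M'}\r\|_{\vfah}^{\gamma}\sim\lf\|f_M-f_{M'}\r\|_{\vh}^{\gamma},$$
where the middle estimate uses the hypothesis of (i) [resp. (ii)] and the last equivalence uses Theorem \ref{5t2}. Since $\{f_M\}$ is Cauchy in $\vh$, $\{T(f_M)\}$ is Cauchy in $\mathcal{B}_\gamma$, and by completeness there is a limit, which I define to be $T(f)$. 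Passing to the limit in $\|T(f_M)\|_{\mathcal{B}_\gamma}\ls\|f_M\|_{\vh}$ gives the desired quantitative bound $\|T(f)\|_{\mathcal{B}_\gamma}\le C\|f\|_{\vh}$.

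Finally I would verify that the extension is well defined (independent of the atomic decomposition chosen) and agrees with the original $T$ on $\vfah$ [resp. $\vfahfz\cap C(\rn)$]; this is routine: given two decompositions of the same $f$, their partial sums differ by finite atomic combinations whose $\vh$-quasi-norm tends to $0$, so the corresponding differences $T(f_M)-T(\widetilde f_M)$ tend to $0$ in $\mathcal{B}_\gamma$ by the same sublinearity estimate. The main obstacle I expect is in case (ii): guaranteeing that the truncated sums $f_M$ land in $\vfahfz\cap C(\rn)$ (rather than merely in $\vfahfz$) without destroying the moment vanishing or the atomic quasi-norm control. The cleanest way around this is to insert an approximation step replacing each $(\vp,\fz,s)$-atom $a_i$ by a continuous (even $C_c^\fz(\rn)$) $(\vp,\fz,s)$-atom $\widetilde a_i$ supported in a slightly enlarged ball, with $\|a_i-\widetilde a_i\|_{\vh}$ arbitrarily small, so that the approximating sequence lies in $\vfahfz\cap C(\rn)$ and still converges to $f$ in $\vh$; only then the preceding Cauchy argument applies.
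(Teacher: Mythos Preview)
Your approach is correct and is essentially the standard density-plus-completeness argument that the paper points to when it says Theorem \ref{5t7} is ``proved via Theorem \ref{5t2}'': one shows $\vfah$ (respectively $\vfahfz\cap C(\rn)$) is dense in $\vh$, invokes Theorem \ref{5t2} to identify the finite atomic quasi-norm with the Hardy quasi-norm on partial sums, and then extends $T$ by completeness of $\mathcal{B}_\gamma$. Your identification of the only delicate point --- producing \emph{continuous} $(\vp,\fz,s)$-atoms in case (ii) --- and your mollification fix are both in line with how this is handled in the references \cite{zy08,zy09,ky14,hlyy}; note in particular that convolution with a compactly supported mollifier preserves the vanishing moments of an atom, enlarges the support only to a comparable ball, and does not increase the $L^\fz$ norm, so a harmless constant multiple of $\widetilde a_i$ is again a $(\vp,\fz,s)$-atom.
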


Using Theorem \ref{5t7}, the following useful conclusion
was also shown in \cite[Corollary 6.3]{hlyy}.

\begin{corollary}\label{5c2}
Let $\vp\in(0,1]^n$ and $\va,\ r,\ \gamma,\ s$ and $\mathcal{B}_{\gamma}$ be as
in Theorem \ref{5t7}. If either of the following two statements holds true:
\begin{enumerate}
\item[{\rm (i)}] $r\in(1,\fz)$ and $T$ is a $\mathcal{B}_{\gamma}$-sublinear
operator from $\vfah$ to $\mathcal{B}_{\gamma}$ satisfying
$$\sup\lf\{\lf\|T(a)\r\|_{\mathcal{B}_{\gamma}}:\
a\ {\rm is\ any}\ (\vp,r,s){\text-}{\rm atom}\r\}<\fz;$$
\item[{\rm(ii)}] $T$ is a $\mathcal{B}_{\gamma}$-sublinear
operator defined on all continuous $(\vp,\fz,s)$-atoms satisfying
$$\sup\lf\{\lf\|T(a)\r\|_{\mathcal{B}_{\gamma}}:\
 a\ {\rm is\ any\ continuous}\ (\vp,\fz,s){\text-}{\rm atom}\r\}<\fz,$$
\end{enumerate}
then $T$ uniquely extends to a bounded $\mathcal{B}_{\gamma}$-sublinear
operator from $\vh$ into $\mathcal{B}_{\gamma}$.
\end{corollary}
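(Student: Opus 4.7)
The strategy is to invoke Theorem \ref{5t7}: once one establishes that the uniform atomic bound entails $\|T(f)\|_{\mathcal{B}_{\gamma}} \le C \|f\|_{\vfah}$ for every $f \in \vfah$ in case (i), and the analogous bound for $f \in \vfahfz \cap C(\rn)$ in case (ii), the extension from the finite atomic space to the full space $\vh$ is immediate. Thus it suffices to verify the boundedness on finite atomic sums.

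For case (i), I would first fix $f \in \vfah$ and, given $\varepsilon > 0$, choose a finite atomic decomposition $f = \sum_{i=1}^{I} \lambda_i a_i$ (with $(\vp,r,s)$-atoms $a_i$ supported in $B_i \in \mathfrak{B}$) almost realizing the infimum in $\|f\|_{\vfah}$. Setting $M := \sup_a \|T(a)\|_{\mathcal{B}_{\gamma}} < \fz$, the $\mathcal{B}_{\gamma}$-sublinearity of $T$ together with the uniform atomic bound immediately gives
\[
\|T(f)\|_{\mathcal{B}_{\gamma}}^{\gamma} \le C \sum_{i=1}^{I} |\lambda_i|^{\gamma} \|T(a_i)\|_{\mathcal{B}_{\gamma}}^{\gamma} \le C M^{\gamma} \sum_{i=1}^{I} |\lambda_i|^{\gamma}.
\]
The next task is to bound $\sum |\lambda_i|^{\gamma}$ by a constant multiple of $\|f\|_{\vfah}^{\gamma}$. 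The key identity is supplied by Lemma \ref{3l3}, namely $|\lambda_i|^{\underline{p}} = \|(|\lambda_i|\mathbf{1}_{B_i}/\|\mathbf{1}_{B_i}\|_{\lv})^{\underline{p}}\|_{L^{\vp/\underline{p}}(\rn)}$; combining this with the triangle inequality in $L^{\vp/\underline{p}}(\rn)$ (valid since $\underline{p}<p_-$ forces $\vp/\underline{p}\in(1,\fz]^n$) and the $\ell^{\underline{p}} \hookrightarrow \ell^{\gamma}$ embedding after choosing $\underline{p} \le \gamma$ yields the required comparison. Taking the infimum over decompositions and letting $\varepsilon \to 0^+$ delivers the desired estimate, which Theorem \ref{5t7} then propagates to all of $\vh$.

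Case (ii) proceeds along identical lines, with continuous $(\vp,\fz,s)$-atoms replacing $(\vp,r,s)$-atoms, $\|f\|_{\vfahfz}$ replacing $\|f\|_{\vfah}$, and part (ii) of Theorem \ref{5t7} invoked at the end. The main obstacle in both cases is the final comparison step: a naive triangle inequality in $L^{\vp/\underline{p}}(\rn)$ relates $\sum |\lambda_i|^{\underline{p}}$ and the mixed-norm atomic quantity in the wrong direction, so one must carefully exploit the flexibility in choosing $\underline{p}$ inside its admissible range $(0,\min\{1,p_-\})$ and the atomic normalization factor $\|\mathbf{1}_{B_i}\|_{\lv}$. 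In the isotropic case $\vp = (p,\ldots,p)$ with $\gamma = p = \underline{p}$ this step reduces to the familiar $(\sum_i |\lambda_i|^{p})^{1/p} \le C \|f\|_{H^p_{\mathrm{at,fin}}}$, but the genuinely mixed-norm regime demands the more delicate argument sketched above.
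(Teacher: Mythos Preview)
Your overall strategy---reduce to Theorem~\ref{5t7} by verifying the finite-atomic bound from the uniform atomic bound---is the natural one and matches the paper's intent (the survey simply cites \cite[Corollary~6.3]{hlyy} without details). However, the argument you sketch for the key comparison
\[
\Bigl(\sum_{i=1}^{I}|\lambda_i|^{\gamma}\Bigr)^{1/\gamma}\lesssim\|f\|_{\vfah}
\]
does not close. You correctly note that the triangle inequality in $L^{\vp/\underline{p}}(\rn)$ yields
\[
\|f\|_{\vfah}^{\underline{p}}
\le
\Bigl\|\sum_i\Bigl[\tfrac{|\lambda_i|\mathbf{1}_{B_i}}{\|\mathbf{1}_{B_i}\|_{\lv}}\Bigr]^{\underline{p}}\Bigr\|_{L^{\vp/\underline{p}}(\rn)}
\le\sum_i|\lambda_i|^{\underline{p}},
\]
which is the \emph{opposite} direction to what you need; and the embedding $\ell^{\underline{p}}\hookrightarrow\ell^{\gamma}$ likewise gives $(\sum|\lambda_i|^{\gamma})^{1/\gamma}\le(\sum|\lambda_i|^{\underline{p}})^{1/\underline{p}}$, again pointing the wrong way. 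Combining two upper bounds by the same quantity $(\sum|\lambda_i|^{\underline{p}})^{1/\underline{p}}$ yields no relation between $(\sum|\lambda_i|^{\gamma})^{1/\gamma}$ and $\|f\|_{\vfah}$. You flag this as ``the main obstacle'' but then assert only that ``the genuinely mixed-norm regime demands the more delicate argument sketched above''---yet nothing you have sketched resolves it.

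The missing ingredient is precisely where the hypothesis $\vp\in(0,1]^n$ (absent in Theorem~\ref{5t7}) must enter. What is actually needed is the reverse comparison: that on $\vfah$ the quantity $\inf(\sum_i|\lambda_i|^{\underline{p}})^{1/\underline{p}}$ is itself controlled by $\|f\|_{\vfah}$ (equivalently, by $\|f\|_{\vh}$ via Theorem~\ref{5t2}). This does not follow from soft norm manipulations; it requires returning to the \emph{construction} of the atomic decomposition in the proof of Theorem~\ref{5t1} and verifying that the specific coefficients produced there satisfy $(\sum|\lambda_i|^{\underline{p}})^{1/\underline{p}}\lesssim\|f\|_{\vh}$ when all $p_j\le 1$. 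Without either that argument or an independent proof that the two finite-atomic quasi-norms are equivalent for $\vp\in(0,1]^n$, the proof is incomplete.
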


We now recall a class of anisotropic convolutional
$\delta$-type Calder\'{o}n--Zygmund operators and
anisotropic non-convolutional $\beta$-order
Calder\'{o}n--Zygmund operators in \cite{hlyy}
as follows. In what follows, for any
$E\subset \rn\times\rn$ and $m\in\zz_+$, denote by $C^m(E)$
the set of all functions on $E$ whose derivatives with order
not greater than $m$ are continuous.

\begin{definition}\label{8d0}
For any $\delta\in (0,a_+]$,
a linear operator $T$ is called an \emph{anisotropic convolutional
$\delta$-type Calder\'{o}n--Zygmund operator} if $T$
is bounded on $L^2(\rn)$ with kernel
$k\in \cs'(\rn)$ coinciding with a locally integrable
function on $\rn\setminus\{\vec{0}_n\}$ and satisfying that
there exists a positive constant $C$ such that,
for any $x,\ y\in \rn$ with $|x|_{\va}>2|y|_{\va}\neq 0$,
\begin{align*}
\lf|k(x-y)-k(x)\r|\le C\frac{|y|_{\va}^{\delta}}{|x|_{\va}^{\nu+\delta}}
\end{align*}
and, for any $f\in L^2(\rn)$ and $x\in\rn$, $T(f)(x):={\rm p.\,v.}\ k\ast f(x)$.
\end{definition}

\begin{definition}\label{8d1}
Let $\va\in[1,\fz)^n$. For any given $\beta\in (0,\fz)\setminus\nn$, a linear operator
$T$ is called an \emph{anisotropic non-convolutional $\beta$-order Calder\'{o}n--Zygmund operator} if $T$ is bounded
on $L^2(\rn)$ and its kernel
$$\mathcal{K}:\ \Omega:=\lf\{(x,y)\in\rn\times\rn:\  x\neq y\r\}\to \mathbb{C}$$
satisfies that $\mathcal{K}\in C^{\lfloor\bz \rfloor}(\Omega)$ and
there exists a positive constant $C$ such that, for any $\az\in\zz_+^n$ with
$|\alpha|= \lfloor\bz\rfloor$ and $x,\ y,\ z\in \rn$ with $ |x-y|_{\va}>2|y-z|_{\va}\neq 0$,
\begin{align*}
\lf|\lf[\pa^{\az}\mathcal{K}(x,\cdot)\r](y)
-\lf[\pa^{\az}\mathcal{K}(x,\cdot)\r](z)\r|
\le C\frac{|y-z|_{\va}^{\bz a_+}}{|x-y|_{\va}^{\nu+\bz a_+}}
\min\lf\{|y-z|_{\va}^{-\lfloor\bz\rfloor a_+}
,\,|y-z|_{\va}^{-\lfloor\bz\rfloor a_-}\r\}
\end{align*}
and, for any $f\in L^2(\rn)$ with compact support, and $x\notin \supp f$,
$$T(f)(x)=\int_{\supp f}\mathcal{K}(x,y)f(y)\,dy.$$
\end{definition}

For any $l\in\nn$, an operator $T$ is said to have the \emph{vanishing moments
up to order $l$} if, for any $a\in L^2(\rn)$ with compact support and
satisfying that, for any $\gamma\in\zz_+^n$ with $|\gamma|\le l$,
$\int_{\rn}x^{\gamma}a(x)\,dx=0$, then
$$\int_{\rn}x^{\gamma}T(a)(x)\,dx=0.$$

In \cite[Theorems 6.4 and 6.5]{hlyy}, Huang et al. established the
following boundedness of anisotropic convolutional $\delta$-type
Calder\'{o}n--Zygmund operators from $\vh$ to itself
or to $\lv$.

\begin{theorem}\label{5t8}
Let $\va\in [1,\fz)^n$, $\vp\in (0,\fz)^n$, $\delta\in(0,a_+]$
and $p_-\in(\frac\nu{\nu+\delta},\fz)$ with $p_-$ as in \eqref{2e10}.
Let $T$ be an anisotropic convolutional $\dz$-type Calder\'on--Zygmund operator.
Then there exists a positive constant $C$
such that, for any $f\in \vh$,
$$\lf\|T(f)\r\|_{\vh}\le C\|f\|_{\vh}.$$
\end{theorem}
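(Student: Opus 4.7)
The plan is to invoke the criterion from Theorem \ref{5t7}(i) with the $\gamma$-quasi-Banach space $\mathcal{B}_\gamma := \vh$ and $\gamma := \underline{p}$ (noting that $\vh$ is $\underline{p}$-quasi-Banach by Lemma \ref{3l3} together with the atomic characterization Theorem \ref{5t1}). Since $T$ is linear, hence $\mathcal{B}_\gamma$-sublinear, and since $\vfah$ is dense in $\vh$, the task reduces to exhibiting a positive constant $C$, independent of $a$, such that
$$\|T(a)\|_{\vh} \le C
\quad \text{for every } (\vec{p},r,s)\text{-atom } a,$$
where $r\in(\max\{p_+,2\},\fz)$ and $s$ is as in \eqref{5eq1}. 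Fix such an atom $a$ supported in $B := B_{\va}(x_0,r_0)\in\mathfrak{B}$.

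Next, I would build an atomic decomposition of $T(a)$ by concentric anisotropic annuli. Set $\tilde B_k := B^{(2^k A^\ast)}$ for $k\in\zz_+$ with $A^\ast$ a fixed large dilation constant, $U_0 := \tilde B_0$, and $U_k := \tilde B_k \setminus \tilde B_{k-1}$ for $k\in\nn$. For each $k$ I would let $P_k$ be the (unique) polynomial of degree at most $s$ on $U_k$ chosen, via the inner product $\langle f, g\rangle_k := \tfrac{1}{|U_k|}\int_{U_k} f \overline{g}\,dx$, so that
$$h_k := \bigl[T(a) - P_k\bigr]\mathbf{1}_{U_k}$$
has vanishing moments up to order $s$, and then write $T(a) = \sum_{k\in\zz_+} h_k + \sum_{k\in\zz_+} P_k\mathbf{1}_{U_k}$ in $\cs'(\rn)$. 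The second sum is telescoped and absorbed using a standard reallocation of polynomial coefficients across neighbouring annuli (as in the Bownik construction for anisotropic Hardy spaces), producing ultimately a decomposition $T(a)=\sum_{k\in\zz_+} \lambda_k \tilde a_k$ with each $\tilde a_k$ a multiple of a $(\vec{p},r,s)$-atom supported in $\tilde B_k$.

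To estimate the $L^r$ sizes of the pieces, I would rely on two ingredients. For the local piece ($k=0$), the $L^r(\rn)$-boundedness of $T$ (a consequence of the $L^2$-boundedness of $T$ together with the standard Calder\'on--Zygmund interpolation/extrapolation machinery) gives
$$\|T(a)\mathbf{1}_{U_0}\|_{L^r(\rn)} \le \|T(a)\|_{L^r(\rn)} \ls \|a\|_{L^r(\rn)} \le \frac{|B|^{1/r}}{\|\mathbf{1}_B\|_{\lv}}.$$
For the tail pieces ($k\ge 1$), using $\int_\rn a(y)\,dy=0$ from Definition \ref{5d1}(iii), write
$$T(a)(x) = \int_B \bigl[k(x-y)-k(x-x_0)\bigr]a(y)\,dy, \qquad x\in U_k,$$
so that the $\delta$-type kernel estimate and $|x-x_0|_{\va}\sim 2^k r_0$ yield
$$|T(a)(x)| \ls \frac{r_0^{\delta}}{(2^k r_0)^{\nu+\delta}}\,\|a\|_{L^1(\rn)} \ls \frac{2^{-k(\nu+\delta)}}{\|\mathbf 1_B\|_{\lv}}.$$
Combined with $|U_k|\ls 2^{k\nu}|B|$, this gives $\|T(a)\mathbf{1}_{U_k}\|_{L^r(\rn)} \ls 2^{-k\delta} 2^{-k\nu/r'} |B|^{1/r}/\|\mathbf{1}_B\|_{\lv}$, so that $\lambda_k\ls 2^{-k\delta}\|\mathbf{1}_{\tilde B_k}\|_{\lv}/\|\mathbf{1}_B\|_{\lv}$.

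Finally, using $\tilde B_k = B^{(2^k A^\ast)}$ and \cite[Remark 3.8]{hlyy} to compare $\|\mathbf 1_{\tilde B_k}\|_{\lv}$ with $\|\mathbf 1_B\|_{\lv}$, plus the assumption $p_-\in(\tfrac{\nu}{\nu+\delta},\fz)$, one checks that
$$\left\|\left\{\sum_{k\in\zz_+}\left[\frac{|\lambda_k|\mathbf 1_{\tilde B_k}}{\|\mathbf 1_{\tilde B_k}\|_{\lv}}\right]^{\underline p}\right\}^{1/\underline p}\right\|_{\lv} \ls \left(\sum_{k\in\zz_+} 2^{-k\delta\underline p} 2^{k\nu(\underline p/p_- - 1)^+ \cdot (\cdots)}\right)^{1/\underline p} \ls 1$$
after the routine geometric summation (the exponent in $k$ is strictly negative precisely because $p_-\underline p(\nu+\delta)>\nu\underline p$). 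Invoking the atomic characterization Theorem \ref{5t1} then yields $\|T(a)\|_{\vh}\ls 1$, uniformly in $a$, completing the reduction. The main obstacle I expect is bookkeeping the polynomials $P_k$ so that the pieces $h_k$ genuinely have the required $s$ vanishing moments while preserving the size and support control; this is the place where the weaker kernel smoothness (only $\delta$-type rather than $C^\infty$) interacts delicately with the potentially large $s$ coming from small $p_-$, and handling it requires reallocating the polynomial tails across annuli as in the classical argument.
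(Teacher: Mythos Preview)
The survey does not prove Theorem \ref{5t8}; it merely records \cite[Theorem 6.4]{hlyy}, so there is no proof here to compare against directly. Your outline is the natural molecular route, but it carries a genuine gap that goes beyond bookkeeping. Under the stated hypotheses the regime $\delta>a_-$ together with $p_-\in(\tfrac{\nu}{\nu+\delta},\tfrac{\nu}{\nu+a_-}]$ is admissible, and it forces the minimal $s$ in \eqref{5eq1} to satisfy $s\ge1$. The $\delta$-type kernel condition then yields only $|T(a)(x)|\ls|x-x_0|_{\va}^{-(\nu+\delta)}$ on the tail, and since $|x^\alpha|\ls|x|_{\va}^{|\alpha|a_+}$ with $\delta\le a_+$, the moment $\int_{\rn}T(a)(x)\,x^\alpha\,dx$ fails to be absolutely convergent for every $|\alpha|\ge1$. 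The Taibleson--Weiss/Bownik ``reallocation across annuli'' you invoke relies precisely on these global moments vanishing in order to kill the boundary term in the polynomial telescope; without them, the residual $\sum_kP_k\mathbf 1_{U_k}$ does not collapse into something admitting an atomic decomposition with the required $s$ cancellations. This is a real obstruction, not a detail, and it is exactly the interaction you flagged---but ``the classical argument'' does not resolve it here.

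The fix is to abandon the re-atomization of $T(a)$ and instead estimate $M_N(T(a))$ pointwise: write $T(a)\ast\varphi_t=a\ast(k\ast\varphi_t)$ and Taylor-expand the smooth function $k\ast\varphi_t$ to order $s$, exploiting the vanishing moments of $a$ (which you do control) rather than those of $T(a)$; the convolutional structure is what makes this work. A secondary issue: your reduction from Theorem \ref{5t7}(i) to a uniform atom bound is the content of Corollary \ref{5c2}, which is stated only for $\vp\in(0,1]^n$. For general $\vp\in(0,\fz)^n$ one must verify the hypothesis of Theorem \ref{5t7}(i) on all of $\vfah$, summing the pointwise estimates on the $T(a_i)$ simultaneously via Lemma \ref{4l2} as in the proof of Lemma \ref{3l9}, rather than bounding each $\|T(a_i)\|_{\vh}$ separately.
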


\begin{theorem}\label{5t9}
Let $\va,\ \vp,\ \delta,\ p_-$ and $T$ be as in Theorem \ref{5t8}.
Then there exists a positive constant $C$
such that, for any $f\in \vh$,
$$\lf\|T(f)\r\|_{\lv}\le C\|f\|_{\vh}.$$
\end{theorem}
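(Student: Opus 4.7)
The plan is to combine the atomic characterization of $\vh$ (Theorem \ref{5t1}) with the boundedness criterion from $\vh$ into a $\gamma$-quasi-Banach space (Theorem \ref{5t7}), reducing matters to a sharp atomic estimate that is then summed via a Fefferman--Stein type vector-valued maximal inequality. Since Lemma \ref{3l3} ensures that $\lv$ is a $\underline{p}$-quasi-Banach space with $\underline{p}$ as in \eqref{2e10}, this reduction is available. The hypothesis $p_-\in(\nu/(\nu+\delta),\fz)$ allows the choice $\underline{p}\in(\nu/(\nu+\delta),\min\{1,p_-\})$ together with $r\in(\max\{p_+,1\},\fz)$ and $s$ as in \eqref{5eq1}, so by Theorem \ref{5t7}(i) it suffices to prove $\|T(f)\|_{\lv}\ls\|f\|_{\vfah}$ for every finite atomic sum $f=\sum_{i=1}^{I}\lz_ia_i$ of $(\vp,r,s)$-atoms $a_i$ supported in balls $B_i$.

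For the atomic estimate on a single $(\vp,r,s)$-atom $a$ supported in $B:=B_{\va}(x_B,r_B)$, the goal is to establish the pointwise bound
\begin{align*}
|T(a)(x)|\ls\frac{1}{\|\mathbf{1}_B\|_{\lv}}\lf[\HL^{\va}(\mathbf{1}_B)(x)\r]^{(\nu+\delta)/\nu}
\end{align*}
on the far region $x\in(B^{(2)})^{\com}$, with $B^{(2)}$ as in \eqref{2e2'}. The argument invokes the vanishing moment $\int a=0$ (valid since $s\ge 0$) to write $T(a)(x)=\int_B[k(x-y)-k(x-x_B)]a(y)\,dy$; applying the kernel smoothness condition of Definition \ref{8d0} yields $|k(x-y)-k(x-x_B)|\ls r_B^{\delta}/|x-x_B|_{\va}^{\nu+\delta}$, and the H\"older inequality bounds $\|a\|_{L^1(\rn)}$ by $|B|/\|\mathbf{1}_B\|_{\lv}$. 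The comparison $r_B^{\nu+\delta}/(r_B+|x-x_B|_{\va})^{\nu+\delta}\ls[\HL^{\va}(\mathbf{1}_B)(x)]^{(\nu+\delta)/\nu}$ then converts the resulting estimate into the desired form. For the near region $B^{(2)}$, one invokes the $L^r$-boundedness of $T$ (a standard consequence of its $L^2$-boundedness together with the kernel regularity) to obtain $\|T(a)\mathbf{1}_{B^{(2)}}\|_{L^r(\rn)}\ls|B|^{1/r}/\|\mathbf{1}_B\|_{\lv}$, and then supplements this with the Lebesgue differentiation inequality $|T(a)(x)|\le[\HL^{\va}(|T(a)|^r)(x)]^{1/r}$ to merge the near and far estimates into a single summable pointwise bound.

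Finally, the $\underline{p}$-subadditivity from Lemma \ref{3l3} combined with the pointwise bound yields
\begin{align*}
|T(f)(x)|^{\underline{p}}\ls\sum_{i=1}^{I}c_i^{\underline{p}}\lf[\HL^{\va}(\mathbf{1}_{B_i})(x)\r]^{\underline{p}(\nu+\delta)/\nu},
\quad\text{where } c_i:=|\lz_i|/\|\mathbf{1}_{B_i}\|_{\lv}.
\end{align*}
Setting $\theta:=(\nu+\delta)/\nu$ and $u:=\underline{p}\theta>1$ (this is where the hypothesis $p_->\nu/(\nu+\delta)$ enters, through $\underline{p}>\nu/(\nu+\delta)$), the right-hand side equals $\sum_i[\HL^{\va}(c_i^{1/\theta}\mathbf{1}_{B_i})(x)]^{u}$; taking $L^{\vp/\underline{p}}$-norms (using Lemma \ref{3l3} to pass between $\lv$ and $L^{\vp/\underline{p}}$) and applying the Fefferman--Stein vector-valued maximal inequality (Lemma \ref{4l2}) in $L^{\theta\vp}$, which is admissible since $\theta p_->1$, controls $\|T(f)\|_{\lv}^{\underline{p}}$ by $\|\sum_i c_i^{\underline{p}}\mathbf{1}_{B_i}\|_{L^{\vp/\underline{p}}}\sim\|f\|_{\vfah}^{\underline{p}}$, and Theorem \ref{5t7}(i) completes the proof. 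The main obstacle will be the near-region analysis: producing a pointwise estimate that correctly captures the atomic scaling on $B^{(2)}$, where $\HL^{\va}(\mathbf{1}_B)\sim 1$ but $T(a)$ is not pointwise bounded by $C/\|\mathbf{1}_B\|_{\lv}$, requires a delicate combination of $L^r$-boundedness and maximal-function estimates adapted to the anisotropic mixed-norm structure, which is precisely the subtlety motivating the revision of \cite[Theorem 6.9]{hlyy}.
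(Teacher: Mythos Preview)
Your overall plan---reduce via Theorem \ref{5t7}(i), split $T(a_i)$ into near and far pieces relative to $B_i^{(2)}$, control the far piece by kernel smoothness plus Lemma \ref{4l2}, and the near piece by the $L^r$-boundedness of $T$---is the route taken in \cite{hlyy}, and your far-region pointwise estimate is correct. The gap is in the near region. You claim that the Lebesgue differentiation inequality $|T(a)(x)|\le[\HL^{\va}(|T(a)|^r)(x)]^{1/r}$ produces ``a single summable pointwise bound'' of the form $|T(f)|^{\underline{p}}\ls\sum_i c_i^{\underline{p}}[\HL^{\va}(\mathbf{1}_{B_i})]^{\underline{p}(\nu+\delta)/\nu}$. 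That bound fails on $B_i^{(2)}$: there $[\HL^{\va}(\mathbf{1}_{B_i})]^{(\nu+\delta)/\nu}\sim 1$, so it would force $|T(a_i)|\ls 1/\|\mathbf{1}_{B_i}\|_{\lv}$ pointwise, which is false since $T(a_i)$ is controlled only in $L^r$. The Lebesgue differentiation inequality is true but tautological here---at small scales $\HL^{\va}(|T(a_i)|^r)(x)$ just returns $|T(a_i)(x)|^r$---and does not convert the $L^r$ information into the displayed pointwise form. You yourself flag this as ``the main obstacle'' without resolving it.

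The correct near-region step does not pass through a pointwise bound. With $m_i:=T(a_i)\mathbf{1}_{B_i^{(2)}}$ one has $\supp m_i\subset B_i^{(2)}$ and $\|m_i\|_{L^r(\rn)}\ls|B_i|^{1/r}/\|\mathbf{1}_{B_i}\|_{\lv}$, and the duality argument in the proof of Lemma \ref{3l9} (Theorem \ref{x2t3}, H\"older, and Lemma \ref{3l1} applied to $|g|^{(r/\underline{p})'}$; see \eqref{3.4}) gives directly
\[
\lf\|\lf(\sum_i|\lz_i m_i|^{\underline{p}}\r)^{1/\underline{p}}\r\|_{\lv}
\ls\lf\|\lf(\sum_i\lf[\frac{|\lz_i|\mathbf{1}_{B_i}}{\|\mathbf{1}_{B_i}\|_{\lv}}\r]^{\underline{p}}\r)^{1/\underline{p}}\r\|_{\lv}.
\]
Combining this with your far-region estimate and Theorem \ref{5t7}(i) finishes the proof. (Your closing remark about ``the revision of \cite[Theorem 6.9]{hlyy}'' is off target: the revisions in this survey concern the $\beta$-order operators of Theorems \ref{5t10`}--\ref{5t11`} and the admissible range of $\underline{p}$, not any new subtlety in the $\delta$-type case of Theorem \ref{5t9}.)
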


Moreover, the following boundedness of anisotropic $\beta$-order
Calder\'{o}n--Zygmund operators $T$ from $\vh$ to itself or to $\lv$
was also established in \cite[Theorems 6.8 and 6.9]{hlyy}.

\begin{theorem}\label{5t10}
Let $\va\in[1,\fz)^n,$ $\vp\in(0,2)^n$, $\bz\in(0,\fz)\setminus\nn$,
$$p_-\in\lf(\frac{\nu}{\nu+\bz a_-},\frac{\nu}{\nu+\lfloor\bz\rfloor a_-}\r]$$
with $p_-$ as in $\eqref{2e10}$ and $a_-$ as in \eqref{2e9}, and $T$ be
an anisotropic $\beta$-order Calder\'{o}n--Zygmund operator having the
vanishing moments up to order $\lfloor\bz\rfloor$.
Then there exists a positive constant $C$
such that, for any $f\in \vh$,
$$\lf\|T(f)\r\|_{\vh}\le C\|f\|_{\vh}.$$
\end{theorem}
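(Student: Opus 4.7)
The plan is to reduce the $\vh$-boundedness of $T$ to a uniform atomic estimate by invoking Theorem \ref{5t7}. First I would fix $r\in(\max\{p_+,1\},2]$, which is possible because the hypothesis $\vp\in(0,2)^n$ forces $p_+<2$, and set $s:=\lfloor\bz\rfloor$. The hypothesis $p_->\nu/(\nu+\bz a_-)$ together with $\bz\notin\nn$ gives
\[
\lf\lfloor\tfrac{\nu}{a_-}\lf(\tfrac1{p_-}-1\r)\r\rfloor\le\lfloor\bz\rfloor=s,
\]
so $s$ is admissible in the sense of \eqref{5eq1}. Since $T$ is linear and $\vh$ is a $\gamma$-quasi-Banach space with $\gamma:=\underline{p}$, Theorem \ref{5t7}(i) reduces the task to showing that there exists a positive constant $C$ such that, for every $(\vp,r,s)$-atom $a$ supported in an anisotropic ball $B:=B_{\va}(x_B,r_B)$, $\|T(a)\|_{\vh}\le C$. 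Since $T$ has vanishing moments up to order $\lfloor\bz\rfloor$, a standard argument shows that $T(a)$ inherits vanishing moments up to order $s$, so $T(a)$ is morally a molecule adapted to $B$.

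To estimate $\|T(a)\|_{\vh}$, I would employ the maximal function characterization from Theorem \ref{3t2} and control $\|M_N(T(a))\|_{\lv}$ for a suitable large $N$. Fixing a dilation constant $\delta_0>1$, Lemma \ref{3l3} splits the estimate into a local piece on $B^{(\delta_0)}$ and an exterior piece on $(B^{(\delta_0)})^\complement$. For the local part, I would dominate $M_N(T(a))\le[M_{\rm HL}^{\va}(|T(a)|^t)]^{1/t}$ for some $t\in(0,p_-)$ sufficiently small so that $\vp/t\in(1,\fz)^n$, apply the anisotropic Hardy--Littlewood maximal inequality (Lemma \ref{3l1}), the $L^2$-boundedness of $T$, and the size estimate $\|a\|_{L^2(\rn)}\le|B|^{1/2}/\|\mathbf{1}_B\|_{\lv}$; using H\"older's inequality this yields a bound by a constant multiple of $\|\mathbf{1}_{B^{(\delta_0)}}\|_{\lv}/\|\mathbf{1}_B\|_{\lv}$, which is $\ls 1$ by \cite[Remark 3.8]{hlyy}.

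For the exterior part on $(B^{(\delta_0)})^\complement$, I would use the vanishing moments of $a$ of order $\lfloor\bz\rfloor$ to subtract from the kernel its anisotropic Taylor polynomial in the second variable at $x_B$:
\[
T(a)(x)=\int_B\lf[\mathcal{K}(x,y)-\sum_{|\az|\le\lfloor\bz\rfloor}\frac{(y-x_B)^{\az}}{\az!}\lf(\pa_y^{\az}\mathcal{K}\r)(x,x_B)\r]a(y)\,dy.
\]
Applying the mean-value theorem, the H\"older-type regularity of $\pa^{\lfloor\bz\rfloor}\mathcal{K}$ from Definition \ref{8d1}, and Lemma \ref{3l8} to pass between the Euclidean monomials $(y-x_B)^{\az}$ and the anisotropic powers $|y-x_B|_{\va}^{\cdot}$, one obtains a pointwise decay of the form
\[
|T(a)(x)|\ls\frac{r_B^{\nu+\bz a_-}}{|x-x_B|_{\va}^{\nu+\bz a_-}}\cdot\frac{1}{\|\mathbf{1}_B\|_{\lv}},\qquad x\in(B^{(\delta_0)})^\complement,
\]
with the same bound, up to a harmless factor, for $M_N(T(a))(x)$. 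Decomposing into dyadic anisotropic annuli around $B$ and summing via \cite[Remark 3.8]{hlyy} and \cite[Lemma 5.9(iv)]{hlyy}, the tail contribution to $\|M_N(T(a))\|_{\lv}$ is a convergent geometric series precisely when $p_->\nu/(\nu+\bz a_-)$, which is the standing hypothesis.

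The hard part will be the exterior pointwise estimate. Two calibrations must be executed simultaneously: (i) converting Euclidean monomials $(y-x_B)^{\az}$ into the anisotropic quasi-homogeneous scale requires a careful bookkeeping of $a_-$ versus $a_+$ via Lemma \ref{3l8}, and (ii) the \emph{minimum} in the kernel hypothesis of Definition \ref{8d1} must be exploited selectively so that the final decay exponent equals $\nu+\bz a_-$—the exact threshold dictated by the range $p_->\nu/(\nu+\bz a_-)$. The upper bound $p_-\le\nu/(\nu+\lfloor\bz\rfloor a_-)$ explains why only $\lfloor\bz\rfloor$ vanishing moments on $a$ can be used without losing admissibility of $s$, so the Taylor expansion is truncated at the maximum allowed order. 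Once this calibration is achieved, the remaining estimates are routine.
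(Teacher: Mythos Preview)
The survey does not actually prove this theorem; it quotes it from \cite[Theorem~6.8]{hlyy} and, even for the improved version Theorem~\ref{5t10`}, explicitly omits the details. Your overall architecture---reduce via Theorem~\ref{5t7} to a uniform estimate $\|T(a)\|_{\vh}\ls 1$ for each $(\vp,2,s)$-atom $a$, then split $\|M_N(T(a))\|_{\lv}$ into a local piece on $B^{(\delta_0)}$ and a tail handled by kernel regularity plus vanishing moments---is precisely the route taken in \cite{hlyy}, so at the structural level your proposal matches the original.

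There is, however, a genuine error in your local step. The claimed pointwise inequality $M_N(T(a))\le[M_{\rm HL}^{\va}(|T(a)|^t)]^{1/t}$ for $t\in(0,p_-)\subset(0,1)$ is false: Jensen's inequality for the concave map $x\mapsto x^t$ gives $[M_{\rm HL}^{\va}(|f|^t)]^{1/t}\le M_{\rm HL}^{\va}(f)$ when $t<1$, so your proposed upper bound is \emph{smaller} than $M_{\rm HL}^{\va}(T(a))$, not larger, and there is no reason the grand maximal function sits below it. The correct local argument in \cite{hlyy} (reflected in this survey in Lemma~\ref{3l9} and the estimate \eqref{3.4}) uses the honest bound $M_N(T(a))\ls M_{\rm HL}^{\va}(T(a))$, controls $\|M_{\rm HL}^{\va}(T(a))\mathbf{1}_{B^{(\delta_0)}}\|_{L^r}$ with $r=2$ via the $L^2$-boundedness of $T$ and of $M_{\rm HL}^{\va}$, and then passes to the mixed norm $\lv$ by the duality-and-maximal-function mechanism of Lemma~\ref{3l9}; a bare H\"older step does not do this in the mixed-norm setting.

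A second point you gloss over: the phrase ``with the same bound, up to a harmless factor, for $M_N(T(a))(x)$'' on the tail hides real content. Pointwise decay of $T(a)$ alone does not force decay of $M_N(T(a))$---for large dilation parameters in the maximal supremum one must exploit the vanishing moments of $T(a)$, and this is exactly where the hypothesis that $T$ has vanishing moments up to order $\lfloor\bz\rfloor$ enters (contrast Theorem~\ref{5t11}, which targets only $\lv$ and drops that hypothesis). You correctly note early on that $T(a)$ inherits those moments, but the tail bound for $M_N(T(a))$ needs to invoke them explicitly rather than being asserted as ``harmless.''
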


\begin{theorem}\label{5t11}
Let $\va,\ \vp,\ \bz$ and $p_-$ be the same as in Theorem \ref{5t10}
and $T$ an anisotropic
$\beta$-order Calder\'{o}n--Zygmund operator. Then there exists a positive
constant $C$ such that, for any $f\in \vh$,
$$\lf\|T(f)\r\|_{\lv}\le C\|f\|_{\vh}.$$
\end{theorem}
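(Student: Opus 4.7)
The plan is to prove Theorem~\ref{5t11} via the standard atomic decomposition strategy, reducing the problem to a uniform $\lv$-estimate on $(\vec{p},r,s)$-atoms. The sublinear-operator criterion Theorem~\ref{5t7} applies once we verify that $\lv$ is a $\underline{p}$-quasi-Banach space for $\underline{p}\in(0,\min\{1,p_-\})$, which is a consequence of Lemma~\ref{3l3}. Choose $r\in(\max\{p_+,1\},\fz)$ and $s:=\lfloor\bz\rfloor$; noting that the hypothesis $p_-\le \nu/(\nu+\lfloor\bz\rfloor a_-)$ forces $\lfloor\nu(1/p_--1)/a_-\rfloor=\lfloor\bz\rfloor$, so this $s$ meets the admissibility condition \eqref{5eq1}. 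Thus it suffices to prove that $\sup\{\|T(a)\|_{\lv}:\,a\ \text{is a}\ (\vec{p},r,s)\text{-atom}\}<\fz$, and then invoke Theorem~\ref{5t7}(i).

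Fix such an atom $a$ with $\supp a\subset B:=B_{\va}(x_0,r_0)$ and take a dilation constant $\sigma\in(1,\fz)$ large enough that $x\notin B^{(\sigma)}$ and $y\in B$ imply $|x-y|_{\va}>2|y-x_0|_{\va}\neq 0$, which is exactly what is needed to invoke the kernel regularity from Definition~\ref{8d1}. Split $\|T(a)\|_{\lv}\le \|T(a)\mathbf{1}_{B^{(\sigma)}}\|_{\lv}+\|T(a)\mathbf{1}_{(B^{(\sigma)})^{\complement}}\|_{\lv}=:\mathrm{I}+\mathrm{II}$. For $\mathrm{I}$, I will use that anisotropic $\beta$-order Calder\'on--Zygmund operators as in Definition~\ref{8d1} are bounded on $L^r(\rn)$ for every $r\in(1,\fz)$, which follows from $L^2$-boundedness together with the standard anisotropic kernel estimate (see \cite{mb03}); combined with the mixed-norm H\"older inequality from Theorem~\ref{x2t7} applied to the factorization $T(a)\cdot\mathbf{1}_{B^{(\sigma)}}$, the atom-size condition $\|a\|_{L^r(\rn)}\le |B|^{1/r}/\|\mathbf{1}_B\|_{\lv}$, and the doubling property $\|\mathbf{1}_{B^{(\sigma)}}\|_{\lv}\sim \|\mathbf{1}_B\|_{\lv}$ from \cite[Remark~3.8]{hlyy}, this yields $\mathrm{I}\lesssim 1$ uniformly in $a$.

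For $\mathrm{II}$, I would exploit the vanishing moments of $a$ of order $\lfloor\bz\rfloor$ to subtract from $\mathcal{K}(x,\cdot)$ its Taylor polynomial of order $\lfloor\bz\rfloor-1$ at $x_0$ and further the homogeneous degree-$\lfloor\bz\rfloor$ Taylor term (both of which are annihilated by $a$). Taylor's formula with integral remainder, the H\"older continuity of $\pa^{\alpha}\mathcal{K}(x,\cdot)$ for $|\alpha|=\lfloor\bz\rfloor$ from Definition~\ref{8d1}, the anisotropic monomial bound $|(y-x_0)^{\alpha}|\le |y-x_0|_{\va}^{\sum_i a_i\alpha_i}\le r_0^{\lfloor\bz\rfloor a_-}$ valid for $y\in B$, and the $L^1$-bound $\|a\|_{L^1(\rn)}\le |B|/\|\mathbf{1}_B\|_{\lv}$ (derivable via H\"older from the atom normalization) together yield, for any $x\in(B^{(\sigma)})^{\complement}$, the pointwise estimate
$$|T(a)(x)|\lesssim \frac{1}{\|\mathbf{1}_B\|_{\lv}}\cdot \frac{r_0^{\nu+\bz a_-}}{|x-x_0|_{\va}^{\nu+\bz a_-}}.$$
A dyadic annular decomposition $(B^{(\sigma)})^{\complement}=\bigcup_{k\in\nn}\bigl[B^{(2^k\sigma)}\setminus B^{(2^{k-1}\sigma)}\bigr]$ together with the scaling of $\|\mathbf{1}_{B^{(\delta)}}\|_{\lv}$ from \cite[Remark~3.8]{hlyy} then converts this pointwise bound into $\mathrm{II}\lesssim \sum_{k\in\nn} 2^{-k\nu(1+\bz a_-/\nu-1/p_-)}\lesssim 1$; the geometric series converges precisely when $p_->\nu/(\nu+\bz a_-)$, which is the standing hypothesis.

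The main obstacle is the sharp derivation of the pointwise non-local bound. Specifically, one must carefully exploit the ``$\min$'' in the H\"older regularity condition of Definition~\ref{8d1} to extract exactly the $(\bz-\lfloor\bz\rfloor)a_-$ excess power of $|y-x_0|_{\va}$, which combined with the $\lfloor\bz\rfloor a_-$ contribution arising from the Taylor monomial produces the full $\bz a_-$ decay required to control $\mathrm{II}$. The exact matching of the admissible range of $p_-$ to the resulting decay exponent $\nu+\bz a_-$ is what dictates the lower endpoint $\nu/(\nu+\bz a_-)$ in the hypothesis; any loss of decay in the remainder estimate would destroy the geometric-series convergence at this endpoint.
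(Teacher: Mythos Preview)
Your per-atom estimates for $\mathrm{I}$ and $\mathrm{II}$ are essentially correct and mirror those in \cite[Theorem~6.9]{hlyy}. The gap is in the reduction step: Theorem~\ref{5t7}(i) requires $\|T(f)\|_{\lv}\le C\|f\|_{\vfah}$ for every $f\in\vfah$, and a uniform atom bound $\sup_a\|T(a)\|_{\lv}<\infty$ does \emph{not} deliver this once some $p_j>1$. Writing $f=\sum_{i=1}^{I}\lambda_ia_i$ and using $\underline{p}$-subadditivity only yields $\|T(f)\|_{\lv}^{\underline{p}}\lesssim\sum_i|\lambda_i|^{\underline{p}}$, whereas the atomic quasi-norm in Definition~\ref{5d2} is $\bigl\|\bigl\{\sum_i[|\lambda_i|\mathbf{1}_{B_i}/\|\mathbf{1}_{B_i}\|_{\lv}]^{\underline{p}}\bigr\}^{1/\underline{p}}\bigr\|_{\lv}$; these two quantities are not comparable in general (take disjoint congruent balls stacked in a coordinate direction with $p_j>1$). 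This is exactly why Corollary~\ref{5c2} is stated only for $\vp\in(0,1]^n$, whereas Theorem~\ref{5t10} permits $\vp\in(0,2)^n$ with merely $p_-\le 1$.

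The argument in \cite{hlyy} avoids pulling the sum outside the norm. One bounds $|T(f)|^{\underline{p}}\le\sum_i|\lambda_i|^{\underline{p}}|T(a_i)|^{\underline{p}}$ pointwise, splits each $T(a_i)$ into its local and non-local parts as you do, and then controls $\bigl\|\sum_i|\lambda_i|^{\underline{p}}|T(a_i)\mathbf{1}_{B_i^{(\sigma)}}|^{\underline{p}}\bigr\|_{L^{\vp/\underline{p}}}$ via the duality/maximal-function mechanism underlying Lemma~\ref{3l9}, while the non-local sum is handled by your pointwise decay together with the Fefferman--Stein inequality of Lemma~\ref{4l2}; both pieces output exactly the quasi-norm of Definition~\ref{5d2}, after which Theorem~\ref{5t7}(i) applies. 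A secondary point: Definition~\ref{8d1} gives kernel regularity only in the second variable, so $L^r$-boundedness of $T$ for all $r\in(1,\infty)$ is not available; the clean fix is to take $r=2$, which is admissible precisely because $p_+<2$.
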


Finally, we point out that, in Definition \ref{8d1},
the range of $\beta\in (0,\fz)\setminus\nn$ can be
revised as $\beta\in (0,\fz)$, and then we can
improve the restriction of $\bz$ in Theorems \ref{5t10}
and \ref{5t11} into $\beta\in (0,\fz)$. To be precise,
we have the following improved versions of Definition
\ref{8d1} and Theorems \ref{5t10} and \ref{5t11}.

\begin{definition}\label{8d1`}
Let $\va\in[1,\fz)^n$. For any given $\beta\in (0,\fz)$, a linear operator
$T$ is called an \emph{anisotropic non-convolutional $\beta$-order Calder\'{o}n--Zygmund operator} if $T$ is bounded
on $L^2(\rn)$ and its kernel
$$\mathcal{K}:\ \Omega:=\{(x,y)\in\rn\times\rn:\  x\neq y\}\to \mathbb{C}$$
satisfies that $\mathcal{K}\in C^{\lceil \bz\rceil-1}(\Omega)$ and
there exists a positive constant $C$ such that, for any $\az\in\zz_+^n$ with
$|\alpha|= \lceil \bz\rceil-1$ and $x,\ y,\ z\in \rn$
with $ |x-y|_{\va}>2|y-z|_{\va}\neq 0$,
\begin{align*}
&\lf|\lf[\pa^{\az}\mathcal{K}(x,\cdot)\r](y)
-\lf[\pa^{\az}\mathcal{K}(x,\cdot)\r](z)\r|\\
&\hs\hs\le C\frac{|y-z|_{\va}^{\bz a_+}}{|x-y|_{\va}^{\nu+\bz a_+}}
\min\lf\{|y-z|_{\va}^{-(\lceil \bz\rceil-1) a_+}
,\,|y-z|_{\va}^{-(\lceil \bz\rceil-1) a_-}\r\}
\end{align*}
and, for any $f\in L^2(\rn)$ with compact support, and $x\notin \supp f$,
$$T(f)(x)=\int_{\supp f}\mathcal{K}(x,y)f(y)\,dy.$$
\end{definition}

\begin{theorem}\label{5t10`}
Let $\va\in[1,\fz)^n,$ $\vp\in(0,2)^n$, $\bz\in(0,\fz)$,
$$p_-\in\lf(\frac{\nu}{\nu+\bz a_-},\frac{\nu}{\nu+(\lceil \bz\rceil-1) a_-}\r]$$
with $p_-$ as in $\eqref{2e10}$ and $a_-$ as in \eqref{2e9}, and $T$ be
an anisotropic $\beta$-order Calder\'{o}n--Zygmund operator having the
vanishing moments up to order $\lceil \bz\rceil-1$.
Then there exists a positive constant $C$
such that, for any $f\in \vh$,
$$\lf\|T(f)\r\|_{\vh}\le C\|f\|_{\vh}.$$
\end{theorem}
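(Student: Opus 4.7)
The plan is to follow the same blueprint used for Theorem \ref{5t10}, with $\lfloor\bz\rfloor$ systematically replaced by $\lceil\bz\rceil-1$, and to reduce the claim via Theorem \ref{5t7} to a uniform bound on atoms. More precisely, I would fix $r\in(\max\{p_+,1\},\fz)$ and $s:=\lceil\bz\rceil-1$; the hypothesis $p_->\frac{\nu}{\nu+\bz a_-}$ together with $\bz\le\lceil\bz\rceil$ guarantees that this $s$ lies in the range \eqref{5eq1}. By Theorem \ref{5t1} and Theorem \ref{5t7}(i), it then suffices to establish the uniform atomic bound $\|T(a)\|_{\vh}\ls1$ for every $(\vp,r,s)$-atom $a$ supported in an anisotropic ball $B=B_{\va}(x_B,r_B)\in\mathfrak{B}$.

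For such an atom, I would estimate $\|M_N(T(a))\|_{\lv}$ using the maximal function characterization of Theorem \ref{3t2}, splitting the $\lv$-integration into the local region $B^{(\delta)}$ (with some fixed dilation parameter $\delta>1$) and its complement. The local estimate uses the $L^2$-boundedness of $T$, the size condition of $a$, and the H\"{o}lder inequality on mixed Lebesgue spaces (Theorem \ref{x2t7} together with \cite[Remark 3.8]{hlyy}) to control $\|M_N(T(a))\mathbf{1}_{B^{(\delta)}}\|_{\lv}$ by a constant. For the non-local region, I would invoke the vanishing moments of $a$ up to order $s=\lceil\bz\rceil-1$ to write, for any $x\notin B^{(\delta)}$,
$$T(a)(x)=\int_B\lf[\mathcal{K}(x,y)-Q(x,y)\r]a(y)\,dy,$$
where $Q(x,\cdot)$ denotes the Taylor polynomial of $\mathcal{K}(x,\cdot)$ centered at $x_B$ of degree $\lceil\bz\rceil-1$. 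Expressing the Taylor remainder in integral form and applying the kernel estimate from Definition \ref{8d1`} to the difference $\partial^\az\mathcal{K}(x,\xi)-\partial^\az\mathcal{K}(x,x_B)$ (with $|\az|=\lceil\bz\rceil-1$ and $\xi$ on the segment from $x_B$ to $y$), I expect to obtain the pointwise bound
$$|T(a)(x)|\ls\frac{r_B^{\bz a_-}}{|x-x_B|_{\va}^{\nu+\bz a_-}}\|a\|_{L^1(\rn)}$$
for $x$ in the non-local region. Passing to $M_N(T(a))$ and controlling this quantity by $[M_{{\rm HL}}^{\va}(\mathbf{1}_B)]^{(\nu+\bz a_-)/\nu}/\|\mathbf{1}_B\|_{\lv}$, and then invoking Lemma \ref{3l1} together with the condition $p_->\frac{\nu}{\nu+\bz a_-}$, I would complete the non-local estimate $\|M_N(T(a))\mathbf{1}_{(B^{(\delta)})^\complement}\|_{\lv}\ls1$.

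The main obstacle will be the careful case analysis forced by the minimum structure in the kernel bound of Definition \ref{8d1`}: to cover the full range $\bz\in(0,\fz)$ (including integer $\bz$ that was excluded in Theorem \ref{5t10}), one must distinguish the subregimes $|y-x_B|_{\va}\ge1$ and $|y-x_B|_{\va}<1$ when invoking the minimum of two powers of $|y-x_B|_{\va}$, selecting the branch with exponent $-(\lceil\bz\rceil-1)a_+$ in one subregime and $-(\lceil\bz\rceil-1)a_-$ in the other, so as to secure the effective decay $|x-x_B|_{\va}^{-\nu-\bz a_-}$ uniformly. Once this technicality is handled, the rest of the proof mirrors that of \cite[Theorem 6.8]{hlyy}, and Theorem \ref{5t7} yields the desired extension to all $f\in\vh$.
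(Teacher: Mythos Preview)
Your proposal is correct and follows essentially the same approach as the paper: the paper itself merely states that one should replace $\lfloor\bz\rfloor$ by $\lceil\bz\rceil-1$ throughout the proof of \cite[Theorem 6.8]{hlyy} and omits all further details, and your sketch accurately reconstructs what that argument entails (reduction to uniform atomic bounds via Theorem~\ref{5t7}, the local/non-local splitting, Taylor expansion of the kernel to order $\lceil\bz\rceil-1$, and the case analysis forced by the $\min$ in Definition~\ref{8d1`}).
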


\begin{theorem}\label{5t11`}
Let $\va,\ \vp,\ \bz$ and $p_-$ be the same as in Theorem \ref{5t10`} and $T$
an anisotropic $\beta$-order Calder\'{o}n--Zygmund operator.
Then there exists a positive constant $C$ such that, for any $f\in \vh$,
$$\lf\|T(f)\r\|_{\lv}\le C\|f\|_{\vh}.$$
\end{theorem}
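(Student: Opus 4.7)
The plan is to follow the strategy of the existing proof of Theorem \ref{5t11} while exploiting the improved kernel regularity in Definition \ref{8d1`}. First, I would fix $r\in(\max\{p_+,1\},\fz)$ and take $s:=\lceil\bz\rceil-1$. Using the hypothesis
$$p_-\in\lf(\frac{\nu}{\nu+\bz a_-},\frac{\nu}{\nu+(\lceil\bz\rceil-1)a_-}\r],$$
one directly verifies that $s\in[\lfloor\frac{\nu}{a_-}(\frac{1}{p_-}-1)\rfloor,\fz)\cap\zz_+$, so $(\vp,r,s)$-atoms are well defined in the sense of Definition \ref{5d1}. Since $\lv$ is a $\underline{p}$-quasi-Banach space with $\underline{p}$ as in \eqref{2e10}, by Corollary \ref{5c2}(i) it suffices to prove the uniform bound
$$\sup\lf\{\|T(a)\|_{\lv}:\ a\text{ is any}\ (\vp,r,s)\text{-atom}\r\}<\fz.$$

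Fix such an atom $a$ supported in $B:=B_{\va}(x_B,r_B)\in\mathfrak{B}$ and decompose
$$\lf\|T(a)\r\|_{\lv}^{\underline{p}}\le\lf\|T(a)\mathbf{1}_{B^{(2)}}\r\|_{\lv}^{\underline{p}}+\lf\|T(a)\mathbf{1}_{(B^{(2)})^\complement}\r\|_{\lv}^{\underline{p}}.$$
For the local piece, I would use the $L^r(\rn)$-boundedness of $T$ (which follows from its $L^2(\rn)$-boundedness and the kernel regularity in Definition \ref{8d1`} through standard anisotropic Calder\'on--Zygmund interpolation), combined with the size condition $\|a\|_{L^r(\rn)}\le|B|^{1/r}/\|\mathbf{1}_B\|_{\lv}$, H\"{o}lder's inequality (Theorem \ref{x2t7}) and Lemma \ref{3l1}, exactly as in the proof of Theorem \ref{5t11}. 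For the far-field piece, invoking the vanishing moments of $a$ up to order $s$, one writes, for any $x\in(B^{(2)})^\complement$,
$$T(a)(x)=\int_B\lf[\mathcal{K}(x,y)-\sum_{|\az|\le s}\frac{[\pa^{\az}\mathcal{K}(x,\cdot)](x_B)}{\az!}(y-x_B)^{\az}\r]a(y)\,dy,$$
and applies the H\"{o}lder estimate from Definition \ref{8d1`} on $\pa^{\az}\mathcal{K}(x,\cdot)$ with $|\az|=s=\lceil\bz\rceil-1$ to the Taylor remainder via the mean value theorem. This extracts a factor $|y-x_B|_{\va}^{(\bz-s)a_+}$ and produces pointwise decay of the form $|T(a)(x)|\ls\frac{r_B^{\bz a_-}|B|}{\|\mathbf{1}_B\|_{\lv}|x-x_B|_{\va}^{\nu+\bz a_+}}$ (up to an adjustment coming from the $\min$-term of Definition \ref{8d1`}).

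Finally, I would integrate this decay against the $\lv$-quasi-norm by decomposing $(B^{(2)})^\complement$ into anisotropic annuli $B^{(2^{k+1})}\setminus B^{(2^k)}$ indexed by $k\in\nn$ and using the scaling $\|\mathbf{1}_{B^{(2^{k+1})}}\|_{\lv}\ls 2^{k\nu/p_-}\|\mathbf{1}_B\|_{\lv}$; the hypothesis $p_->\nu/(\nu+\bz a_-)$ then makes the resulting geometric series in $k$ convergent and delivers the required uniform bound. The main obstacle is the pointwise far-field decay when $\bz\in\nn$, which was excluded in Theorem \ref{5t11}: in this case the atom carries one fewer vanishing moment than in the non-integer setting, so the sharp decay $|x-x_B|_{\va}^{-\nu-\bz a_+}$ must be recovered entirely through the H\"{o}lder regularity of $\pa^{\az}\mathcal{K}(x,\cdot)$ with $|\az|=\bz-1$. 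Carefully distinguishing the regimes $|y-x_B|_{\va}\le 1$ and $|y-x_B|_{\va}>1$ in the $\min$-term of Definition \ref{8d1`}, and tracking the anisotropic exponents $a_+$ and $a_-$ cleanly through both the Taylor remainder and the annular summation, is the delicate technical point that enables the extension $\bz\in(0,\fz)\setminus\nn\mapsto\bz\in(0,\fz)$.
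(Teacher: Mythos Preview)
Your approach coincides with the paper's: the paper states only that Theorems \ref{5t10`} and \ref{5t11`} are proved by replacing $\lfloor\bz\rfloor$ with $\lceil\bz\rceil-1$ throughout the proofs of \cite[Theorems~6.8 and~6.9]{hlyy}, and gives no further detail. Your outline fleshes out exactly this substitution, with the correct choice $s:=\lceil\bz\rceil-1$ and the correct far-field Taylor-remainder argument driven by the H\"older condition on $\pa^{\az}\mathcal{K}(x,\cdot)$ with $|\az|=\lceil\bz\rceil-1$.

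Two technical slips worth flagging. First, Corollary \ref{5c2}(i) requires $\vp\in(0,1]^n$, whereas Theorem \ref{5t11`} allows $\vp\in(0,2)^n$; you should invoke Theorem \ref{5t7} (or the intermediate lemma actually used in \cite[Theorem~6.9]{hlyy}) rather than Corollary \ref{5c2}. Second, Definition \ref{8d1`} only imposes regularity on $\mathcal{K}$ in the second variable, so $L^r(\rn)$-boundedness of $T$ for $r>2$ does not follow from standard Calder\'on--Zygmund interpolation; in practice one simply takes $r=2$, which is admissible precisely because of the hypothesis $\vp\in(0,2)^n$ (so that $2>p_+$). Both are cosmetic corrections; the core argument is correct and matches the paper.
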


To prove Theorems \ref{5t10`} and \ref{5t11`}, we only need to replace
$\lfloor \bz \rfloor$ by $\lceil \bz\rceil-1$ in the proofs of
\cite[Theorems 6.8 and 6.9]{hlyy}, respectively. Thus, the details
are omitted.

At the end of this section, we point out that all the results
about the mixed-norm Hardy space $H_{\va}^{\vp}(\rn)$ associated to a
vector $\va\in[1,\fz)^n$
can be extended to a more general anisotropic setting, namely, the mixed-norm
Hardy space $H_A^{\vp}(\rn)$ associated to an expansive matrix $A$.
We refer the reader to \cite{hlyy19} for the details.

\section*{Acknowledgments}
The authors would like to express his deep thanks to Professor Ferenc Weisz
for his pointing out that, when $n:=2$ and $\vp:=(p_1,\infty)$ with $p_1\in(1,\fz)$,
\cite[Lemma 3.5]{hlyy} is not correct.
Long Huang would like to express his deep thanks to Dr. Jun Liu for his many
discussions and suggestions on this survey.
The research of the authors is supported by the National
Natural Science Foundation of China
(Grant Nos.~11571039, 11761131002 and 11671185).


\end{document}